\documentclass[11pt,a4paper,reqno]{amsart}
\usepackage[english]{babel}
\usepackage[T1]{fontenc}
\usepackage{verbatim}
\usepackage{palatino}
\usepackage{amsmath}
\usepackage{mathabx}
\usepackage{amssymb}
\usepackage{amsthm}
\usepackage{amsfonts}
\usepackage{graphicx}
\usepackage{esint}
\usepackage{color}
\usepackage{mathtools}
\usepackage{overpic}

\usepackage[colorlinks = true, citecolor = black]{hyperref}
\pagestyle{headings}
\author{Tuomas Orponen}
\title[Projections of Ahlfors-regular sets]{On the projections of Ahlfors regular sets in the plane}
\address{Department of Mathematics and Statistics\\ University of Jyv\"askyl\"a,
P.O. Box 35 (MaD)\\
FI-40014 University of Jyv\"askyl\"a\\
Finland}
\email{tuomas.t.orponen@jyu.fi}
\date{\today}
\subjclass[2010]{28A80 (primary) 28A78 (secondary)}
\keywords{Projections, Ahlfors regular sets}
\thanks{T.O. is supported by the European Research Council (ERC) under the European Union’s Horizon Europe research and innovation programme (grant agreement No 101087499), and by the Research Council of Finland via the project \emph{Approximate incidence geometry}, grant no. 355453.}

\newcommand{\R}{\mathbb{R}}

\newcommand{\N}{\mathbb{N}}
\newcommand{\Q}{\mathbb{Q}}

\newcommand{\Z}{\mathbb{Z}}
\newcommand{\tn}{\mathbb{P}}

\newcommand{\calS}{\mathcal{S}}

\newcommand{\spt}{\operatorname{spt}}
\newcommand{\Hd}{\dim_{\mathrm{H}}}

\newcommand{\Bd}{\overline{\dim}_{\mathrm{B}}}

\newcommand{\diam}{\operatorname{diam}}

\newcommand{\m}{\mathfrak{m}}
\newcommand{\M}{\mathfrak{M}}
\newcommand{\Div}{\mathfrak{D}}

\def\Barint_#1{\mathchoice
          {\mathop{\vrule width 6pt height 3 pt depth -2.5pt
                  \kern -8pt \intop}\nolimits_{#1}}%
          {\mathop{\vrule width 5pt height 3 pt depth -2.6pt
                  \kern -6pt \intop}\nolimits_{#1}}%
          {\mathop{\vrule width 5pt height 3 pt depth -2.6pt
                  \kern -6pt \intop}\nolimits_{#1}}%
          {\mathop{\vrule width 5pt height 3 pt depth -2.6pt
                  \kern -6pt \intop}\nolimits_{#1}}}

\numberwithin{equation}{section}

\theoremstyle{plain}
\newtheorem{thm}[equation]{Theorem}
\newtheorem{"thm"}[equation]{"Theorem"}

\newtheorem{lemma}[equation]{Lemma}
\newtheorem{"lemma"}[equation]{"Lemma"}

\newtheorem{cor}[equation]{Corollary}
\newtheorem{"proposition"}[equation]{"Proposition"}
\newtheorem{proposition}[equation]{Proposition}

\theoremstyle{definition}

\newtheorem{definition}[equation]{Definition}
\newtheorem{assumption}[equation]{Assumption}
\newtheorem{notation}[equation]{Notation}

\theoremstyle{remark}
\newtheorem{remark}[equation]{Remark}

\addtolength{\hoffset}{-1.15cm}
\addtolength{\textwidth}{2.3cm}
\addtolength{\voffset}{0.45cm}
\addtolength{\textheight}{-0.9cm}

\newcommand{\nref}[1]{(\hyperref[#1]{#1})}

\DeclareMathSymbol{\intop}  {\mathop}{mathx}{"B3}

\setcounter{tocdepth}{1}

\begin{document}

\begin{abstract} This paper contains the following $\delta$-discretised projection theorem for Ahlfors regular sets in the plane.

For all $C,\epsilon > 0$ and $s \in [0,1]$, there exists $\kappa > 0$ such that the following holds for all $\delta > 0$ small enough. Let $\nu$ be a Borel probability measure on $S^{1}$ satisfying $\nu(B(x,r)) \leq Cr^{\epsilon}$ for all $x \in S^{1}$ and $r > 0$. Let $K \subset B(1) \subset \R^{2}$ be Ahlfors $s$-regular with constant at most $C$. Then, there exists a vector $\theta \in \spt \nu$ such that
\begin{displaymath} |\pi_{\theta}(F)|_{\delta} \geq \delta^{\epsilon - s} \end{displaymath}
for all $F \subset K$ with $|F|_{\delta} \geq \delta^{\kappa - s}$. Here $\pi_{\theta}(z) = \theta \cdot z$ for $z \in \R^{2}$.

\end{abstract}

\maketitle

\tableofcontents

\section{Introduction}

This paper is concerned with the orthogonal projections of Ahlfors regular subsets of the plane. Orthogonal projections are the maps defined by $\pi_{\theta}(z) := \theta \cdot z$ for $\theta \in S^{1}$ and $z \in \R^{2}$. Occasionally it is also convenient to use the alternative parametrisation $\pi_{\theta}(x,y) := x + \theta y$, where $\theta \in [0,1]$, but this will be mentioned separately.

Marstrand \cite{mar} in 1954 proved that if $K \subset \R^{2}$ is a Borel set, then 
\begin{equation}\label{form73a} E_{H} := \{\theta \in S^{1} : \Hd \pi_{\theta}(K) < \min\{\Hd K,1\}\} \end{equation}
has $\mathcal{H}^{1}(E_{H}) = 0$ (the subscript "$H$" stands for Hausdorff dimension, see Remark \ref{rem5} for a discussion concerning a few other notions of dimension). In the generality of Borel sets, Marstrand's result is sharp in the sense that the conclusion $\mathcal{H}^{1}(E_{H}) = 0$ cannot be improved to $\Hd E_{H} \leq \sigma$ for any $\sigma < 1$, see \cite{KM}.

Under additional hypotheses on $K$, it is reasonable to expect stronger versions of Marstrand's theorem. This has been demonstrated for self-similar, self-conformal, and self-affine sets \cite{MR3955707,MR3903263,MR3263957,Ho,MR3276605,PS}. These papers show that either $\Hd E_{H} = 0$, or even $|E_{H}| < \infty$ under suitable "irrationality" hypotheses on the underlying dynamics.

\cite[Question 1]{MR4388762} proposed that $\Hd E_{H} = 0$ whenever $K$ is \emph{Ahlfors regular}:
\begin{definition}[Ahlfors $(s,C)$-regularity] Let $C,s > 0$. A Borel measure $\mu$ on $\R^{d}$ is called \emph{Ahlfors $(s,C)$-regular} if
\begin{displaymath} C^{-1}r^{s} \leq \mu(B(x,r)) \leq Cr^{s}, \qquad x \in \spt \mu, \, 0 < r \leq \diam (\spt \mu). \end{displaymath}
A closed set $K \subset \R^{d}$ is called Ahlfors $(s,C)$-regular if $K = \spt \mu$ for some non-trivial Ahlfors $(s,C)$-regular measure $\mu$. \end{definition}

 The motivation for \cite[Question 1]{MR4388762} was that Ahlfors regular sets are not necessarily generated by any dynamical system, but they are "self-similar as a set family" in the sense that Ahlfors regularity is invariant under rescaling. While \cite[Question 1]{MR4388762} remains open as stated, in this paper I obtain the following $\delta$-discretised variant (for the terminology and notation, see Definition \ref{def:regularity} and Notation \ref{not1}):
\begin{thm}\label{mainThm2} For every $C > 0$, $s,\tau \in (0,1]$, and $\underline{s} \in [0,s)$, there exist $\kappa = \kappa(C,s,\underline{s},\tau) > 0$ and $\delta_{0} = \delta_{0}(C,s,\underline{s},\tau) > 0$ such that the following holds for all $\delta \in (0,\delta_{0}]$.

Let $\nu$ be a Borel probability measure on $S^{1}$ satisfying $\nu(B(x,r)) \leq Cr^{\tau}$ for all $x \in S^{1}$ and $r > 0$. Let $\mu$ be an Ahlfors $(s,C)$-regular measure on $\R^{2}$. Then, there exists a vector $\theta \in \spt \nu$ such that 
\begin{equation}\label{form78a} |\pi_{\theta}(F)|_{\delta} \geq \delta^{-\underline{s}} \end{equation}
for all Borel sets $F \subset B(1)$ with $\mu(F) \geq \delta^{\kappa}$. \end{thm}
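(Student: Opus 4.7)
The plan is a proof by contradiction relying on a multi-scale decomposition of $\mu$ together with a single-scale discretised projection theorem for $(\delta,s)$-sets tested against $(\delta,\tau)$-sets of directions. Assume the conclusion fails. Then for arbitrarily small $\delta > 0$ there exist an Ahlfors $(s,C)$-regular $\mu$ and a $\tau$-Frostman $\nu$ on $S^{1}$ such that every $\theta \in \spt\nu$ admits a Borel set $F_{\theta} \subset B(1)$ with $\mu(F_{\theta}) \geq \delta^{\kappa}$ and $|\pi_{\theta}(F_{\theta})|_{\delta} \leq \delta^{-\underline{s}}$. A standard pigeonhole on the fibres of $\pi_{\theta}$ isolates, for each $\theta$, a family $\calT_{\theta}$ of at most $\delta^{-\underline{s}}$ $\delta$-tubes orthogonal to $\theta$ whose union $T_{\theta}$ satisfies $\mu(T_{\theta}) \gtrsim \delta^{\kappa}$. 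Ahlfors regularity of $\mu$ converts this into $\gtrsim \delta^{\kappa - s}$ distinct $\delta$-balls of $K := \spt \mu$ contained in $T_{\theta}$, and a second pigeonhole produces a sub-family of tubes each hitting $\gtrsim \delta^{-s + \underline{s} + \kappa}$ such balls.

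The next step is to trade the single-scale hypothesis for a multi-scale one. I would partition $\log(1/\delta)$ into $N$ equal blocks and run a dyadic pigeonhole to extract a "uniform" subset $E \subset K$ whose branching numbers at the scales $\delta^{j/N}$ are essentially constant and whose weighted average equals $s$. The tube-covering hypothesis on $F_{\theta}$ then becomes a Furstenberg-type configuration on $E$: at a positive fraction of scales $r = \delta^{j/N}$, the bad directions equip $E$ with a covering by at most $r^{-\underline{s}}$ tubes of width $r$ carrying a definite proportion of the mass of $E$.

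At this point I would invoke a $\delta$-discretised projection/Furstenberg theorem of the Orponen--Shmerkin type (an ABC-sum-product style bound): for every $(\delta,s)$-set $X \subset B(1)$ and every $(\delta,\tau)$-set $\Theta \subset S^{1}$, there exists $\theta \in \Theta$ with
\begin{displaymath} |\pi_{\theta}(X)|_{\delta} \geq \delta^{-s + \eta}, \end{displaymath}
for some $\eta = \eta(s,\underline{s},\tau) > 0$. Applied to the uniform set $E$ at each scale of the multi-scale decomposition, this rules out the conjectured bad-tube picture on a $\nu$-positive family of directions, contradicting the initial assumption once $\kappa$ is chosen sufficiently small in terms of $\eta$ and $N$.

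The main obstacle is the passage from this one-scale statement to the multi-scale conclusion. The exceptional set of bad directions produced by the one-scale theorem depends on the scale, and one must show that the $\nu$-mass of directions that are bad at \emph{some} scale $\delta^{j/N}$ remains strictly below one. Here the use of Ahlfors regularity, rather than a mere global Frostman condition on $\mu$, is essential: it provides a uniformly positive $s$-dimensional branching at every intermediate scale, guaranteeing that the one-scale gain $\eta$ is uniform in $j$ and that the exceptional $\nu$-measures sum to something $\ll 1$. Securing this uniformity — in particular, showing that $\eta$ depends only on $s - \underline{s}$, $\tau$, and $C$, and not on the fine structure of $\mu$ — is the subtlest ingredient and the real quantitative heart of the argument.
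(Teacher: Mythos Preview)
Your outline has the right shape but leaves open precisely the step you flag as ``the subtlest ingredient,'' and the mechanism you sketch for it does not work as stated.

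First, the black box you invoke --- a single-scale bound $|\pi_{\theta}(X)|_{\delta} \geq \delta^{-s+\eta}$ for $(\delta,s)$-sets $X$ against $(\delta,\tau)$-sets of directions --- is false for general $(\delta,s)$-sets (see e.g.\ \cite[Appendix A]{MR4745881}). The correct analogue for Ahlfors regular sets is Theorem~\ref{mainThm} of this paper, phrased in terms of high-multiplicity sets: for any $\epsilon,\sigma>0$ there is $\delta_{0}$ such that $\int \mu(B(1)\cap H_{\theta}(K,\delta^{-\sigma},[\delta,1]))\,d\nu \leq \epsilon$ for all $\delta\leq\delta_{0}$ and all $(s,C)$-regular $\mu$. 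This is legitimate to cite here, but note that it only bounds the bad set by a \emph{constant} $\epsilon$, with $\delta_{0}$ depending on $\epsilon$.

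Second, and this is the real gap: your plan to sum exceptional $\nu$-measures over $N$ scales and conclude the sum is $\ll 1$ does not close. To make the sum $<1$ you need each exceptional measure $< 1/N$; but the single-scale theorem's threshold $\delta_{0}$ depends on that $\epsilon = 1/N$, and there is no reason the resulting $\delta_{0}(N)$ is compatible with your multi-scale grid. More fundamentally, even if you fix $N$ absolute, this argument only handles sets $F$ of measure $\geq c(N)$ for some constant, not $\geq \delta^{\kappa}$: the pigeonholed uniform subset $E$ you extract from $K$ has nothing to do with the $\theta$-dependent sets $F_{\theta}$, and a $\delta^{\kappa}$-fraction of $E$ is invisible to a single-scale estimate whose conclusion is only up to constants.

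The paper's route (Section~\ref{s:selfImprovement}) is genuinely different. It exploits that Ahlfors $(s,C)$-regularity is \emph{exactly} preserved under renormalisation $\mu \mapsto \mu^{B}$. The self-improvement Lemma~\ref{lemma9} argues as follows: if $\mu(H_{\theta}(K,\delta^{-\Sigma},[\delta,1])) \geq \eta = \delta^{\kappa}$ for too many $\theta$, then a Kullback--Leibler divergence calculation over David cubes locates an intermediate scale $\Delta^{j}$ and a single cube $Q$ such that the renormalised measure $\mu^{Q}$ (still $(s,C)$-regular) has $\mu^{Q}$-high-multiplicity set of measure $\geq c(\mathbf{C},\mathfrak{d})$ --- a \emph{constant} --- for a $\nu$-positive set of $\theta$. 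This directly contradicts Theorem~\ref{mainThm} applied at scale $\Delta$. The point is not to sum over scales but to find one scale where the bad mass concentrates, and then zoom in; the KL-divergence bound (comparing $\mu_{\theta} := \mu|_{H_{\theta}}/\mu(H_{\theta})$ to $\mu$) is what guarantees such a scale exists.
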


The following weak answer to \cite[Question 1]{MR4388762} follows easily:
\begin{cor}\label{mainCor} Let $K \subset \R^{2}$ be Ahlfors $s$-regular for some $s \in [0,2]$. Then $\Hd E_{\bar{B}} = 0$, where
\begin{displaymath} E_{\bar{B}} := \{\theta \in S^{1} : \Bd \pi_{\theta}(K) < \min\{s,1\}\}, \end{displaymath}
and $\Bd$ is the \emph{upper box dimension}. \end{cor}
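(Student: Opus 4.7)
The plan is to argue by contradiction using Theorem \ref{mainThm2}, combined with a Frostman-type uniformization of $E_{\bar{B}}$ into Borel pieces on which a quantitative covering-number bound holds at all small scales.

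First I reduce to the applicable range $s \in (0,1]$ of Theorem \ref{mainThm2}. The case $s = 0$ is vacuous since $\Bd \geq 0$ and $\min\{s,1\} = 0$. For $s > 1$, a standard construction produces a closed Ahlfors $1$-regular subset $K' \subset K$; then $\pi_{\theta}(K) \supset \pi_{\theta}(K')$, so the $s = 1$ case applied to $K'$ implies the conclusion for $K$. Henceforth $s \in (0,1]$, so $\min\{s,1\} = s$. By a translation I may further assume $0 \in K$; set $F := K \cap B(1/2)$, a Borel subset of $B(1)$ with $\mu(F) \geq c_{0} = c_{0}(s,C) > 0$ by Ahlfors regularity.

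Now assume for contradiction that $\Hd E_{\bar{B}} > 0$. Decompose $E_{\bar{B}} = \bigcup_{j,k \in \N} E^{(j,k)}$, where
\begin{displaymath} E^{(j,k)} := \left\{ \theta \in S^{1} \, : \, |\pi_{\theta}(K \cap B(1))|_{r} \leq r^{-(s - 1/k)} \text{ for every dyadic } r \in (0, 1/j] \right\}. \end{displaymath}
Using lower semicontinuity of $\theta \mapsto |\pi_{\theta}(X)|_{r}$ for compact $X$ at each fixed $r$, each $E^{(j,k)}$ is a countable intersection of closed sets, hence Borel. Countable stability of Hausdorff dimension furnishes $(j,k)$ (with $k$ large enough that $s - 1/k > 0$) such that $\Hd E^{(j,k)} > 0$. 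Pick $\tau \in (0, \Hd E^{(j,k)})$ and invoke Frostman's lemma to obtain a Borel probability measure $\nu$ with $\spt \nu \subset E^{(j,k)}$ and $\nu(B(x,r)) \leq C' r^{\tau}$ for all $x \in S^{1}$ and $r > 0$.

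Finally, apply Theorem \ref{mainThm2} with constants $\max\{C, C'\}$, exponent $s$, exponent $\underline{s} \in (s - 1/k, s)$, and Frostman parameter $\tau$, obtaining $\kappa > 0$ and $\delta_{0} > 0$. For every $\delta$ small enough that $\delta \leq \min\{\delta_{0}, 1/j\}$ and $\delta^{\kappa} \leq c_{0}$, the theorem produces $\theta_{\delta} \in \spt \nu \subset E^{(j,k)}$ with
\begin{displaymath} \delta^{-\underline{s}} \leq |\pi_{\theta_{\delta}}(F)|_{\delta} \leq |\pi_{\theta_{\delta}}(K \cap B(1))|_{\delta} \leq \delta^{-(s - 1/k)}, \end{displaymath}
contradicting $\underline{s} > s - 1/k$. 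The main obstacle is the uniformization: one must verify that the sub-level sets $E^{(j,k)}$ are Borel in a form where Frostman's lemma applies, and establish the reduction to $s \leq 1$. Once these are in hand, Theorem \ref{mainThm2} closes the argument in a single line.
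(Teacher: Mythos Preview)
Your proof is correct and follows essentially the same route as the paper: reduce to $s \in (0,1]$ via the existence of Ahlfors $1$-regular subsets, assume $\Hd E_{\bar{B}} > 0$, extract a uniformly bad Borel piece of directions carrying a $\tau$-Frostman measure $\nu$, and feed this into Theorem \ref{mainThm2} with $F = K \cap B(1/2)$ to obtain a contradiction. The paper orders the steps slightly differently --- it places the Frostman measure on $E_{\bar{B}}(\underline{s}) = \{\theta : \Bd \pi_{\theta}(K) < \underline{s}\}$ first and then passes to the nested exhaustion $E_{\bar{B}}(\underline{s},\delta_{0})$ by continuity of measure --- but the substance is identical.

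One small correction: the map $\theta \mapsto |\pi_{\theta}(X)|_{r}$ is \emph{not} lower semicontinuous for compact $X$; at dyadic endpoints the count can jump in either direction (e.g.\ take $X = \{(0,0),(0,1)\}$, $r = 1$, and perturb $\theta$ around $(1,0)$). So your sets $E^{(j,k)}$ need not be closed. This does not damage the argument: for each fixed dyadic $r$ the function is Borel (it is a finite sum of indicators of sets of the form $\{\theta : \pi_{\theta}(X) \cap I \neq \emptyset\}$, each of which is Borel for compact $X$ and a half-open interval $I$), hence $E^{(j,k)}$ is Borel as a countable intersection of Borel sets, and Frostman's lemma still applies.
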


\begin{proof} One may assume $s \leq 1$, since Ahlfors $t$-regular sets always contain Ahlfors $s$-regular subsets for $0 < s < t$ by a result of Mattila and Saaranen \cite{MS08}. Now, assume to the contrary that $\Hd E_{\bar{B}} > \tau > 0$. Then also $E_{\bar{B}}(\underline{s}) := \{\theta : \Bd \pi_{\theta}(K) < \underline{s}\}$ has $\Hd E_{\bar{B}}(\underline{s}) > \tau$ for some $\underline{s} < s \leq 1$. Using Frostman's lemma, find a Borel probability measure $\nu$ on $E_{\bar{B}}(\underline{s})$ satisfying $\nu(B(x,r)) \lesssim r^{\tau}$ for all $x \in S^{1}$ and $r > 0$. By the definition of upper box dimension, the union of the nested sets
\begin{displaymath} E_{\bar{B}}(\underline{s},\delta_{0}) := \{\theta \in S^{1} : |\pi_{\theta}(K)|_{\delta} \leq \delta^{\underline{s}} \text{ for all } \delta \in (0,\delta_{0}]\}, \qquad \delta_{0} \in 2^{-\N}, \end{displaymath}
contains $E_{\bar{B}}(\underline{s})$. In particular $\nu(E_{\bar{B}}(\underline{s},\delta_{0})) \geq \tfrac{1}{2}$ for $\delta_{0} > 0$ sufficiently small. This contradicts Theorem \ref{mainThm2} for $\delta_{0} > 0$ sufficiently small.
\end{proof} 

\begin{remark}\label{rem4} Theorem \ref{mainThm2} looks similar to other $\delta$-discretised projection theorems in the literature, e.g. \cite[Theorem 3]{Bo2} or \cite[Theorem 1]{He20} or \cite[Corollary 4.9]{2023arXiv230110199O}. Why does it not imply that $\Hd E_{H} = 0$? One would need a slightly stronger version, where the hypothesis $\nu(B(x,r)) \leq Cr^{\tau}$ is relaxed to $\nu(B(x,r)) \leq C(\delta)r^{\tau}$, where $n \mapsto 1/C(2^{-n})$ is summable (e.g. $\nu(B(x,r)) \leq (\log \tfrac{1}{\delta})^{2}r^{\tau}$).

On the other hand, Theorem \ref{mainThm2} is stronger than Corollary \ref{mainCor} in the sense that strictly weaker versions of Theorem \ref{mainThm2} would already imply Corollary \ref{mainCor}. To name one example, in place of \eqref{form78a}, it would suffice to know that $|\pi_{\theta}(F)|_{\Delta} \geq \Delta^{-\underline{s}}$ for some $\Delta \in [\delta,\delta^{\kappa}]$.
 \end{remark} 

\begin{remark}\label{rem5} This remark discusses what is known, or plausible, regarding variants of \cite[Question 1]{MR4388762} for other notions of dimension. First of all, M. Wu has announced \cite{Wu24} a result much stronger than Corollary \ref{mainCor}: $E_{\bar{B}}$ is countable whenever $K$ is Ahlfors regular! Wu's method is different than the one used in this paper, and it does not yield Theorem \ref{mainThm2}. It does not seem plausible that one could deduce Theorem \ref{mainThm2} from information on $\Bd \pi_{\theta}(K)$ alone, for the reason mentioned at the end of Remark \ref{rem4}.

The set $E_{H}$ (as in \eqref{form73a}) may not be countable, even if $K$ is Ahlfors regular: \cite[Theorem 1.5]{Or1} contains the construction of an Ahlfors $1$-regular set $K \subset \R^{2}$ such that $E_{H}(0) = \{\theta \in S^{1} : \Hd \pi_{\theta}(K) = 0\}$ is a dense $G_{\delta}$-set, in particular $E_{H}$ has full packing dimension. In fact, the short proof of \cite[Theorem 1.5]{Or1} shows \emph{a fortiori} that $E_{\underline{B}}(0) = \{\theta \in S^{1} : \underline{\dim}_{B} \pi_{\theta}(K) = 0\}$ is a dense $G_{\delta}$-set, where $\underline{\dim}_{B}$ is the lower box dimension. So, even when $K$ is Ahlfors regular, neither $E_{H}$ nor $E_{\underline{B}}$ need be countable. 

If $K$ is a general (non-Ahlfors regular) compact set, $E_{\bar{B}}$ may have positive packing dimension, see \cite[Theorem 1.17]{Or1} (so $E_{\bar{B}}$ need not be countable for general compact sets). On the other hand, the weaker conclusion $\Hd E_{\bar{B}} = 0$ of Corollary \ref{mainCor} seems plausible for all compact sets. However, Ahlfors regularity (or at least some extra hypothesis of this nature) is necessary for the the $\delta$-discretised Theorem \ref{mainThm2}, see e.g. the construction in \cite[Appendix A]{MR4745881}. The $\delta$-discretised counterpart of the question $\Hd E_{\bar{B}} = 0$, for general compact sets, would rather have to be formulated in the way suggested in Remark \ref{rem4}.

Finally, for general (even non-compact) sets, \cite{MR4218963} shows that $\Hd E_{A} = 0$, where $E_{A} = \{\theta \in S^{1} : \dim_{\mathrm{A}} \pi_{\theta}(K) < \min\{\dim_{\mathrm{A}} K,1\}\}$, and $\dim_{\mathrm{A}}$ is the Assouad dimension. Wu has announced that $E_{A}$ is always countable.  \end{remark}

\begin{remark} In addition to the sizes of $E_{H},E_{\bar{B}},E_{\underline{B}},E_{A}$, one may ask for the sharp upper bounds on the sizes of $E_{H}(s),E_{\bar{B}}(s),E_{\underline{B}}(s),E_{A}(s)$, where for example
\begin{displaymath} E_{H}(s) := \{\theta \in S^{1} : \Hd \pi_{\theta}(K) \leq s\}, \qquad s \in [0,\min\{\Hd K,1\}). \end{displaymath}
The sharp upper bounds on $\Hd E_{H}(s)$, for all $s \in [0,\min\{\Hd K,1\})$, were established in 2023 by Ren and Wang \cite{2023arXiv230808819R}. For recent progress and more references regarding this problem in higher dimensions, see \cite{He20,2023arXiv230904097R}. \end{remark} 

I now proceed to define a slightly weaker variant of Ahlfors regularity.

\begin{definition}[Upper $(s,C)$-regularity]\label{def:upperRegularity} Let $C,s >0$. A set $K \subset \R^{d}$ is called \emph{upper $(s,C)$-regular} if 
\begin{displaymath} N_{r}(K \cap B(x,R)) \leq C\left(\tfrac{R}{r} \right)^{s}, \qquad 0 < r \leq R < \infty, \, x \in \R^{d}. \end{displaymath}
Here $N_{r}(A)$ is the least number of balls of radius $r$ required to cover $A \subset \R^{d}$. \end{definition}

\begin{definition}[$(s,C)$-Frostman measure] Let $C,s > 0$. A Borel measure $\mu$ on $\R^{d}$ is called \emph{$(s,C)$-Frostman} if $\mu(B(x,r)) \leq Cr^{s}$ for all $x \in \R^{d}$ and $r > 0$. \end{definition} 

\begin{definition}[$(s,C)$-regularity]\label{def:regularity} Let $C,s > 0$. A Borel measure $\mu$ on $\R^{d}$ is called \emph{$(s,C)$-regular} if $\mu$ is $(s,C)$-Frostman, and $K = \spt \mu$ is upper $(s,C)$-regular.  A closed set $K \subset \R^{d}$ is called $(s,C)$-regular if $K$ coincides with the support of a non-trivial $(s,C)$-regular measure. Notably, all (Ahlfors) $(s,C)$-regular sets are assumed to be closed and non-empty without further remark. \end{definition}

\begin{remark} Theorem \ref{mainThm2} concerns the (stronger) notion of Ahlfors $s$-regularity, but the weaker $s$-regularity is sufficient for a major part of the paper, namely Theorem \ref{mainThm} below. The weaker notion might well be sufficient for all the results, but the stronger Ahlfors $s$-regularity is technically convenient: it guarantees the existence of a system of \emph{David cubes}, see Definition \ref{def:DavidCubes}. If desired, generalising all the results to $s$-regular sets only hinges on generalising the "stand-alone" Lemma \ref{lemma9}.

Note that if $\mu$ is an Ahlfors $(s,C)$-regular measure, then $\mu$ is $(s,2^{s}C)$-Frostman, and $K = \spt \mu$ is upper $(s,\mathbf{C})$-regular with $\mathbf{C} \lesssim_{d,s} C^{2}$. Therefore, Ahlfors $(s,C)$-regularity is stronger than $(s,C)$-regularity.
\end{remark} 

\subsection{Proof sketch and paper structure}\label{s:outline}

Theorem \ref{mainThm2} will be deduced from the following variant concerning \emph{high-multiplicity sets} $H_{\theta}(K,\delta^{-\sigma},[\delta,1])$. For the precise definition, see Notation \ref{def:highMultiplicity}. Informally, $H_{\theta}(K,M,[\delta,1])$ contains all the points of $K$ contained on lines of the form $\pi_{\theta}^{-1}\{t\}$, which hit the $\delta$-neighbourhood of $K$ at least $M$ times.

\begin{thm}\label{mainThm} For every $\mathbf{C},\epsilon,\sigma > 0$, and $s \in [0,1]$, there exists $\delta_{0} = \delta_{0}(\mathbf{C},\epsilon,\sigma) > 0$ such that the following holds for all $\delta \in (0,\delta_{0}]$. 

Let $\mu$ be an $(s,\mathbf{C})$-regular measure on $\R^{2}$, and let $\nu$ be a Borel measure on $S^{1}$ satisfying $\nu(B(x,r)) \leq \mathbf{C}r^{\epsilon}$ for all $x \in S^{1}$ and $r > 0$. Then,
\begin{displaymath} \int_{S^{1}} \mu(B(1) \cap H_{\theta}(\spt \mu,\delta^{-\sigma},[\delta,1])) \, d\nu(\theta) \leq \epsilon. \end{displaymath}  \end{thm}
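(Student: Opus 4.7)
My plan is to argue by contradiction and reduce a multi-scale statement to a single-scale projection theorem via pigeonholing over the dyadic scales in $[\delta,1]$. Suppose that $\int_{S^{1}} \mu(B(1) \cap H_{\theta}(\spt \mu,\delta^{-\sigma},[\delta,1])) \, d\nu(\theta) > \epsilon$ for arbitrarily small $\delta > 0$. Since $\nu(S^{1}) \leq \mathbf{C}$ (from the Frostman bound), a Markov-type application to the integrand produces a set $G \subset S^{1}$ with $\nu(G) \gtrsim \epsilon/\mathbf{C}$ such that for every $\theta \in G$ the high-multiplicity set has $\mu$-mass at least $\epsilon/(2\mathbf{C})$. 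For each such $\theta$, every point of $H_{\theta}(\spt\mu,\delta^{-\sigma},[\delta,1]) \cap B(1)$ lies on a line $\pi_{\theta}^{-1}\{t\}$ that meets the $\delta$-neighbourhood of $\spt \mu$ at multiplicity at least $\delta^{-\sigma}$ at \emph{some} intermediate scale $\Delta \in [\delta,1]$.

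The next step is to localise the multiplicity at a single scale. There are only $O(\log(1/\delta))$ dyadic scales in $[\delta,1]$, so by pigeonhole (first in $\theta$, then in $x \in H_{\theta}$) I can refine $G$ to a subset $G' \subset G$ with $\nu(G') \gtrsim \epsilon / (\mathbf{C}\log(1/\delta))$ and a single dyadic scale $\Delta = \Delta(\delta) \in [\delta,1]$ at which, for every $\theta \in G'$, a subset $F_{\theta} \subset B(1) \cap \spt \mu$ with $\mu(F_{\theta}) \gtrsim \epsilon/\log(1/\delta)$ lies on $\Delta$-tubes in direction $\theta$ of $\Delta$-multiplicity at least $\delta^{-\sigma}$ with respect to $\delta$-balls of $\spt \mu$. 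By $(s,\mathbf{C})$-regularity, each such $\Delta$-tube is efficiently filled by $\delta$-balls of $\spt \mu$; translating the multiplicity lower bound into $\pi_{\theta}$-language, the family $\{\pi_{\theta}(F_{\theta})\}$ admits covers by $\Delta$-intervals whose count is suppressed by a factor of roughly $\delta^{\sigma}$ below the "expected" count coming from $\mu$-regularity. In short, for every $\theta \in G'$ the projection $\pi_{\theta}\mu|_{F_{\theta}}$ is strongly concentrated at scale $\Delta$.

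I now plan to invoke a single-scale, Ahlfors-regular discretised projection theorem (in the spirit of \cite[Corollary 4.9]{2023arXiv230110199O} or Shmerkin's inverse theorem) at the scale $\Delta$: such a theorem asserts that for an $(s,\mathbf{C})$-regular measure, the set of directions along which a fixed $\mu$-mass projection is compressible at scale $\Delta$ by a factor $\Delta^{\sigma'}$ is contained in an exceptional set whose $\tau$-Frostman measure is bounded by $\Delta^{c(\mathbf{C},s,\sigma',\tau)}$. Applied with the present $\sigma$, this bounds $\nu(G') \leq \delta^{c \cdot \sigma}$ for some $c > 0$ depending only on $\mathbf{C}, s, \epsilon, \sigma$; for $\delta$ small enough the exponent beats the logarithmic factor and contradicts the lower bound $\nu(G') \gtrsim \epsilon/\log(1/\delta)$ obtained above.

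The main obstacle is step two: the scale-localisation must preserve not just the total $\mu$-mass of $H_{\theta}$ but also a uniform lower bound on the per-tube multiplicity, because the inverse-type projection theorem needs a genuine concentration statement, not merely a tube-count statement. Handling this cleanly seems to require $(s,\mathbf{C})$-regularity so that the mass on each $\Delta$-tube is essentially determined by its intersection with $\spt \mu$ (this is precisely where David cubes come in, allowing a well-behaved stopping-time decomposition across $[\delta,1]$). A secondary technical issue is that the exceptional-set bound from \cite[Corollary 4.9]{2023arXiv230110199O} is stated for fixed $\tau$-Frostman measures on $S^{1}$ with modulus independent of $\delta$; here $\nu$ is exactly of this form with $\tau = \epsilon$, which is why the Frostman hypothesis on $\nu$ with exponent $\epsilon$ appears on both sides of the conclusion.
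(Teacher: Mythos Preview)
Your proposal has a fatal gap in step 4. The ``single-scale, Ahlfors-regular discretised projection theorem'' you invoke --- asserting that the set of directions with compressible projection has $\tau$-Frostman measure at most $\Delta^{c(\mathbf{C},s,\sigma',\tau)}$ for \emph{arbitrary} $\tau > 0$ --- does not exist in the literature prior to this paper; it is essentially equivalent to Theorem~\ref{mainThm2}, which the paper derives \emph{from} Theorem~\ref{mainThm}. So the argument is circular. The existing discretised projection theorems (Bourgain, He, and the result you cite from \cite{2023arXiv230110199O}) only bound the exceptional set for Frostman exponents $\tau$ bounded away from zero in terms of $s$ and $\sigma'$; they cannot be applied with $\tau = \epsilon$ arbitrarily small, and that is exactly the new content here (it is false for general sets and genuinely requires Ahlfors regularity). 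Your scale-localisation in step 3 is also problematic: the set $H_\theta(K,\delta^{-\sigma},[\delta,1])$ counts $\delta$-squares along the fibre inside $B(x,1)$, so it involves only the two scales $\delta$ and $1$, with no intermediate $\Delta$ to pigeonhole on. One can relate global to local multiplicities via a product-type formula (the paper's Lemma~\ref{lemma5}), but pigeonholing then yields only local multiplicity $\geq \Delta^{-\sigma}$ at a single scale $\Delta$, which is far too weak to recover any $\delta^{c\sigma}$ gain.

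The paper's actual route is entirely different: a bootstrap on the multiplicity exponent $\sigma$ (Proposition~\ref{mainProp}). If the statement holds at some exponent $\sigma_0$, it also holds at any $\sigma < \sigma_0$ close enough that $\sigma_0(1-\tfrac{\sigma}{4}) < \sigma$; starting from the trivial case $\sigma_0 \approx s$ and iterating reaches every $\sigma > 0$. The induction step is the substance of the paper: it combines Shmerkin's inverse theorem for convolutions with a renormalisation argument exploiting that every blow-up $\mu^{B}$ is again $(s,\mathbf{C})$-regular, so the inductive hypothesis at exponent $\sigma_0$ applies to all such blow-ups at a coarser scale $\Delta$ and feeds back into the analysis at scale $\delta$. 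David cubes play no role in this part of the argument; they appear only later in Section~\ref{s:selfImprovement}, in the separate deduction of Theorem~\ref{mainThm2} from Theorem~\ref{mainThm}.
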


The proof of Theorem \ref{mainThm} will be completed in Section \ref{s:statements}.

\begin{remark} Theorem \ref{mainThm2} is a formal corollary of Theorem \ref{mainThm}, but this is not straightforward: the biggest difference between the statements is that Theorem \ref{mainThm2}  yields information about subsets $F \subset B(1)$ of $\mu$ measure $\delta^{\kappa}$. \emph{A priori}, Theorem \ref{mainThm} only seems powerful enough to deal with subsets of $\mu$ measure $\epsilon$. The main point of the argument, contained in Section \ref{s:selfImprovement}, is that Theorem \ref{mainThm} may be applied to suitable rescalings of the "given" measure $\mu$, since such renormalisations remain (Ahlfors) $(s,C)$-regular.  
\end{remark}

Sections \ref{s:technicalLemmas}-\ref{s:branching} contain the proof of Theorem \ref{mainThm}. The argument has significant similarities to the proof of \cite[Theorem 3.6]{MR4388762}. In particular, it is also based on an iterative scheme, where the "multiplicity parameter" $\sigma$ is gradually reduced towards $0$, see Proposition \ref{mainProp} for the formal statement, and "Proposition" \ref{mainPropToy} for an informal version.

For the remainder of Section \ref{s:outline}, I will outline the proof of Theorem \ref{mainThm}, and then explain the differences to \cite[Theorem 3.6]{MR4388762} in Remark \ref{rem3}. The discussion will be highly informal and heuristic: the reader should assume that none of the statements are literally true, and check (if desired) the accurate counterparts from the body of the paper.

\begin{notation}\label{not1} Below, for $A \subset \R^{d}$, the notation $\mathcal{D}_{r}(A)$ refers to the standard dyadic cubes of side-length $r \in 2^{\Z}$ intersecting $A$, and $|A|_{r} := |\mathcal{D}_{r}(A)|$. This \emph{dyadic $r$-covering number} is comparable to the $r$-covering number $N_{r}(A)$ already used above. \end{notation}

Here is an informal version of Proposition \ref{mainProp}:

\begin{"proposition"}\label{mainPropToy} Let $s \in (0,1]$ and $\sigma_{0},\tau > 0$. Assume that for \underline{all} $\epsilon > 0$ there exists $\Delta_{\epsilon} > 0$ such that for \underline{all} $s$-regular sets $K \subset B(1)$:
\begin{equation}\label{form9t} \mathcal{H}_{\infty}^{\tau}\{\theta : |\pi_{\theta}(K)|_{\Delta} \leq \Delta^{\sigma_{0} - s}\} \leq \epsilon, \qquad \Delta \in (0,\Delta_{\epsilon}]. \end{equation}
Let $\sigma \in (0,\sigma_{0})$ be so close to $\sigma_{0}$ that $\sigma_{0}(1 - \sigma) < \sigma$. Then, for \underline{all} $\epsilon > 0$ there exists $\delta_{\epsilon} > 0$ such that for \underline{all} $s$-regular sets $K \subset B(1)$:
\begin{equation}\label{form8t} \mathcal{H}_{\infty}^{\tau}\{\theta : |\pi_{\theta}(K)|_{\delta} \leq \delta^{\sigma - s}\} \leq \epsilon. \end{equation} 
\end{"proposition"} 

It is easy to see that the hypothesis \eqref{form9t} is valid, at least, with $\sigma_{0} = 1$.

A key ingredient in the proof of "Proposition" \ref{mainPropToy} is Shmerkin's inverse theorem \cite[Theorem 2.1]{Sh}, stated rigorously as Theorem \ref{shmerkin}. An informal version is "Theorem" \ref{t:inverse} below, but even for that we need some notation.

\begin{definition}[Special case of uniform sets] Let $\Delta \in 2^{-\N}$ and $n \in \N$. A set $P \subset [0,1)$ is called \emph{$\{\Delta^{j}\}_{j = 0}^{n - 1}$-uniform} if there exists a sequence $\{N_{j}\}_{j = 0}^{n - 1} \subset \N^{n}$ such that 
\begin{displaymath} |P \cap Q|_{\Delta^{j + 1}} = N_{j}, \qquad Q \in \mathcal{D}_{\Delta^{j}}(P). \end{displaymath}
 \end{definition} 
 
\begin{"thm"}[Inverse theorem]\label{t:inverse} For every $\Delta \in 2^{-\N}$ there exists $\zeta > 0$ and $n_{0} \in \N$ such that the following holds for $n \geq n_{0}$. Write $\delta := \Delta^{n}$. Let $A,B \subset [0,1]$ be $\{\Delta^{j}\}_{j = 0}^{n - 1}$-uniform sets with branching numbers $\{N_{j}^{A}\}$ and $\{N_{j}^{B}\}$. Assume that
\begin{equation}\label{form11t} |A + B|_{\delta} \leq \delta^{-\zeta}|A|_{\delta}. \end{equation}
Then, $\{0 \leq j \leq n - 1 : N_{j}^{B} > 1\} \subset \{0 \leq j \leq n - 1 : N_{j}^{A} = \Delta^{-1}\}$.  \end{"thm"}

We are then equipped to "sketch" the proof of "Proposition" \ref{mainPropToy}.

\begin{proof}["Sketch" of a proof for "Proposition" \ref{mainPropToy}] Let $K \subset B(1)$ be $s$-regular, fix $\epsilon > 0$. We want to prove that $\mathcal{H}_{\infty}^{\tau}(E) \leq \epsilon$, where 
\begin{displaymath} E = \{\theta \in [0,1] : |\pi_{\theta}(K)|_{\delta} \leq \delta^{\sigma - s}\}. \end{displaymath}
and $\delta > 0$ can be taken as small as we like. In this discussion it is useful to use the parametrisation $\pi_{\theta}(x,y) = x + \theta y$ for orthogonal projections, with $\theta \in [0,1]$. Abbreviate $\nu := \mathcal{H}^{\tau}_{\infty}$ (or let $\nu$ be a $\tau$-dimensional Frostman measure inside $E$). Start with a counter assumption: $\nu(E) > \epsilon$. Then, use the hypothesis \eqref{form8t} with parameter $\epsilon^{2}$ to deduce the existence of a scale $\Delta \in (\delta,1]$ such that
\begin{equation}\label{form13} \mathcal{H}^{\tau}\{\theta : |\pi_{\theta}(\bar{K})|_{\Delta} \leq \Delta^{\sigma_{0} - s}\} \leq \epsilon^{2} \leq \epsilon \cdot \nu(E), \end{equation}
whenever $\bar{K} \subset B(1)$ is $s$-regular. Thus, for every $s$-regular $\bar{K} \subset B(1)$, at most an $\epsilon$-fraction (w.r.t. $\nu$) of the points $\theta \in E$ are "bad" in the sense $|\pi_{\theta}(\bar{K})|_{\Delta} \geq \Delta^{\sigma_{0} - s}$. For this exposition, I will now assume (quite unrealistically) that this fraction is $0$:
\begin{assumption}\label{A1} If $\bar{K} \subset B(1)$ is $s$-regular, then $|\pi_{\theta}(\bar{K})|_{\Delta} \geq \Delta^{\sigma_{0} - s}$ for all $\theta \in E$. \end{assumption}

I proceed to draft a list of further suspicious extra assumptions. They are never exactly correct, but substitutes of them will be obtained in the body of the paper.

\begin{assumption} Assume that $\{0,1\} \subset E$ (so in particular Assumption \ref{A1} applied to $0$). This is quite reasonable, since $\mathcal{H}^{\tau}(E) \geq \epsilon$ by the counter assumption. Therefore $E$ certainly contains a pair of well-separated points, and why not $\{0,1\}$. \end{assumption}
\begin{assumption}\label{A2} Assume that $K = A \times B$. This is too technical to justify in this proof sketch. On the other hand, reducing to the case $K \approx A \times B$ is an old idea, first used in \cite{MR4055989}, and later \cite{MR4388762,OS23,2023arXiv230110199O}. Since $\{0,1\} \subset E$:
\begin{equation}\label{form10t} |A|_{\delta} = |\pi_{0}(A \times B)|_{\delta} \leq \delta^{\sigma - s} \quad \text{and} \quad |A + B|_{\delta} = |\pi_{1}(A \times B)|_{\delta} \leq \delta^{\sigma - s}. \end{equation}
Moreover, since $|K|_{\delta} \sim \delta^{-s}$ by $s$-regularity, the upper bound for $|A|_{\delta}$ implies $|B|_{\delta} \geq \delta^{-\sigma}$. \end{assumption}

\begin{assumption} Assume that $\delta = \Delta^{n}$ for some $n \in \N$, where $\Delta$ is the scale from \eqref{form13}. Assume that $A,B$ are $\{\Delta^{j}\}_{j = 0}^{n - 1}$-uniform. This assumption does not have a counterpart in the real proof: it is only needed here apply the sloppily stated "Theorem" \ref{t:inverse}. In the true version of the inverse theorem $A,B$ need not be uniform to begin with, but one locates uniform subsets of them as a consequence of the theorem. \end{assumption}

\begin{"lemma"}\label{lemma14} The branching numbers $\{N_{j}^{A}\}_{j = 0}^{N - 1}$ of $A$ satisfy 
\begin{equation}\label{form12t} N_{j}^{A} \geq \Delta^{\sigma_{0} - s}, \qquad 0 \leq j \leq n - 1. \end{equation} \end{"lemma"}
\begin{proof} This follows from Assumption \ref{A1} applied to all "renormalised" sets of the from $K^{Q} = T_{Q}(K)$, where $Q = I \times J \in \mathcal{D}_{\Delta^{j}}(K)$, and $T_{Q}$ is a rescaling map sending $Q$ to $[0,1)^{2}$. Note that
\begin{displaymath} K^{Q} = T_{I}(A) \times T_{J}(B) =: A^{I} \times B^{J}.\end{displaymath}
The sets $K^{Q}$ remain $s$-regular (with the original constant), so Assumption \ref{A1} applies to them: $N_{j} = |A \cap I|_{\Delta^{j + 1}} = |A^{I}|_{\Delta} = |\pi_{0}(K^{Q})|_{\Delta} \geq \Delta^{\sigma_{0} - s}$. \end{proof}

As a corollary of "Lemma" \ref{lemma14}, we could deduce that $|A|_{\delta} \geq \delta^{\sigma_{0} - s}$, which is an approximate converse to \eqref{form10}. However, this lower bound is not quite strong enough, as discussed further below, and we need the following stronger version:

\begin{assumption}\label{A3} Assume that $|A|_{\delta} \geq \delta^{\sigma - s + \zeta}$ for small number $\zeta = \zeta(\Delta) > 0$, determined by applying the inverse theorem at scale $\Delta$. Achieving this lower bound is technically tedious, but eventually a matter of pigeonholing: this is accomplished by Proposition \ref{mainTechnicalProp}. In order to achieve simultaneously the upper and lower bounds $\delta^{\sigma - s + \zeta} \leq |A|_{\delta} \leq \delta^{\sigma - s}$, the "given" parameter $\sigma$ may have to be replaced by $\bar{\sigma} > \sigma$. This is ultimately harmless, because the key relation $\sigma_{0}(1 - \bar{\sigma}) < \bar{\sigma}$ remains valid for $\bar{\sigma} \geq \sigma$.  \end{assumption}

After this list of assumptions, we have
\begin{displaymath} |A + B|_{\delta} \stackrel{\eqref{form10t}}{\leq} \delta^{\sigma - s} \leq \delta^{-\zeta}|A|_{\delta}. \end{displaymath}
Thus, the hypothesis \eqref{form11t} of "Theorem" \ref{t:inverse} is met for $A$ and $B$. (Note that the information $|A + B|_{\delta} \leq \delta^{\sigma - \sigma_{0}}|A|_{\delta}$, implied by "Lemma" \ref{lemma14}, would not suffice here, as the difference $\sigma_{0} - \sigma$ is not allowed to depend on $\Delta$.) We may conclude that $\{0 \leq j \leq n - 1 : N_{j}^{B} > 1\} \subset \{0 \leq j \leq n - 1 : N_{j}^{A} = \Delta^{-1}\}$. To apply this information, write 
\begin{displaymath} \lambda := \tfrac{1}{n}|\{0 \leq j \leq n - 1 : N_{j}^{B} > 1\}. \end{displaymath} 
Since $|B|_{\delta} \geq \delta^{-\sigma}$, as observed in Assumption \ref{A2}, necessarily $\lambda \geq \sigma$. We may now estimate $|A|_{\delta}$ from below as follows, using $\delta = \Delta^{n}$:
\begin{displaymath} |A|_{\delta} = \prod_{N_{j}^{B} = 1} N_{j}^{A} \times \prod_{N_{j}^{B} > 1} N_{j}^{A} \stackrel{\eqref{form12t}}{\geq} (\Delta^{-s + \sigma_{0}})^{(1 - \lambda)n} (\Delta^{-1})^{\lambda n} = \delta^{(1 - \lambda)(\sigma_{0} - s) - \lambda} \geq \delta^{(1 - \sigma)(\sigma_{0} - s) - \sigma}. \end{displaymath} 
On the other hand $|A|_{\delta} \leq \delta^{\sigma - s}$ by \eqref{form10t}. Since $s \in (0,1]$ and $\sigma_{0}(1 - \sigma) < \sigma$, these inequalities are incompatible for $\delta > 0$ small enough depending on $\sigma,\sigma_{0}$. This contradiction completes the "sketch" for the proof of "Proposition" \ref{mainPropToy}. \end{proof}

\begin{remark}\label{rem3} Compared to \cite[Theorem 3.6]{MR4388762}, the main differences in the proof sketch above are (i) the application of "Lemma" \ref{lemma14} and (ii) the addition of Assumption \ref{A3}. At the time of writing  \cite[Theorem 3.6]{MR4388762}, I only knew the bound $|A + B|_{\delta} \leq \delta^{\sigma - \sigma_{0}}|A|_{\delta}$, which was too weak to apply the inverse theorem with scale parameter $\Delta$. As a substitute, I applied the inverse theorem with some "absolute" $\Delta_{0} \in 2^{\N}$ (not stemming from the "inductive" hypothesis \eqref{form9t}). Since $\Delta_{0}$ had nothing to do with \eqref{form9t}, I resorted to a "trivial" substitute of "Lemma" \ref{lemma14}, valid only for the product measures considered in \cite[Theorem 3.6]{MR4388762}. Plugging this "trivial" lower bound into the final computation in the sketch above gave the partial results in \cite{MR4388762}.   \end{remark}

\subsection*{Notation} The meaning of $\mathcal{D}_{r}(A)$ and $|A|_{r}$ was explained in Notation \ref{not1}. Additionally,
\begin{displaymath} \mathcal{D}_{r} := \mathcal{D}_{r}(\R^{d}). \end{displaymath}
Open balls in either $\R^{d}$ will be denoted $B(x,r)$; in this paper always $d \in \{1,2\}$. If $f,g$ are real-valued non-negative functions of some parameter $x \in X$, the notation $f \lesssim g$ means that there exists an absolute constant $C \geq 1$ such that $f(x) \leq Cg(x)$ for all $x \in X$. If the constant $C$ is allowed to depend on a parameter "$p$", this is signified by writing $A \lesssim_{p} B$. If $A \subset \R^{d}$ is a bounded set, and $\delta > 0$, then $N_{\delta}(A)$ refers to the smallest number of balls $B(x,\delta) \subset \R^{d}$ required to cover $A$. If $A \subset \R^{d}$ is a finite set, its cardinality is denoted $|A|$. For $\delta > 0$, the open Euclidean $\delta$-neighbourhood of a set $A \subset \R^{d}$ is denoted $[A]_{\delta}$.  

\subsection*{Acknowledgements} I am most grateful to Pablo Shmerkin for discussions and ideas throughout the project. On my account he could be considered a co-author.

\section{Technical lemmas}\label{s:technicalLemmas}

\subsection{Lemmas on high multiplicity sets} I start by defining the notion of high multiplicity sets which already appeared in the statement of Theorem \ref{mainThm}.

\begin{notation}\label{def:highMultiplicity} For a set $K \subset \R^{2}$, a pair of dyadic scales $\delta \leq \Delta$, and a direction $\theta \in S^{1}$, we define the multiplicity function
\begin{displaymath} \mathfrak{m}_{K,\theta}(x \mid [\delta,\Delta]) := |K_{\delta} \cap B(x,\Delta) \cap \pi_{\theta}^{-1}\{\pi_{\theta}(x)\}|_{\delta}, \end{displaymath}
where $K_{\delta} := \cup \mathcal{D}_{\delta}(K)$, and $|\cdot|_{\delta}$ refers to the dyadic $\delta$-covering number. We also write
\begin{displaymath} H_{\theta}(K,N,[\delta,\Delta]) := \{x \in \R^{2} : \mathfrak{m}_{K,\theta}(x \mid [\delta,\Delta]) \geq N\}, \qquad N \geq 1. \end{displaymath}
\end{notation}

The next lemma on the effect of rescaling on high multiplicity sets is \cite[Lemma 2.11]{MR4388762}:
\begin{lemma}\label{lemma7} Let $K \subset \R^{2}$ be arbitrary, let $0 < r \leq R \leq \infty$, $M > 0$, and $\theta \in [0,1]$. Then, 
\begin{displaymath} T_{z_{0},r_{0}}(H_{\theta}(K,M,[r,R])) = H_{\theta}(T_{z_{0},r_{0}}(K),M,[\tfrac{r}{r_{0}},\tfrac{R}{r_{0}}]), \qquad z_{0} \in \R^{2}, \, r_{0} > 0, \end{displaymath}
where $T_{z_{0},r_{0}}(z) = (z - z_{0})/r_{0}$. \end{lemma}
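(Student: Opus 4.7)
The proof is a direct verification showing that every constituent of the definition of $\mathfrak{m}_{K,\theta}(x \mid [r,R])$ transforms equivariantly under the similarity $T = T_{z_{0},r_{0}}$, with the scale pair $(r,R)$ replaced by $(r/r_{0},R/r_{0})$.

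First I would record three key equivariances. (i) $T$ carries $\pi_{\theta}$-fibres to $\pi_{\theta}$-fibres, namely $T(\pi_{\theta}^{-1}\{\pi_{\theta}(x)\}) = \pi_{\theta}^{-1}\{\pi_{\theta}(T(x))\}$; this follows by combining the explicit inverse $T^{-1}(y) = r_{0}y + z_{0}$ with the identity $\pi_{\theta}(T^{-1}(y)) = r_{0}\pi_{\theta}(y) + \pi_{\theta}(z_{0})$ coming from the linearity of $\pi_{\theta}$. (ii) $T$ sends balls to balls with radius rescaled by $1/r_{0}$: $T(B(x,R)) = B(T(x),R/r_{0})$. (iii) The dyadic neighbourhood transforms compatibly with the rescaled covering number, in the sense that $T$ induces a natural bijection between $r$-covering families of $K$ and $(r/r_{0})$-covering families of $T(K)$, preserving the quantity $|\cdot|_{r}$.

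Combining (i)--(iii), applying $T$ to the intersection $K_{r} \cap B(x,R) \cap \pi_{\theta}^{-1}\{\pi_{\theta}(x)\}$ produces $T(K)_{r/r_{0}} \cap B(T(x),R/r_{0}) \cap \pi_{\theta}^{-1}\{\pi_{\theta}(T(x))\}$, whose dyadic $(r/r_{0})$-covering number equals the $r$-covering number of the original intersection. This yields
\begin{displaymath} \mathfrak{m}_{T(K),\theta}(T(x) \mid [r/r_{0},R/r_{0}]) = \mathfrak{m}_{K,\theta}(x \mid [r,R]) \end{displaymath}
for every $x \in \R^{2}$, and taking preimages of $[M,\infty)$ under the two multiplicity functions gives the claimed set-theoretic identity.

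The one mildly subtle point is the third equivariance: the standard dyadic grid $\mathcal{D}_{r}$ is not literally invariant under the translation part of $T$, so $T$ need not map $\mathcal{D}_{r}$ onto $\mathcal{D}_{r/r_{0}}$ in a strict sense. I would handle this by interpreting $|\cdot|_{r}$ via the comparable covering number $N_{r}$, after which (iii) holds exactly; this is the only place in the proof requiring any care. Since the statement already appears as \cite[Lemma 2.11]{MR4388762}, no substantive obstacle is expected.
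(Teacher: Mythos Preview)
The paper does not give its own proof of this lemma; it simply cites it as \cite[Lemma 2.11]{MR4388762}. Your direct-verification approach via the equivariance of $\pi_{\theta}$-fibres, balls, and covering numbers under the similarity $T_{z_{0},r_{0}}$ is the natural (and essentially the only) route, and your flagging of the dyadic-grid technicality---together with the remedy of passing to the genuinely translation- and scale-covariant covering number $N_{r}$---is exactly the right way to handle it.
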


The following observations from \cite[Lemma 2.6]{MR4388762} are frequently useful:
\begin{lemma}\label{lemma13} Let $K \subset \R^{2}$, $\theta \in S^{1}$, and $C \geq 1$ and $M \leq N$. Assume $C\delta \leq \Delta$. Then: 
\begin{itemize}
\item[(i)] $H_{\theta}(K,N,[\delta,\Delta]) \subset H_{\theta}(K,M,[\delta,\Delta])$.
\item[(ii)] $H_{\theta}(K,M,[\delta,\Delta]) \subset H_{\theta}(K,M,[\delta,C\Delta])$.
\item[(iii)] $H_{\theta}(K,M,[\delta,\Delta]) \subset H_{\theta}(K,\tfrac{M}{C},[C\delta,\Delta])$.
\end{itemize}
\end{lemma}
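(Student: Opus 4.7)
The plan is to verify the three inclusions (i)--(iii) one at a time, each being a direct unpacking of the definition
\[
\mathfrak{m}_{K,\theta}(x \mid [\delta,\Delta]) = |K_{\delta} \cap B(x,\Delta) \cap \pi_{\theta}^{-1}\{\pi_{\theta}(x)\}|_{\delta};
\]
they are monotonicity statements in, respectively, the threshold, the outer radius, and the inner scale.

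Parts (i) and (ii) require essentially no work beyond this unpacking. For (i), if $\mathfrak{m}_{K,\theta}(x \mid [\delta,\Delta]) \geq N$ and $M \leq N$, then $\mathfrak{m}_{K,\theta}(x \mid [\delta,\Delta]) \geq M$. For (ii), the inclusion $B(x,\Delta) \subset B(x,C\Delta)$ (which holds because $C \geq 1$) yields
\[
K_{\delta} \cap B(x,\Delta) \cap \pi_{\theta}^{-1}\{\pi_{\theta}(x)\} \subset K_{\delta} \cap B(x,C\Delta) \cap \pi_{\theta}^{-1}\{\pi_{\theta}(x)\},
\]
so $\mathfrak{m}_{K,\theta}(x \mid [\delta,\Delta]) \leq \mathfrak{m}_{K,\theta}(x \mid [\delta,C\Delta])$, which is the desired inclusion.

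The substantive case is (iii). Writing $A := K_{\delta} \cap B(x,\Delta) \cap \pi_{\theta}^{-1}\{\pi_{\theta}(x)\}$ and $A' := K_{C\delta} \cap B(x,\Delta) \cap \pi_{\theta}^{-1}\{\pi_{\theta}(x)\}$, the inclusion $K_{\delta} \subset K_{C\delta}$ (each dyadic $\delta$-cube meeting $K$ lies in a dyadic $C\delta$-cube meeting $K$) gives $A \subset A'$, and in particular $|A'|_{C\delta} \geq |A|_{C\delta}$. The remaining geometric step is the bound $|A|_{C\delta} \geq |A|_{\delta}/C$: since $A$ lies on the single line $\pi_{\theta}^{-1}\{\pi_{\theta}(x)\}$, the trace of this line inside any dyadic $C\delta$-cube $Q$ has length $\lesssim C\delta$, and can therefore meet at most $\lesssim C$ of the dyadic $\delta$-cubes contained in $Q$ (consecutive such cubes are separated along the line by a distance $\geq \delta$). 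Combining these bounds yields $\mathfrak{m}_{K,\theta}(x \mid [C\delta,\Delta]) \geq M/C$.

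The only (mild) obstacle is the absolute constant in the last step: the counting argument naively produces $M/(\kappa C)$ for some harmless $\kappa > 0$, but this is absorbed into the already loose constant $C$ in the statement, as in \cite[Lemma 2.6]{MR4388762} from which the lemma is quoted.
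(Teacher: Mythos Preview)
Your proof is correct and is exactly the elementary verification one would expect; the paper itself does not prove the lemma but simply quotes it from \cite[Lemma 2.6]{MR4388762}. Your observation about the absolute constant $\kappa$ in part (iii) is accurate and honest: the line through a dyadic $C\delta$-square meets at most $\lesssim C$ (not exactly $C$) of its $\delta$-children, so the inclusion literally holds with $M/(\kappa C)$ for some absolute $\kappa$, which is harmless in every application in the paper.
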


The following lemma is essentially \cite[Proposition 5.1]{MR4388762} (and \cite[Proposition 5.1]{MR4388762} would also suffice in this paper). The proof in \cite{MR4388762} was so unnecessarily long that I decided to give here a self-contained streamlined proof.

\begin{lemma}\label{lemma15} Let $A,\mathbf{C} \geq 1$, $s \in [0,2]$. Let $\mu$ be an $(s,\mathbf{C})$-regular measure, and let $K \subset \R^{2}$ be an $(s,\mathbf{C})$-regular set (not necessarily $\spt \mu$). Let $\kappa \in (0,1]$, $1 \leq M \leq N$, and $\delta,\Delta \in 2^{-\N}$ with $\delta \leq \Delta$. Then, for every $\theta \in S^{1}$ fixed,
\begin{align*} \mu(B(1) \cap H_{\theta}(K,N,[\delta,A])) & \leq (1 + \kappa)  \mu(B(1) \cap H_{\theta}(K,M,[3\delta,3\Delta])) + \kappa,\\
& + \mu(B(1) \cap H_{\theta}(K,N,[\delta,A]) \cap H_{\theta}(K,\mathbf{c}(\kappa) \cdot \tfrac{N}{M},[\Delta,3])), \end{align*}
where $\mathbf{c}(\kappa) = c\kappa^{2}/(A^{2}\mathbf{C}^{3})$ for an absolute constant $c > 0$.
 \end{lemma}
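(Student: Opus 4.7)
Set $E_0 := B(1) \cap H_\theta(K, N, [\delta, A])$ and $E_1 := B(1) \cap H_\theta(K, M, [3\delta, 3\Delta])$. My plan is to split $E_0 = E_1 \cup (E_0 \setminus E_1)$ and to show that, up to an exceptional $\mu$-set of mass at most $\kappa \mu(E_1) + \kappa$, every $x \in E_0 \setminus E_1$ already lies in $E_2 := H_\theta(K, \mathbf{c}(\kappa) N/M, [\Delta, 3])$. Adding $\mu(E_1)$, the exceptional bound, and $\mu(E_0 \cap E_2)$ then gives the claimed inequality.

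The heart of the argument is a fibre analysis at a typical $x \in E_0 \setminus E_1$. Let $\ell_x := \pi_\theta^{-1}\{\pi_\theta(x)\}$; by hypothesis $\ell_x \cap B(x, A)$ meets at least $N$ distinct $\delta$-cubes of $K_\delta$. I would cover this segment by $J \lesssim A/\Delta$ balls $B(y_j, 3\Delta)$ with centres $y_j \in \ell_x$ and sort them into \emph{heavy} ones (those with $y_j \in H_\theta(K, M, [3\delta, 3\Delta])$) and \emph{light} ones. Lemma \ref{lemma13}(ii)--(iii) applied to a light $y_j$ gives $\mathfrak{m}_{K, \theta}(y_j \mid [\delta, 3\Delta]) \lesssim M$, so the light centres account for at most $\lesssim AM/\Delta$ of the $N$ cubes. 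The remaining cubes must be carried by heavy centres, and upper $(s, \mathbf{C})$-regularity of $K$ ceilings each heavy contribution by $\lesssim \mathbf{C}(\Delta/\delta)^s$. Pigeonholing in the regime where the light budget is smaller than, say, $N/2$ then forces at least $\gtrsim N/M$ many heavy $y_j$'s, up to a constant of order $1/(A^2 \mathbf{C}^3)$. Distinct heavy $y_j$'s correspond to distinct $\Delta$-cubes of $K$ meeting $\ell_x \cap B(x, 3)$ (the scale jump from $A$ to $3$ is absorbed into the $A^2$ factor in $\mathbf{c}(\kappa)$), which places $x$ in $E_2$.

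The residual set---points where the pigeonhole just barely fails, or where the fibre segment straddles a boundary of $B(1)$ at scale $\Delta$---is controlled by a Vitali-style covering argument that leverages the Frostman property $\mu(B(y, \Delta)) \leq \mathbf{C}\Delta^s$ of $\mu$: the overlap constant of the covering contributes the $(1 + \kappa)$ inflation of $\mu(E_1)$, and the additive $\kappa$ catches truly bad outliers whose fibres straddle $\partial B(1)$. The main obstacle is the simultaneous bookkeeping of constants across the three scales $[\delta, A]$, $[3\delta, 3\Delta]$ and $[\Delta, 3]$: the $A^2$ in $\mathbf{c}(\kappa)$ reflects the fibre-segment length, one factor of $\mathbf{C}$ enters from the Frostman bound on $\mu$, two further factors come from the two uses of upper $(s, \mathbf{C})$-regularity of $K$ (one for the heavy contribution, one for the Vitali overlap), and the quadratic $\kappa^2$ stems from two independent thresholds---one for "heaviness", one for the Vitali approximation. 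Calibrating all of these simultaneously is what makes the "streamlined" version still delicate.
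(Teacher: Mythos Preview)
Your fibre analysis has a genuine gap in the pigeonhole step. Fix $x\in E_{0}\setminus E_{1}$ and cover $\ell_{x}\cap B(x,A)$ by $\lesssim A/\Delta$ balls $B(y_{j},3\Delta)$ as you describe. For a \emph{light} $y_{j}$ you correctly get $\lesssim M$ of the $\delta$-cubes of $K_{\delta}$ inside $B(y_{j},3\Delta)$, so the light budget is $\lesssim (A/\Delta)M$. But for a \emph{heavy} $y_{j}$ the only available upper bound on its $\delta$-cube count is $\mathbf{C}(\Delta/\delta)^{s}$ (or the trivial $\Delta/\delta$); nothing ties this to $M$. Hence your count gives at best
\[
\#\{\text{heavy }y_{j}\}\gtrsim \frac{N-(A/\Delta)M}{\mathbf{C}(\Delta/\delta)^{s}},
\]
which has nothing to do with $N/M$. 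Worse, the regime $(A/\Delta)M\geq N/2$ is not excluded by any hypothesis (recall $\Delta$ may be as small as $\delta$), so the light budget can swallow all $N$ cubes. In short, membership in $E_{0}\setminus E_{1}$ for a single point $x$ says nothing about the other centres $y_{j}$: one can have all $N$ cubes sitting in a single $\Delta$-ball far from $x$, with $x\notin E_{1}$ but $\mathfrak{m}_{K,\theta}(x\mid[\Delta,3])=1$.

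The paper's proof avoids this by working tube-by-tube rather than point-by-point, and by using $\mu$ in an essential way (not merely via a Vitali overlap). One covers $E_{0}$ by $\delta$-tubes $T$ parallel to $\pi_{\theta}^{-1}\{0\}$; a tube is \emph{bad} if $\mu(T\cap E_{1})<(1-\eta)\mu(T\cap E_{0})$ and \emph{heavy} if additionally $\mu(T\cap E_{0})>\eta N\delta^{s}$, with $\eta\sim\kappa$. For a heavy bad tube one finds (using the Frostman bound on $\mu$) a $\delta$-separated set $\{x_{1},\dots,x_{m}\}\subset T\setminus H_{\theta}(K,M,[3\delta,3\Delta])$ with $m\gtrsim\eta^{2}N/\mathbf{C}$. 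The crucial point is that these $x_{i}$ \emph{themselves} lie in $K_{3\delta}$ and near the fibre, so if more than $\sim M$ of them clustered in a single $\Delta$-square they would witness each other's membership in $H_{\theta}(K,M,[3\delta,3\Delta])$---a contradiction. Hence they spread over $\gtrsim\eta^{2}N/(\mathbf{C}M)$ distinct $\Delta$-squares along the fibre, which places every point of $B(1)\cap T$ in $E_{2}$. The $\kappa^{2}$ arises from the product of the two $\mu$-thresholds (badness and heaviness), and the additive $\kappa$ from summing over the light bad tubes using the cardinality bound $|\mathcal{T}|\lesssim\mathbf{C}\delta^{-s}/N$; neither mechanism resembles a Vitali covering.
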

 

\begin{proof} Fix $\theta \in S^{1}$. In this proof, we abbreviate $H_{\theta}(K,\ldots)$ to $H(\ldots)$. We also abbreviate $\mu(B(1) \cap F)$ to $\mu_{1}(F)$.

Fix $\kappa > 0$ as in the statement, and let $\eta = \eta(A,\mathbf{C},\kappa) > 0$ be so small that 
\begin{equation}\label{form14} (1 - \eta)^{-1} \leq 1 + \kappa \quad \text{and} \quad A^{s}C\mathbf{C}\eta \leq \kappa \end{equation}
for a suitable absolute constant $C > 0$ to be determined in \eqref{form10} below.

Let $\mathcal{T} := \mathcal{T}_{\theta}$ be a minimal cover of $B(1) \cap H(N,[\delta,A])$ by disjoint (half-open) tubes of width $\delta$ parallel to $\pi_{\theta}^{-1}\{0\}$. In particular, every tube $T \in \mathcal{T}$ contains at least one point $x_{T} \in B(1) \cap H(N,[\delta,A])$. Moreover, let us check that
\begin{equation}\label{form10} |\mathcal{T}| \leq A^{s}C\mathbf{C}\delta^{-s}/N. \end{equation}
By the definition of $x_{T} \in H(N,[\delta,A]) \cap T$, it holds $|K_{\delta} \cap B(x_{T},A) \cap \pi_{\theta}^{-1}\{\pi_{\theta}(x_{T})\}|_{\delta} \geq N$. In particular, there are $\geq N$ squares in $\mathcal{D}_{\delta}(B(3A) \cap K)$ intersecting $T$. Since the $\delta$-tubes in $\mathcal{T}$ are disjoint, and $|B(3A) \cap K|_{\delta} \leq \mathbf{C}(3A/\delta)^{s}$ by the $(s,\mathbf{C})$-regularity of $K$, the claim follows.

A tube $T \in \mathcal{T}$ is called \emph{good} if 
\begin{displaymath} \mu_{1}(T \cap H(M,[3\delta,3\Delta])) \geq (1 - \eta)\mu_{1}(T \cap H(N,[\delta,A])), \end{displaymath}
and otherwise \emph{bad}. The good and bad tubes are denoted $\mathcal{T}_{\mathrm{good}}$ and $\mathcal{T}_{\mathrm{bad}}$, respectively. With this notation,
\begin{align*} \mu_{1}(H(N,[\delta,A])) & \leq (1 - \eta)^{-1} \sum_{T \in \mathcal{T}_{\mathrm{good}}} \mu_{1}(T \cap H(M,[3\delta,3\Delta])) + \sum_{T \in \mathcal{T}_{\mathrm{bad}}} \mu_{1}(T \cap H(N,[\delta,A]))\\
& \stackrel{\eqref{form14}}{\leq} (1 + \kappa) \mu_{1}(H(M,[3\delta,3\Delta])) + \sum_{T \in \mathcal{T}_{\mathrm{bad}}} \mu_{1}(T \cap H(N,[\delta,A])).  \end{align*}

To bound the sum over the bad tubes, we split them further into $\mathcal{T}_{\mathrm{bad}}^{\mathrm{light}}$ and $\mathcal{T}_{\mathrm{bad}}^{\mathrm{heavy}}$, where
\begin{displaymath} \mathcal{T}_{\mathrm{bad}}^{\mathrm{light}} := \{T \in \mathcal{T}_{\mathrm{bad}} : \mu_{1}(T \cap H(N,[\delta,A])) \leq \eta N \cdot \delta^{s}\}, \end{displaymath}
and $\mathcal{T}_{\mathrm{bad}}^{\mathrm{heavy}} = \mathcal{T}_{\mathrm{bad}} \, \setminus \, \mathcal{T}_{\mathrm{bad}}^{\mathrm{light}}$. By \eqref{form10}, the light bad tubes only cover a small set:
\begin{displaymath} \sum_{T \in \mathcal{T}^{\mathrm{light}}_{\mathrm{bad}}} \mu_{1}(T \cap H(N,[\delta,A])) \leq A^{s}C\mathbf{C}\eta \stackrel{\eqref{form14}}{\leq} \kappa. \end{displaymath}
After these estimates, the lemma will be proven once we show that
\begin{equation}\label{form15} \sum_{T \in \mathcal{T}_{\mathrm{bad}}^{\mathrm{heavy}}} \mu_{1}(T \cap H(N,[\delta,A])) \leq \mu_{1}(H(N,[\delta,A]) \cap H(\mathbf{c}(\kappa) \cdot \tfrac{N}{M},[\Delta,3])). \end{equation}
In fact, we will prove this with $\mathbf{c}(\kappa) \sim \mathbf{C}^{-1}\eta^{2} \sim \kappa^{2}/(A^{2}\mathbf{C}^{3})$, as claimed in the lemma.

Fix $T \in \mathcal{T}^{\mathrm{heavy}}_{\mathrm{bad}}$. We claim that 
\begin{equation}\label{form16} B(1) \cap T \subset H(c\eta^{2} \cdot \tfrac{N}{M},[\Delta,3]) \end{equation}
for a suitable constant $c \sim \mathbf{C}^{-1}$. Clearly \eqref{form16} implies \eqref{form15}. 

To prove \eqref{form16}, first using the definition of badness, and then heaviness,
\begin{equation}\label{form11} \mu_{1}(T \, \setminus \, H(M,[3\delta,3\Delta])) \geq \eta \cdot \mu_{1}(T \cap H(N,[\delta,A])) \geq \eta^{2}N \cdot \delta^{s}. \end{equation}
Let 
\begin{equation}\label{form17} \{x_{1},\ldots,x_{m}\} \subset B(1) \cap K \cap T \, \setminus \, H(M,[3\delta,3\Delta]) \end{equation}
be a maximal $\delta$-separated set. Then 
\begin{equation}\label{form13} m \geq \eta^{2}N/\mathbf{C} \end{equation}
by \eqref{form11}, and since $\mu_{1}(B(x_{j},\delta)) \leq \mathbf{C}\delta^{s}$ for all $1 \leq j \leq m$. We will now use the points $\{x_{1},\ldots,x_{m}\}$ to prove \eqref{form16}. The rough idea is that since the points $x_{j}$ lie in the complement of $H(M,[2\delta,2\Delta])$, it takes $\gtrsim m/M \gtrsim_{\mathbf{C}} \eta^{2}N/M$ balls of radius $\Delta$ to cover them, and this eventually gives \eqref{form16}.

Unwrapping the definitions, \eqref{form16} is equivalent to
\begin{equation}\label{form12} |K_{\Delta} \cap B(x,3) \cap \pi_{\theta}^{-1}\{\pi_{\theta}(x)\}|_{\Delta} \geq c\eta^{2}\tfrac{N}{M}, \qquad x \in B(1) \cap T. \end{equation}
Fix $x \in B(1) \cap T$, and let $Q_{1},\ldots,Q_{n}$ be the family of all dyadic $\Delta$-squares intersecting $K_{\Delta} \cap B(x,3) \cap \pi_{\theta}^{-1}\{\pi_{\theta}(x)\}$. 

Since $x_{j} \in B(1) \cap T$ for each $1 \leq j \leq m$, and $\pi_{\theta}^{-1}\{\pi_{\theta}(x)\}$ is a line contained in $T$, we may pick points $y_{j} \in B(x_{j},\delta) \cap \pi_{\theta}^{-1}\{\pi_{\theta}(x)\}$, $1 \leq j \leq m$. Since the points $x_{j}$ are $\delta$-separated, the set $\{y_{1},\ldots,y_{m}\}$ still has cardinality $\sim m$; we pretend here that $|\{y_{1},\ldots,y_{m}\}| = m$ for simplicity, this only affects absolute constants. Since 
\begin{displaymath} y_{j} \in K_{\delta} \cap B(x_{j},1) \subset K_{\Delta} \cap B(x,3), \end{displaymath}
each point $y_{j}$ is covered by one of the squares $Q_{1},\ldots,Q_{n}$.

Now assume that \eqref{form12} fails, thus $n \leq c\eta^{2}N/M$. This implies that at least one of the squares $Q_{1},\ldots,Q_{n}$, say $Q_{i_{0}}$, contains a subset $Y \subset \{y_{1},\ldots,y_{m}\}$ with cardinality
\begin{displaymath} |Y| \gtrsim \frac{m}{c\eta^{2}N/M} \stackrel{\eqref{form13}}{\gtrsim} \frac{M}{c \cdot \mathbf{C}}. \end{displaymath}
In particular, $Y \subset \pi_{\theta}^{-1}\{\pi_{\theta}(x)\}$, and $\diam(Y) \leq \diam Q_{i_{0}} \leq 2\Delta$. Let $X := \{x_{j} : y_{j} \in Y\}$, thus also $|X| > CM$ (for a suitable absolute constant $C \geq 1$), provided that we choose $c \sim \mathbf{C}^{-1}$ sufficiently small. 

We now finally claim that $X \subset H(M,[3\delta,3\Delta])$. Since also $X \subset \{x_{1},\ldots,x_{m}\}$, this will contradict the choice of $\{x_{1},\ldots,x_{m}\}$ at \eqref{form17}.

The claim $X \subset H(M,[3\delta,3\Delta])$ is equivalent to
\begin{equation}\label{form18} |K_{3\delta} \cap B(x_{j},3\Delta) \cap \pi_{\theta}^{-1}\{\pi_{\theta}(x_{j})\}|_{3\delta} \geq M, \qquad x_{j} \in X. \end{equation} 
To see this, fix $x_{j} \in X$, and note that all the other elements $x_{i} \in X$ also lie inside $K_{3\delta} \cap B(x_{j},3\Delta)$. Since these points are $\delta$-separated, and $|X| \geq CM$, it is easy to check that \eqref{form18} holds, provided that $C \geq 1$ is sufficiently large. This completes the proof. \end{proof}

The next lemma states that if a set satisfies local multiplicity upper bounds on "most scales", then it also satisfies a global multiplicity upper bound.
 
\begin{lemma}\label{lemma5} Let $\Delta \in 2^{-\N} \cap (0,\tfrac{1}{10}]$, $N \geq 1$, and $\eta > 0$. Write $\delta := \Delta^{N}$. Fix $\theta \in S^{1}$, and let $F \subset B(1)$ be a set with the following properties.

Let $\{a_{i}\}_{i = 0}^{n} \subset \{0,\ldots,N\}$ with $0 = a_{0} < a_{1} < \ldots < a_{n} = N$. Assume that for every $x \in F$ it holds
\begin{displaymath} \sum_{j \in \mathcal{B}(x)} (a_{j + 1} - a_{j}) \leq \eta N, \end{displaymath} 
where 
\begin{equation}\label{form24} \mathcal{B}(x) = \{0 \leq j \leq n - 1 : x \in H_{\theta}(F,(\Delta^{a_{j + 1} - a_{j}})^{-\sigma},[50\Delta^{a_{j + 1}},50\Delta^{a_{j}}])\}. \end{equation}
Then, for an absolute constant $C \geq 1$,
\begin{equation}\label{form49} |F_{5\delta} \cap \pi_{\theta}^{-1}\{t\}|_{5\delta} \leq C^{n}\delta^{-\sigma - \eta}, \qquad t \in \R, \end{equation} 
\end{lemma}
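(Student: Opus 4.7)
Fix $t \in \R$ and write $\ell := \pi_{\theta}^{-1}\{t\}$ and $r_j := \Delta^{a_j}$, so $r_0 = 1$ and $r_n = \delta$. My plan is a multi-scale branching argument on $\ell$. For each $j \in \{0, \ldots, n\}$, let $\mathcal{I}_j$ be the family of dyadic $r_j$-intervals on $\ell$ meeting $[F]_{5\delta} \cap B(1)$. Refinement turns these into a tree: the root level $\mathcal{I}_0$ has $O(1)$ nodes, the leaves are $\mathcal{I}_n$, and the branching of $I \in \mathcal{I}_j$ is $b_I := |\{I' \in \mathcal{I}_{j+1} : I' \subset I\}|$. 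Writing $N_j := |\mathcal{I}_j|$, note $|F_{5\delta} \cap \ell|_{5\delta} \lesssim N_n$ since $r_n = \delta$. An elementary induction on depth (if a node has $k$ children, then the number of descendant leaves is at most $k$ times the maximum such number among its children) yields the tree identity
\begin{displaymath} N_n \leq \max_{\beta} \prod_{j=0}^{n-1} b_{I_j^{\beta}}, \end{displaymath}
where $\beta = (I_0^{\beta} \supset \cdots \supset I_n^{\beta})$ ranges over root-to-leaf paths.

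Fix a maximising path $\beta$ and choose $x \in F$ with $d(x, I_n^{\beta}) \leq 5\delta$; this exists because $I_n^{\beta}$ is alive, and nestedness gives $d(x, I_j^{\beta}) \leq 5\delta$ for every $j$, so $I_j^{\beta} \subset B(x, 50 r_j)$ (using $5\delta + r_j \leq 50 r_j$, which follows from $\delta \leq r_j$). The central geometric step is the pointwise branching bound
\begin{displaymath} b_{I_j^{\beta}} \leq C_0 \cdot \mathfrak{m}_{F,\theta}(x \mid [50 r_{j+1}, 50 r_j]) \end{displaymath}
for an absolute constant $C_0$. Indeed, each alive child of $I_j^{\beta}$ contains a point of $[F]_{5\delta}$, whose nearest $F$-point projects onto the line $\pi_{\theta}^{-1}\{\pi_{\theta}(x)\}$ (parallel to $\ell$ at distance $\leq 5\delta$) into $[F]_{10\delta} \cap B(x, 50 r_j)$; distinct children contribute points separated by at least $r_{j+1}/2$ on this line, giving a bounded-overlap family of $50 r_{j+1}$-cells exactly counted by the multiplicity.

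The hypothesis on $x$ then gives $\mathfrak{m}_{F,\theta}(x \mid [50 r_{j+1}, 50 r_j]) < \Delta^{-\sigma(a_{j+1} - a_j)}$ for $j \notin \mathcal{B}(x)$, and the trivial bound $2\Delta^{-(a_{j+1}-a_j)}$ for $j \in \mathcal{B}(x)$ (at most $2r_j/r_{j+1}$ cells of size $50 r_{j+1}$ fit inside an interval of length $100 r_j$). Writing $W := \sum_{j \in \mathcal{B}(x)}(a_{j+1}-a_j) \leq \eta N$,
\begin{displaymath} \prod_{j=0}^{n-1} b_{I_j^{\beta}} \leq (2 C_0)^n \Delta^{-\sigma(N - W) - W} \leq (2 C_0)^n \delta^{-\sigma - (1-\sigma)\eta} \leq (2 C_0)^n \delta^{-\sigma - \eta}, \end{displaymath}
using $1 - \sigma \leq 1$ in the last step. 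Combined with the tree identity, this gives \nref{form49}. The main obstacle is the geometric verification of the branching bound: carefully handling the $5\delta$-shift between $\ell$ and the parallel line through $x$, and the conversion between $r_{j+1}$-dyadic sub-intervals on $\ell$ and $50 r_{j+1}$-cells on the parallel line, while keeping $C_0$ absolute and independent of $j, \delta, \Delta$. Once that is in place, the remaining steps are a routine induction and an algebraic manipulation.
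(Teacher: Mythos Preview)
Your argument is correct and reaches the same conclusion as the paper, but the packaging is genuinely different. The paper proves the bound by an \emph{entropy} computation: it places the uniform probability measure $\eta$ on a $\delta$-separated set $X \subset F$ shadowing $F_{5\delta} \cap \pi_\theta^{-1}\{t\}$, writes $\log m \approx H(\eta,\mathcal{D}_\delta)$ as a telescoping sum of conditional entropies $H(\eta,\mathcal{D}_{j+1}\mid\mathcal{D}_j)$, and bounds each term by $\log \mathfrak{m}_{F,\theta}(x\mid[50\Delta^{a_{j+1}},50\Delta^{a_j}])$ integrated in $x$; the good/bad scale split is then applied \emph{inside the integral}. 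Your proof replaces this averaging by an extremal argument: the greedy root-to-leaf path in the tree of alive intervals singles out one point $x \in F$ at which the product of branching numbers already dominates $N_n$, and the hypothesis (which holds for every $x$) finishes the job. The core geometric step---that the branching at level $j$ near $x$ is controlled by $\mathfrak{m}_{F,\theta}(x\mid[50r_{j+1},50r_j])$---is identical in both proofs (compare your branching bound with the paper's inequality \eqref{form39}), and both sketches leave the same constant-juggling between $[F]_{c\delta}$ and $F_{c\delta}$ to the reader. Your route is more elementary (no entropy) and arguably cleaner for this statement; the entropy formulation is a bit more robust if one later wants to average over $x$ or work with non-uniform weights. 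Two minor remarks: your tree identity should carry the harmless factor $|\mathcal{I}_0| = O(1)$, which you noted but dropped; and the intermediate inequality $\Delta^{-\sigma(N-W)-W} \leq \delta^{-\sigma-(1-\sigma)\eta}$ uses $\sigma \leq 1$ (true in all applications in the paper), though the final bound $\delta^{-\sigma-\eta}$ holds regardless.
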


\subsubsection{Entropy} The proof of Lemma \ref{lemma5} uses the notion of entropy, so let us briefly recall the bits of information we need. If $\mu$ is a probability measure on some space $\Omega$, and $\mathcal{F}$ is a $\mu$ measurable partition of $\Omega$, the $\mathcal{F}$-entropy of $\mu$ is defined by
\begin{displaymath} H(\mu,\mathcal{F}) := \sum_{F \in \mathcal{F}} \mu(F)\log \tfrac{1}{\mu(F)}. \end{displaymath}
Here $0 \cdot \log \tfrac{1}{0} := 0$, and "$\log$" refers to logarithm in base $2$. The measures of interest to us are probability measures $\R$ or $\R^{2}$, and $\mathcal{F} = \mathcal{D}_{\Delta}$ for some $\Delta \in 2^{-\N}$. In addition to $\mathcal{F}$-entropy, we will also need the \emph{conditional entropy of $\mu$}:
\begin{displaymath} H(\mu,\mathcal{F} \mid \mathcal{E}) := \sum_{E \in \mathcal{E}} \mu(E)H(\mu_{E},\mathcal{F}). \end{displaymath}
Here $\mu_{E} := \mu(E)^{-1} \cdot \mu|_{E}$, and $\mathcal{E},\mathcal{F}$ are $\mu$ measurable partitions of $\Omega$. In all applications below, $\mathcal{F}$ \emph{refines} $\mathcal{E}$, or in other words every $E \in \mathcal{E}$ is a finite union of sets in $\mathcal{F}$. In this special case, the conditional entropy can be rewritten as
\begin{equation}\label{entropy} H(\mu,\mathcal{F} \mid \mathcal{E}) = H(\mu,\mathcal{F}) - H(\mu,\mathcal{E}), \end{equation}
We finally record that $H(\mu,\mathcal{F}) \leq \log |\mathcal{F}|$, as a consequence of Jensen's inequality.

\begin{proof}[Proof of Lemma \ref{lemma5}] Fix $t \in \R$, and let $\{y_{1},y_{2},\ldots,y_{m}\}$ be a maximal $(5\delta)$-separated subset of $F_{5\delta} \cap \pi_{\theta}^{-1}\{t\}$. The claim is that $m \leq C^{n}\delta^{-\sigma - \eta}$ for some absolute constant $C \geq 1$. For each $y_{j}$, find a point $x_{j} \in F$ with $|x_{j} - y_{j}| \leq 5\delta$. The points $x_{j}$ may not be $\delta$-separated, but they contain a $\delta$-separated subset of cardinality $\sim m$. So, we assume with no loss of generality that the points $x_{j}$ are $\delta$-separated, and in fact that each dyadic $\delta$-square contains at most one point $x_{j}$. We write $X := \{x_{1},\ldots,x_{m}\}$. Let us record that
\begin{displaymath} X \subset T := \pi_{\theta}^{-1}([t - 5\delta,t + 5\delta]). \end{displaymath}
Let $\eta$ be the uniformly distributed probability measure on $X \subset F \cap T$. Write $\mathcal{D}_{j} := \mathcal{D}_{\Delta^{a_{j}}}$ for the dyadic squares of side-length $\Delta^{a_{j}}$ in $\R^{2}$, so in particular 
\begin{displaymath} \mathcal{D}_{0} = \mathcal{D}_{\Delta^{a_{0}}} = \mathcal{D}_{1} \quad \text{and} \quad \mathcal{D}_{n} = \mathcal{D}_{\Delta^{a_{n}}} = \mathcal{D}_{\delta}. \end{displaymath}
Note that $H(\eta,\mathcal{D}_{0}) \leq \log |\{Q \in \mathcal{D}_{0} \cap B(1) \neq \emptyset\}| = 2$, since $\spt \eta \subset F \subset B(1)$. Then, using the assumption that every square in $\mathcal{D}_{n}$ contains at most one point from $X$,
\begin{align} \log m - 2 & \leq H(\eta,\mathcal{D}_{n}) - H(\eta,\mathcal{D}_{0}) \stackrel{\eqref{entropy}}{=} H(\eta,\mathcal{D}_{n} \mid \mathcal{D}_{0}) \stackrel{\eqref{entropy}}{=} \sum_{j = 0}^{n - 1} H(\eta,\mathcal{D}_{j + 1} \mid \mathcal{D}_{j}) \notag\\
&\label{form40} = \sum_{j = 0}^{n - 1} \sum_{Q \in \mathcal{D}_{j}} \eta(Q)H(\eta_{Q},\mathcal{D}_{j + 1}) = \int \sum_{Q \ni x} H(\eta_{Q},\mathcal{D}_{\mathrm{gen}(Q) + 1}) \, d\eta(x). \end{align}
Here $\eta_{Q} = \eta(Q)^{-1} \cdot \eta|_{Q}$. Fix 
\begin{displaymath} x \in \spt \eta = X \subset \pi_{\theta}^{-1}([t - 5\delta,t + 5\delta]), \end{displaymath}
and $Q \in \mathcal{D}_{j}$, $0 \leq j \leq n - 1$, with $x \in Q$. Then
\begin{displaymath} H(\eta_{Q},\mathcal{D}_{\mathrm{gen}(Q) + 1}) = H(\eta_{Q},\mathcal{D}_{j + 1}) \leq \log |\{Q_{j + 1} \in \mathcal{D}_{j + 1} : Q_{j + 1} \cap \spt \eta_{Q} \neq \emptyset\}|, \end{displaymath}
and we claim that
\begin{equation}\label{form39} |\{Q_{j + 1} \in \mathcal{D}_{j + 1} : Q_{j + 1} \cap \spt \eta_{Q} \neq \emptyset\}| \leq C \cdot \m_{F,\theta}(x \mid [50\Delta^{a_{j + 1}},50\Delta^{a_{j}}]) \end{equation}
for some absolute constant $C \geq 1$. Indeed, let $Q_{1}',\ldots,Q_{p}'$ be an enumeration of the squares in $\mathcal{D}_{j + 1}$ which intersect $\spt \eta_{Q}$, see Figure \ref{fig1}. Thus, for each $Q_{k}'$, there exists a point $x_{k}' \in \spt \eta_{Q} \cap Q_{k}' \subset F \cap Q \cap T$. On the other hand, by definition
\begin{displaymath} \m_{\theta}(x \mid [50\Delta^{a_{j + 1}},50\Delta^{a_{j}}]) = |B(x,50\Delta^{a_{j}}) \cap F_{50\Delta^{a_{j + 1}}} \cap \pi_{\theta}^{-1}\{\pi_{\theta}(x)\}|_{50\Delta^{a_{j + 1}}}. \end{displaymath}
Now, the key observation is that for each $x_{k}' \in Q_{k}'$, the line $\pi_{\theta}^{-1}\{\pi_{\theta}(x)\}$ intersects the ball $B(x_{k}',50\Delta^{a_{j + 1}})$ at some point 
\begin{displaymath} y_{k}' \in B(x,50\Delta^{a_{j}}) \cap F_{50\Delta^{a_{j + 1}}} \cap \pi_{\theta}^{-1}\{\pi_{\theta}(x)\}, \qquad 1 \leq k \leq p, \end{displaymath}
and in fact even in a segment of length $\sim \Delta^{a_{j + 1}}$, see Figure \ref{fig1}.
\begin{figure}[h!]
\begin{center}
\begin{overpic}[scale = 1]{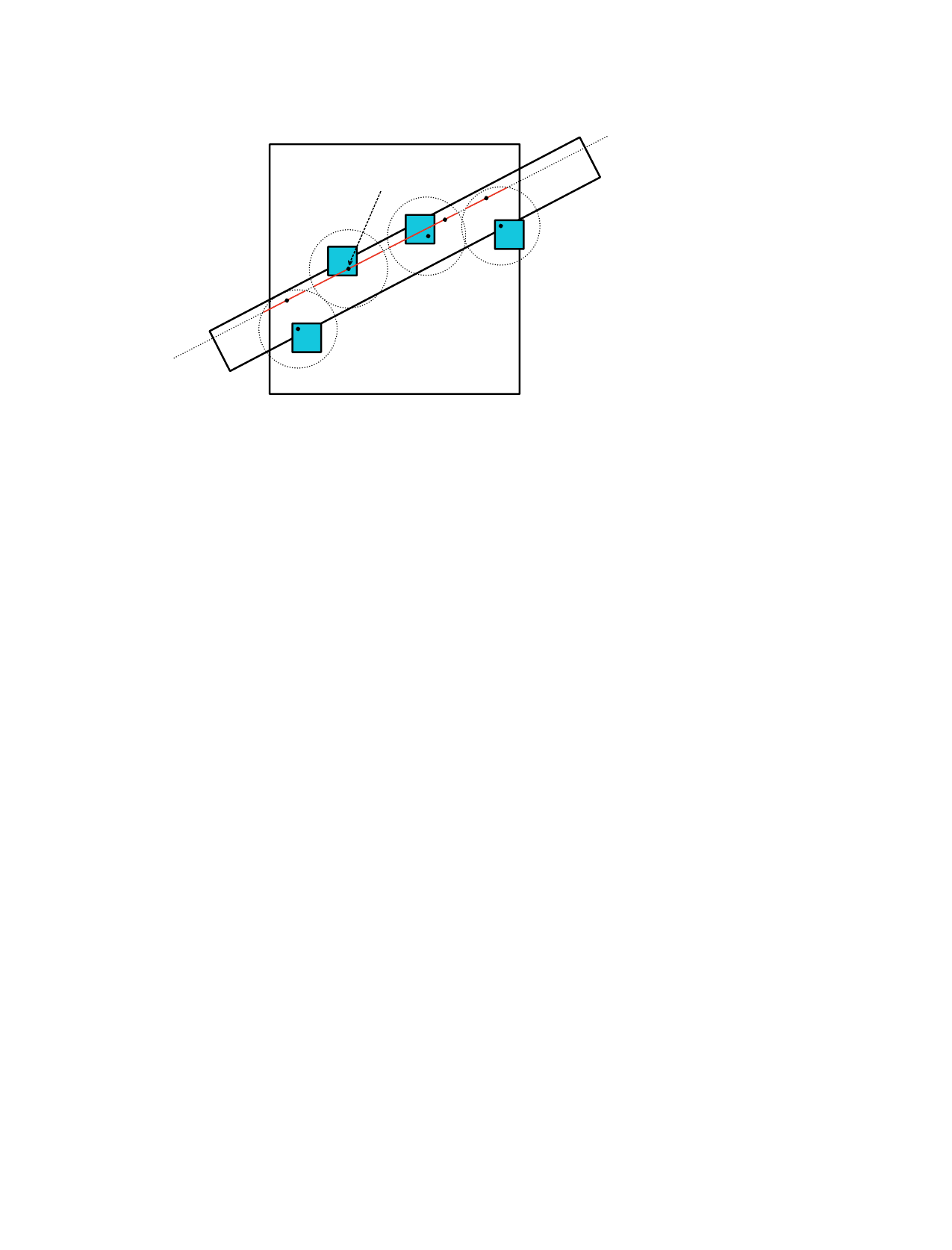}
\put(23,54){$Q$}
\put(24,18.5){\small{$y_{1}'$}}
\put(28,12){\small{$x_{1}'$}}
\put(28,7.5){\tiny{$Q_{1}'$}}
\put(47,48){$x$}
\put(36,31.5){\tiny{$Q_{2}'$}}
\put(53.5,38.5){\tiny{$Q_{3}'$}}
\put(73,31){\tiny{$Q_{4}'$}}
\put(61,37){\small{$y_{3}'$}}
\put(57,31.5){\small{$x_{3}'$}}
\put(74,36){\small{$x_{4}'$}}
\put(70,42){\small{$y_{4}'$}}
\end{overpic}
\caption{The squares $Q_{1}',\ldots,Q_{p}' \in \mathcal{D}_{j + 1}$ intersecting $\eta_{Q}$, the points $x_{1}',\ldots,x_{p}' \subset F \cap Q \cap T$ and the points $y_{1}',\ldots,y_{p}' \subset F_{50\Delta^{a_{j + 1}}} \cap \pi_{\theta}^{-1}\{\pi_{\theta}(x)\}$. The larger ball $B(x,50\Delta^{a_{j}})$ has not been drawn in the figure, but it is large enough to contain $Q \in \mathcal{D}_{j}$, and especially all the points $y_{1}',\ldots,y_{p}'$.}\label{fig1}
\end{center}
\end{figure}
Moreover, since $Q_{k}' \in \mathcal{D}_{j + 1}$, we have
\begin{displaymath} p \lesssim |\{y_{1}',\ldots,y_{p}'\}|_{50\Delta^{a_{j + 1}}} \leq |B(x,50\Delta^{a_{j}}) \cap F_{10\Delta^{a_{j + 1}}} \cap \pi_{\theta}^{-1}\{\pi_{\theta}(x)\}|_{50\Delta^{a_{j + 1}}}. \end{displaymath} This proves \eqref{form39}.

From \eqref{form40}-\eqref{form39}, we conclude that
\begin{equation}\label{form48} \log m \leq \int \sum_{Q \ni x} \log \m_{F,\theta}(x \mid [50\Delta^{a_{\mathrm{gen}(Q) + 1}},50\Delta^{a_{\mathrm{gen}(Q)}}]) \, d\eta(x) + \log C^{n}, \end{equation}
where the term "$2$" was absorbed into the term "$n\log C$". A trivial bound for $\m_{F,\theta}(x \mid [r,R]) = |B(x,R) \cap F \cap \pi_{\theta}^{-1}\{\pi_{\theta}(x)\}|_{r}$ is 
\begin{displaymath} \m_{F,\theta}(x \mid [r,R]) \leq C(R/r), \end{displaymath}
where $C \geq 1$ is an absolute constant. We will apply this bound for $j \in \mathcal{B}(x)$, recall the definition from \eqref{form24}. For $j \in \mathcal{G}(x) := \{0,\ldots,n - 1\} \, \setminus \, \mathcal{B}(x)$, we use the better estimate $\m_{F,\theta}(x \mid [50\Delta^{a_{j + 1}},50\Delta^{a_{j}}]) \leq \Delta^{-\sigma}$. Altogether,
 \begin{align*} \sum_{Q \ni x}  & \log \m_{F,\theta}(x \mid [50\Delta^{a_{\mathrm{gen}(Q) + 1}},50\Delta^{a_{\mathrm{gen}(Q)}}])\\
 & \leq \sum_{j \in \mathcal{G}} \log \Delta^{-\sigma(a_{j + 1} - a_{j})} + \sum_{j \in \mathcal{B}} \log C \cdot \Delta^{-(a_{j + 1} - a_{j})}\\
 & \leq n \log C + \sigma \log \tfrac{1}{\Delta} \sum_{j = 0}^{n - 1} (a_{j + 1} - a_{j}) + \log \tfrac{1}{\Delta} \sum_{j \in \mathcal{B}} (a_{j + 1} - a_{j})\\
 & \leq \log C^{n} + \sigma N \log \tfrac{1}{\Delta} + \eta N \log \tfrac{1}{\Delta} = \log C^{n} + (\sigma + \eta) \log \tfrac{1}{\delta}.  \end{align*}
 Recalling from the beginning of the proof that $m \sim |F_{5\delta} \cap \pi_{\theta}^{-1}\{t\}|_{5\delta}$, we have now shown 
 \begin{displaymath} |F_{5\delta} \cap \pi_{\theta}^{-1}\{t\}|_{5\delta} \lesssim C^{n}\delta^{-\sigma - \eta}, \end{displaymath}
 and the proof is complete.  \end{proof}
 
Lemma \ref{lemma4} shows that a "single-scale" upper bound on the size of high-multiplicity sets, which is true for all $(s,\mathbf{C})$-regular measures, implies a "multi-scale" upper bound for the size of high-multiplicity sets of $(s,\mathbf{C})$-regular measures $\mu$. The result follows by appling the single-scale bound to various \emph{renormalisations} $\mu^{B}$, defined below:
 \begin{definition}[Renormalised measure]\label{def:renormalisation} Let $\mathbf{C},s > 0$. Given an $(s,\mathbf{C})$-regular measure $\mu$ on $\R^{d}$, and a ball $B \subset \R^{d}$ with $\mathrm{rad}(B) = r$, we write
\begin{displaymath} \mu^{B} := r^{-s}T_{B}\mu. \end{displaymath}
Here $T_{B}$ is the affine map which sends $B$ to $B(1)$. Specifically, if $B = B(x_{0},r_{0})$, then $T_{B}$ is defined by $T_{B}(x) = (x - x_{0})/r_{0}$ for $x \in \R^{d}$. \end{definition} 

A key property of renormalisation is that it exactly preserves $(s,\mathbf{C})$-regularity.

\begin{lemma} Let $\mathbf{C},s > 0$, and let $\mu$ be an $(s,\mathbf{C})$-regular measure on $\R^{d}$. If $B \subset \R^{d}$ is an arbitrary ball, then $\mu^{B}$ is also $(s,\mathbf{C})$-regular.  The same conclusions hold if $(s,\mathbf{C})$-regularity is replaced by Ahlfors $(s,\mathbf{C})$-regularity. \end{lemma}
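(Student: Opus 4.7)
The plan is to verify the three defining conditions of $(s,\mathbf{C})$-regularity, and then the additional lower bound for Ahlfors regularity, by direct computation from the definition of $\mu^B$. Write $B = B(x_0,r_0)$, so that $T_B(z) = (z-x_0)/r_0$ and hence $T_B^{-1}(B(x,r)) = B(x_0 + r_0 x, r_0 r)$ for any $x \in \R^d$ and $r > 0$. Also $\spt \mu^B = T_B(\spt \mu)$ since $T_B$ is a homeomorphism and $\mu^B$ is just a scalar multiple of the pushforward.

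First I would verify the Frostman bound. Using the change-of-variables identity above,
\begin{displaymath} \mu^B(B(x,r)) = r_0^{-s}(T_B\mu)(B(x,r)) = r_0^{-s}\mu(B(x_0 + r_0 x, r_0 r)) \leq r_0^{-s}\cdot \mathbf{C}(r_0 r)^s = \mathbf{C}r^s, \end{displaymath}
so $\mu^B$ is $(s,\mathbf{C})$-Frostman. Next, for upper $(s,\mathbf{C})$-regularity of $\spt \mu^B$: for $x \in \R^d$ and $0 < r \leq R$, the map $T_B$ is a similarity with scaling factor $1/r_0$, so an optimal $r_0 r$-cover of $\spt \mu \cap B(x_0 + r_0 x, r_0 R)$ maps under $T_B$ to an $r$-cover of $T_B(\spt \mu) \cap B(x,R) = \spt \mu^B \cap B(x,R)$ of the same cardinality. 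Hence
\begin{displaymath} N_r(\spt \mu^B \cap B(x,R)) \leq N_{r_0 r}(\spt \mu \cap B(x_0 + r_0 x, r_0 R)) \leq \mathbf{C}\left(\tfrac{r_0 R}{r_0 r}\right)^s = \mathbf{C}\left(\tfrac{R}{r}\right)^s. \end{displaymath}

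For the Ahlfors case, the remaining task is the lower Frostman bound on $\spt \mu^B$. Fix $x \in \spt \mu^B$, so $y := x_0 + r_0 x \in \spt \mu$. Note $\diam(\spt \mu^B) = \diam(\spt \mu)/r_0$, so for $0 < r \leq \diam(\spt \mu^B)$ we have $0 < r_0 r \leq \diam(\spt \mu)$, and the Ahlfors lower bound for $\mu$ gives
\begin{displaymath} \mu^B(B(x,r)) = r_0^{-s}\mu(B(y,r_0 r)) \geq r_0^{-s}\cdot \mathbf{C}^{-1}(r_0 r)^s = \mathbf{C}^{-1}r^s. \end{displaymath}
Nothing here is really an obstacle: the only point worth flagging is the scaling of the prefactor $r_0^{-s}$, which is precisely the factor that compensates for $T_B$ contracting $r$-balls to $(r/r_0)$-balls, thereby preserving the exponent $s$ and the constant $\mathbf{C}$ simultaneously.
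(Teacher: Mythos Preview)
Your proof is correct and follows essentially the same approach as the paper: both verify the Frostman upper bound, the upper $(s,\mathbf{C})$-regularity of the support, and (for the Ahlfors case) the lower Frostman bound by direct computation using the scaling $T_B^{-1}(B(x,r)) = B(x_0 + r_0 x, r_0 r)$. Your version even adds the small observation that $\diam(\spt \mu^B) = \diam(\spt \mu)/r_0$, which makes the range condition in the Ahlfors lower bound explicit.
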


\begin{proof} Note that if $K = \spt \mu$, then $K^{B} := \spt \mu^{B} = T_{B}(K)$. Write $B = B(x_{0},r_{0})$ with $x_{0} \in \R^{2}$ and $r_{0} > 0$. Then, $K^{B}$ is upper $(s,\mathbf{C})$-regular, since
\begin{displaymath} N_{r}(K^{B} \cap B(x,R)) = N_{rr_{0}}(K \cap B(T^{-1}(x),r_{0}R)) \leq \mathbf{C}\left(\tfrac{r_{0}R}{r_{0}r} \right)^{s} = \mathbf{C}\left(\tfrac{R}{r} \right)^{s} \end{displaymath} 
for all $x \in \R^{d}$ and $0 < r \leq R < \infty$.

The $(s,\mathbf{C})$-Frostman property of $\mu^{B}$ follows from the next calculation:
\begin{displaymath} \mu^{B}(B(x,r)) = r_{0}^{-s} \cdot \mu(B(T^{-1}(x),rr_{0})) \leq r_{0}^{-s} \cdot \mathbf{C} (rr_{0})^{s} = \mathbf{C} r^{s}, \end{displaymath}
This completes the proof of the $(s,\mathbf{C})$-regular case. For the Ahlfors $(s,\mathbf{C})$-regular case, note that the previous displayed upper bound also holds as a lower bound for $x = T_{B}(x') \in \spt \mu^{B}$ (with constant $\mathbf{C}^{-1}$).   \end{proof}
 
 \begin{lemma}\label{lemma4} Let $A \geq 1$, $s \in [0,2]$, and $\Delta \in (0,\tfrac{1}{A}]$. Let $\mathbf{C},\epsilon,\sigma_{0} > 0$ be parameters such that the inequality
\begin{equation}\label{form63a} \int_{S^{1}} \mu(B(1) \cap H_{\theta}(K,\Delta^{-\sigma_{0}},[\Delta,1])) \, d\nu(\theta) \leq \epsilon \end{equation}
holds for all $(s,\mathbf{C})$-regular measures $\mu$ with $K = \spt \mu$, and some finite Borel measure $\nu$ on $S^{1}$. Let $M,N \in \N \, \setminus \, \{0\}$, and let $\mu$ be an $(s,\mathbf{C})$-regular measure with $K := \spt \mu \subset \R^{2}$.

Then, there exists a subset $S \subset S^{1}$ of $\nu$-measure 
\begin{displaymath} \nu(S) \geq \nu(S^{1}) - \left(\tfrac{\mathbf{C}}{M} \right)^{1/2}, \end{displaymath}
and for all $\theta \in S$ there exists a set $L_{\theta} \subset B(1) \cap K$ with $\mu(L_{\theta}) \geq \mu(B(1)) - \left(\tfrac{\mathbf{C}}{M} \right)^{1/2}$ such that
\begin{equation}\label{form30a} \tfrac{1}{N}|\left\{1 \leq j \leq N : x \in H_{\theta}(K,\Delta^{-\sigma_{0}},[A\Delta^{j + 1},A\Delta^{j}])\right\}| \leq M\epsilon, \qquad x \in L_{\theta}. \end{equation}
\end{lemma}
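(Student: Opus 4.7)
The plan is to apply the hypothesis~\eqref{form63a} at each scale $A\Delta^j$, $j \in \{1, \ldots, N\}$, to a suitable family of renormalisations of $\mu$, and then to convert the resulting doubly averaged bound into the pointwise conclusion via two applications of Chebyshev's inequality.

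First I fix $j$ and put $r_0 := A\Delta^j \leq 1$. The upper $(s,\mathbf{C})$-regularity of $K$ provides a cover $\mathcal{B}_j$ of $K \cap B(1)$ by at most $\mathbf{C} r_0^{-s}$ balls of radius $r_0$. For each $B \in \mathcal{B}_j$, the renormalised measure $\mu^B$ remains $(s,\mathbf{C})$-regular with $\spt \mu^B = K^B$, so the hypothesis applies and gives $\int_{S^1} \mu^B(B(1) \cap H_\theta(K^B, \Delta^{-\sigma_0}, [\Delta, 1])) \, d\nu(\theta) \leq \epsilon$. Combining Lemma~\ref{lemma7} with the identity $\mu^B = r_0^{-s} T_B \mu$, this is equivalent to
\[ \int_{S^1} \mu(B \cap H_\theta(K, \Delta^{-\sigma_0}, [A\Delta^{j+1}, A\Delta^j])) \, d\nu(\theta) \leq \epsilon r_0^s. \]
Since $\mu$ is concentrated on $K \subset \bigcup \mathcal{B}_j$, subadditivity produces
\[ \int_{S^1} \mu(B(1) \cap H_\theta(K, \Delta^{-\sigma_0}, [A\Delta^{j+1}, A\Delta^j])) \, d\nu(\theta) \leq |\mathcal{B}_j| \cdot \epsilon r_0^s \leq \mathbf{C}\epsilon. \]

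Averaging this bound over $j \in \{1, \ldots, N\}$ and swapping integrals by Fubini yields $\int_{S^1} \int_{B(1)} f(x,\theta) \, d\mu(x) \, d\nu(\theta) \leq \mathbf{C}\epsilon$, where $f(x,\theta)$ is the normalised count on the left-hand side of~\eqref{form30a}. I then take $S := \{\theta \in S^1 : \int_{B(1)} f(\cdot,\theta) \, d\mu \leq \epsilon(\mathbf{C}M)^{1/2}\}$, so that Chebyshev applied to the outer integral gives $\nu(S^1 \setminus S) \leq \mathbf{C}\epsilon / (\epsilon(\mathbf{C}M)^{1/2}) = (\mathbf{C}/M)^{1/2}$. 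For each $\theta \in S$, I define $L_\theta := \{x \in B(1) \cap K : f(x,\theta) \leq M\epsilon\}$, and a second application of Chebyshev (now inside $B(1)$ against $\mu$) yields $\mu(B(1) \setminus L_\theta) \leq \epsilon(\mathbf{C}M)^{1/2} / (M\epsilon) = (\mathbf{C}/M)^{1/2}$. These deliver exactly the conclusions claimed.

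The real content is in the first step, where the cardinality bound $|\mathcal{B}_j| \leq \mathbf{C} r_0^{-s}$ coming from upper $(s,\mathbf{C})$-regularity of $K$ exactly cancels the $r_0^s$ factor produced by the renormalisation identity $\mu^B = r_0^{-s} T_B \mu$. Without this cancellation the per-scale bound would deteriorate as $r_0 \to 0$, and summing over $N$ scales would introduce an unaffordable growth factor. After that observation, the rest of the argument is a routine two-stage Chebyshev pigeonhole.
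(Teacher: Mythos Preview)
Your proof is correct and follows essentially the same approach as the paper: cover $K \cap B(1)$ by $\lesssim \mathbf{C}(A\Delta^{j})^{-s}$ balls of radius $A\Delta^{j}$, apply the hypothesis~\eqref{form63a} to each renormalisation $\mu^{B}$, sum to obtain the per-scale bound $\mathbf{C}\epsilon$, average over $j$, and finish with two Chebyshev inequalities. The only cosmetic difference is the order of the two Chebyshev steps --- you split first in $\theta$ (at threshold $\epsilon(\mathbf{C}M)^{1/2}$) and then in $x$, whereas the paper defines $L_{\theta}$ first and then bounds $\nu(\{\theta : \mu(B(1)\setminus L_{\theta}) \geq (\mathbf{C}/M)^{1/2}\})$; both routes yield the identical bounds $(\mathbf{C}/M)^{1/2}$.
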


\begin{proof} We define the space 
\begin{displaymath} \Omega := \{1,\ldots,N\} \times [K \cap B(1)] \times S^{1}, \end{displaymath}
the measure $\tn := \tfrac{1}{N}\mathcal{H}^{0} \times \mu|_{B(1)} \times \nu$ on $\Omega$ (not necessarily a probability measure, despite the notation), and the subset
\begin{displaymath} H := \{(j,x,\theta) \in \Omega : x \in H_{\theta}(K,\Delta^{-\sigma_{0}},[A\Delta^{j + 1},A\Delta^{j}])\}. \end{displaymath}
After abbreviating $H_{\theta}(j) := H_{\theta}(K,\Delta^{- \sigma_{0}},[A\Delta^{j + 1},A\Delta^{j}])$, we claim that
\begin{equation}\label{form66a} \tn(H) = \frac{1}{N} \sum_{j = 1}^{N} \int_{S^{1}} \mu(B(1) \cap H_{\theta}(j)) \, d\nu(\theta) \leq \mathbf{C} \epsilon. \end{equation}
The first equation follows immediately from the definition of $H$ and $\tn$. The second inequality might look like a consequence of the assumption \eqref{form63a}, but the set $H_{\theta}(j)$ is not exactly the one which appears in \eqref{form63a}. To bring the correct set out, let $\mathcal{K}_{A\Delta^{j}} =: \mathcal{K}_{j}$ be a minimal cover of $K \cap B(1)$ by discs of radius $A\Delta^{j}$ (note that $A\Delta^{j} \leq 1$ for $j \in \{1,\ldots,N\}$). By the $(s,\mathbf{C})$-regularity of $\mu$, we have 
\begin{equation}\label{form6a} |\mathcal{K}_{j}| = N_{A\Delta^{j}}(K \cap B(1)) \leq \tfrac{\mathbf{C}}{A^{s}} \cdot \Delta^{-js}, \qquad 1 \leq j \leq N. \end{equation}
Then, for $1 \leq j \leq N$ fixed, we start by decomposing
\begin{displaymath} \int_{S^{1}} \mu(B(1) \cap H_{\theta}(j)) \, d\nu(\theta) \leq \sum_{B_{j} \in \mathcal{K}_{j}} \int_{S^{1}} \mu(B_{j} \cap H_{\theta}(j)) \, d\nu(\theta). \end{displaymath}
To estimate the terms on the right individually, fix $B_{j} \in \mathcal{K}_{j}$, and let $T_{B_{j}} \colon B_{j} \to B(1)$ be the rescaling map. Since $\mathrm{rad}(B_{j}) = A\Delta^{j}$, we may apply Lemma \ref{lemma7} to deduce that
\begin{displaymath} B_{j} \cap H_{\theta}(j) = T_{B_{j}}^{-1}[B(1) \cap H_{\theta}(T_{B_{j}}(K),\Delta^{-\sigma_{0}},[\Delta,1])], \qquad \theta \in S^{1}. \end{displaymath} 
Therefore, denoting by $\mu^{B_{j}} := (A\Delta^{j})^{-s} \cdot T_{B_{j}}\mu$ be the renormalised measure associated with $B_{j}$ (as in Definition \ref{def:renormalisation}), we find
\begin{displaymath} \mu(B_{j} \cap H_{\theta}(j)) = (A\Delta^{j})^{s} \cdot \mu^{B_{j}}(B(1) \cap H_{\theta}(T_{B_{j}}(K),\Delta^{-\sigma_{0}},[\Delta,1])), \qquad \theta \in S^{1}. \end{displaymath} 
The measure $\mu^{B_{j}}$ is $(s,\mathbf{C})$-regular, and $\spt \mu^{B_{j}} = T_{B_{j}}(K)$, so \eqref{form63a} yields
\begin{displaymath} \int_{S^{1}} \mu(B_{j} \cap H_{\theta}(j)) \, d\nu(\theta) = (A\Delta^{j})^{s} \int \mu^{B_{j}}(B(1) \cap H_{\theta}(T_{B_{j}}(K),\Delta^{-\sigma_{0}},[\Delta,1])) \, d\nu(\theta) \leq (A\Delta^{j})^{s}\epsilon \end{displaymath}
for all $B_{j} \in \mathcal{K}_{j}$. Combining this estimate with \eqref{form6a},
\begin{displaymath} \int_{S^{1}} \mu(B(1) \cap H_{\theta}(j)) \, d\nu(\theta) \leq \sum_{B_{j} \in \mathcal{K}_{j}} \int_{S^{1}} \mu(B_{j} \cap H_{\theta}(j)) \, d\nu(\theta) \leq \mathbf{C} \epsilon. \end{displaymath}
This proves \eqref{form66a} after taking an average over $j \in \{1,\ldots,N\}$.

Now, fix a constant $M \geq 1$, and for $\theta \in S^{1}$, consider the set
\begin{displaymath} L_{\theta} := \left\{x \in K : \tfrac{1}{N}|\{1 \leq j \leq N : x \in H_{\theta}(j)\}| \leq M\epsilon\right\}. \end{displaymath}
Then,
\begin{displaymath} \int_{S^{1}} \mu(B(1) \, \setminus \, L_{\theta}) \, d\nu(\theta) \leq  \int_{S^{1}} \int_{B(1)} \frac{1}{MN\epsilon}\sum_{j = 1}^{N} \mathbf{1}_{H_{\theta}(j)}(x) \, d\mu(x) \, d\nu(\theta) = \frac{\tn(H)}{M\epsilon} \stackrel{\eqref{form66a}}{\leq} \frac{\mathbf{C}}{M}. \end{displaymath}
It follows that 
\begin{displaymath} \nu(\{\theta \in S^{1} : \mu(B(1) \, \setminus \, L_{\theta}) \geq \left(\tfrac{\mathbf{C}}{M} \right)^{1/2}\}) \leq \left(\tfrac{\mathbf{C}}{M} \right)^{1/2}. \end{displaymath}
Therefore, there exists a subset $S \subset S^{1}$ of measure 
\begin{displaymath} \nu(S) \geq \nu(S^{1}) - \left(\tfrac{\mathbf{C}}{M} \right)^{1/2} \quad \text{ such that } \quad \mu(L_{\theta}) \geq \mu(B(1)) - \left(\tfrac{\mathbf{C}}{M} \right)^{1/2} \text{ for } \theta \in S. \end{displaymath}
This is what we claimed. \end{proof}

\subsection{A Lebesgue differentiation lemma on $[0,1]$} If $E \subset [0,1]$ is a Lebesgue measurable set of measure $|E| = \epsilon < 1$, then by the Lebesgue differentiation theorem, almost all of $[0,1] \, \setminus \, E$ can be covered by intervals $J$ where the density of $E$ is arbitrarily low, and in particular $|E \cap I| \lesssim \epsilon |I|$ for all sub-intervals $I \subset J$ of length $|I| \sim |J|$. The next lemma is a more quantitative version of this fact.

\begin{lemma}\label{lemma2} For every $\gamma \in 2^{-\N} \cap (0,\tfrac{1}{2}]$ and $C \geq 1$ there exists $\rho = \rho(C,\gamma) > 0$ such that the following holds. Let $\epsilon > 0$, and let $E \subset [0,1]$ be a Lebesgue measurable set of measure $|E| \leq \epsilon$. 

Then, there exists a set $G \subset [0,1]$ of measure $|G| \geq 1 - \tfrac{1}{C}$, which is a union of finitely many disjoint dyadic intervals $J \subset [0,1]$, of length $|J| \geq \rho$, with the property that $|E \cap I| < (8C\epsilon) |I|$ for all sub-intervals $I \subset J$ with $|I| \geq \gamma |J|$. \end{lemma}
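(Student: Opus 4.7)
My plan is to use the one-dimensional Hardy--Littlewood weak $(1,1)$ inequality to capture the ``dense'' part of $E$ in a small open set, and then to assemble $G$ from dyadic intervals that respect the component structure of that set. Without loss of generality I assume $\epsilon < 1/(8C)$, since otherwise $|E \cap I| \leq |I| < 8C\epsilon |I|$ for every interval of positive length and $G := [0,1]$ works trivially. Let $\mathcal{M}$ denote the non-centered Hardy--Littlewood maximal operator on $\R$. Applied to $\mathbf{1}_E$ at level $4C\epsilon$, the weak $(1,1)$ inequality yields $|V| \leq |E|/(4C\epsilon) \leq 1/(4C)$ for the open super-level set
\[
V := \{x \in [0,1] : \mathcal{M}\mathbf{1}_E(x) > 4C\epsilon\}.
\]
I decompose $V = \bigsqcup_k I_k$ into its maximal open component intervals, and set $\rho := \gamma/(8C)$.

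I would call a dyadic interval $J \subseteq [0,1]$ \emph{admissible} if every component of $V \cap J$ has length strictly less than $\gamma|J|/2$. The key consequence is that admissibility already implies the density condition of the lemma: for any sub-interval $I \subseteq J$ with $|I| \geq \gamma|J|$, its two consecutive halves $I_L, I_R$ each have length at least $\gamma|J|/2$. Neither half can lie entirely in $V$ (else $V \cap J$ would contain a component of length at least $\gamma|J|/2$), so each half contains some $y \notin V$, whence $|E \cap I_L|/|I_L| \leq \mathcal{M}\mathbf{1}_E(y) \leq 4C\epsilon$, and similarly for $I_R$. Summing yields $|E \cap I| \leq 4C\epsilon|I| < 8C\epsilon|I|$.

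The set $G$ is assembled greedily: for each $x \in [0,1]$, let $J(x)$ be the largest admissible dyadic interval of length $\geq \rho$ containing $x$, when such exists, and set $G := \bigcup_x J(x)$. By maximality the distinct $J(x)$'s are pairwise disjoint, so $G$ is a finite disjoint union of dyadic intervals of length $\geq \rho$. To control $[0,1] \setminus G$, I would show this bad set is contained in the union of (i) the ``deep interior'' $\{x \in I_k : \dist(x, \partial I_k) > \gamma/2\}$ of each \emph{big} component (those with $|I_k| \geq \gamma/2$), and (ii) the $\rho$-neighborhood of the big components. Region (i) has total measure at most $\sum_k |I_k| \leq |V| \leq 1/(4C)$. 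The number of big components is at most $2|V|/\gamma \leq 1/(2C\gamma)$, so the contribution of (ii) is at most $2\rho \cdot 1/(2C\gamma) = 1/(8C^2) \leq 1/(8C)$. The total is $\leq 3/(8C) < 1/C$, as required.

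The main technical obstacle I anticipate is verifying the bad-set inclusion. This rests on a short case analysis of how a dyadic $J \ni x$ can meet a component $I_k$ of $V$: either $J \subseteq I_k$, $J \supseteq I_k$, or $J$ straddles a single endpoint of $I_k$. The first two cases are immediate ($V \cap J = J$, respectively $V \cap J \supseteq I_k$ with $|I_k| \geq \gamma/2 \geq \gamma|J|/2$ whenever $|J| \leq 1$). In the straddling case the crucial estimate is $|J \cap I_k| \geq \dist(x, \partial I_k)$, which combined with $|J| \leq 1$ gives $|J \cap I_k| > \gamma/2 \geq \gamma|J|/2$ whenever $x$ lies in the deep interior of a big $I_k$. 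Conversely, any $x$ outside (i)\,$\cup$\,(ii) should admit an admissible $J$ of length $\geq \rho$: either a small dyadic interval sitting inside a gap of $V$, or a slightly larger dyadic interval whose overlaps with $V$ come only from components of length below $\gamma|J|/2$.
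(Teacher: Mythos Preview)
Your maximal-function approach is natural and genuinely different from the paper's iterative stopping-time construction, but the bad-set inclusion $[0,1]\setminus G \subset (\mathrm{i})\cup(\mathrm{ii})$ is false as stated. The problem is that your sets $(\mathrm{i})$ and $(\mathrm{ii})$ only account for \emph{big} components of $V$ (those of length $\geq \gamma/2$), whereas \emph{medium} components---those with length in the range $[\gamma\rho/2,\gamma/2)$---can also create bad points far from every big component. Concretely, take $\gamma=\tfrac14$, a big component $I_b$ near $\tfrac12$ with $|I_b|$ just above $\tfrac18$, and a disjoint medium component $I_m=(\tfrac{1}{16},\tfrac{1}{8}+\epsilon)$ with $\epsilon>0$ tiny. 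For $x=\tfrac{3}{32}\in I_m$ one checks that every dyadic $J\ni x$ with $|J|\geq \rho$ is non-admissible: small $J$'s are killed by $I_m\cap J$, the interval $[0,\tfrac12)$ is killed because $|I_m|>\tfrac{1}{16}=\gamma\cdot\tfrac12/2$, and $[0,1)$ is killed by $I_b$. So $x$ is bad, yet $x$ is neither in the deep interior of a big component nor within $\rho$ of one. Your ``conversely'' sketch does not cover this situation: the option ``a slightly larger dyadic interval whose overlaps with $V$ come only from components of length below $\gamma|J|/2$'' simply fails to exist here, because the scales at which $I_m$ becomes harmless are precisely the scales at which $I_b$ becomes harmful.

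A secondary point: the weak $(1,1)$ constant for the uncentered maximal function on $\R$ is not $1$, so $|V|\leq |E|/(4C\epsilon)$ is off by a factor; this is cosmetic. The real issue is structural. The paper avoids it by never invoking the maximal function: it runs a greedy stopping-time on dyadic intervals, at each step either declaring $J$ good or excising a single dense dyadic sub-interval $I\subset J$ and recursing on the $\gamma|J|$-pieces of $J\setminus I$. The total measure removed into ``bad'' is controlled directly by $|E|$ (each excised $I$ satisfies $|E\cap I|\geq 2C\epsilon|I|$), and the recursion depth is bounded because $|T_n|\leq(1-\gamma)^n$. Your approach might be completable with a more careful multi-scale bound on the bad set, but this would require substantially more than the two-line measure estimate you give, and your choice $\rho=\gamma/(8C)$ is likely too large in any case.
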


\begin{proof} We first define the constants. Let $n(C,\gamma) \in \N$ be the smallest integer such that 
\begin{equation}\label{form19} (1 - \gamma)^{n(C,\gamma)} \leq \tfrac{1}{4C}. \end{equation}
Next, let $\rho = \rho(C,\gamma) > 0$ be so small that $\gamma^{-(n(C,\gamma) + 1)}\rho \leq \tfrac{1}{4C}$.

We perform a construction for $n = 0,1,\ldots$ where we maintain sets $B_{n},G_{n},S_{n}$, and $T_{n}$. The letters stand for bad, good, short, and transient. For every $n \geq 0$, the sets $B_{n},G_{n},S_{n}$ and $T_{n}$ partition $[0,1)$. Initially $T_{n} = [0,1)$, while the other sets are empty. 

Assume that the sets $B_{n},G_{n},S_{n},T_{n}$ have been constructed for some $n \geq 0$, and are unions of dyadic intervals such that the intervals constituting $S_{n}$ have length $< \rho$. 

We will now construct the sets $B_{n + 1},G_{n + 1},S_{n + 1},T_{n + 1}$. In fact, we immediately include the set $B_{n}$ into $B_{n + 1}$, and $G_{n}$ into $G_{n + 1}$, and $S_{n}$ into $S_{n + 1}$. So, it only remains to describe how to (re-)distribute the points of $T_{n}$ into the sets $B_{n + 1},G_{n + 1},S_{n + 1}$, and $T_{n + 1}$. For this purpose, let $\mathcal{T}_{n}$ the intervals which constitute $T_{n}$. 

Fix $J \in \mathcal{T}_{n}$. If there are \textbf{no} dyadic sub-intervals $I \subset J$ with 
\begin{equation}\label{form21} |I| \geq \gamma|J| \quad \text{and} \quad |E \cap I| \geq (2C\epsilon)|I|, \end{equation}
we include $J$ into $G_{n + 1}$, thus $J \cap T_{n + 1} = \emptyset$. In the opposite case, there exists at least one dyadic sub-interval $I \subset J$ with $|I| \geq \gamma|J|$ and $|E \cap I| \geq (2C\epsilon)|I|$. We include $I$ into $B_{n + 1}$. The set $J \, \setminus \, I$ can now be expressed as a union of dyadic intervals of length (exactly) $\gamma |J|$. If $\gamma |J| < \rho$, we include all of these intervals into $\mathcal{S}_{n}$. Otherwise they are included into $T_{n + 1}$. This completes the construction of the sets $B_{n + 1},G_{n + 1},S_{n + 1},T_{n + 1}$.

We now claim that $|T_{n}| \leq (1 - \gamma)^{n}$ for all $n \geq 0$. This is clear for $n = 0$. The inductive step follows from the inequality
\begin{displaymath} |J \cap T_{n + 1}| \leq (1 - \gamma)|J|, \qquad J \in \mathcal{T}_{n}, \end{displaymath}
which is clear by inspecting the construction of $T_{n + 1}$ above. 

Another easy fact is that $|\mathcal{T}_{n}| \leq \gamma^{-n}$ for all $n \geq 0$. As a consequence, let us check that $S_{n}$ consists of at most $\gamma^{-(n + 1)}$ intervals. Indeed, if $\mathcal{S}_{n}$ is the collection of intervals constituting $S_{n}$, then we may assume (inductively) that $|\mathcal{S}_{n}| \leq \gamma^{-(n + 1)}$, and we note that $\mathcal{S}_{n + 1}$ consists of all the intervals in $\mathcal{S}_{n}$, plus additionally $\leq \gamma^{-1}$ new intervals for each element of $\mathcal{T}_{n}$. Thus, 
\begin{displaymath} |\mathcal{S}_{n + 1}| \leq |\mathcal{S}_{n}| + \gamma^{-1}|\mathcal{T}_{n}| \leq \gamma^{-(n + 1)} + \gamma^{-1} \cdot \gamma^{-n} = 2 \cdot \gamma^{-(n + 1)} \leq \gamma^{-(n + 2)}. \end{displaymath}
Since all the intervals in $\mathcal{S}_{n}$ have length $< \rho$, we may now infer that
\begin{displaymath} 1 - |B_{n}| = |G_{n} \cup S_{n} \cup T_{n}| \leq |G_{n}| + \gamma^{-(n + 1)}\rho + (1 - \gamma)^{n}, \qquad n \in \N. \end{displaymath}
In particular, this estimate is true for $n = n(C,\gamma) \in \N$ defined at \eqref{form19}, and for this index
\begin{equation}\label{form20} 1 - |B_{n}| \leq |G_{n}| + \tfrac{1}{2C} \quad \Longrightarrow \quad |G_{n}| \geq 1 - \tfrac{1}{2C} - |B_{n}|. \end{equation} 
Now, a key observation is that, on the other hand, $|B_{n}| \leq \tfrac{1}{2C}$. This is because $B_{n}$ is a union of disjoint (up to endpoints) intervals $I \subset [0,1]$ such that $|E \cap I| \geq (2C\epsilon)|I|$. Since $|E| \leq \epsilon$, we have $|B_{n}| \leq \tfrac{1}{2C}$. Inserting this bound to \eqref{form20}, we find $|G_{n}| \geq 1 - \tfrac{1}{C}$. 

Finally, by construction, $G_{n}$ consists of intervals $J \subset [0,1]$ of length $\geq \rho$ such \eqref{form21} holds for \textbf{no} dyadic subintervals $I \subset J$. This completes the proof. (The original claim about arbitrary intervals $I \subset J$ vs. dyadic intervals follows by covering $I$ by two dyadic intervals of length $\leq 2|I|$). \end{proof} 

\subsection{Finding branching scales for Frostman measures} The idea of the technical lemma in this section is the following. If $\nu$ is a Borel measure on $\R^{d}$ satisfying $\nu(B(x,r)) \lesssim r^{\tau}$, then for some scales $\delta > 0$ it may happen that $\nu$ has "no branching" in the sense that, say, $\nu(B(x,\delta)) \sim \nu(B(x,\delta^{2}))$ for $x \in \spt \nu$. However, the Frostman property $\nu(B(x,r)) \lesssim r^{\tau}$ prevents this from happening on "many" consecutive scales, where "many" depends only on $\tau > 0$. In Lemma \ref{lemma8a}, this fact is quantified by fixing a long sequence of scales $\{\delta_{j}\}$ of a special \emph{a priori} form, which depends only on $\tau$, and another technical parameter $\mathfrak{d} > 1$. The conclusion is then (roughly speaking) that every measure $\nu$ with $\nu(B(x,r)) \lesssim r^{\tau}$ must have significant branching on at least one scale in $\{\delta_{j}\}$.

To describe the sequence $\{\delta_{j}\}$ we find useful, we pose the following definition:

\begin{definition}[$(\tau,\mathfrak{d})$-numbers]\label{def:tauRationals} Let $\tau > 0$ and $\mathfrak{d} > 1$. Let $\mathfrak{n} \lesssim_{\mathfrak{d},\tau} 1$ be the smallest natural number satisfying
\begin{equation}\label{form65a} \frac{1}{2\mathfrak{d}^{\mathfrak{n} - 1}} \leq \frac{\tau}{4}. \end{equation}
 Then, let $\mathcal{Q}_{0}(\mathfrak{d},\tau) = \{\tfrac{1}{2}\mathfrak{d}^{-j} : 0 \leq j \leq \mathfrak{n}\}$.  \end{definition} 
 
 \begin{lemma}\label{lemma8a} For every $\tau > 0$ and $\mathfrak{d} > 1$ there exists $\eta > 0$ and $\delta_{0} = \delta_{0}(\mathfrak{d},\tau) > 0$ such that the following holds for all $\delta \in (0,\delta_{0}]$.
 
 Let $\nu$ be a Borel probability measure on $S^{1}$ satisfying $\nu(B(x,r)) \leq \delta^{-\eta}r^{\tau}$ for all $x \in S^{1}$ and $r > 0$. Then, there exists $p \in \mathcal{Q}_{0}(\mathfrak{d},\tau)$ and a subset $G \subset S^{1}$ with $\nu(G) \geq \tau^{2}/(150\mathfrak{n}^{2}) \gtrsim_{\mathfrak{d},\tau} 1$ such that if $I \in \mathcal{D}_{\delta^{p}}(S^{1})$ has $\nu(I \cap G) > 0$, then
\begin{equation}\label{form68a} \nu(J \cap G) \leq \delta^{\tau/(20\mathfrak{n})}\nu(I \cap G), \qquad J \in \mathcal{D}_{\delta^{\mathfrak{d} p}}(I). \end{equation} 
 \end{lemma}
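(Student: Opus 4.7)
The plan is an entropy telescoping argument combined with a pigeonhole and a specific construction of $G$. Reindex $\mathcal{Q}_{0}(\mathfrak{d},\tau)$ as $q_i := \tfrac{1}{2}\mathfrak{d}^{i-\mathfrak{n}}$, $0 \leq i \leq \mathfrak{n}$, so that $q_0 = p_{\mathfrak{n}}$, $q_\mathfrak{n} = p_0 = 1/2$, and $q_{i+1} = \mathfrak{d} q_i$. Fix $\alpha := \tau/(10\mathfrak{n})$, and call a dyadic child $J \in \mathcal{D}_{\delta^{q_{i+1}}}(I)$ of $I \in \mathcal{D}_{\delta^{q_i}}$ \emph{heavy} if $\nu(J) > \delta^{\alpha}\nu(I)$, \emph{light} otherwise; let $L_i \subset S^{1}$ denote the union of the light children at step $i$. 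For the pointwise ratios $\phi_i(x) := \log\bigl(\nu(I_{q_i}(x))/\nu(I_{q_{i+1}}(x))\bigr) \geq 0$ (where $I_r(x)$ is the dyadic interval of side $\delta^r$ containing $x$), we have $\phi_i(x) \geq \alpha\log(1/\delta)$ precisely when $x \in L_i$.

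Entropy telescoping and the Frostman hypothesis $\nu(B(x,r)) \leq \delta^{-\eta}r^{\tau}$ give
\begin{displaymath}
\sum_{i=0}^{\mathfrak{n}-1}\int \phi_i\,d\nu = H(\nu,\mathcal{D}_{\delta^{q_\mathfrak{n}}}) - H(\nu,\mathcal{D}_{\delta^{q_0}}) \geq (\tau/2 - \eta - p_{\mathfrak{n}})\log(1/\delta) - O(1) \geq (\tau/4 - \eta)\log(1/\delta) - O(1),
\end{displaymath}
using $p_{\mathfrak{n}} \leq \tau/(4\mathfrak{d}) \leq \tau/4$, which follows from the defining inequality $1/(2\mathfrak{d}^{\mathfrak{n}-1}) \leq \tau/4$ of Definition \ref{def:tauRationals}. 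The heavy contribution satisfies $\int_{S^{1}\setminus L_i}\phi_i\,d\nu \leq \alpha\log(1/\delta)$, so summing, $\sum_i\int_{L_i}\phi_i\,d\nu \geq (\tau/4 - \eta - \mathfrak{n}\alpha)\log(1/\delta) - O(1) = (3\tau/20 - \eta)\log(1/\delta) - O(1)$, and pigeonhole furnishes $i_0$ with $\int_{L_{i_0}}\phi_{i_0}\,d\nu \gtrsim (\tau/\mathfrak{n})\log(1/\delta)$. The step I expect to be the main obstacle is converting this integral bound into a lower bound of the form $\nu(L_{i_0}) \geq \beta \gtrsim \tau/\mathfrak{n}$, since $\phi_{i_0}$ is not uniformly bounded on $L_{i_0}$ (a cell can have $\nu$-mass arbitrarily small relative to its parent). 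I plan to handle this by truncation: the elementary tail estimate
\begin{displaymath}
\nu(\{x : \phi_i(x) > t\}) = \sum_{J \,:\, \nu(J) < 2^{-t}\nu(\mathrm{parent}(J))}\nu(J) \lesssim 2^{-t}\,\delta^{-(q_{i+1}-q_i)}
\end{displaymath}
shows that truncating $\phi_{i_0}$ at level $K := (q_{i_0+1}-q_{i_0}+\tau/100)\log(1/\delta) \leq (1/2 + \tau/100)\log(1/\delta)$ changes the integral by at most $O(\delta^{\tau/100})$; then $\int_{L_{i_0}}\phi_{i_0}\,d\nu \leq K\nu(L_{i_0}) + o(1)$ gives $\nu(L_{i_0}) \geq \beta$ for some $\beta \gtrsim \tau/\mathfrak{n}$.

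With $p := q_{i_0}$, set $\mathcal{A} := \{I \in \mathcal{D}_{\delta^{q_{i_0}}} : \nu(I \cap L_{i_0}) \geq (\beta/2)\nu(I)\}$ and $G := \bigcup_{I \in \mathcal{A}}(I \cap L_{i_0})$. A standard averaging based on $\sum_I \nu(I)\cdot[\nu(I \cap L_{i_0})/\nu(I)] = \nu(L_{i_0}) \geq \beta$ yields $\sum_{I \in \mathcal{A}}\nu(I) \geq \beta/2$, and hence $\nu(G) \geq \beta^{2}/4 \geq \tau^{2}/(150\mathfrak{n}^{2})$ after arranging constants. For $I \in \mathcal{A}$ with $\nu(I \cap G) > 0$ and $J \in \mathcal{D}_{\delta^{q_{i_0+1}}}(I)$: if $J$ is heavy then $J \cap G = \emptyset$, while if $J$ is light then
\begin{displaymath}
\nu(J \cap G) \leq \nu(J) \leq \delta^{\alpha}\nu(I) \leq (2/\beta)\,\delta^{\alpha}\,\nu(I \cap G);
\end{displaymath}
since $\alpha - \tau/(20\mathfrak{n}) = \tau/(20\mathfrak{n}) > 0$, the desired bound $(2/\beta)\delta^{\alpha} \leq \delta^{\tau/(20\mathfrak{n})}$ reduces to $\delta^{\tau/(20\mathfrak{n})} \leq \beta/2$, which holds once $\delta \leq \delta_0(\mathfrak{d},\tau)$, with $\eta$ chosen small enough to absorb all $O(1)$ and $o(1)$ terms from the entropy inequality and the truncation.
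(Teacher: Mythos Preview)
Your proof is correct and follows essentially the same entropy--telescoping and pigeonhole scheme as the paper: both arguments lower-bound $H(\nu,\mathcal{D}_{\delta^{1/2}})-H(\nu,\mathcal{D}_{\delta^{p_{\mathfrak{n}}}})$ via the Frostman condition, pigeonhole to find a good scale, and then pass from an entropy-type lower bound to a genuine $\nu$-measure lower bound on the union of ``light'' children. The only notable difference is in that last step: the paper uses Jensen's inequality (applied to $\log$ on a discrete probability over the light children) to get $\nu_I(G_I)\gtrsim\bar\tau$, whereas you use the tail bound $\nu(\phi_i>t)\lesssim 2^{-t}\delta^{-(q_{i+1}-q_i)}$ and a truncation to convert $\int_{L_{i_0}}\phi_{i_0}\,d\nu$ into $\nu(L_{i_0})\gtrsim\tau/\mathfrak{n}$---both are short and elementary, and your route has the mild advantage that the truncation level is explicit.
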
 
 
 This lemma is actually \cite[Lemma 4.17]{MR4388762}, but since that lemma was not formulated as a "stand-alone" result, we repeat the proof here. 
 
 \begin{proof}[Proof of Lemma \ref{lemma8a}] We define the following increasing scale sequence: $\delta_{0} := \delta^{\mathfrak{d}/2}$, and
 \begin{displaymath} \delta_{j + 1} := \delta_{j}^{1/\mathfrak{d}}, \qquad 0 \leq j \leq \mathfrak{n}. \end{displaymath}
 Thus actually $\{\delta_{j}\}_{j = 0}^{\mathfrak{n}} = \{\delta^{p}\}_{p \in \mathcal{Q}_{0}(\mathfrak{d},\tau)}$. Note that, a little unusually, the sequence $\{\delta_{j}\}$ is increasing in $j$. Below, we identify $S^{1} \cong [0,1)$ in the sense that we use dyadic intervals on $S^{1}$. We assume with no loss of generality that $\delta > 0$, as in the statement, has the property that $\delta_{j} \in 2^{-\N}$ for all $0 \leq j \leq \mathfrak{n}$. It is easy to check that the general case of Lemma \ref{lemma8a} can be reduced to the special case with this extra assumption.
 
 Regarding the constant $\eta = \eta(\tau,\mathfrak{d}) > 0$, the only requirement is that
 \begin{equation}\label{form67a} \eta < \tfrac{\tau(\mathfrak{d} - 1)}{2}. \end{equation}
 
 Using the hypothesis $\nu(B(x,r)) \leq \delta^{-\eta}r^{\tau}$, we have the uniform bound
\begin{displaymath} \nu(I) \leq \delta^{-\eta}\delta_{0}^{\tau} \qquad I \in \mathcal{D}_{\delta_{0}}, \end{displaymath} 
which gives the following lower bound for the entropy of $\nu$ at scale $\delta_{0}$:
\begin{equation}\label{form21c} H(\nu,\mathcal{D}_{\delta_{0}}) = \sum_{I \in \mathcal{D}_{\delta_{0}}} \nu(I) \log \tfrac{1}{\nu(I)} \geq \log \delta_{0}^{-\tau} - \log \delta^{-\eta} \geq \log \delta^{-\tau/2}. \end{equation}
In the final inequality, we used that $\delta_{0}^{-\tau} = \delta^{-\tau \mathfrak{d}/2} = \delta^{-\tau/2} \cdot \delta^{-(\mathfrak{d} - 1) \tau/2}$, and the logarithm of the second factor exceeds the error term $\log \delta^{-\eta}$ if $\delta > 0$ is sufficiently small.

On the other hand, observing that $\mathcal{D}_{\delta_{\mathfrak{n}}}$ consists of intervals of length $\delta^{1/[2\mathfrak{d}^{n - 1}]}$,
\begin{displaymath} H(\nu,\mathcal{D}_{\delta_{\mathfrak{n}}}) \leq \log |\mathcal{D}_{\delta_{\mathfrak{n}}}| \leq \log \delta^{-1/[2\mathfrak{d}^{n - 1}]} \stackrel{\eqref{form65a}}{\leq} \log \delta^{-\tau/4}. \end{displaymath} 
Combining this estimate with \eqref{form21c},
\begin{displaymath} \log \delta^{-\tau/4} \leq H(\nu,\mathcal{D}_{\delta_{0}}) - H(\nu,\mathcal{D}_{\delta_{\mathfrak{n}}}) \stackrel{\eqref{entropy}}{=} \sum_{j = 0}^{\mathfrak{n} - 1} H(\nu,\mathcal{D}_{\delta_{j}} \mid \mathcal{D}_{\delta_{j + 1}}). \end{displaymath}
Consequently, we may fix an index $j \in \{0,\ldots,\mathfrak{n} - 1\}$ with the property
\begin{equation}\label{form23c} \sum_{I \in \mathcal{D}_{\delta_{j + 1}}} \nu(I) \cdot H(\nu_{I},\mathcal{D}_{\delta_{j}}) = H(\nu,\mathcal{D}_{\delta_{j}} \mid \mathcal{D}_{\delta_{j + 1}}) \geq \log \delta^{-\tau/(4\mathfrak{n})}. \end{equation} 
Here $\nu_{I} = \nu(I)^{-1} \nu|_{I}$ for any $I \in \mathcal{D}_{\delta_{j + 1}}$ with $\nu(I) \neq 0$. Motivated by \eqref{form23c}, set
\begin{equation}\label{form69a} \bar{\tau} := \frac{\tau}{4\mathfrak{n}} \gtrsim_{\tau,\mathfrak{d}} 1.  \end{equation}
The choice of the index $j \in \{0,\ldots,\mathfrak{n} - 1\}$ at \eqref{form23} roughly tells us that for many "long" intervals $I \in \mathcal{D}_{\delta_{j + 1}}$ with $\nu(I) > 0$, the re-normalised restriction $\nu_{I}$ is not concentrated on few "short" sub-intervals of $I$ in the family $\mathcal{D}_{\delta_{j}}$. By restricting $\nu$ a bit, we may replace the word "many" by "all", as we will see next. We claim that there exists a collection of intervals $\mathcal{G}_{j + 1} \subset \mathcal{D}_{\delta_{j + 1}}$ with
\begin{equation}\label{form239} \nu(\cup \mathcal{G}_{j + 1}) \geq \bar{\tau}/2 \end{equation} 
with the properties $\nu(I) > 0$ and 
\begin{equation}\label{form24c} H(\nu_{I},\mathcal{D}_{\delta_{j}}) \geq \log \delta^{-\bar{\tau}/2}, \qquad I \in \mathcal{G}_{j + 1}. \end{equation} Indeed, if this failed, then, using the trivial bound $H(\nu_{I},\mathcal{D}_{\delta_{j}}) \leq \log \delta^{-1}$,
\begin{align*} \sum_{I \in \mathcal{D}_{\delta_{j + 1}}} \nu(I) H(\nu_{I},\mathcal{D}_{\delta_{j}}) & \leq \sum_{H(\ldots) \geq \log \delta^{-\bar{\tau}/2}} \nu(I) \cdot \log \delta^{-1} + \sum_{H(\ldots) < \log \delta^{-\bar{\tau}/2}} \nu(I) \cdot \log \delta^{-\bar{\tau}/2}\\
& < \frac{\bar{\tau}}{2} \cdot \log \delta^{-1} + \log \delta^{-\bar{\tau}/2} = \log \delta^{-\bar{\tau}}, \end{align*} 
contradicting \eqref{form23c}. 

Now, \eqref{form24c} means that $\nu_{I} = \nu(I)^{-1}\nu|_{I}$ cannot be fully concentrated inside any single interval of length $\delta_{j} = |I|^{\mathfrak{d}}$. We want, \emph{a fortiori}, that $\nu_{I}$ satisfies a Frostman bound at the smaller scale $\delta_{j}$, see \eqref{form68a}. This will be achieved by choosing a suitable subset $G_{I} \subset I$.

Fix $I \in \mathcal{G}_{j + 1}$, write $\mathcal{D}_{\delta_{j}}(I) := \{J \in \mathcal{D}_{\delta_{j}} : J \subset I\}$, and then estimate
\begin{displaymath} \log \delta^{-\bar{\tau}/2} \stackrel{\eqref{form24c}}{\leq} \sum_{J \in \mathcal{D}_{\delta_{j}}(I)} \nu_{I}(J)\log \tfrac{1}{\nu_{I}(J)} \leq \sum_{\nu_{I}(J) \geq \delta^{\bar{\tau}/4}} \nu_{I}(J) \log \delta^{-\bar{\tau}/4} + \sum_{\nu_{I}(J) < \delta^{\bar{\tau}/4}} \nu_{I}\log \tfrac{1}{\nu_{I}(J)}. \end{displaymath}
Since $\nu_{I}(S^{1}) = 1$, the first term is bounded from above by $\tfrac{1}{2} \log \delta^{-\bar{\tau}/2}$, and consequently the second term has the lower bound
\begin{equation}\label{form22c} \sum_{\nu_{I}(J) < \delta^{\bar{\tau}/4}} \nu_{I}(J)\log \tfrac{1}{\nu_{I}(J)} \geq \log \delta^{-\bar{\tau}/4}. \end{equation}
Write $\mathcal{G}_{I} := \{I \in \mathcal{D}_{\delta_{j}}(I) : \nu_{I}(J) < \delta^{\bar{\tau}/4}\}$ and $G_{I} := \cup \mathcal{G}_{I}$. Then,
\begin{align*} \log \delta^{-\bar{\tau}/4} & \stackrel{\eqref{form22c}}{\leq} \sum_{J \in \mathcal{G}_{I}} \nu_{I}(J)\log \tfrac{1}{\nu_{I}(J)} = \nu_{I}(G_{I})\sum_{J \in \mathcal{G}_{I}} \frac{\nu_{I}(J)}{\nu_{I}(G_{I})} \log \tfrac{1}{\nu_{I}(J)}\\
&  \leq \nu_{I}(G_{I}) \log \left(\sum_{J \in \mathcal{G}_{I}} \tfrac{1}{\nu_{I}(G_{I})} \right) \leq \nu_{I}(G_{I}) \log |\mathcal{G}_{I}| \leq \delta^{-\nu_{I}(G_{I})} \end{align*}
by Jensen's inequality applied to the discrete probability measure $J \mapsto \nu_{I}(J)/m_{I}$ on $\mathcal{G}_{I}$, and since $|\mathcal{G}_{I}| \leq |\mathcal{D}_{\delta_{j}}(I)| \leq \delta^{-1}$. Consequently, 
 $\nu_{I}(G_{I}) \geq \bar{\tau}/4$. Define finally
\begin{displaymath} G := \bigcup_{I \in \mathcal{G}_{j + 1}} G_{I}. \end{displaymath}
Then,
\begin{displaymath} \tfrac{1}{\nu(G_{j + 1})} \cdot \nu(G) = \nu(G) = \sum_{I \in \mathcal{G}_{j + 1}} \nu(I) \cdot \nu_{I}(G_{I}) \geq \bar{\tau}/4, \end{displaymath}
so $\nu(G) \geq (\bar{\tau}/4) \cdot \nu(G_{j + 1}) \geq \bar{\tau}^{2}/8 \geq \tau^{2}/(150\mathfrak{n}^{2})$ according to \eqref{form239} and \eqref{form69a}.
Further, note that if $I \in \mathcal{D}_{\delta_{j + 1}}$ with $\nu(I \cap G) \neq 0$, then $I \in \mathcal{G}_{j + 1}$, and therefore
\begin{displaymath} \nu(J \cap G) = \nu(I) \cdot \nu_{I}(J \cap G) \leq \delta^{\bar{\tau}/4}\nu(I) \leq \tfrac{4}{\bar{\tau}} \cdot \delta^{\bar{\tau}/4}\nu(I \cap G) , \qquad J \in \mathcal{D}_{\delta_{j}}(I). \end{displaymath}
This completes the proof, noting that $\delta_{j + 1} = \delta^{p}$ for some $p \in \mathcal{Q}_{0}(\mathfrak{d},\tau)$, and $\delta_{j} = \delta^{\mathfrak{d}p}$. \end{proof}

\section{The main proposition}\label{s:statements}

The purpose of this section is to complete the proof of Theorem \ref{mainThm}, or more precisely to reduce it to Proposition \ref{mainProp}. To state that proposition, we introduce the following notation. Let $\mathbf{C} \geq 1$ and $s,\tau \in (0,2]$. For $\delta,\sigma > 0$, write
\begin{displaymath} \iota(\sigma)[\delta] := \iota(\mathbf{C},s,\sigma,\tau)[\delta] := \sup_{\mu,\nu} \int_{S^{1}} \mu(B(1) \cap H_{\theta}(K,\delta^{-\sigma},[\delta,1])) \, d\nu(\theta), \end{displaymath}
where the "$\sup$" runs over all $(s,\mathbf{C})$-regular measures $\mu$ on $\R^{2}$ and $(\tau,\mathbf{C})$-Frostman measures on $\nu$ on $S^{1}$. While $\iota(\sigma)[\delta]$ evidently depends on $\mathbf{C},s,\tau$, these quantities will remain fixed throughout the proof of Proposition \ref{mainProp}, whereas $\sigma$ and $\delta$ will vary.

\begin{proposition}\label{mainProp} Let $0 < \sigma < \sigma_{0}$ with $\sigma \leq s$ and and $\sigma_{0}(1 - \tfrac{\sigma}{4}) < \sigma$. Assume that for every $\epsilon_{0} > 0$ there exists $\Delta_{0} > 0$ such that $\iota(\sigma_{0})[\Delta] \leq \epsilon_{0}$ for all $\Delta \in (0,\Delta_{0}]$.

Then, for every $\epsilon > 0$, there exists $\delta_{0} > 0$ such that $\iota(\sigma)[\delta] \leq \epsilon$ for all $\delta \in (0,\delta_{0}]$.
\end{proposition}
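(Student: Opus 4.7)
I argue by contradiction along the lines of the sketch for \emph{``Proposition''} \ref{mainPropToy}. Suppose that for some $\epsilon > 0$ there exist arbitrarily small $\delta$ with $\iota(\sigma)[\delta] > \epsilon$. Fix such a $\delta$, an $(s,\mathbf{C})$-regular $\mu$ on $\R^{2}$ with $K = \spt \mu$, and a $(\tau,\mathbf{C})$-Frostman $\nu$ on $S^{1}$ which (nearly) realise this lower bound. Apply Lemma \ref{lemma8a} to $\nu$ with a large technical parameter $\mathfrak{d}$ to select an intermediate scale $\Delta = \delta^{p}$, $p \in \mathcal{Q}_{0}(\mathfrak{d},\tau)$, together with a subset $G \subset S^{1}$ of quantitative positive $\nu$-mass on which $\nu$ exhibits significant branching between scales $\Delta$ and $\Delta^{\mathfrak{d}}$. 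With $\Delta$ so fixed, use the standing hypothesis to ensure $\iota(\sigma_{0})[\Delta'] \leq \epsilon_{0}$ for all $\Delta' \leq \Delta$, where $\epsilon_{0} \ll \epsilon$ will be chosen at the very end of the argument.

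\textbf{Converting the hypothesis into multi-scale information.} Run Lemma \ref{lemma4} at scale $\Delta$ with $N \sim \log_{\Delta^{-1}} \delta^{-1}$ and a large auxiliary parameter $M = M(\epsilon,\mathbf{C})$. This produces a subset $S \subset S^{1}$ of nearly full $\nu$-mass and, for each $\theta \in S$, a set $L_{\theta} \subset K \cap B(1)$ of nearly full $\mu$-mass, such that for every $x \in L_{\theta}$ the fraction of indices $j \in \{1,\ldots,N\}$ at which $x \in H_{\theta}(K,\Delta^{-\sigma_{0}},[A\Delta^{j + 1},A\Delta^{j}])$ is at most $M\epsilon_{0}$. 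Feeding this into Lemma \ref{lemma5} (with $\eta \sim M\epsilon_{0}$) produces the fibrewise upper bound
\begin{equation*} |K_{5\delta} \cap \pi_{\theta}^{-1}\{t\}|_{5\delta} \lesssim C^{N}\delta^{-\sigma_{0} - \eta}, \qquad t \in \R, \end{equation*}
valid for $\theta \in S \cap G$ and $x \in L_{\theta}$. On the other hand, the counter-assumption, combined with Lemma \ref{lemma15} (which lets one transfer the high-multiplicity data between comparable scales), guarantees that a substantial $\mu \otimes \nu$-mass of pairs $(x,\theta)$ sits on fibres of multiplicity at least $\delta^{-\sigma}$ at scale $\delta$. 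These two pieces of fibrewise information---one matching the ``$\sigma_{0}$ hypothesis'', the other provided by the ``$\sigma$ counter-assumption''---are what must eventually clash.

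\textbf{Extracting a pair of sets for Shmerkin's inverse theorem.} By pigeonholing over dyadic $\Delta$-cubes in $\R^{2}$, passing to the affine parametrisation $\pi_{\theta}(x,y) = x + \theta y$, and renormalising (the renormalised measures $\mu^{B}$ remain $(s,\mathbf{C})$-regular), extract two well-separated directions $\theta,\theta' \in S \cap G$ and sets $A,B \subset [0,1]$ at scale $\delta$ satisfying approximate bounds of the form
\begin{equation*} \delta^{-\sigma_{1} + s + \zeta} \leq |A|_{\delta} \leq \delta^{-\sigma_{1} + s}, \qquad |A + B|_{\delta} \leq \delta^{-\sigma_{1} + s}, \qquad |B|_{\delta} \geq \delta^{-\sigma_{1}}, \end{equation*}
for some $\sigma_{1} \in [\sigma,s)$ close to $\sigma$. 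Here $\zeta = \zeta(\Delta) > 0$ is the threshold from Shmerkin's inverse theorem applied at scale $\Delta$; the upper bounds on $|A|_{\delta}$ and $|A+B|_{\delta}$ correspond to small projections in the directions $\theta$ and $\theta'$, while the matching lower bound on $|A|_{\delta}$ (Assumption \ref{A3}) is the delicate part, to be obtained by a multi-scale pigeonhole of the type promised by Proposition \ref{mainTechnicalProp}, combined with Lemma \ref{lemma2}. Since $|A + B|_{\delta} \leq \delta^{-\zeta}|A|_{\delta}$, Shmerkin's theorem applies to $\{\Delta^{j}\}$-uniform subsets of $A,B$ and shows that, wherever $B$ has nontrivial branching at scale $\Delta$, $A$ has \emph{full} branching $N_{j}^{A} = \Delta^{-1}$. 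At the remaining scales, applying Lemma \ref{lemma4} to the renormalisations $\mu^{Q}$ over dyadic $\Delta^{j}$-cubes $Q$ (the rigorous replacement of ``Lemma'' \ref{lemma14}) forces $N_{j}^{A} \gtrsim \Delta^{-\sigma_{0} + s}$. Inserting both types of branching estimates into $|A|_{\delta} = \prod_{j} N_{j}^{A}$, with $\lambda \gtrsim \sigma_{1}$ denoting the fraction of branching scales of $B$, the counting inequality at the end of the sketch contradicts $|A|_{\delta} \leq \delta^{-\sigma_{1} + s}$ provided $\sigma_{0}(1 - \sigma/4) < \sigma$ and $\delta$ is small enough to absorb the losses from $\eta$, $\zeta$, $\epsilon_{0}$, and the replacement of $\sigma$ by $\sigma_{1}$.

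\textbf{Main obstacle.} The principal difficulty is the rigorous reduction to a product setting: neither $K$ nor $\mu$ is literally of the form $A \times B$; the good directions form only a positive-$\nu$-mass subset of $S^{1}$ rather than containing specific vectors like $\{0,1\}$; and, most critically, the simultaneous upper and lower bounds on $|A|_{\delta}$ (with $\zeta$ depending on $\Delta$ only, not on $\delta$) cannot be produced by a single pigeonhole. Overcoming these through a multi-scale pigeonhole---essentially the content of Proposition \ref{mainTechnicalProp}---while preserving the branching lower bound for $A$ via renormalisation is the technical heart of the argument. Once the sets $A$ and $B$ have been extracted, the remainder is the arithmetic of the sketch.
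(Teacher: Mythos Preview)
Your plan follows the paper's architecture and correctly identifies Proposition \ref{mainTechnicalProp} as the technical heart. However, there is a scale-selection confusion that, as written, would make the argument fail. You choose the intermediate scale by applying Lemma \ref{lemma8a} to $\nu$, setting $\Delta = \delta^{p}$ for some $p \in \mathcal{Q}_{0}(\mathfrak{d},\tau)$, and then intend to run Shmerkin's inverse theorem at this $\Delta$. But then $N = \log_{\Delta^{-1}}\delta^{-1} = 1/p$ is \emph{bounded} (since $p$ ranges over a fixed finite set), whereas Theorem \ref{shmerkin} requires $N \geq N_{0}(m,\rho)$; worse, your $\zeta = \zeta(\Delta)$ would then depend on $\delta$, destroying the crucial matching upper/lower bounds on $|A|_{\delta}$. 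In the paper these two scales play separate roles: the ``Shmerkin scale'' $\Delta$ is fixed \emph{first}, independently of $\delta$, small enough that $\iota(\sigma_{0})[\Delta] \leq \epsilon_{0}$ and that Theorem \ref{shmerkin} applies with the target $\rho$; only then is $\delta = \Delta^{N}$ taken with $N$ large. The branching lemma for $\nu$ is applied \emph{inside} Proposition \ref{mainTechnicalProp} to locate a different intermediate scale $\bar{\delta}^{1/2}$ governing the separation $|\theta_{0}-\theta_{1}|$ of the two good directions, not the Shmerkin multiscale decomposition.

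Two smaller corrections. The branching lower bound $R_{n}^{1} \gtrsim \Delta^{\sigma_{0}-s}$ is not obtained by reapplying Lemma \ref{lemma4} to renormalisations $\mu^{Q}$; Lemma \ref{lemma4} produces the low-multiplicity sets $L_{\theta}$ once, up front, and the per-scale projection lower bound (your ``Lemma'' \ref{lemma14}) is then Proposition \ref{prop4}, which converts the $L_{\theta}$-information into $|\pi_{\theta}(K\cap Q)|_{\Delta^{j+1}} \geq \Delta^{\sigma_{0}-s+o(1)}$ for most $j$. And the fibrewise bound you quote from Lemma \ref{lemma5}, namely $\lesssim C^{N}\delta^{-\sigma_{0}-\eta}$, does not clash with the counter-assumption's $\delta^{-\sigma}$ lower bound (since $\sigma<\sigma_{0}$); in the paper this lemma is used only as an auxiliary device inside Lemma \ref{lemma3}, not to produce the final contradiction.
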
 

\begin{remark} The proof of Proposition \ref{mainProp} is effective in principle. Assuming that we have access to information about which $\Delta_{0}$ works for given $\epsilon_{0}$, it is possible, with some effort, to answer the question: which threshold $\delta_{0}$ works for given $\epsilon > 0$? \end{remark} 

We then complete the proof of Theorem \ref{mainThm}:

\begin{proof}[Proof of Theorem \ref{mainThm} assuming Proposition \ref{mainProp}] Fix $\mathbf{C},\tau > 0$ and $s \in [0,1]$. Let
\begin{displaymath} \Sigma(\mathbf{C},s,\tau) := \inf \{\sigma > 0 : \forall \epsilon > 0 \, \exists \delta_{0} > 0 \text{ such that } \iota(\mathbf{C},s,\sigma,\tau)[\delta] \leq \epsilon \text{ for all } \delta \in (0,\delta_{0}]\}. \end{displaymath}
We claim that $\Sigma(\mathbf{C},s,\tau) = 0$. Let us check that this proves Theorem \ref{mainThm}. Indeed, for Theorem \ref{mainThm}, we need to fix $\mathbf{C},\epsilon,s,\sigma > 0$, and prove that $\iota(\mathbf{C},s,\sigma,\epsilon)[\delta] \leq \epsilon$ for all $\delta > 0$ small enough. But this follows from $\Sigma(\mathbf{C},s,\epsilon) = 0 < \sigma$.

The proof that $\Sigma(\mathbf{C},s,\tau) = 0$ could be accomplished by a direct argument, but let us use a counter assumption: $\Sigma(\mathbf{C},s,\tau) > 0$. Clearly $\Sigma(\mathbf{C},s,\tau) \leq s$, because if $\sigma > s$, then $|K \cap B(x,1)|_{\delta} \leq \mathbf{C}\delta^{-s}$ (for every $x \in \R^{2}$) implies that
\begin{displaymath} H_{\theta}(K,\delta^{-\sigma},[\delta,1]) = \emptyset \end{displaymath}
for all $\delta > 0$ sufficiently small. In particular then $\iota(\mathbf{C},s,\sigma,\tau)[\delta] = 0$.

It is therefore possible to fix $0 < \sigma < \Sigma(\mathbf{C},s,\tau) < \sigma_{0}$ such that $\sigma \leq s$, and $\sigma_{0}(1 - \tfrac{\sigma}{4}) < \sigma$. Since $\sigma_{0} > \Sigma(\mathbf{C},s,\tau)$, for all $\epsilon_{0} > 0$ there exists $\Delta_{0} > 0$ such that $\iota(\mathbf{C},s,\sigma_{0},\tau)[\Delta] \leq \epsilon_{0}$ for all $\Delta \in (0,\Delta_{0}]$. But now Proposition \ref{mainProp} implies that for all $\epsilon > 0$ there exists $\delta_{0} > 0$ such that $\iota(\mathbf{C},s,\sigma,\tau)[\delta] \leq \epsilon$ for all $\delta \in (0,\delta_{0}]$. But then $\Sigma(\mathbf{C},s,\tau) \leq \sigma$ by definition, which contradicts our choice $\sigma < \Sigma(\mathbf{C},s,\tau)$. \end{proof}

The proof of Proposition \ref{mainProp} will mostly be accomplished in Section \ref{s:mainProof}, but it requires a major auxiliary result, Proposition \ref{mainTechnicalProp}. To make the statement of that proposition appear more readable, we briefly initialise the proof of Proposition \ref{mainProp}: this way we will already introduce the objects to which Proposition \ref{mainTechnicalProp} will be applied. We will recall the beginning of the proof of Proposition \ref{mainProp}, once more, at the head of Section \ref{s:mainProof}.

The proof of Proposition \ref{mainProp} proceeds by making a counter assumption: for some $\epsilon > 0$, there are arbitrarily small scales $\delta > 0$ such that $\iota(\sigma)[\delta] \geq \epsilon$. To spell out the definition, for arbitrarily small $\delta > 0$, we may find an $(s,\mathbf{C})$-regular measure $\mu$ on $\R^{2}$, and a $(\tau,\mathbf{C})$-Frostman measure $\nu$ on $S^{1}$ such that 
\begin{equation}\label{form22} \int_{S^{1}} \mu(B(1) \cap H_{\theta}(K,\delta^{-\sigma},[\delta,1])) \, d\nu(\theta) \geq \epsilon. \end{equation}
In particular, there exist subsets $S_{1} \subset S^{1}$ with $\nu(S_{1}) \geq \tfrac{\epsilon}{2}$ such that 
\begin{equation}\label{form65} \mu(B(1) \cap H_{\theta}(K,\delta^{-\sigma},[\delta,1])) \geq \tfrac{\epsilon}{2}, \qquad \theta \in S_{1}. \end{equation}
We then fix a constant $\lambda = \lambda(\sigma,\sigma_{0},\tau) > 0$, depending only on $\sigma,\sigma_{0},\tau$. The precise value is not important yet, but will be fixed at \eqref{choiceLambda}. We then apply the hypothesis of Proposition \ref{mainProp} with constant
\begin{equation}\label{form35} \epsilon_{0} := \frac{\lambda\epsilon^{2}}{16 \mathbf{C}} > 0 \end{equation} 
to find a scale $\Delta \in (0,\tfrac{1}{500}]$ such that $\Delta$ is small in a manner depending on $\mathbf{C},\sigma,\sigma_{0},\tau$ (the precise requirement is not yet important, but will be discussed right below \eqref{form133}), and
\begin{displaymath} \sup_{\bar{\mu}} \int \bar{\mu}(B(1) \cap H_{\theta}(K,\Delta^{-\sigma_{0}},[\Delta,1])) \, d\nu(\theta) \leq \epsilon_{0}, \end{displaymath}
where the $\sup$ runs over $(s,\mathbf{C})$-regular measures $\bar{\mu}$ on $\R^{2}$. With these choices, we claim to find a contradiction to \eqref{form22}.

\begin{remark}  We may assume with no loss of generality that $\delta = \Delta^{N}$ for some large integer $N \in \N$. Why? Note that the choice of $\Delta$ only depends on $\epsilon,\sigma,\sigma_{0},\tau$. When we write "$\delta = \Delta^{N}$ without loss of generality", we actually mean that if we manage to deduce a contradiction from \eqref{form22} for all $\delta = \Delta^{N}$ with $N \geq N_{0}$, say, then we do the same for all $\delta > 0$ sufficiently small. To see this, fix $0 < \delta \leq \Delta^{N_{0}}$ arbitrary, assume \eqref{form22} for this $\delta$, and let $N \in \N$ such that $\Delta^{N + 1} \leq \delta \leq \Delta^{N}$. Then 
\begin{equation}\label{form72a} H_{\theta}(K,\delta^{-\sigma},[\delta,1]) \subset H_{\theta}(K,\Delta\delta^{-\sigma},[\Delta^{N},1]) \end{equation}
by Lemma \ref{lemma13}(iii) applied with $C := \Delta^{N}/\delta$.

Now, we take $N \in \N$ (also) so large that $\Delta\delta^{-\sigma} \geq \delta^{-\sigma'} \geq (\Delta^{N})^{-\sigma'}$, where $\sigma' < \sigma$ is so close to $\sigma$ that still $\sigma_{0}(1 - \tfrac{\sigma'}{4}) < \sigma'$. Now the inclusion \eqref{form72a} combined with \eqref{form22} yields
\begin{displaymath} \int \mu(B(1) \cap H_{\theta}(K,(\Delta^{N})^{-\sigma'},[\delta,1])) \, d\nu(\theta) \geq \int_{S^{1}} \mu(B(1) \cap H_{\theta}(K,\delta^{-\sigma},[\delta,1])) \, d\nu(\theta) \geq \epsilon. \end{displaymath}
We have now managed to replace the "given" $\delta$ by one of the form $\Delta^{N}$ at the cost of changing $\sigma$ by an insignificant amount. In the sequel, similar arguments will be often omitted, and we will assume that "$\delta$" has some convenient form where necessary. \end{remark}

Write
\begin{equation}\label{form73} M := \frac{16\mathbf{C}}{\epsilon^{2}} \quad \text{and} \quad A := 500 \end{equation}
and for $\theta \in S_{1}$, consider the "low multiplicity sets" 
\begin{equation}\label{form41} L_{\theta} := \{x \in B(1) \cap K : \tfrac{1}{N}|\{0 \leq j \leq N - 1 : x \in H_{\theta}(K,\Delta^{-\sigma_{0}},[A\Delta^{j + 1},A\Delta^{j}])\}| \leq \lambda\} \end{equation}
familiar form Lemma \ref{lemma4}. Note that $M\epsilon_{0} = \lambda$. With the choice of "$M$" above, Lemma \ref{lemma4} implies that there exists a further subset $S_{2} \subset S_{1}$ with $\nu(S_{2}) \geq \tfrac{\epsilon}{4}$ such that $\mu(L_{\theta}) \geq \mu(B(1)) - \tfrac{\epsilon}{4}$ for all $\theta \in S_{2}$. Combining this with \eqref{form65}, we find $\mu(B(1) \cap H_{\theta}(K,\delta^{-\sigma},[\delta,1]) \cap L_{\theta}) \geq \tfrac{\epsilon}{4}$ for all $\theta \in S_{2}$, and as a corollary
\begin{equation}\label{form23} \int_{S^{1}} \mu(B(1) \cap H_{\theta}(K,\delta^{-\sigma},[\delta,1]) \cap L_{\theta}) \, d\nu(\theta) \geq \tfrac{\epsilon}{16}. \end{equation}
To derive a contradiction directly from \eqref{form23} seems difficult, but would be possible if $\mu$ and $\nu$ satisfied a few additional properties. It turns out that these properties can be achieved after replacing $\mu$ by a suitable renormalisation $\bar{\mu} = \mu^{\mathbf{B}}$ to some disc $\mathbf{B} \subset \R^{2}$. The next proposition -- the most technical statement of the paper -- formalises this idea:

\begin{proposition}\label{mainTechnicalProp} Let $A,\mathbf{C},\Delta,\epsilon,\eta,\lambda,s,\sigma,\sigma_{0},\tau,\zeta \in (0,1]$ with $\eta \leq \sigma \leq s$, and $\mathfrak{d} > 1$. Assume that $\delta = \Delta^{N}$ for some $N \in \N$ sufficiently large, depending on all the previously listed constants. Let $\nu$ be a $(\tau,\mathbf{C})$-Frostman measure on $S^{1}$. Let $\mu$ be an $(s,\mathbf{C})$-regular measure with $\spt \mu = K \subset \R^{2}$ satisfying
\begin{equation}\label{form66} \int_{S^{1}} \mu(B(1) \cap H_{\theta}(K,\delta^{-\sigma},[\delta,1]) \cap L_{\theta}) \, d\nu(\theta) \geq \epsilon, \end{equation}
where
\begin{equation}\label{form67} L_{\theta} = \{x \in \R^{2} : \tfrac{1}{N}|\{0 \leq j \leq N - 1 : x \in H_{\theta}(K,\Delta^{-\sigma_{0}},[A\Delta^{j + 1},A\Delta^{j}]\}| \leq \lambda\}. \end{equation} 
Then, there exists an $(s,\mathbf{C})$-regular measure $\bar{\mu}$ with $\spt \bar{\mu} = \bar{K}$ and a $(\tau,\mathbf{C})$-Frostman measure $\bar{\nu}$ on $S^{1}$ such that 
\begin{displaymath} \int_{S^{1}} \bar{\mu}(B(1) \cap H_{\theta}(\bar{K},\bar{\delta}^{-\bar{\sigma}},[\bar{\delta},1]) \cap \bar{L}_{\theta}) \, d\bar{\nu}(\theta) = \bar{\epsilon} \gtrsim_{\mathbf{C},\epsilon,\eta,\mathfrak{d},\zeta,\tau} 1, \end{displaymath} 
where
\begin{displaymath} \bar{\sigma} \in [\sigma - \eta,s] \quad \text{and} \quad \bar{\delta} = \Delta^{\bar{N}} \end{displaymath}
for some $\bar{N} \gtrsim_{\mathfrak{d},\eta,\tau,\zeta} N$, and 
\begin{displaymath} \bar{L}_{\theta} = \{x \in \R^{2} : \tfrac{1}{\bar{N}} |\{0 \leq j \leq \bar{N} - 1 : x \in H_{\theta}(\bar{K},\Delta^{-\sigma_{0}},[\tfrac{A}{50}\Delta^{j + 1},\tfrac{A}{50}\Delta^{j}])\}| \leq C(\mathfrak{d},\tau)\eta^{-1}\lambda\}, \end{displaymath}
where $C(\mathfrak{d},\tau) \geq 1$ is a constant depending only on $\mathfrak{d},\tau$. Moreover,
\begin{equation}\label{form68} \int_{S^{1}} \bar{\mu}(B(1) \cap H_{\theta}(\bar{K},(\bar{\delta}^{q - p})^{-\bar{\sigma} - \zeta},[\bar{\delta}^{q},\bar{\delta}^{p}]) \cap \bar{L}_{\theta}) \, d\bar{\nu}(\theta) \leq \tfrac{\bar{\epsilon}}{10} \end{equation} 
for all pairs $p,q \in \{0,\tfrac{1}{2},1\}$ with $p < q$. Finally, if $I \subset S^{1}$ is a dyadic arc of length $\bar{\delta}^{1/2}$ with $\bar{\nu}(I) > 0$, then $\bar{\nu}(J) \leq \bar{\delta}^{c(\mathfrak{d},\tau)}\bar{\nu}(I)$ for every sub-arc $J \subset I$ with length $|I| \leq \bar{\delta}^{\mathfrak{d}/2}$, where $c(\mathfrak{d},\tau) > 0$ is a constant depending only on $\mathfrak{d} > 1$ and $\tau > 0$. \end{proposition}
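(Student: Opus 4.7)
The plan is a multi-step pigeonhole argument that extracts a "good" direction subset of $\nu$ via Lemma~\ref{lemma8a}, locates a "good" renormalisation ball $\mathbf{B} \subset \R^{2}$ via the hypothesis $\iota(\sigma_{0})[\Delta] \leq \epsilon_{0}$, and sets $\bar{\mu} := \mu^{\mathbf{B}}$. Throughout, Lemma~\ref{lemma7} translates high-multiplicity sets of $K$ inside $\mathbf{B}$ to high-multiplicity sets of $T_{\mathbf{B}}(K)$ inside $B(1)$, so every property of $\bar{\mu}$ can be verified in the original coordinates before renormalising. The new multiplicity parameter $\bar{\sigma}$ is chosen by a dyadic pigeonhole across $[\sigma - \eta, s]$ of mesh $\eta$, which explains both its range and the $\eta^{-1}$ factor appearing in the definition of $\bar{L}_{\theta}$.

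First I would construct $\bar{\nu}$. Applying Lemma~\ref{lemma8a} to $\nu$ (legitimate once $\delta$ is small enough that $\delta^{-\eta} \geq \mathbf{C}$) produces an exponent $p^{\ast} \in \mathcal{Q}_{0}(\mathfrak{d},\tau)$ and a set $G \subset S^{1}$ with $\nu(G) \gtrsim_{\mathfrak{d},\tau} 1$ on which $\nu$ branches significantly between scales $\delta^{p^{\ast}}$ and $\delta^{\mathfrak{d}p^{\ast}}$ in the sense of \eqref{form68a}. I then set $\bar{\delta} := \delta^{2p^{\ast}}$, so that $\bar{\delta}^{1/2} = \delta^{p^{\ast}}$ and $\bar{\delta}^{\mathfrak{d}/2} = \delta^{\mathfrak{d}p^{\ast}}$ match the branching scales required by the final clause of the proposition; since $p^{\ast} \geq 1/(2\mathfrak{d}^{\mathfrak{n}})$ with $\mathfrak{n} \lesssim_{\mathfrak{d},\tau} 1$, this gives $\bar{\delta} = \Delta^{\bar{N}}$ with $\bar{N} \gtrsim_{\mathfrak{d},\tau} N$. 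The measure $\bar{\nu}$ is then taken to be (a constant multiple of) $\nu|_{G}$, which retains the $(\tau,\mathbf{C})$-Frostman property.

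Second I would locate $\mathbf{B}$ and establish the upper bounds \eqref{form68}. Applying Lemma~\ref{lemma4} at scale $\Delta$ with the hypothesis $\iota(\sigma_{0})[\Delta] \leq \epsilon_{0}$ yields that, except on a set of small $\nu \times \mu$-mass, every $(\theta, x)$ satisfies the "few-bad-scale" estimate \eqref{form30a}. Combining this with Lemma~\ref{lemma5} applied to each of the sub-ranges $[\bar{\delta}^{q},\bar{\delta}^{p}]$ corresponding to the three pairs $(p,q) \in \{0,\tfrac12,1\}^{2}$, $p<q$, converts a small fraction of bad $\Delta$-scales into a macroscopic multiplicity bound of the form $\mathfrak{m}_{\bar{K},\theta}(x \mid [\bar{\delta}^{q},\bar{\delta}^{p}]) \leq (\bar{\delta}^{q-p})^{-\sigma_{0}-\lambda'}$, which is strictly tighter than the required $(\bar{\delta}^{q-p})^{-\bar{\sigma}-\zeta}$ provided $\lambda$ is small in terms of $\zeta, \eta$ (and hence $C(\mathfrak{d},\tau) \eta^{-1}\lambda$ is small in terms of $\zeta$). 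To pass from integral averages to pointwise mass on a sub-ball, Lemma~\ref{lemma2} is applied to the Lebesgue measure of "bad scale exponents" inside $[0, N]$: it furnishes a collection of dyadic scale-intervals $J$ of length $\geq \rho N$ on which the bad-density is uniformly small across every sub-interval of comparable length. Pigeonholing over these $J$ identifies a mesoscopic scale $r_{0}$, and a ball $\mathbf{B}$ of radius $r_{0}$, such that (a) the restricted lower bound $\int \mu(\mathbf{B} \cap H_{\theta}(K, \delta^{-\sigma},[\delta,1]) \cap L_{\theta}) \, d(\nu|_{G})(\theta)$ has $\mu(\mathbf{B})$-proportion $\gtrsim 1$, and (b) all three upper bounds \eqref{form68} hold after renormalising via $T_{\mathbf{B}}$. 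The final value $\bar{\sigma}$ is then picked from the dyadic grid on $[\sigma - \eta, s]$ matching the residual effective multiplicity, which in turn determines $\bar{\epsilon} \gtrsim_{\mathbf{C},\epsilon,\eta,\mathfrak{d},\zeta,\tau} 1$.

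The main obstacle is choreographing these pigeonholes so that their simultaneous success probability is strictly positive. The restrictions (the $\theta$-selection to $G$, the ball selection $\mathbf{B}$, the pigeonhole on $\bar{\sigma}$, and the scale-selection from Lemma~\ref{lemma2}) each discard a controlled fraction of the original mass, and these fractions must jointly stay below the lower bound $\epsilon/16$ from \eqref{form23}. The accounting works because $\epsilon_{0} = \lambda \epsilon^{2}/(16\mathbf{C})$ is quadratic in $\epsilon$, so the failure masses stemming from Lemmas~\ref{lemma4}, \ref{lemma5}, and \ref{lemma2} are all $O(\epsilon^{2})$, dominated by the $\Omega(\epsilon)$ lower bound. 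The delicate part is tracking how the low-multiplicity set $L_{\theta}$ transforms under the rescaling $T_{\mathbf{B}}$ into $\bar{L}_{\theta}$: each scale $A\Delta^{j+1}, A\Delta^{j}$ inside $\mathbf{B}$ is sent to a scale $\tfrac{A}{r_{0}^{-1}}\Delta^{j+1}, \tfrac{A}{r_{0}^{-1}} \Delta^{j}$ in the renormalised coordinates, and one must verify that the extra $1/50$ factor appearing in $\bar{L}_{\theta}$ (instead of the original $A = 500$ in $L_{\theta}$) exactly absorbs the change-of-scale distortion combined with Lemma~\ref{lemma13}.
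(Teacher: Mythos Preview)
Your plan has a genuine gap in the mechanism for obtaining \eqref{form68}. You propose to derive the upper bounds via Lemma~\ref{lemma5}: since points in $L_{\theta}$ have at most $\lambda N$ scales with $\sigma_{0}$-multiplicity exceeding $\Delta^{-\sigma_{0}}$, Lemma~\ref{lemma5} would bound the overall multiplicity on any sub-range $[\bar{\delta}^{q},\bar{\delta}^{p}]$ by roughly $(\bar{\delta}^{q-p})^{-\sigma_{0}-\lambda'}$. But \eqref{form68} requires smallness of the set where multiplicity exceeds $(\bar{\delta}^{q-p})^{-\bar{\sigma}-\zeta}$, and nothing in the hypotheses relates $\sigma_{0}$ to $\bar{\sigma}$; in the intended application $\sigma_{0} > \sigma \approx \bar{\sigma}$, so your bound is strictly \emph{weaker} than what is needed, not tighter. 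The property encoded in $L_{\theta}$ concerns only the parameter $\sigma_{0}$ and cannot by itself produce \eqref{form68} for $\bar{\sigma}$ near $\sigma$.

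The paper achieves \eqref{form68} through an entirely different mechanism, Lemma~\ref{lemma6}, which you do not invoke. That lemma runs an iteration: if the upper bound \eqref{form47} already holds for all $(p,q)$ in a prescribed finite set $\mathcal{Q}$, stop; otherwise some pair $(p_{n},q_{n})$ violates it, and one renormalises to a ball of radius $\delta_{n}^{p_{n}}$, which \emph{increases} the effective multiplicity exponent from $\sigma_{n}$ to $\sigma_{n}+\zeta_{1}$ while preserving the lower bound \eqref{form45}. Since the exponent cannot exceed $s$, the iteration terminates in at most $2/\zeta_{1}$ steps, and at termination \eqref{form47} holds by construction. This is how $\bar{\sigma}$ is actually determined, not by a dyadic pigeonhole on $[\sigma-\eta,s]$.

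Your ordering is also off. You fix $\bar{\delta} = \delta^{2p^{\ast}}$ from Lemma~\ref{lemma8a} \emph{before} selecting the renormalisation ball, but the iteration in Lemma~\ref{lemma6} changes the working scale from $\delta_{1}$ to an unpredictable $\delta_{2} \leq \delta_{1}^{\mathfrak{gap}^{n}}$. The paper therefore applies Lemma~\ref{lemma8a} \emph{after} Lemma~\ref{lemma6}, at scale $\delta_{2}$, and designs $\mathcal{Q} = \mathcal{Q}_{0} \cup \mathcal{Q}_{1} \cup \{0\}$ so that whichever $p_{0} \in \mathcal{Q}_{0}$ Lemma~\ref{lemma8a} selects, the three pairs $(0,p_{0}),(p_{0},2p_{0}),(0,2p_{0})$ needed for \eqref{form79} already lie in $\mathcal{Q}$ and are thus covered by \eqref{form58}. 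A further application of Lemma~\ref{lemma15} (which you do not mention) is then needed to transfer the lower bound from scale $\delta_{2}$ to scale $\bar{\delta} = \delta_{2}^{2p_{0}}$. Your use of Lemma~\ref{lemma2} does correspond to part of the argument (it underlies Lemma~\ref{lemma3}, which is the first renormalisation step), but that step only produces the stable low-multiplicity property \eqref{form42}; it does not address \eqref{form68}.
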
 

\begin{remark} The statement \eqref{form68} is the formal version of Assumption \ref{A3} in the sketchy proof outline in the introduction. The parameter $\zeta$ will eventually be chosen small enough in terms of the scale $\Delta$.  \end{remark} 

\section{Proof of Proposition \ref{mainTechnicalProp}}

The proof of Proposition \ref{mainTechnicalProp} splits to Lemmas \ref{lemma3} and \ref{lemma6}, and then the main argument in Section \ref{lemma6}. Lemma \ref{lemma3} replaces $\mu$ by another $(s,\mathbf{C})$-regular measure whose associated "local low-multiplicity sets" $L_{1,\theta}$ have an additional stability property \eqref{form42}.

The proof of Lemma \ref{lemma3} uses the renormalised measures from Definition \ref{def:renormalisation}. We note here that these renormalisations satisfy the following "chain rule": if $B,B' \subset \R^{2}$ are arbitrary discs, then
\begin{equation}\label{chainRule} (\mu^{B})^{B'} = \mu^{B''}, \end{equation}
where $B'' = (T_{B'} \circ T_{B})^{-1}(B(1))$. The proof is straightforward (or see \cite[(3.6)]{MR4218963}). By \cite[Lemma 3.7]{MR4218963}, if $\mu$ is $(s,\mathbf{C})$-regular, then every renormalisation $\mu^{B}$ is also $(s,\mathbf{C})$-regular.

\begin{lemma}\label{lemma3} For all $A,\mathbf{C},\Delta,\epsilon,\lambda,\eta,\gamma,\sigma,\sigma_{0} > 0$, there exist constants $\rho = \rho(\eta,\gamma) > 0$ and $N_{0} = N_{0}(\mathbf{C},\eta,\gamma,\epsilon) > 0$ such that the following holds for all $\delta = \Delta^{N}$ with $N \geq N_{0}$.

Let $\mu$ be an $(s,\mathbf{C})$-regular measure satisfying
\begin{equation}\label{form66b} \int_{S^{1}} \mu(B(1) \cap H_{\theta}(K,\delta^{-\sigma},[\delta,1]) \cap L_{\theta}) \, d\nu(\theta) \geq \epsilon, \end{equation}
where $K = \spt \mu$, the sets $L_{\theta}$ are given by \eqref{form67} (the parameters $\sigma_{0},\lambda$ appear in \eqref{form67}), and $\nu$ is a Borel measure with $\nu(S^{1}) \leq \mathbf{C}$. Then, there exists a scale $\delta_{1} = \Delta^{N_{1}}$, where $N_{1} \geq \rho N$, and an $(s,\mathbf{C})$-regular measure $\mu_{1}$ such that
\begin{equation}\label{form36} \int \mu_{1}(B(1) \cap H_{\theta}(K_{1},\delta_{1}^{-\sigma_{1}},[50\delta_{1},50]) \cap L_{1,\theta}) \, d\nu(\theta) = \epsilon_{1} \gtrsim_{\mathbf{C},\eta,\gamma,\epsilon} 1. \end{equation}
Here $\sigma_{1} \geq \sigma - \eta$, and $K_{1} = \spt \mu_{1}$, and for each $\theta \in S^{1}$, the set $L_{1,\theta}$ has the following property: if $I \subset \{0,\ldots,N_{1} - 1\}$ is any interval of length $|I| \geq \gamma N_{1}$, then
\begin{equation}\label{form42} |\{j \in I : x \in H_{\theta}(K_{1},\Delta^{-\sigma_{0}},[A\Delta^{j + 1},A\Delta^{j}]\}| \leq 16\eta^{-1}\lambda|I|, \qquad x \in L_{1,\theta}. \end{equation}
\end{lemma}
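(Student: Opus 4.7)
My plan is to apply Lemma~\ref{lemma2} to extract a stable dyadic scale structure on $[0,1]$, then pigeonhole a common scale level, common interval and common ball, then renormalize $\mu$ to that ball, and finally verify that both the high-multiplicity and the stability conditions survive.

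For each $(x,\theta)$ in $\Phi := \{(x,\theta) : x \in B(1) \cap K \cap H_\theta(K,\delta^{-\sigma},[\delta,1]) \cap L_\theta\}$, the normalised bad-scale set
\[
E_\theta(x) := \tfrac{1}{N}\{j \in \{0,\ldots,N-1\} : x \in H_\theta(K,\Delta^{-\sigma_0},[A\Delta^{j+1},A\Delta^j])\} \subset [0,1]
\]
has Lebesgue measure $\leq \lambda$, by the definition of $L_\theta$ in \eqref{form67}. I would apply Lemma~\ref{lemma2} with constant $C := 2/\eta$, so that the resulting local density bound $8C\lambda = 16\eta^{-1}\lambda$ matches the target \eqref{form42}. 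This produces $\rho = \rho(\eta,\gamma) > 0$ and a set $G_\theta(x) \subset [0,1]$ of Lebesgue measure $\geq 1 - \eta/2$, composed of dyadic intervals of length $\geq \rho$, on each of which the target density bound holds for every sub-interval of relative length $\geq \gamma$.

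Next comes a three-step pigeonhole against $\pi := \mu|_{B(1)} \otimes \nu$. The dyadic lengths $\geq \rho$ form a finite set of size $K \lesssim \log(1/\rho)$, so averaging picks a common level $k^*$; then averaging over the $2^{k^*} \leq 1/\rho$ dyadic intervals of length $2^{-k^*}$ picks a common $J^* = [a^*,a^*+2^{-k^*}]$ with $J^* \subset G_\theta(x)$ for a subset $\Phi_2 \subset \Phi$ with $\pi(\Phi_2) \gtrsim_{\rho,\eta} \epsilon$; finally, covering $K \cap B(1)$ by $\lesssim \mathbf{C} r_0^{-s}$ balls of radius $r_0 := \Delta^{a^*N}$ and averaging picks a ball $B^*$ with $\pi(\Phi_2 \cap (B^* \times S^1)) \gtrsim_{\rho,\eta} \epsilon r_0^s/\mathbf{C}$. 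I would then set $\mu_1 := \mu^{B^*}$, which is $(s,\mathbf{C})$-regular, $N_1 := \lfloor 2^{-k^*} N \rfloor \geq \rho N/2$, and $\delta_1 := \Delta^{N_1}$; the renormalisation factor $r_0^{-s}$ in $\mu_1$ precisely cancels the $r_0^s$ loss in the ball pigeonhole. Define $L_{1,\theta}$ to be the $T_{B^*}$-image of the surviving $x \in B^*$. Because $J^* \subset G_\theta(x)$, any sub-interval of $\{0,\ldots,N_1-1\}$ of length $\geq \gamma N_1$ pulls back (via $j = a^*N + j'$) to a sub-interval of $J^*$ of relative length $\geq \gamma$, and Lemma~\ref{lemma7} identifies the corresponding scale ranges on the two sides (modulo absolute constants absorbed through Lemma~\ref{lemma13}); the density bound from Step~1 then yields \eqref{form42}.

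The principal obstacle is to verify the multiplicity lower bound $x_1 \in H_\theta(K_1, \delta_1^{-\sigma_1}, [50\delta_1,50])$ with $\sigma_1 \geq \sigma - \eta$. By Lemma~\ref{lemma7}, this reduces to showing $\mathfrak{m}_{K,\theta}(x \mid [50\delta_1 r_0, 50 r_0]) \geq \delta_1^{-\sigma_1}$ in the old world, i.e.\ a local multiplicity lower bound on scales aligned with $J^*$. The hypothesis provides only the global multiplicity $\mathfrak{m}_{K,\theta}(x \mid [\delta,1]) \geq \delta^{-\sigma}$, which does not automatically localise to $J^*$. To remedy this, I would decompose $[0,N]$ into $2^{k^*}$ windows $W$ of length $N_1$ and observe, via the trivial upper bound $\Delta^{-N_1}$ on each $\mathfrak{m}_{K,\theta}(x \mid W)$, that any $x$ with global multiplicity $\geq \delta^{-\sigma}$ must have at least an $\eta$-fraction of windows $W$ with $\mathfrak{m}_{K,\theta}(x \mid W) \geq \delta_1^{-(\sigma - \eta)}$. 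I would then build this high-multiplicity condition into the pigeonhole on $J^*$ in Step~2 (stable windows cover at least $1-\eta/2$ of $[0,1]$, high-multiplicity windows at least $\eta$, so by inclusion–exclusion their intersection still covers at least $\eta/2$), at the price of only a further factor of $\eta$. All losses combine to give $\epsilon_1 \gtrsim_{\mathbf{C},\eta,\gamma,\epsilon} 1$ as required.
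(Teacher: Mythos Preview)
Your general outline (apply Lemma~\ref{lemma2} with $C = 2/\eta$, pigeonhole, renormalise) matches the paper's, but the localisation of the high-multiplicity condition --- which you correctly flag as the principal obstacle --- does not go through as written. Your window-counting argument implicitly assumes
\[
\mathfrak{m}_{K,\theta}(x \mid [\delta,1]) \lesssim \prod_W \mathfrak{m}_{K,\theta}(x \mid W),
\]
and this is false. By definition $\mathfrak{m}_{K,\theta}(x \mid [\Delta^{(j+1)N_1},\Delta^{jN_1}])$ only counts points of $K$ on the line $\ell = \pi_\theta^{-1}\{\pi_\theta(x)\}$ lying inside the tiny ball $B(x,\Delta^{jN_1})$, whereas the global multiplicity sees all of $B(x,1)$. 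A cluster of $M$ points on $\ell$ at distance $\sim \tfrac12$ from $x$ contributes $M$ to $\mathfrak{m}_{K,\theta}(x \mid [\delta,1])$ but is invisible to every window with $j\geq 1$, so the product can be $\sim 1$ while the global multiplicity is $M$. (A correct sub-multiplicativity holds if at each step one maximises over varying centres $y\in K\cap \ell\cap B(x,1)$, but then the $L_\theta$-condition, which is pointwise at $x$, decouples from those $y$, and your inclusion--exclusion breaks.)

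The paper handles this step by contradiction rather than direct pigeonhole, and for this it is essential to pigeonhole not on a single interval $J^*$ but on the \emph{entire collection} $\mathcal{G}=\{[a_i,b_i]\}_{i\in I}$ produced by Lemma~\ref{lemma2} (there are only $O_{\eta,\gamma}(1)$ possible collections, so this costs the same). For fixed $\theta$, if \eqref{form30} failed for every $i\in I$, the set
\[
F_\theta := \bar G_\theta \,\setminus\, \bigcup_{i\in I} H_\theta\bigl(K,(\Delta^{b_i-a_i})^{-\sigma+\eta},[50\Delta^{b_i},50\Delta^{a_i}]\bigr)
\]
would still have $\mu(F_\theta)\gtrsim_{\mathbf{C},\eta,\gamma,\epsilon} 1$. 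Since the $[a_i,b_i]$ cover a $(1-\eta/2)$-fraction of $\{0,\ldots,N-1\}$, Lemma~\ref{lemma5} gives $|(F_\theta)_{4\delta}\cap \pi_\theta^{-1}\{t\}|_{4\delta}\leq C^{|I|}\delta^{-\sigma+\eta/2}$ for all $t$. But $F_\theta$ is a large subset of $H_\theta(K,\delta^{-\sigma},[\delta,1])$, so the hereditary Lemma~\ref{l:hereditary} forces $|(F_\theta)_{4\delta}\cap \pi_\theta^{-1}\{\pi_\theta(x)\}|_{4\delta}\gtrsim_{\mathbf{C},\kappa}\delta^{-\sigma}$ at some point --- a contradiction for $\delta$ small. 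Only after securing $i(\theta)$ this way does one pigeonhole over $\theta$ to fix $i$, and then over balls of radius $\Delta^{a_i}$. Note that retaining all of $\mathcal{G}$ (not just one $J^*$) is what makes Lemma~\ref{lemma5} applicable.
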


\begin{remark} To highlight the main findings of Lemma \ref{lemma3}, note that the proportion of "local high multiplicity scales" of points $x \in L_{1,\theta}$ may be somewhat higher (by a factor $16\eta^{-1}$). than that of $x \in L_{\theta}$. On the bright side, this proportion stays small even when the full interval $\{0,\ldots,N_{1} - 1\}$ is replaced by a sub-interval $I$ with $|I| \geq \gamma N_{1}$.  \end{remark}


\begin{proof}[Proof of Lemma \ref{lemma3}] It follows from the hypothesis \eqref{form66b}, combined with $\mu(B(1)) \leq \mathbf{C}$ and $\nu(S^{1}) \leq \mathbf{C}$, that there exists a set $S \subset S^{1}$ with $\nu(S) \geq \epsilon/(2\mathbf{C})$ such that $\mu(G_{\theta}) \geq \epsilon/(2\mathbf{C})$ for all $\theta \in S$, where
\begin{displaymath} G_{\theta} := B(1) \cap K \cap H_{\theta}(K,\delta^{-\sigma},[\delta,1]) \cap L_{\theta}. \end{displaymath}
Fix $\theta \in S$, and $x \in G_{\theta} \subset L_{\theta}$, thus by \eqref{form67},
\begin{displaymath} |\{0 \leq j \leq N - 1 : x \in H_{\theta}(\Delta^{-\sigma_{0}},[A\Delta^{j + 1},A\Delta^{j}])\}| \leq \lambda N. \end{displaymath}
Rescaling by $\tfrac{1}{N}$, we now view the set $E := E_{x,\theta} := \tfrac{1}{N}\{0 \leq j \leq N - 1 : x \in \ldots\}$ appearing above as a subset of $[0,1]$ of Lebesgue measure $\leq \lambda$. We apply to $E$ Lemma \ref{lemma2} with constants $\gamma > 0$ and $C := 2\eta^{-1}$ (given by the statement). The conclusion is that there exists a family $\mathcal{G}_{x,\theta} \subset \{0,\ldots,N - 1\}$ of disjoint dyadic intervals\footnote{We mean intervals of the form $\{1 \leq j \leq N : j/N \in I\}$, where $I \subset [0,1]$ is a standard dyadic interval.} of length $\geq \rho N = \rho(\eta,\gamma)N > 0$ with the following properties. First, 
\begin{equation}\label{form26} |\cup \mathcal{G}_{x,\theta}| \geq (1 - \tfrac{\eta}{2})N. \end{equation}
Second, if $J \in \mathcal{G}_{x,\theta}$, then 
\begin{equation}\label{form32} |\{j \in I : x \in H_{\theta}(K,\Delta^{-\sigma_{0}},[A\Delta^{j + 1},A\Delta^{j}])\}| \leq 16\eta^{-1}\lambda|I| \end{equation}
for all sub-intervals $I \subset J$ with $|I| \geq \gamma |J|$. This argument (identifying $E$ with a subset of $[0,1]$ and applying Lemma \ref{lemma2}) implicitly needed $N \geq 1$ to be so large that $\rho(\eta,\gamma)N \geq 1$.

The collection $\mathcal{G}_{x,\theta}$ depends on both $\theta \in S$ and $x \in G_{\theta}$. However, $\mathcal{G}_{x,\theta}$ is a collection of dyadic intervals of length $\geq \rho = \rho(\eta,\gamma)$.  There are only in total $\leq 2^{2^{1/\rho + 1}} \lesssim_{\eta,\gamma} 1$ distinct collections of dyadic intervals of length $\geq \rho$. Thus, there exist new subsets $\bar{S} \subset S$ and $\bar{G}_{\theta} \subset G_{\theta}$ with 
\begin{equation}\label{form31} \nu(\bar{S}) \gtrsim_{\eta,\gamma} \nu(S) \gtrsim_{\mathbf{C},\epsilon} 1 \quad \text{and} \quad \mu(\bar{G}_{\theta}) \gtrsim_{\eta,\gamma} \mu(G_{\theta}) \gtrsim_{\mathbf{C},\epsilon} 1 \text{ for } \theta \in \bar{S} \end{equation}
such that $\mathcal{G}_{x,\theta} \equiv \mathcal{G}$ for all pairs $(\theta,x)$ with $\theta \in \bar{S}$ and $x \in \bar{G}_{\theta}$. The collection $\mathcal{G}$ satisfies \eqref{form26} and \eqref{form32} for all $\theta \in \bar{S}$ and $x \in \bar{G}_{\theta}$.

Enumerate the intervals in $\mathcal{G}$ as $\mathcal{G} = \{[a_{i},b_{i}]\}_{i \in I}$. Thus $(b_{i} - a_{i}) \geq \rho N$ for all $i \in I$, and consequently
\begin{equation}\label{form25} |I| \leq \rho^{-1}. \end{equation}

 The next goal is now to find a fixed index $i \in I$ such that
\begin{equation}\label{form27} \int_{\bar{S}} \mu(B(1) \cap H_{\theta}(K,(\Delta^{b_{i} - a_{i}})^{-\sigma + \eta},[50\Delta^{b_{i}},50\Delta^{a_{i}}]) \cap \bar{G}_{\theta}) \, d\nu(\theta) \gtrsim_{\mathbf{C},\eta,\gamma,\epsilon} 1.  \end{equation}
From this, \eqref{form36} will follow by renormalising $\mu$ to a disc of radius $\Delta^{a_{i}}$. For proving \eqref{form27}, we need the next "hereditary" property of high multiplicity sets:

\begin{lemma}\label{l:hereditary} Let $\mu$ be an $(s,\mathbf{C})$-regular measure, $M \geq 1$, and let $F \subset B(1) \cap H_{\theta}(K,M,[\delta,1])$ be a Borel set with $\mu(F) \geq \kappa > 0$. Then, there exists a subset $G \subset F$ with $\mu(G) \geq \tfrac{1}{2}\mu(F)$ such that
\begin{displaymath} \m_{G,\theta}(x \mid [4\delta,4]) \geq M', \qquad x \in G, \end{displaymath}
where $M' = c\mathbf{C}^{-2}\kappa M$, and $c > 0$ is an absolute constant. \end{lemma}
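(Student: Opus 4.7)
The plan is to partition a neighbourhood of $B(1)$ into thin tubes parallel to the fibres of $\pi_\theta$, use the multiplicity hypothesis to bound the \emph{number} of such tubes hitting $F$ from above, discard those carrying unusually little $\mu$-mass, and then check that each surviving point has the required multiplicity by projecting onto $\ell_x$.

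Cover $B(2)$ by disjoint dyadic $\delta$-tubes $\mathcal{T}$ parallel to $\pi_\theta^{-1}\{0\}$, so $|\mathcal{T}| \lesssim 1/\delta$; let $T_x \in \mathcal{T}$ be the tube containing $x$ and $\ell_x := \pi_\theta^{-1}\{\pi_\theta(x)\}$ the fibre of $\pi_\theta$ through $x$. For every $x \in F$, the $\geq M$ dyadic $\delta$-cubes counted in $\mathfrak{m}_{K,\theta}(x \mid [\delta,1])$ each meet both $K$ and $\ell_x \subset T_x$, and hence each sits inside an absolute-constant dilate $T_x^{\ast}$; this yields $|K \cap T_x^{\ast}|_\delta \geq M$. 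Summing over distinct $T \in \mathcal{T}_F := \{T \in \mathcal{T} : F \cap T \neq \emptyset\}$, using bounded overlap of the dilates together with upper $(s,\mathbf{C})$-regularity of $K$, produces the key combinatorial output
\[
N := |\mathcal{T}_F| \lesssim \mathbf{C}\delta^{-s}/M.
\]

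Call $T \in \mathcal{T}_F$ \emph{heavy} when $\mu(F \cap T) \geq \mu(F)/(2N)$, and set $G := F \cap \bigcup \mathcal{T}_{\mathrm{heavy}}$. The union of non-heavy tubes carries mass at most $N \cdot \mu(F)/(2N) = \mu(F)/2$, so $\mu(G) \geq \tfrac{1}{2}\mu(F)$. For each heavy tube $T$,
\[
\mu(G \cap T) = \mu(F \cap T) \geq \mu(F)/(2N) \gtrsim \kappa M \delta^s/\mathbf{C},
\]
and the Frostman inequality $\mu(G \cap T) \lesssim \mathbf{C}\delta^s |G \cap T|_{4\delta}$ then yields $|G \cap T|_{4\delta} \gtrsim \mathbf{C}^{-2}\kappa M$.

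Finally, for $x \in G$, orthogonal projection onto $\ell_x$ displaces each point of $G \cap T_x$ by at most $\delta$ and sends it into $G_{4\delta} \cap \ell_x \cap B(x,4)$; two points of $G \cap T_x$ lying in distinct $4\delta$-cubes of $\mathcal{D}_{4\delta}$ project to $4\delta$-cubes on $\ell_x$ that coincide only in at most an absolute number of cases, because the $\delta$-tube $T_x$ straddles only $O(1)$ dyadic $4\delta$-columns transverse to $\theta^{\perp}$. This gives $\mathfrak{m}_{G,\theta}(x \mid [4\delta,4]) \gtrsim |G \cap T_x|_{4\delta} \gtrsim \mathbf{C}^{-2}\kappa M$, which is the desired bound $M' = c\mathbf{C}^{-2}\kappa M$. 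I expect the main point, and the only place where the structure of $H_\theta(K,M,[\delta,1])$ is actually exploited, to be the conversion from the pointwise ``$\geq M$ cubes on the fibre $\ell_x$'' to the global estimate $N \lesssim \mathbf{C}\delta^{-s}/M$; once $N$ is controlled, pigeonhole and Frostman do the rest.
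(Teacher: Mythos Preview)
Your proof is correct and follows essentially the same approach as the paper's: both bound the number of $\delta$-tubes meeting $F$ by $\lesssim \mathbf{C}\delta^{-s}/M$ via the multiplicity hypothesis, discard the light tubes to define $G$, and then count points of $G$ along the fibre $\ell_x$. The only cosmetic difference is that you declare a tube heavy by its $\mu$-mass and then convert to $\delta$-covering number via the Frostman bound, whereas the paper declares it heavy by its $\delta$-covering number directly (and, to guarantee that the relevant $F$-cubes lie in $G$, dilates to $2T$ in the definition of $G$).
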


\begin{proof} Since $\emptyset \neq F \subset B(1) \cap H_{\theta}(K,M,[\delta,1])$, it is easy to check that $M \lesssim \mathbf{C}\delta^{-s}$, and
\begin{displaymath} |\pi_{\theta}(F)|_{\delta} \lesssim \frac{|K \cap B(1)|_{\delta}}{M} \leq \mathbf{C}\delta^{-s}/M. \end{displaymath}
Consequently, there exists a collection $\mathcal{T}$ of disjoint $\delta$-tubes parallel to $\pi_{\theta}^{-1}\{0\}$ whose union covers $F$, with $|\mathcal{T}| \lesssim \mathbf{C}\delta^{-s}/M$. Write $M' := c\mathbf{C}^{-2}\kappa M$, where $c > 0$ is a suitable absolute constant to be determined in a moment. Let 
\begin{displaymath} \mathcal{T}_{\mathrm{light}} := \{T \in \mathcal{T} : |\{p \in \mathcal{D}_{\delta}(F) : p \cap T \neq \emptyset\}| \leq M'\} \quad \text{and} \quad \mathcal{T}_{\mathrm{heavy}} := \mathcal{T} \, \setminus \, \mathcal{T}_{\mathrm{light}}. \end{displaymath}
Let $F_{\mathrm{light}}$ be the part of $F$ covered by the tubes in $\mathcal{T}_{\mathrm{light}}$. Now,
\begin{displaymath} \mu(F_{\mathrm{light}}) \lesssim \mathbf{C}M'\delta^{s} \cdot |\mathcal{T}| \lesssim c\kappa. \end{displaymath} 
In particular, $\mu(F \, \setminus \, F_{\mathrm{light}}) \geq \tfrac{1}{2}\mu(F)$ if the constant $c > 0$ is chosen appropriately. Define
\begin{equation}\label{form28} G := F \cap \bigcup_{T \in \mathcal{T}_{\mathrm{heavy}}} 2T. \end{equation}
Then $G \supset F \, \setminus F_{\mathrm{light}}$, so $\mu(G) \geq \tfrac{1}{2}\mu(F)$.

Finally, we claim that $m_{G,\theta}(x \mid [4\delta,4]) \gtrsim M'$ for all $x \in G$. To see this, fix $x \in G$, and let $T \in \mathcal{T}_{\mathrm{heavy}}$ be a tube with $x \in 2T$. By the definition of $\mathcal{T}_{\mathrm{heavy}}$,
\begin{displaymath} |\{p \in \mathcal{D}_{\delta}(F) : p \cap T \neq \emptyset\}| \geq M'. \end{displaymath}
Each square $p \in \mathcal{D}_{\delta}(F)$ with $p \cap T \neq \emptyset$ is contained in $2T$, and contains a point $x_{p} \in F \cap p \subset 2T$. Then also $x_{p} \in G$ according to \eqref{form28}. Moreover, since $x,x_{p} \in B(1) \cap 2T$, and $T$ is a $\delta$-tube, the disc $B(x_{p},4\delta)$ intersects $B(x,4) \cap \pi_{\theta}^{-1}\{\pi_{\theta}(x)\}$. This implies that
\begin{displaymath} |G_{4\delta} \cap B(x,4) \cap \pi_{\theta}^{-1}\{\pi_{\theta}(x)\}|_{4\delta} \geq |\{x_{p} : p \in \mathcal{D}_{\delta}(F) \text{ and } p \cap T \neq \emptyset\}|_{4\delta} \gtrsim M', \end{displaymath}
as claimed. \end{proof} 

We then proceed with finding the index satisfying \eqref{form27}. Fix $\theta \in \bar{S}$, and recall from \eqref{form31} that $\mu(\bar{G}_{\theta}) \gtrsim_{\mathbf{C},\eta,\gamma,\epsilon} 1$. We start by finding an index $i = i(\theta) \in I$ such that
\begin{equation}\label{form30} \mu(B(1) \cap H_{\theta}(K,(\Delta^{b_{i} - a_{i}})^{-\sigma + \eta},[50\Delta^{b_{i}},50\Delta^{a_{i}}]) \cap \bar{G}_{\theta}) \geq \tfrac{\rho}{2} \cdot \mu(\bar{G}_{\theta}) \gtrsim_{\mathbf{C},\eta,\gamma,\epsilon} 1. \end{equation}
If this failed for all indices $i \in I$, then by \eqref{form25}, the part of $\bar{G}_{\theta}$ in the complement of all the sets $H_{\theta}(K,(\Delta^{b_{i} - a_{i}})^{-\sigma + \eta},[50\Delta^{b_{i}},50\Delta^{a_{i}}])$, with $i \in I$, denoted 
\begin{displaymath} F_{\theta} := \bar{G}_{\theta} \, \setminus \, \bigcup_{i \in I} H_{\theta}(K,(\Delta^{b_{i} - a_{i}})^{-\sigma + \eta},[50\Delta^{b_{i}},50\Delta^{a_{i}}]), \end{displaymath}
would still have measure $\mu(F_{\theta}) \geq \tfrac{1}{2}\mu(\bar{G}_{\theta}) \gtrsim_{\mathbf{C},\eta,\gamma,\epsilon} 1$. Let us justify that the set $F_{\theta}$ satisfies the assumptions of Lemma \ref{lemma5}. Indeed, since $F_{\theta} \subset K$, but
\begin{displaymath} F_{\theta} \subset \R^{2} \, \setminus \, H_{\theta}(K,(\Delta^{b_{i} - a_{i}})^{-\sigma + \eta},[50\Delta^{b_{i}},50\Delta^{a_{i}}]), \qquad i \in I, \end{displaymath}
we in particular have
\begin{equation}\label{form16a} H_{\theta}(F_{\theta},(\Delta^{b_{i} - a_{i}})^{-\sigma + \eta},[50\Delta^{b_{i}},50\Delta^{a_{i}}]) = \emptyset, \qquad i \in I. \end{equation}
Furthermore, recall from \eqref{form26} that the good intervals $[a_{i},b_{i}]$, $i \in I$, cover all of $\{1,\ldots,N\}$ except a fraction "$\eta/2$". To render Lemma \ref{lemma5} formally applicable, we need to write $0 = \bar{a}_{0} < \bar{a}_{1} < \ldots < \bar{a}_{n} = N$, where the partition $\{\bar{a}_{j}\}_{j = 0}^{n}$ is determined by the good intervals $[a_{i},b_{i}]$ (thus each $[a_{i},b_{i}] = [\bar{a}_{j},\bar{a}_{j + 1}]$ for some $0 \leq j \leq n - 1$) and their complementary intervals, denoted $\mathcal{B}$. With this notation, $n \leq |I| + 1$, and
\begin{displaymath} \sum_{j \in \mathcal{B}} (\bar{a}_{j + 1} - \bar{a}_{j}) \leq \tfrac{\eta}{2}N, \end{displaymath}
and by \eqref{form16a},
\begin{displaymath} \{0 \leq j \leq n - 1 : x \in H_{\theta}(F_{\theta},(\Delta^{\bar{a}_{j + 1} - \bar{a}_{j}})^{-\sigma + \eta},[50\Delta^{\bar{a}_{j + 1},\bar{a}^{j}}])\} \subset \mathcal{B}, \qquad x \in F_{\theta} \end{displaymath} 
Therefore, Lemma \ref{lemma5} (applied with parameter $\sigma - \eta$) implies
\begin{equation}\label{form29} |(F_{\theta})_{4\delta} \cap \pi_{\theta}^{-1}\{t\}|_{4\delta} \leq C^{2(|I| + 1)}\delta^{-\sigma + \eta/2}, \qquad t \in \R, \end{equation}
where $|I| \leq \rho^{-1} \lesssim_{\eta,\gamma} 1$, recall \eqref{form25}. 

On the other hand, recall also that $F_{\theta} \subset \bar{G}_{\theta} \subset H_{\theta}(K,\delta^{-\sigma},[\delta,1])$, and $\mu(F_{\theta}) \gtrsim_{\mathbf{C},\eta,\gamma,\epsilon} 1$. Now it follows from Lemma \ref{l:hereditary} with $M := \delta^{-\sigma}$ and $\kappa = \kappa(\mathbf{C},\eta,\gamma,\epsilon) = \mu(F_{\theta})$ that there exists a non-empty subset $\bar{F} \subset F_{\theta}$ such that 
\begin{displaymath} |(F_{\theta})_{4\delta} \cap \pi_{\theta}^{-1}\{\pi_{\theta}(x)\}|_{4\delta} \geq m_{\bar{F},\theta}(x \mid [4\delta,4]) \geq c\mathbf{C}^{-2} \cdot \kappa\delta^{-\sigma}, \qquad x \in F'. \end{displaymath}
This estimate contradicts \eqref{form29} for $\delta > 0$ small enough, depending only on $\mathbf{C},\eta,\gamma,\epsilon$. 

From this contradiction, we conclude that for $\theta \in \bar{S}$ fixed, there exists an index $i(\theta) \in I$ such that \eqref{form30} holds. Since $\nu(\bar{S}) \gtrsim_{\mathbf{C},\eta,\gamma,\epsilon} 1$ by \eqref{form31}, and $|I| \leq \rho^{-1} \lesssim_{\eta,\gamma} 1$, the existence of a fixed index $i \in I$ such that \eqref{form27} holds follows from the pigeonhole principle.

We finally use the index $i \in I$ satisfying \eqref{form27} to complete the proof of Lemma \ref{lemma3}. We abbreviate $N_{1} := b_{i} - a_{i}$ and $\delta_{1} := \Delta^{N_{1}}$. Recall from above \eqref{form26} that $N_{1} \geq \rho N$, as claimed in Lemma \ref{lemma3}. The measure $\mu_{1}$ will have the form $\mu_{1} = \mu^{\mathbf{B}}$ for a suitable disc of radius $\Delta^{a_{i}}$. To find $\mathbf{B}$, let $\mathcal{B}$ be a cover of $B(1) \cap K$ by boundedly overlapping discs of radius $\Delta^{a_{i}}$. By \eqref{form27},
\begin{displaymath} \sum_{\mathbf{B} \in \mathcal{B}} \int_{\bar{S}} \mu(\mathbf{B} \cap H_{\theta}(K,\delta_{1}^{-\sigma + \eta},[50\Delta^{b_{i}},50\Delta^{a_{i}}]) \cap \bar{G}_{\theta}) \, d\nu(\theta) \gtrsim_{\mathbf{C},\eta,\gamma,\epsilon} 1. \end{displaymath}
Using the definition of push-forward measures, and Lemma \ref{lemma7},
\begin{align*} \mu(\mathbf{B} & \cap H_{\theta}(K,\delta_{1}^{-\sigma + \eta},[50\Delta^{b_{i}},50\Delta^{a_{i}}]) \cap \bar{G}_{\theta})\\
& = (\Delta^{a_{i}})^{s}\mu^{\mathbf{B}}(B(1) \cap H_{\theta}(T_{\mathbf{B}}(K),\delta_{1}^{-\sigma + \eta},[50\delta_{1},50]) \cap T_{\mathbf{B}}(\bar{G}_{\theta})). \end{align*} 
Since on the other hand $|\mathcal{B}| \leq \mathbf{C}(\Delta^{a_{i}})^{-s}$ by the $(s,\mathbf{C})$-regularity of $\mu$, we conclude that there exists a disc $\mathbf{B} \in \mathcal{B}$ such that
\begin{displaymath} \int_{S_{3}} \mu^{\mathbf{B}}(B(1) \cap H_{\theta}(T_{\mathbf{B}}(K),\delta_{1}^{-\sigma + \eta},[50\delta_{1},50]) \cap T_{\mathbf{B}}(\bar{G}_{\theta})) \, d\nu(\theta) \gtrsim_{\mathbf{C},\eta,\gamma,\epsilon} 1. \end{displaymath}
This is \eqref{form36}, once we define
\begin{displaymath} \mu_{1} := \mu^{\mathbf{B}} \quad \text{and} \quad L_{1,\theta} := T_{\mathbf{B}}(\bar{G}_{\theta}) \text{ for } \theta \in \bar{S}. \end{displaymath}
We also write $K_{1} = \spt \mu_{1} = T_{\mathbf{B}}(K)$. For $\theta \in S^{1} \, \setminus \, \bar{S}$, we set $L_{1,\theta} := \emptyset$.

It only remains to verify \eqref{form42}, namely that whenever $\theta \in S^{1}$ and $y \in L_{1,\theta}$, and $I \subset \{0,\ldots,N_{1} - 1\}$ is an interval of length $|I| \geq \gamma N_{1}$, then 
\begin{equation}\label{form44}  |\{j \in I : y \in H_{\theta}(K_{1}\Delta^{-\sigma_{0}},[A\Delta^{j + 1},A\Delta^{j}]\}| \leq 16\eta^{-1}\lambda |I|. \end{equation}
This is a consequence of \eqref{form32}, and the rescaling Lemma \ref{lemma7}, but let us check the details. Fix $\theta \in \bar{S}$, and $y = T_{\mathbf{B}}(x) \in L_{1,\theta} = T_{\mathbf{B}}(\bar{G}_{\theta})$, thus $x \in \bar{G}_{\theta}$. Fix also an interval $I \subset \{0,\ldots,N_{1} - 1\}$ of length $|I| \geq \gamma N_{1}$. Recall that $N_{1} = b_{i} - a_{i}$, and that $J := [a_{i},b_{i}] \in \mathcal{G}$. Since $\theta \in \bar{S}$ and $x \in \bar{G}_{\theta}$, in particular $J \in \mathcal{G}_{x,\theta}$, recalling the definition of $\mathcal{G}$ below \eqref{form31}.

 Now $I' := I + a_{i} \subset J$ is an interval of length $|I'| \geq \gamma N_{1} = \gamma (b_{i} - a_{i})$, so according to \eqref{form32} (and since $J \in \mathcal{G}_{x,\theta}$),
\begin{equation}\label{form43} |\{j \in I' : x \in H_{\theta}(K,\Delta^{-\sigma_{0}},[A\Delta^{j + 1},A\Delta^{j}])\}| \leq 16\eta^{-1}\lambda |I|. \end{equation}
Finally, it follows from Lemma \ref{lemma7} that $x \in H_{\theta}(K,\Delta^{-\sigma_{0}},[A\Delta^{j + 1},A\Delta^{j}])$ if and only if $y \in H_{\theta}(K_{1},\Delta^{-\sigma_{0}},[A\Delta^{j + 1 - a_{i}},A\Delta^{j - a_{i}}])$. In combination with \eqref{form43}, this proves \eqref{form44}. The proof of Lemma \ref{lemma3} is complete. \end{proof}

\subsection{Finding a tangent with exact dimensional projections}\label{s:tangent}

Lemma \ref{lemma3} did not yet make much visible progress towards Proposition \ref{mainTechnicalProp}. In particular, it did nothing to bring us closer to \eqref{form68}. Making such progress is the content of Lemma \ref{lemma6}.

\begin{lemma}\label{lemma6} Let $\mathbf{C} \geq 1$, $\zeta_{1},\sigma,s \in (0,1]$ with $\sigma \leq s$, and let $\mathcal{Q} \subset [0,1]$ be finite. Then, there exists $\delta_{0} = \delta_{0}(\mathbf{C},\mathcal{Q},\zeta_{1}) > 0$ such that the following holds for all $\delta \in (0,\delta_{0}]$.

Let $\{\epsilon_{n}\}_{n \geq 1} \subset (0,1)$ be a non-increasing sequence, and let $C,\mathbf{C} \geq 1$. Let $\mu$ be an $(s,\mathbf{C})$-regular measure on $\R^{2}$ with $K := \spt \mu$. Let $\nu$ be a finite Borel measure on $S^{1}$, and let $\{L_{\theta}\}_{\theta \in S^{1}}$ be a family of Borel sets in $\R^{2}$ indexed by $S^{1}$. Assume that
\begin{equation}\label{form72} \int_{S^{1}} \mu(B(1) \cap H_{\theta}(K,\delta^{-\sigma},[C\delta,C]) \cap L_{\theta}) \, d\nu(\theta) \geq \epsilon_{1}. \end{equation} 
Write $\mathfrak{gap} := \min\{|p - q| : p,q \in \mathcal{Q}, \, p \neq q\}$. Then, there exists an index $n \leq 2/\zeta_{1}$, a scale $\bar{\delta} \leq \delta^{\mathfrak{gap}^{n}}$, a disc $\mathbf{B} \subset \R^{2}$ of radius $\mathrm{rad}(\mathbf{B}) \in [\delta/\bar{\delta},1]$, and a number $\bar{\sigma} \in [\sigma,s]$ such that
\begin{equation}\label{form46} \int_{S^{1}} \mu^{\mathbf{B}}(B(1) \cap H_{\theta}(T_{\mathbf{B}}(K),\bar{\delta}^{-\bar{\sigma}},[C\bar{\delta},C]) \cap T_{\mathbf{B}}(L_{\theta})) \, d\nu(\theta) \geq \epsilon_{n}, \end{equation} 
but on the other hand
\begin{equation}\label{form47} \int_{S^{1}} \mu^{\mathbf{B}}(B(C) \cap H_{\theta}(T_{\mathbf{B}}(K),(\bar{\delta}^{q - p})^{-(\bar{\sigma} + \zeta_{1})},[C\bar{\delta}^{q},C\bar{\delta}^{p}]) \cap T_{\mathbf{B}}(L_{\theta})) \, d\nu(\theta) \leq \mathbf{C}C^{s}\epsilon_{n + 1} \end{equation}
for all pairs $p,q \in \mathcal{Q}$ with $p < q$.
\end{lemma}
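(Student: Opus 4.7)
The plan is to set up an iterative pigeonhole scheme that successively refines $(\bar\delta_n,\mathbf{B}_n,\bar\sigma_n)$ until \eqref{form47} holds. Start at stage $n=1$ with $\bar\delta_1:=\delta$, $\mathbf{B}_1:=B(0,1)$ (so $\mu^{\mathbf{B}_1}=\mu$), and $\bar\sigma_1:=\sigma$; then \eqref{form46} at $n=1$ is exactly the hypothesis \eqref{form72}. The inductive invariants I maintain are $\bar\delta_n\le\delta^{\mathfrak{gap}^{n-1}}$, $\mathrm{rad}(\mathbf{B}_n)\in[\delta/\bar\delta_n,1]$, $\bar\sigma_n=\sigma+(n-1)\zeta_1\in[\sigma,s]$, and that \eqref{form46} holds at stage $n$.

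At each stage $n$, if \eqref{form47} holds, stop and output. Otherwise, fix $p<q$ in $\mathcal{Q}$ witnessing the failure, cover $B(C)\cap\spt\mu^{\mathbf{B}_n}$ by a family $\mathcal{B}$ of discs of radius $\bar\delta_n^{p}$ with $|\mathcal{B}|\le\mathbf{C}(C/\bar\delta_n^{p})^{s}$ (using $(s,\mathbf{C})$-regularity of $\mu^{\mathbf{B}_n}$), and pigeonhole the failed \eqref{form47} over $\mathcal{B}$ to extract a single disc $\mathbf{B}\in\mathcal{B}$ of radius $\bar\delta_n^{p}$ in the renormalized space carrying an integrated measure at least $\bar\delta_n^{ps}\epsilon_{n+1}$. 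Set $\bar\delta_{n+1}:=\bar\delta_n^{q-p}$, $\bar\sigma_{n+1}:=\bar\sigma_n+\zeta_1$, and define $\mathbf{B}_{n+1}:=T_{\mathbf{B}_n}^{-1}(\mathbf{B})$; the chain rule \eqref{chainRule} then gives $\mu^{\mathbf{B}_{n+1}}=(\mu^{\mathbf{B}_n})^{\mathbf{B}}$ and $T_{\mathbf{B}_{n+1}}=T_{\mathbf{B}}\circ T_{\mathbf{B}_n}$. Combining Lemma \ref{lemma7} with the scaling identity $\mu^{\mathbf{B}_{n+1}}(T_{\mathbf{B}}(E))=\bar\delta_n^{-ps}\mu^{\mathbf{B}_n}(E)$ for $E\subset\mathbf{B}$ converts the pigeonholed inequality into \eqref{form46} at stage $n+1$: the threshold $(\bar\delta_n^{q-p})^{-(\bar\sigma_n+\zeta_1)}$ becomes $\bar\delta_{n+1}^{-\bar\sigma_{n+1}}$, the range $[C\bar\delta_n^{q},C\bar\delta_n^{p}]$ rescales to $[C\bar\delta_{n+1},C]$, and the integrated mass of the $T_{\mathbf{B}}$-image is at least $\bar\delta_n^{-ps}\cdot\bar\delta_n^{ps}\epsilon_{n+1}=\epsilon_{n+1}$.

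Verifying the invariants at stage $n+1$ is mechanical: $\bar\delta_{n+1}\le\bar\delta_n^{\mathfrak{gap}}\le\delta^{\mathfrak{gap}^{n}}$ since $q-p\ge\mathfrak{gap}$; $\mathrm{rad}(\mathbf{B}_{n+1})=\mathrm{rad}(\mathbf{B}_n)\bar\delta_n^{p}\le1$, and $\mathrm{rad}(\mathbf{B}_{n+1})\bar\delta_{n+1}=\mathrm{rad}(\mathbf{B}_n)\bar\delta_n^{q}\ge\mathrm{rad}(\mathbf{B}_n)\bar\delta_n\ge\delta$ since $q\le1$. For termination, $\bar\sigma_n$ increases by $\zeta_1$ per step, so after $n-1>(s-\sigma)/\zeta_1$ steps one has $\bar\sigma_n+\zeta_1>s$; then for $\delta\le\delta_0(\mathbf{C},\mathcal{Q},\zeta_1)$ the hypothetical threshold $(\bar\delta_n^{q-p})^{-(\bar\sigma_n+\zeta_1)}$ strictly exceeds the $(s,\mathbf{C})$-regularity upper bound $\mathbf{C}\bar\delta_n^{-(q-p)s}$ on $\bar\delta_n^{q}$-coverings of $T_{\mathbf{B}_n}(K)$ inside balls of radius $\bar\delta_n^{p}$, so the multiplicity set in \eqref{form47} is empty and that inequality holds trivially. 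This forces termination within $n\le(s-\sigma)/\zeta_1+1\le 2/\zeta_1$ stages.

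The main obstacle is purely bookkeeping: threading through the chain rule \eqref{chainRule} so that the compositional renormalization $(\mu^{\mathbf{B}_n})^{\mathbf{B}}$ correctly coincides with $\mu^{\mathbf{B}_{n+1}}$, ensuring that Lemma \ref{lemma7} rescales the multiplicity range $[C\bar\delta_n^{q},C\bar\delta_n^{p}]$ to $[C\bar\delta_{n+1},C]$ with the exponent $\bar\sigma_n+\zeta_1$ aligning with $\bar\sigma_{n+1}$, and checking that the radius $\mathrm{rad}(\mathbf{B}_{n+1})=\mathrm{rad}(\mathbf{B}_n)\bar\delta_n^{p}$ stays in $[\delta/\bar\delta_{n+1},1]$ throughout the iteration.
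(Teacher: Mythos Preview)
Your proposal is correct and follows essentially the same iterative pigeonhole scheme as the paper's proof: both set up the recursion $\bar\delta_{n+1}=\bar\delta_n^{q-p}$, $\bar\sigma_{n+1}=\bar\sigma_n+\zeta_1$, cover $B(C)\cap\spt\mu^{\mathbf{B}_n}$ by $\lesssim\mathbf{C}(C/\bar\delta_n^{p})^{s}$ discs of radius $\bar\delta_n^{p}$, pigeonhole to select $\mathbf{B}_{n+1}$, and terminate via the $(s,\mathbf{C})$-regularity bound once $\bar\sigma_n+\zeta_1>s$. One small point: your stated invariant ``$\bar\sigma_n\in[\sigma,s]$'' is not literally maintained by the iteration, since at the forced-termination step one may have $\bar\sigma_n\in(s,s+\zeta_1]$; the paper handles this by redefining $\bar\sigma:=s$ at the end (which preserves \eqref{form46} and only costs replacing $\zeta_1$ by $2\zeta_1$ in \eqref{form47}), and you should note this adjustment as well.
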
 

\begin{proof} We will recursively define a sequence of measures $\{\mu_{n}\}_{n \geq 1}$, where each $\mu_{n + 1}$ has the form $\mu_{n + 1} = \mu_{n}^{B}$ for some disc $B \subset \R^{2}$. We initialise this by setting $\mu_{1} := \mu$. It follows from the "chain rule for renormalisations" \eqref{chainRule} that every element in $\{\mu_{n}\}_{n \geq 0}$ has the form $\mu_{n} = \mu^{\mathbf{B}}$ for some disc $\mathbf{B} \subset \R^{2}$. In particular, the measures $\mu_{n}$ are $(s,\mathbf{C})$-regular.

In addition to the measures $\{\mu_{n}\}$, we will define an increasing sequence of scales $\{\delta_{n}\}_{n \geq 1}$, where $\delta_{1} := \delta$, and every scale $\delta_{n + 1}$ will have the form $\delta_{n + 1} = \delta_{n}^{q_{n} - p_{n}}$ for some $p_{n},q_{n} \in \mathcal{Q}$ with $q_{n} > p_{n}$. In particular, $\delta_{n} \leq \delta_{n + 1} \leq \delta_{n}^{\mathfrak{gap}}$. Finally, we also define $\sigma_{0} := \sigma$, and $\sigma_{n + 1} := \sigma_{n} + \zeta_{1}$.

Assume that $\mu_{n},\delta_{n}$ have already been defined for some $n \geq 1$, and that $\mu_{n} = \mu^{\mathbf{B}_{n}}$ for some disc $\mathbf{B}_{n} \subset \R^{2}$. For $n = 1$, this is true with $\mathbf{B}_{0} = B(1)$. Write $K_{n} := \spt \mu_{n} = T_{\mathbf{B}_{n}}(K)$ and $L_{\theta,n} := T_{\mathbf{B}_{n}}(L_{\theta})$. Assume inductively that 
\begin{equation}\label{form45} \int \mu_{n}(B(1) \cap H_{\theta}(K_{n},\delta_{n}^{-\sigma_{n}},[C\delta_{n},C]) \cap L_{\theta,n}) \, d\nu(\theta) \geq \epsilon_{n}. \end{equation} 
Now, if
\begin{equation}\label{form48} \int \mu_{n}(B(C) \cap H_{\theta}(K_{n},(\delta_{n}^{q - p})^{-\sigma_{n + 1}},[C\delta_{n}^{q},C\delta_{n}^{p}]) \cap L_{\theta,n}) \, d\nu(\theta) \leq \mathbf{C}C^{s}\epsilon_{n + 1},  \end{equation} 
for all $p,q \in \mathcal{Q}$ with $p < q$, the recursive construction terminates. We set $\bar{\sigma} := \sigma_{n}$ and $\bar{\delta} := \delta_{n} \leq \delta^{\mathfrak{gap}^{n}}$. At this point, \eqref{form46}-\eqref{form47} are given by \eqref{form45}-\eqref{form48}. 

Assume then that \eqref{form48} does not hold: there exists a pair $p_{n},q_{n} \in \mathcal{Q}$ with $p_{n} < q_{n}$ and
\begin{equation}\label{form49} \int \mu_{n}(B(C) \cap H_{\theta}(K_{n},(\delta_{n}^{q_{n} - p_{n}})^{-\sigma_{n + 1}},[C\delta_{n}^{q_{n}},C\delta_{n}^{p_{n}}]) \cap L_{\theta,n}) \, d\nu(\theta) > \mathbf{C}C^{s}\epsilon_{n + 1}. \end{equation}
We now define $\delta_{n + 1} := \delta_{n}^{q_{n} - p_{n}}$. To define the measure $\mu_{n + 1}$, cover $K_{n} \cap B(C)$ by a minimal family $\mathcal{B}_{n + 1}$ of discs of radius $\delta_{n}^{p_{n}}$. Using the relation
\begin{align*} \mu_{n}(B \cap & H_{\theta}(K_{n},\delta_{n + 1}^{-\sigma_{n + 1}},[C\delta_{n}^{q_{n}},C\delta_{n}^{p_{n}}]) \cap L_{\theta,n})\\
& = (\delta_{n}^{p_{n}})^{s} \cdot \mu_{n}^{B}(B(1) \cap H_{\theta}(T_{B}(K_{n}),\delta_{n + 1}^{-\sigma_{n + 1}},[C\delta_{n + 1},C]) \cap T_{B}(L_{\theta,n})), \end{align*}
valid for all discs $B \subset \R^{2}$ of radius $\delta_{n}^{p_{n}}$ (see Lemma \ref{lemma7}), we deduce
\begin{align*} \sum_{B \in \mathcal{B}_{n + 1}}  & (\delta_{n}^{p_{n}})^{s} \int \mu_{n}^{B}(B(1) \cap H_{\theta}(T_{B}(K_{n}),\delta_{n + 1}^{-\sigma_{n + 1}},[C\delta_{n + 1},C]) \cap T_{B}(L_{\theta,n})) \, d\nu(\theta)\\
& \geq \sum_{B \in \mathcal{B}_{n + 1}} \int \mu_{n}(B \cap H_{\theta}(K_{n},\delta_{n + 1}^{-\sigma_{n + 1}},[C\delta_{n}^{q_{n}},C\delta_{n}^{p_{n}}]) \cap L_{\theta,n}) \, d\nu(\theta) \geq \mathbf{C}C^{s}\epsilon_{n + 1}. \end{align*} 
Since $|\mathcal{B}_{n + 1}| \leq \mathbf{C}(C/\delta_{n}^{p_{n}})^{s}$ by the $(s,\mathbf{C})$-regularity of $\mu_{n}$, it follows that there exists at least one disc $B_{n + 1} \in \mathcal{B}_{n + 1}$ such that
\begin{displaymath} \int \mu_{n}^{B_{n + 1}}(B(1) \cap H_{\theta}(T_{B_{n + 1}}(K_{n}),\delta_{n + 1}^{-\sigma_{n + 1}},[C\delta_{n + 1},C]) \cap T_{B_{n + 1}}(L_{\theta})) \, d\nu(\theta) \geq \epsilon_{n + 1}. \end{displaymath}
We now define $\mu_{n + 1} := \mu_{n}^{B_{n + 1}}$. Then \eqref{form45} has been verified at the index $n + 1$, and the construction may proceed. We record that if we now expressed $\mu_{n + 1}$ as $\mu^{\mathbf{B}_{n + 1}}$, then 
\begin{equation}\label{form53} \mathrm{rad}(\mathbf{B}_{n + 1}) = \mathrm{rad}(\mathbf{B}_{n})\delta_{n}^{p_{n}}. \end{equation}

To complete the proof, it suffices to check that the construction terminates in $n \leq 2\zeta_{1}^{-1}$ steps. Morally, this is because $\sigma_{n} > 1$ for $n > \zeta_{1}^{-1}$. To be more precise, we claim that the construction terminates latest at the first step "$n$" when $\sigma_{n} \geq s + \zeta_{1}$ is satisfied (clearly $n \leq 2\zeta_{1}^{-1}$). Indeed, if this was not the case, then we could find two numbers $p_{n},q_{n} \in \mathcal{Q}$ with $p_{n} < q_{n}$, such that \eqref{form49} holds. In particular, there exists $\theta \in S^{1}$ such that
\begin{displaymath} H_{\theta}(K_{n},(\delta_{n}^{q_{n} - p_{n}})^{-s - \zeta_{1}},[C\delta_{n}^{q_{n}},C\delta_{n}^{p_{n}}]) \neq \emptyset. \end{displaymath}
If "$x$" lies in the set above, then by definition
\begin{displaymath} |(K_{n})_{C\delta^{q_{n}}} \cap B(x,C\delta^{p_{n}}) \cap \pi_{\theta}^{-1}\{\pi_{\theta}(x)\}|_{C\delta_{n}^{q_{n}}} \geq (\delta_{n}^{q_{n} - p_{n}})^{-s - \zeta_{1}}. \end{displaymath}
However, the left hand side is bounded from above by $\lesssim \mathbf{C}(\delta_{n}^{q_{n} - p_{n}})^{-s}$. These inequalities are incompatible provided that $\delta_{n}^{\zeta_{1}(p_{n} - q_{n})} \geq A\mathbf{C}$ for an absolute constant $A \geq 1$, and this is the case if $\delta > 0$ was initially chosen small enough in terms of $\mathbf{C},\mathcal{Q},\zeta_{1}$.

We claimed in the statement that $\bar{\sigma} \in [\sigma,s]$, but the argument above only seems to yield $\bar{\sigma} \in [\sigma,s + \zeta_{1}]$. This is only a formal problem: if $\bar{\sigma} \in (s,s + \zeta_{1}]$, then we simply redefine $\bar{\sigma} := s$. Now the lower bound \eqref{form72} continues to hold as stated, and the upper bound \eqref{form47} holds with $\bar{\sigma}  + 2\zeta_{1}$ in place of $\bar{\sigma} + \zeta_{1}$.

We finally note that the final disc $\mathbf{B} \subset \R^{2}$ appearing in \eqref{form46}-\eqref{form47} satisfies $\mathrm{rad}(\mathbf{B}) \geq \delta/\bar{\delta}$. This follows from the recursive formula \eqref{form53}, and the fact that $\mathcal{Q} \subset [0,1]$, since
\begin{align*} \bar{\delta} \cdot \mathrm{rad}(\mathbf{B}) & = \delta_{n} \cdot \mathrm{rad}(\mathbf{B}_{n}) = \delta_{n - 1}^{q_{n - 1} - p_{n - 1}} \cdot \mathrm{rad}(\mathbf{B}_{n - 1})\delta_{n - 1}^{p_{n - 1}}\\
& \geq \delta_{n - 1} \cdot \mathrm{rad}(\mathbf{B}_{n - 1}) \geq \ldots \geq \delta_{0} \cdot \mathrm{rad}(\mathbf{B}_{0}) = \delta. \end{align*} 
This completes the proof.  \end{proof} 

\subsection{Proof of Proposition \ref{mainTechnicalProp}}\label{s2} We will now combine Lemmas \ref{lemma3} and \ref{lemma6} to prove Proposition \ref{mainTechnicalProp}, which we restate here despite its length:
\begin{proposition}\label{mainTechnicalPropRestated} Let $A,\mathbf{C},\Delta,\epsilon,\eta,\lambda,s,\sigma,\sigma_{0},\tau,\zeta \in (0,1]$ with $\eta \leq \sigma \leq s$, and $\mathfrak{d} > 1$. Assume that $\delta = \Delta^{N}$ for some $N \in \N$ sufficiently large, depending on all the previously listed constants. Let $\nu$ be a $(\tau,\mathbf{C})$-Frostman measure on $S^{1}$. Let $\mu$ be an $(s,\mathbf{C})$-regular measure with $\spt \mu = K \subset \R^{2}$ satisfying
\begin{equation}\label{form50} \int_{S^{1}} \mu(B(1) \cap H_{\theta}(K,\delta^{-\sigma},[\delta,1]) \cap L_{\theta}) \, d\nu(\theta) \geq \epsilon, \end{equation}
where
\begin{equation}\label{form69} L_{\theta} = \{x \in \R^{2} : \tfrac{1}{N}|\{0 \leq j \leq N - 1 : x \in H_{\theta}(K,\Delta^{-\sigma_{0}},[A\Delta^{j + 1},A\Delta^{j}]\}| \leq \lambda\}. \end{equation} 
Then, there exists an $(s,\mathbf{C})$-regular measure $\bar{\mu}$ with $\spt \bar{\mu} = \bar{K}$ and a $(\tau,\mathbf{C})$-Frostman measure on $\bar{\nu}$ on $S^{1}$ such that 
\begin{equation}\label{form74} \int_{S^{1}} \bar{\mu}(B(1) \cap H_{\theta}(\bar{K},\bar{\delta}^{-\bar{\sigma}},[\bar{\delta},1]) \cap \bar{L}_{\theta}) \, d\bar{\nu}(\theta) = \bar{\epsilon} \gtrsim_{\mathbf{C},\epsilon,\zeta,\eta,\mathfrak{d},\tau} 1, \end{equation} 
where
\begin{displaymath} \bar{\sigma} \in [\sigma - \eta,s] \quad \text{and} \quad \bar{\delta} = \Delta^{\bar{N}} \end{displaymath}
for some $\bar{N} \gtrsim_{\mathfrak{d},\eta,\tau,\zeta} N$, and 
\begin{displaymath} \bar{L}_{\theta} = \{x \in \R^{2} : \tfrac{1}{\bar{N}} |\{0 \leq j \leq \bar{N} - 1 : x \in H_{\theta}(\bar{K},\Delta^{-\sigma_{0}},[\tfrac{A}{50}\Delta^{j + 1},\tfrac{A}{50}\Delta^{j}])\}| \leq C(\mathfrak{d},\tau)\eta^{-1}\lambda\}, \end{displaymath}
where $C(\mathfrak{d},\tau) \geq 1$ is a constant depending only on $\mathfrak{d},\tau$. Moreover,
\begin{equation}\label{form79} \int_{S^{1}} \bar{\mu}(B(1) \cap H_{\theta}(\bar{K},(\bar{\delta}^{q - p})^{-\bar{\sigma} - \zeta},[\bar{\delta}^{q},\bar{\delta}^{p}]) \cap \bar{L}_{\theta}) \, d\bar{\nu}(\theta) \leq \tfrac{\bar{\epsilon}}{10} \end{equation} 
for all pairs $p,q \in \{0,\tfrac{1}{2},1\}$ with $p < q$. Finally, if $I \subset S^{1}$ is a dyadic arc of length $\bar{\delta}^{1/2}$ with $\bar{\nu}(I) > 0$, then $\bar{\nu}(J) \leq \bar{\delta}^{c(\mathfrak{d},\tau)}\bar{\nu}(I)$ for every sub-arc $J \subset I$ with length $|I| \leq \bar{\delta}^{\mathfrak{d}/2}$, where $c(\mathfrak{d},\tau) > 0$ is a constant depending only on $\mathfrak{d} > 1$ and $\tau > 0$. \end{proposition}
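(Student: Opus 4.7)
The proof chains Lemmas \ref{lemma3}, \ref{lemma8a}, and \ref{lemma6}, each contributing one of the three conclusions of the proposition, while coordinating the scales so the outputs are compatible.

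First, apply Lemma \ref{lemma3} to $(\mu,\nu)$ with the given $\eta$ and a small $\gamma = \gamma(\mathfrak{d},\tau,\zeta) > 0$ (to be pinned down at the end). This produces an $(s,\mathbf{C})$-regular measure $\mu_1$ at a coarser scale $\delta_1 = \Delta^{N_1}$ with $N_1 \geq \rho(\eta,\gamma) N$, an exponent $\sigma_1 \in [\sigma - \eta,s]$, sets $L_{1,\theta}$ enjoying the hereditary bound \eqref{form42}, and a preserved lower bound $\epsilon_1 \gtrsim_{\mathbf{C},\eta,\gamma,\epsilon} 1$. The hereditary feature is precisely what will allow the eventual $\bar{L}_\theta$ to retain its bound under a further renormalization in Step 3.

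Second, apply Lemma \ref{lemma8a} to $\nu$ at scale $\delta_1$, extracting $p \in \mathcal{Q}_0(\mathfrak{d},\tau)$ and a subset $G \subset S^1$ of $\nu$-mass $\gtrsim_{\mathfrak{d},\tau} 1$ on which $\nu|_G$ has quantitative Frostman branching between the scales $\delta_1^p$ and $\delta_1^{\mathfrak{d} p}$. Set $\bar{\nu} := \nu|_G$, which remains $(\tau,\mathbf{C})$-Frostman. The target final scale is $\bar{\delta} := \delta_1^{2p}$: with this choice the branching lies exactly at $\bar{\delta}^{1/2}$ and $\bar{\delta}^{\mathfrak{d}/2}$, realizing the proposition's last assertion with $c(\mathfrak{d},\tau) := \tau/(40 \mathfrak{n} p)$. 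The bound $2p \geq 1/\mathfrak{d}^{\mathfrak{n}}$ in turn yields $\bar{N} \gtrsim_{\mathfrak{d},\eta,\tau,\zeta} N$.

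Third, apply Lemma \ref{lemma6} to $(\mu_1,\bar{\nu})$ with $\mathcal{Q} := \{0, p, 2p\}$ and $\zeta_1 := \zeta$, using a geometric sequence $\epsilon_n$ designed so that $\epsilon_n \gtrsim_{\mathbf{C},\epsilon,\eta,\mathfrak{d},\tau,\zeta} 1$ throughout $n \leq 2/\zeta$. The crucial feature of $\mathcal{Q}$ is that its pairs $(0,p)$, $(p,2p)$, $(0,2p)$ correspond, via division by $2p$, exactly to the proposition's pairs $(0,1/2)$, $(1/2,1)$, $(0,1)$; hence Lemma \ref{lemma6}'s upper bound \eqref{form47} translates directly to \eqref{form79} once one identifies $\bar{\delta} = \delta_1^{2p}$. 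The outputs — a disc $\mathbf{B}$, the renormalized measure $\bar{\mu} := \mu_1^{\mathbf{B}}$, and $\bar{\sigma} \in [\sigma_1,s] \subset [\sigma-\eta,s]$ — combine with \eqref{form46} to yield \eqref{form74}, while the hereditary property of $L_{1,\theta}$ from Step 1 transfers via Lemma \ref{lemma7} to the required bound on $\bar{L}_\theta := T_{\mathbf{B}}(L_{1,\theta})$, provided $\gamma$ was chosen so that $\gamma \leq \bar{N}/N_1$.

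The main obstacle lies in forcing Lemma \ref{lemma6}'s output scale to coincide with the target $\bar{\delta} = \delta_1^{2p}$ fixed in Step 2. The recursion in Lemma \ref{lemma6} produces some $\delta_1^{g_1 \cdots g_n}$ with each $g_i \in \{p,2p\}$ and termination step $n \leq 2/\zeta$ not controlled \emph{a priori}, so aligning the scales requires care: either engineer $\mathcal{Q}$ so that only the pair $(0,2p)$ can fail at scale $\delta_1$ (forcing one-step termination with $g_1 = 2p$), or iterate Steps 2--3 at the actual output scale of Lemma \ref{lemma6} until consistency is achieved, re-extracting the Frostman-branching subset $G$ at the new scale. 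A secondary bookkeeping challenge is calibrating $\gamma$ in Step 1 against the ratio $\bar{N}/N_1$ from Lemma \ref{lemma6} (which itself admits a lower bound depending only on $\mathfrak{d},\tau,\zeta$), and tracking the dependence of $\bar{\epsilon}$ on the permitted parameters through all three steps.
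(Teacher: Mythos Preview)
Your proposal correctly identifies the three main ingredients (Lemmas \ref{lemma3}, \ref{lemma6}, \ref{lemma8a}) but the order of application and the mechanism for aligning scales differ from the paper, and your approach as written has a genuine gap at exactly the point you flag as the ``main obstacle.''

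The paper proceeds in the order Lemma \ref{lemma3} $\to$ Lemma \ref{lemma6} $\to$ Lemma \ref{lemma8a}, not Lemma \ref{lemma3} $\to$ Lemma \ref{lemma8a} $\to$ Lemma \ref{lemma6}. Crucially, Lemma \ref{lemma6} is run with the \emph{large} set $\mathcal{Q} = \mathcal{Q}_0 \cup \mathcal{Q}_1 \cup \{0\}$ (see \eqref{rationals}) containing all of $\mathcal{Q}_0(\mathfrak{d},\tau)$ simultaneously, so that its upper bound \eqref{form47} holds for \emph{every} pair $(p,q) \in \mathcal{Q}^2$ with $p<q$. Only afterwards is Lemma \ref{lemma8a} applied, at the \emph{output} scale $\delta_2$ of Lemma \ref{lemma6}, to select some $p_0 \in \mathcal{Q}_0$; one then sets $\bar{\delta} := \delta_2^{2p_0}$. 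Since $\{0, p_0, 2p_0\} \subset \mathcal{Q}$ by design, the three upper bounds needed for \eqref{form79} are already available from \eqref{form58}.

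This still leaves the problem that the lower bound \eqref{form46} from Lemma \ref{lemma6} is at scale $\delta_2$, not at $\bar{\delta} = \delta_2^{2p_0}$. Here a fourth ingredient, which your proposal omits entirely, is essential: Lemma \ref{lemma15}. Applied with $N = \delta_2^{-\sigma_2}$, $M = (\delta_2^{1-2p_0})^{-\sigma_2 - \zeta_1}$, and intermediate scale $50\bar{\delta}$, it splits the high-multiplicity set $H_\theta(K_2, \delta_2^{-\sigma_2}, [50\delta_2, 50])$ into a piece on $[150\delta_2, 150\bar{\delta}]$, controlled by the upper bound \eqref{form58} at the pair $(2p_0, 1) \in \mathcal{Q}^2$, and a piece on $[50\bar{\delta}, 50]$. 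Integrating over $\bar{S}$ and subtracting transfers the lower bound to scale $\bar{\delta}$, yielding \eqref{form74} (after a final renormalization by $B(50)$). This transfer is also why Lemma \ref{lemma6} must be run with a parameter $\zeta_1 \sim_{\mathfrak{d},\tau} \min\{\zeta,\eta\}$ strictly smaller than $\zeta$ (see \eqref{zetaOne}): the exponent picks up a factor $1/(2p_0)$ in the passage from $\delta_2$ to $\bar{\delta}$.

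Your suggested workarounds for the scale-alignment problem do not succeed as stated: you cannot force which pair fails in the recursion of Lemma \ref{lemma6}, and iterating Steps 2--3 would not terminate in a controlled number of steps with compatible branching data. The paper's device --- run Lemma \ref{lemma6} once with a large $\mathcal{Q}$, select $p_0$ \emph{a posteriori} via Lemma \ref{lemma8a}, then invoke Lemma \ref{lemma15} to shift the lower bound to the correct scale --- is the missing idea.
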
 
In this section, the constants $\mathbf{C},\Delta,\epsilon,\eta,\lambda,\sigma,\sigma_{0},\tau,\zeta \in (0,1]$ and $\mathfrak{d} > 1$ are the ones given in the statement of Proposition \ref{mainTechnicalPropRestated}. The first step in the proof is to apply Lemma \ref{lemma3} to the measure $\mu$, which satisfies the hypothesis of that lemma thanks to \eqref{form50}-\eqref{form69}. The constants $\mathbf{C},\Delta,\epsilon,\lambda,\eta,\sigma,\sigma_{0}$ are exactly the ones defined just above, but there is additionally the constant $\gamma > 0$, which we now need to define -- and which will depend on the given constants $\mathfrak{d},\tau,\zeta$ in Proposition \ref{mainTechnicalPropRestated}. 

\subsubsection{Defining the constant $\gamma$ and the set $\mathcal{Q} \subset [0,1]$}\label{s:gamma} To define the constant $\gamma > 0$, we will first define a finite set $\mathcal{Q} \subset [0,1]$ -- indeed those to which Lemma \ref{lemma6} will soon be applied. Recall the parameters $\mathfrak{d} > 1$ and $\tau > 0$ in Proposition \ref{mainTechnicalPropRestated}. Let $\mathfrak{n} \lesssim_{\mathfrak{d},\tau} 1$ be the smallest natural number satisfying
 \begin{equation}\label{form70a} \frac{1}{2\mathfrak{d}^{\mathfrak{n} - 1}} \leq \frac{\tau}{4}. \end{equation}
 Then, let $\mathcal{Q} := \mathcal{Q}_{0} \cup \mathcal{Q}_{1} \cup \{0\} \subset [0,1] \cap \Q$, where 
 \begin{equation}\label{rationals} \mathcal{Q}_{0} = \{\tfrac{1}{2}\mathfrak{d}^{-j} : 0 \leq j \leq \mathfrak{n}\} \quad \text{and} \quad \mathcal{Q}_{1} = \{\mathfrak{d}^{-j} : 0 \leq j \leq \mathfrak{n}\}. \end{equation}
In fact, $\mathcal{Q}_{0}$ looks familiar from Lemma \ref{lemma8a}. An important point will be that the numbers $\delta^{p}$ with $p \in \mathcal{Q}$ include $\{\delta,\delta^{1/2},1\}$, but also many other triples of the form $\{\delta^{2p},\delta^{p},1\}$. Write
\begin{equation}\label{gap} \mathfrak{gap} = \min\{|p - q| : p,q \in \mathcal{Q}, \, p \neq q\} \gtrsim_{\mathfrak{d},\tau} 1, \end{equation} 
and 
\begin{equation}\label{zetaOne} \zeta_{1} := \tfrac{\min \mathcal{Q}_{0}}{2} \cdot \min\left\{\zeta,\eta\right\}. \end{equation}
Note that $\min \mathcal{Q}_{0} \gtrsim_{\mathfrak{d},\tau} 1$, thus $\zeta_{1} \gtrsim_{\mathfrak{d},\tau} \min\{\zeta,\eta\}$. Then, define
\begin{equation}\label{defGamma} \gamma := c(\mathfrak{d},\tau) \cdot \mathfrak{gap}^{2\zeta_{1}^{-1}} \gtrsim_{\mathfrak{d},\eta,\tau,\zeta} 1, \end{equation}
where $c(\mathfrak{d},\tau) > 0$ is a constant depending only on $\mathfrak{d},\tau$, to be specified later.

We are then ready to prove Proposition \ref{mainTechnicalProp}.

  
 \begin{proof}[Proof of Proposition \ref{mainTechnicalProp}] As already hinted at above, we begin by applying Lemma \ref{lemma3} to the measures $\mu,\nu$, with parameters $\mathbf{C},\Delta,\epsilon,\lambda,\sigma,\sigma_{0},\tau,\eta/2$ provided by the statement of Proposition \ref{mainTechnicalProp}, and the constant $\gamma \sim_{\mathfrak{d},\eta,\tau,\zeta} 1$ determined at \eqref{defGamma}. The result is an $(s,\mathbf{C})$-regular measure $\mu_{1}$ with $\spt \mu_{1} = K_{1}$ satisfying, for $\sigma_{1} \geq \sigma - \eta/2$,
 \begin{equation}\label{form70} \int \mu_{1}(B(1) \cap H_{\theta}(K_{1},\delta_{1}^{-\sigma_{1}},[50\delta_{1},50]) \cap L_{1,\theta}) \, d\nu(\theta) = \epsilon_{1} \gtrsim_{\mathbf{C},\mathfrak{d},\eta,\epsilon,\tau,\zeta} 1. \end{equation}
 Here $\delta_{1} = \Delta^{N_{1}}$ for some $N_{1} \gtrsim_{\mathfrak{d},\eta,\tau,\zeta} N$, and for all intervals $I \subset \{0,\ldots,N_{1} - 1\}$ of length $|I| \geq \gamma N_{1}$ it holds
 \begin{equation}\label{form56} |\{j \in I : x \in H_{\theta}(K_{1},\Delta^{-\sigma_{0}},[A\Delta^{j + 1},A\Delta^{j}]\}| \leq 50\eta^{-1}\lambda |I|, \qquad x \in L_{1,\theta} \end{equation}
 
 We then prepare to apply Lemma \ref{lemma6} to the measure $\mu_{1}$, with exponent $\sigma_{1}$ (as in \eqref{form70}), and parameter
 \begin{equation}\label{form77a} \zeta_{1} = \tfrac{\min \mathcal{Q}_{0}}{2} \cdot \min\left\{\zeta,\eta\right\} \sim_{\mathfrak{d},\eta,\tau,\zeta} 1 \end{equation}
familiar from \eqref{zetaOne}. By \eqref{form70}, the main hypothesis \eqref{form72} of Lemma \ref{lemma6} is indeed satisfied with constants $\epsilon_{1}$ (as in \eqref{form70}) and $C = 50$, and the sets $L_{1,\theta}$ in place of $L_{\theta}$.
 
 To apply Lemma \ref{lemma6}, we still need to specify the numbers $\mathcal{Q}$, and the sequence $\{\epsilon_{n}\}_{n \geq 1}$. The set $\mathcal{Q}$ is the one defined in Section \ref{s:gamma}, and we have also already defined $\epsilon_{1}$. We then define recursively
\begin{equation}\label{form63} \epsilon_{n + 1} = c(\mathbf{C},\mathfrak{d},\tau)\epsilon_{n}^{2} \end{equation}
for a small constant $c(\mathbf{C},\mathfrak{d},\tau) > 0$, depending only on $\mathbf{C},\mathfrak{d},\tau$.
 
 Now we have defined all the parameters needed to apply Lemma \ref{lemma6}. The conclusion is that there exists an index $n \leq 2\zeta_{1}^{-1}$, a scale 
 \begin{displaymath} \delta_{2} \leq \delta_{1}^{\mathfrak{gap}^{2\zeta_{1}^{-1}}} \stackrel{\eqref{defGamma}}{\leq} \delta_{1}^{\gamma}, \end{displaymath}
 a disc $\mathbf{B} \subset \R^{2}$ of radius $\mathrm{rad}(\mathbf{B}) \in [\delta_{1}/\delta_{2},1]$, and a number 
 \begin{equation}\label{form76a} \sigma_{2} \in [\sigma_{1},s] \subset [\sigma - \eta/2,s] \end{equation}
 such that on the one hand, writing $\mu_{2} := \mu_{1}^{\mathbf{B}}$ and $K_{2} := T_{\mathbf{B}}(K_{1})$,
 \begin{equation}\label{form57} \int_{S^{1}} \mu_{2}(B(1) \cap H_{\theta}(K_{2},\delta_{2}^{-\sigma_{2}},[50\delta_{2},50]) \cap T_{\mathbf{B}}(L_{1,\theta})) \, d\nu(\theta) \geq \epsilon_{n}, \end{equation}
 but on the other hand
 \begin{equation}\label{form58} \int_{S^{1}} \mu_{2}(B(50) \cap H_{\theta}(K_{2},(\delta_{2}^{q - p})^{-\sigma_{2} - \zeta_{1}},[50\delta_{2}^{q},50\delta_{2}^{p}]) \cap T_{\mathbf{B}}(L_{1,\theta})) \, d\nu(\theta) \leq 50^{s}\mathbf{C}\epsilon_{n + 1} \end{equation}
 for all pairs $p,q \in \mathcal{Q}$ with $p < q$. In particular, since $\delta_{2} \leq \delta_{1}^{\gamma}$, and $\delta_{1} = \Delta^{N_{1}}$ for $N_{1} \gtrsim_{\mathfrak{d},\eta,\tau,\zeta}N$ according to Lemma \ref{lemma3}, we may express $\delta_{2}$ as
 \begin{equation}\label{form77} \delta_{2} = \Delta^{N_{2}} \quad \text{with} \quad N_{2} \geq \gamma N_{1} \stackrel{\eqref{defGamma}}{\gtrsim}_{\mathfrak{d},\eta,\tau,\zeta}N. \end{equation} 
 Let us examine the set 
 \begin{displaymath} L_{2,\theta} := T_{\mathbf{B}}(L_{1,\theta}) \end{displaymath}
 for a moment. Write $\mathrm{rad}(\mathbf{B}) = \Delta^{a}$, where $a \in \{0,\ldots,N_{1} - N_{2}\}$. We claim that if $y \in L_{2,\theta}$, then
 \begin{equation}\label{form54} |\{0 \leq j \leq N_{2} - 1 : y \in H_{\theta}(K_{2},\Delta^{-\sigma_{0}},[A\Delta^{j + 1},A\Delta^{j}])\}| \leq 50\eta^{-1} \lambda N_{2}. \end{equation}
 To see this, write $y = T_{\mathbf{B}}(x)$ for some $x \in L_{1,\theta}$, and $I := \{0,\ldots,N_{2} - 1\}$. Then $I + a \subset \{0,\ldots,N_{1} - 1\}$ is an interval of length $|I| \geq \gamma N_{1}$, so 
 \begin{displaymath} |\{j \in I + a : x \in H_{\theta}(K_{1},\Delta^{-\sigma_{0}},[A\Delta^{j + 1},A\Delta^{j}])\}| \leq 50\eta^{-1}\lambda |I| = 50\eta^{-1}\lambda N_{2} \end{displaymath} 
 by \eqref{form56}. This implies \eqref{form54}, because Lemma \ref{lemma7} yields the relation
 \begin{displaymath} H_{\theta}(K_{2},\Delta^{-\sigma_{0}},[A\Delta^{j + 1},A\Delta^{j}]) = T_{\mathbf{B}}(H_{\theta}(K_{1},\Delta^{-\sigma_{0}},[A\Delta^{j + 1 + a},\Delta^{j + a}])). \end{displaymath}

Next, we will put the set $\mathcal{Q}$ into practice. Starting from \eqref{form57}, and noting that $\nu(S^{1}) \leq \mathbf{C}$ and $\mu_{2}(B(1)) \leq \mathbf{C}$, find a subset $S \subset S^{1}$ with $\nu(S) \geq \epsilon_{n}/(2\mathbf{C})$ such that 
 \begin{equation}\label{form62} \mu_{2}(B(1) \cap H_{\theta}(K_{2},\delta_{2}^{-\sigma_{2}},[50\delta_{2},50]) \cap L_{2,\theta}) \geq \epsilon_{n}/(2\mathbf{C}), \qquad \theta \in S. \end{equation}
 We now plan to apply Lemma \ref{lemma8a} to the probability measure $\nu_{S} = \nu(S)^{-1}\nu|_{S}$, and at scale $\delta_{2}$. Since $\nu(S) \geq \epsilon_{n}/(2\mathbf{C})$, the hypothesis of Lemma \ref{lemma8a} is valid for $\delta > 0$ (thus $\delta_{2} > 0$) small enough. We repeat the conclusion of Lemma \ref{lemma8a} in the present notation:  

 \begin{lemma}\label{lemma8} There exists $p_{0} \in \mathcal{Q}_{0} \subset \mathcal{Q} \cap (0,1]$ (see \eqref{rationals}) and a subset $\bar{S} \subset S$ with $\nu(\bar{S}) \gtrsim_{\mathfrak{d},\tau} \nu(S)$ such that if $I \in \mathcal{D}_{\delta_{2}^{p_{0}}}(S^{1})$ has $\nu(I \cap \bar{S}) > 0$, then
\begin{equation}\label{form61} \nu(J \cap \bar{S}) \leq \delta_{2}^{c(\mathfrak{d},\tau)}\nu(I \cap \bar{S}), \qquad J \in \mathcal{D}_{\delta_{2}^{\mathfrak{d} p_{0}}}(I), \end{equation} 
where $c(\mathfrak{d},\tau) = \tau/(20\mathfrak{n}) \gtrsim_{\mathfrak{d},\tau} 1$ (the number $\mathfrak{n} \sim_{\mathfrak{d},\tau} 1$ was defined at \eqref{form70a}).  \end{lemma}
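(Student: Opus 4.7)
The plan is to apply Lemma \ref{lemma8a} directly to the renormalised probability measure $\nu_{S} := \nu(S)^{-1} \cdot \nu|_{S}$, evaluated at scale $\delta_{2}$. This is exactly the setup the author is pointing to in the sentence immediately preceding the lemma, so the argument should be short: the content of Lemma \ref{lemma8} is just a bookkeeping of what Lemma \ref{lemma8a} yields when specialised to $\nu_{S}$, with the set $G$ replaced by its intersection with $S$ (and the bounds translated from $\nu_{S}$ back to $\nu$).

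First I would check the hypothesis of Lemma \ref{lemma8a} for $\nu_{S}$. Since $\nu$ is $(\tau,\mathbf{C})$-Frostman and $\nu(S) \geq \epsilon_{n}/(2\mathbf{C})$, for every $x \in S^{1}$ and $r > 0$ one has
\begin{displaymath}
\nu_{S}(B(x,r)) \leq \frac{\mathbf{C} r^{\tau}}{\nu(S)} \leq \frac{2\mathbf{C}^{2}}{\epsilon_{n}} \, r^{\tau}.
\end{displaymath}
Since $\epsilon_{n} \gtrsim_{\mathbf{C},\mathfrak{d},\eta,\epsilon,\tau,\zeta} 1$ and $\eta = \eta(\mathfrak{d},\tau) > 0$ is the constant from Lemma \ref{lemma8a}, the inequality $2\mathbf{C}^{2}/\epsilon_{n} \leq \delta_{2}^{-\eta}$ holds provided $\delta_{2}$ (equivalently $\delta$, since $\delta_{2} \leq \delta^{\gamma}$ with $\gamma \gtrsim_{\mathfrak{d},\eta,\tau,\zeta} 1$) is small enough in terms of all listed parameters. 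This is consistent with the underlying assumption that $N$ is sufficiently large. Lemma \ref{lemma8a} then produces an exponent $p_{0} \in \mathcal{Q}_{0}(\mathfrak{d},\tau)$ and a set $G \subset S^{1}$ with $\nu_{S}(G) \geq \tau^{2}/(150\mathfrak{n}^{2})$ such that every $I \in \mathcal{D}_{\delta_{2}^{p_{0}}}$ with $\nu_{S}(I \cap G) > 0$ satisfies $\nu_{S}(J \cap G) \leq \delta_{2}^{\tau/(20\mathfrak{n})} \nu_{S}(I \cap G)$ for all $J \in \mathcal{D}_{\delta_{2}^{\mathfrak{d} p_{0}}}(I)$.

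Next I would set $\bar{S} := G \cap S$. From $\nu_{S}(G) \gtrsim_{\mathfrak{d},\tau} 1$ one gets $\nu(\bar{S}) = \nu(S) \cdot \nu_{S}(G) \gtrsim_{\mathfrak{d},\tau} \nu(S)$, which is the claimed measure bound. Finally, for the branching inequality, fix $I \in \mathcal{D}_{\delta_{2}^{p_{0}}}(S^{1})$ with $\nu(I \cap \bar{S}) > 0$. Then $\nu_{S}(I \cap G) = \nu(I \cap \bar{S})/\nu(S) > 0$, so for every $J \in \mathcal{D}_{\delta_{2}^{\mathfrak{d} p_{0}}}(I)$,
\begin{displaymath}
\nu(J \cap \bar{S}) = \nu(S) \cdot \nu_{S}(J \cap G) \leq \nu(S) \cdot \delta_{2}^{\tau/(20\mathfrak{n})} \nu_{S}(I \cap G) = \delta_{2}^{\tau/(20\mathfrak{n})} \nu(I \cap \bar{S}),
\end{displaymath}
which is \eqref{form61} with $c(\mathfrak{d},\tau) = \tau/(20\mathfrak{n})$.

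There is no real obstacle here; the only subtle point is to confirm that the "$\delta > 0$ small enough" required by Lemma \ref{lemma8a} is compatible with the quantitative dependencies allowed in the ambient Proposition \ref{mainTechnicalProp}, i.e.\ that $\epsilon_{n}^{-1}$ and $\mathbf{C}$ can be absorbed into a small power of $\delta_{2}$. Since $\epsilon_{n}$ depends only on $\mathbf{C},\mathfrak{d},\eta,\epsilon,\tau,\zeta$ and $N$ (hence $\delta_{2}$) is taken large (resp.\ small) in terms of all of these, this absorption is automatic, and the lemma follows.
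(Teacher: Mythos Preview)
Your proposal is correct and follows exactly the approach the paper takes: the paper explicitly states just before Lemma~\ref{lemma8} that it is merely restating the conclusion of Lemma~\ref{lemma8a} applied to $\nu_{S} = \nu(S)^{-1}\nu|_{S}$ at scale $\delta_{2}$, and your write-up fills in precisely those bookkeeping details (checking the Frostman hypothesis via $\nu(S) \geq \epsilon_{n}/(2\mathbf{C})$, setting $\bar{S} = G \cap S$, and translating back from $\nu_{S}$ to $\nu$).
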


Write $\bar{\delta} := \delta_{2}^{2p_{0}}$ for the number $p_{0} \in \mathcal{Q}_{0}$ provided by Lemma \ref{lemma8}. Equivalently, by \eqref{form77},
\begin{equation}\label{form78} \bar{\delta} = \Delta^{\bar{N}} \quad \text{with} \quad \bar{N} = 2p_{0} \cdot N_{2}. \end{equation}
(We ignore the issue that perhaps $2p_{0} \cdot N_{2} \notin \N$. This could be fixed in several ways, e.g. by considering ceiling functions, or choosing the numbers $p_{0}$ to be rationals.) This is the "final" scale we are looking for in Proposition \ref{mainTechnicalPropRestated}. The restriction $\bar{\nu} := \nu|_{\bar{S}}$ has the property desired in Proposition \ref{mainTechnicalPropRestated} by virtue of Lemma \ref{lemma8}, since $\bar{\delta}^{1/2} = \delta_{2}^{p_{0}}$.

We then proceed to define the objects $\bar{\mu}$ and $\bar{L}_{\theta}$ whose existence is claimed in Proposition \ref{mainTechnicalPropRestated}. To begin with, fix $\theta \in \bar{S}$, and denote (temporarily) $\mu_{\theta} := (\mu_{2})|_{L_{2,\theta}}$ the restriction of $\mu_{2}$ to $L_{2,\theta}$. Then $\mu_{\theta}$ remains $(s,\mathbf{C})$-regular. We apply Lemma \ref{lemma15} at scale $\delta = 50\delta_{2}$ and constant $A = 50$ to the measure $\mu_{\theta}$, the set $K_{2}$, and with parameters
 \begin{displaymath} N := \delta_{2}^{-\sigma_{2}} \quad \text{and} \quad M := (\delta_{2}^{1 - 2p_{0}})^{-\sigma_{2} - \zeta_{1}} \quad \text{and} \quad \Delta := 50\bar{\delta} = 50\delta_{2}^{2p_{0}} \quad \text{and} \quad \kappa := \frac{\epsilon_{n}}{4\mathbf{C}}. \end{displaymath}
 Note that $M \leq N$, because indeed
 \begin{displaymath} \frac{N}{M} = \delta_{2}^{-2p_{0}\sigma_{2} + \zeta_{1} - 2p_{0}\zeta_{1}} = \bar{\delta}^{-\sigma_{2} + \zeta_{1}/(2p_{0}) - \zeta_{1}} \geq \bar{\delta}^{-\sigma_{2} + \zeta_{1}/(2p_{0})}, \end{displaymath}
and here $\sigma_{2} \geq \sigma - \eta/2$ and $\zeta_{1}/(2p_{0}) \leq \eta/4$, recall \eqref{form77a} and \eqref{form76a}.
 
The conclusion of Lemma \ref{lemma15} with these parameters is that
 \begin{align*} \frac{\epsilon_{n}}{2\mathbf{C}} & \stackrel{\eqref{form62}}{\leq} \mu_{\theta}(B(1) \cap H_{\theta}(K_{2},\delta_{2}^{-\sigma_{2}},[50\delta_{2},50]))\\
& \leq 2\mu_{\theta}(B(1) \cap H_{\theta}(K_{2},(\delta_{2}^{1 - 2p_{0}})^{-\sigma_{2} - \zeta_{1}},[150\delta_{2},150\bar{\delta}]) + \kappa\\
&\qquad + \mu_{\theta}(B(1) \cap H_{\theta}(K_{2},\mathbf{c}(\kappa)\bar{\delta}^{-\sigma_{2} + \zeta_{1}/(2p_{0})},[50\bar{\delta},50]), \end{align*}
where $\mathbf{c}(\kappa) = c\kappa^{2}/\mathbf{C}^{3} \in (0,1]$ according to Lemma \ref{lemma15}. When this inequality is $\nu$-integrated over $\theta \in \bar{S}$, and we apply \eqref{form58} (with $q = 1$) to the first term, we find
\begin{align*} \frac{\epsilon_{n}}{2\mathbf{C}}\nu(\bar{S}) & \leq 2\int_{S^{1}}\mu_{\theta}(B(1) \cap H_{\theta}(K_{2},(\delta_{2}^{1 - 2p_{0}})^{-\sigma_{2} - \zeta_{1}},[150\delta_{2},150\bar{\delta}])) \, d\nu(\theta) + \kappa \nu(\bar{S})\\
&\qquad + \int_{\bar{S}} \mu_{\theta}(B(1) \cap H_{\theta}(K_{2},\mathbf{c}(\kappa)\bar{\delta}^{-\sigma_{2} + \zeta_{1}/(2p_{0})},[50\bar{\delta},50])) \, d\nu(\theta)\\
& \leq 2 \cdot 50^{s}\mathbf{C}\epsilon_{n + 1} + \frac{\epsilon_{n}}{4\mathbf{C}}\nu(\bar{S})\\
&\qquad + \int_{\bar{S}} \mu_{\theta}(B(1) \cap H_{\theta}(K_{2},\mathbf{c}(\kappa)\bar{\delta}^{-\sigma_{2} + \zeta_{1}/(2p_{0})},[50\bar{\delta},50])) \, d\nu(\theta).  \end{align*}
Here $\epsilon_{n + 1} \leq c(\mathbf{C},\mathfrak{d},\tau)\epsilon_{n}^{2}$ according to the choice we made at \eqref{form63}. Since on the other hand $\nu(\bar{S}) \gtrsim_{\mathfrak{d},\tau} \nu(S) \gtrsim \epsilon_{n}/\mathbf{C}$ (recall Lemma \ref{lemma8}), we now see that neither of the terms $2 \cdot 50^{s}\mathbf{C}\epsilon_{n + 1}$ nor $\nu(\bar{S})\epsilon_{n}/(4\mathbf{C})$ can dominate the left hand side. Consequently, and recalling now that $\mu_{\theta} = (\mu_{2})|_{L_{2,\theta}}$ and $\bar{\nu} = \nu|_{\bar{S}}$, the third term satisfies
\begin{equation}\label{form75} \int_{S^{1}} \mu_{2}(B(1) \cap H_{\theta}(K_{2},\mathbf{c}(\kappa)\bar{\delta}^{-\sigma_{2} + \zeta_{1}/(2p_{0})},[50\bar{\delta},50]) \cap L_{2,\theta}) \, d\bar{\nu}(\theta) \gtrsim \frac{\epsilon_{n}}{\mathbf{C}}\nu(\bar{S}) \gtrsim_{\mathbf{C},\mathfrak{d},\tau} \epsilon_{n}^{2}. \end{equation}
Recall further that here $n \leq 2\zeta^{-1}$, so in fact $\epsilon_{n} \gtrsim_{\mathbf{C},\mathfrak{d},\tau,\zeta} \epsilon_{1}$. Recalling the definition of $\epsilon_{1}$ from \eqref{form70}, the right hand side of \eqref{form75} is $\gtrsim_{\mathbf{C},\epsilon,\mathfrak{d},\eta,\tau,\zeta} 1$. 

This is nearly what we promised to deliver at \eqref{form74}, copied below for ease of reference:
\begin{equation}\label{form76} \int_{S^{1}} \bar{\mu}(B(1) \cap H_{\theta}(\bar{K},\bar{\delta}^{-\bar{\sigma}},[\bar{\delta},1]) \cap \bar{L}_{\theta}) \, d\bar{\nu}(\theta) =: \bar{\epsilon} \gtrsim_{\mathbf{C},\epsilon,\zeta,\eta,\mathfrak{d},\tau} 1. \end{equation}
We perform one more renormalisation to bring \eqref{form75} even closer to \eqref{form74}. Namely, write
\begin{displaymath} \bar{\mu} := \mu_{2}^{B(50)}, \quad \bar{K} := T_{B(50)}(K_{2}) = \spt \bar{\mu} \quad \text{and} \quad \bar{L}_{\theta} := T_{B(50)}(L_{2,\theta}). \end{displaymath}
Then, using Lemma \ref{lemma7},
\begin{align*} \bar{\mu}(B(1) \cap & H_{\theta}(\bar{K},\mathbf{c}(\kappa)\bar{\delta}^{-\sigma_{2} + \zeta_{1}/(2p_{0})},[\bar{\delta},1] \cap \bar{L}_{\theta})\\
& = 50^{-s} \mu_{2}(B(50) \cap H_{\theta}(K_{2},\mathbf{c}(\kappa)\bar{\delta}^{-\sigma_{2} + \zeta_{1}/(2p_{0})},[50\bar{\delta},50])\\
& \gtrsim \mu_{2}(B(1) \cap H_{\theta}(K_{2},\mathbf{c}(\kappa)\bar{\delta}^{-\sigma_{2} + \zeta_{1}/(2p_{0})},[50\bar{\delta},50]). \end{align*} 
Consequently, \eqref{form75} gives
\begin{equation}\label{form84} \int_{S^{1}} \bar{\mu}(B(1) \cap H_{\theta}(\bar{K},\mathbf{c}(\kappa)\bar{\delta}^{-\sigma_{2} + \zeta_{1}/(2p_{0})},[\bar{\delta},1] \cap \bar{L}_{\theta}) \, d\bar{\nu}(\theta)  \gtrsim_{\mathbf{C},\mathfrak{d},\tau} \epsilon_{n}^{2} \gtrsim_{\mathbf{C},\mathfrak{d},\epsilon,\eta,\tau,\zeta} 1. \end{equation}
To see that this implies \eqref{form76} for some $\bar{\sigma} \geq \sigma - \eta$ (as claimed in Proposition \ref{mainTechnicalPropRestated}), note that
\begin{displaymath} \sigma_{2} - \zeta_{1}/(2p_{0}) \stackrel{\eqref{form76a}}{\geq} \sigma - \zeta_{1}/(2p_{0}) - \eta/2 \stackrel{\eqref{form77a}}{\geq} \sigma - 3\eta/4. \end{displaymath}
In particular,
\begin{displaymath} \bar{\delta}^{-\bar{\sigma}} := \mathbf{c}(\kappa)\bar{\delta}^{-\sigma_{2} + \zeta_{1}/(2p_{0})} \geq \bar{\delta}^{-\sigma + \eta}, \end{displaymath}
provided that $\bar{\delta} > 0$ is small enough in terms of $\eta,\sigma$, and $\kappa = \epsilon_{n}/(4\mathbf{C})$ (which is true if $\delta > 0$ is small enough). If $\delta > 0$ is small enough, the definition above of $\bar{\sigma}$ also shows that
\begin{equation}\label{form82} \bar{\sigma} \geq \sigma_{2} - \frac{\zeta_{1}}{p_{0}} \geq \sigma_{2} - \frac{\zeta_{1}}{\min \mathcal{Q}_{0}}. \end{equation}
Clearly also $\bar{\sigma} \leq \sigma_{2} \leq s$, so $\bar{\sigma} \in [\sigma - \eta,s]$, as claimed in Proposition \ref{mainTechnicalPropRestated}.

We have now formally proved \eqref{form76}, but we still need to check that the sets $\bar{L}_{\theta}  = T_{B(50)}(L_{2,\theta})$ satisfy what we claimed in Proposition \ref{mainTechnicalPropRestated}, namely
\begin{displaymath} |\{0 \leq j \leq \bar{N} - 1 : y \in H_{\theta}(\bar{K},\Delta^{-\sigma_{0}},[\tfrac{A}{50}\Delta^{j + 1},\tfrac{A}{50}\Delta^{j}])\}| \leq C(\mathfrak{d},\tau)\eta^{-1}\lambda \cdot \bar{N} \end{displaymath}
for all $y \in \bar{L}_{\theta}$. This is based on property \eqref{form54} of the set $L_{2,\theta}$. Fix $y = T_{B(50)}(L_{2,\theta}) \in \bar{L}_{\theta}$. Then, recalling that $\bar{K} = T_{B(50)}(K_{2})$, it follows from Lemma \ref{lemma7} that
\begin{displaymath} y \in H_{\theta}(\bar{K},\Delta^{-\sigma_{0}},[\tfrac{A}{50}\Delta^{j + 1},\tfrac{A}{50}\Delta^{j}]) \quad \Longleftrightarrow \quad x \in H_{\theta}(K_{2},\Delta^{-\sigma_{0}},[A\Delta^{j + 1},A\Delta^{j}]). \end{displaymath}
Then \eqref{form54} yields, in combination with \eqref{form78},
\begin{displaymath} |\{0 \leq j \leq \bar{N} - 1 : y \in H_{\theta}(\bar{K},\Delta^{-\sigma_{0}},[\tfrac{A}{50}\Delta^{j + 1},\tfrac{A}{50}\Delta^{j}])\}| \leq 50\eta^{-1}\lambda \cdot N_{2} = \frac{50\eta^{-1}}{2p_{0}}\lambda \cdot \bar{N}. \end{displaymath}
Here $1/p_{0} \leq 1/\min \mathcal{Q}_{0} \lesssim_{\mathfrak{d},\tau} 1$, so the right hand side is $\leq C(\mathfrak{d},\tau)\eta^{-1}\lambda \cdot \bar{N}$, as desired. 

We are almost done with the proof of Proposition \ref{mainTechnicalPropRestated}: we are only missing the estimate \eqref{form79}, copied here for convenience:
\begin{equation}\label{form81} \int_{S^{1}} \bar{\mu}(B(1) \cap H_{\theta}(\bar{K},(\bar{\delta}^{q - p})^{-\bar{\sigma} - \zeta},[\bar{\delta}^{q},\bar{\delta}^{p}]) \cap \bar{L}_{\theta}) \, d\bar{\nu}(\theta) \leq \tfrac{\bar{\epsilon}}{10}, \end{equation}
for all pairs $p,q \in \{0,\tfrac{1}{2},1\}$ with $p < q$. Here $\bar{\epsilon}$ is the value of the integral in \eqref{form84}, so in particular $\bar{\epsilon} \gtrsim_{\mathbf{C},\mathfrak{d},\tau} \epsilon_{n}^{2}$. (Actually \eqref{form81} holds \emph{a fortiori} for $\nu$ in place of $\bar{\nu}$.)

Proving \eqref{form81} is based on the fact that $\epsilon_{n + 1}$ is much smaller than $\epsilon_{n}$, and \eqref{form58}, copied below for ease of reference:
\begin{equation}\label{form80} \int_{S^{1}} \mu_{2}(B(50) \cap H_{\theta}(K_{2},(\delta_{2}^{q - p})^{-\sigma_{2} - \zeta_{1}},[50\delta_{2}^{q},50\delta_{2}^{p}]) \cap L_{2,\theta}) \, d\nu(\theta) \leq 50^{s}\mathbf{C}\epsilon_{n + 1}, \end{equation}
where $p,q \in \mathcal{Q}$ with $p < q$. To see the connection between \eqref{form80} and \eqref{form81}, recall that $\bar{\mu} = \mu_{2}^{B(50)}$, and $\bar{\delta} = \delta_{2}^{2p_{0}}$ for some $p_{0} \in \mathcal{Q}_{0}$. Let us verify \eqref{form81} carefully in the case $(p,q) = (\tfrac{1}{2},1)$, the other two cases $(p,q) = (0,\tfrac{1}{2})$ and $(p,q) = (0,1)$ being similar.

For $(p,q) = (\tfrac{1}{2},1)$, note that 
\begin{displaymath} \bar{\delta}^{q - p} = \bar{\delta}^{1/2} = \delta_{2}^{2p_{0} - p_{0}} \quad \text{and} \quad \bar{\delta}^{q} = \delta_{2}^{2p_{0}} \quad \text{and} \quad \bar{\delta}^{p} = \delta_{2}^{p_{0}}, \end{displaymath}
where $p_{0},2p_{0} \in \mathcal{Q}$ by the definition of $\mathcal{Q}$, recall \eqref{rationals} (and that $p_{0} \in \mathcal{Q}_{0}$). So, \eqref{form81} in the case $(p,q) = (\tfrac{1}{2},1)$ can be written equivalently as
\begin{equation}\label{form83} \int_{S^{1}} \bar{\mu}(B(1) \cap H_{\theta}(\bar{K},(\delta_{2}^{2p_{0} - p_{0}})^{-\bar{\sigma} - \zeta},[\delta_{2}^{2p_{0}},\delta_{2}^{p_{0}}]) \cap \bar{L}_{\theta}) \, d\bar{\nu}(\theta) \leq \tfrac{\bar{\epsilon}}{10}. \end{equation}
Moreover, using \eqref{form82} and then \eqref{zetaOne},
\begin{displaymath} \bar{\sigma} + \zeta \geq \sigma_{2} + \zeta - \zeta_{1}/(\min \mathcal{Q}_{0}) \geq \sigma_{2} + \zeta_{1}. \end{displaymath}
Therefore $H_{\theta}(\bar{K},(\delta_{2}^{2p_{0} - p_{0}})^{-\bar{\sigma} - \zeta},\ldots) \subset H_{\theta}(\bar{K},(\delta_{2}^{2p_{0} - p_{0}})^{-\sigma_{2} - \zeta_{1}},\ldots)$. Combining this with $\bar{\mu} = \mu_{2}^{B(50)}$ and $\bar{L}_{\theta} = T_{B(50)}(L_{2,\theta})$, the integral in \eqref{form83} is bounded from above by
\begin{displaymath} 50^{s} \int_{S^{1}} \mu_{2}(B(50) \cap H_{\theta}(K_{2},(\delta_{2}^{2p_{0} - p_{0}})^{-\sigma_{2} - \zeta_{1}},[50\delta_{2}^{2p_{0}},50\delta_{2}^{p_{0}}]) \cap L_{2,\theta}) \, d\nu(\theta) \stackrel{\eqref{form80}}{\lesssim} \mathbf{C}\epsilon_{n + 1}. \end{displaymath} 
Since $\epsilon_{n + 1} \leq c(\mathbf{C},\mathfrak{d},\tau)\epsilon_{n}^{2}$, the upper bound can be taken to be much smaller than the value of the integral \eqref{form84}, which was our definition of $\bar{\epsilon}$. This concludes the proof of \eqref{form81}, and therefore the proof of Proposition \ref{mainTechnicalPropRestated}. \end{proof}

\section{Proof of the main proposition}\label{s:mainProof}

\subsection{Recap and applying Proposition \ref{mainTechnicalProp}}\label{s:recap} With the proof of Proposition \ref{mainTechnicalProp} finally behind us, we pick up the proof of Proposition \ref{mainProp} where we left off in the beginning of Section \ref{s:statements}. We start by recapping the progress so far, but this time paying proper attention to constants, and then applying Proposition \ref{mainTechnicalProp}.

Recall that by the hypothesis of Proposition \ref{mainProp}, the constants $\sigma,\sigma_{0} > 0$ satisfy $\sigma \leq s$ and $\sigma_{0}(1 - \tfrac{\sigma}{4}) < \sigma$, which can be equivalently written 
\begin{equation}\label{form96a} 2\cdot \tfrac{\sigma \sigma_{0}}{8} + s - \sigma_{0} > s - \sigma. \end{equation}
This form shows that we can choose two constants $\rho,\lambda > 0$ in a manner depending only on $\sigma,\sigma_{0}$, and $\tau$, such that the following constraints are satisfied:
\begin{equation}\label{choiceLambda} \sqrt{\lambda} \leq \sigma, \quad C(\sigma,\tau)\sqrt{\lambda} \leq \tfrac{\rho}{2} \quad \text{and} \quad \sigma \leq \tfrac{3}{2}(\sigma - \sqrt{\lambda}), \end{equation}
where $C(\sigma,\tau) := C(1 + \tfrac{\sigma}{4},\tau) \geq 1$ is the constant from Proposition \ref{mainTechnicalProp} (more precisely see above \eqref{form68}) applied with $\mathfrak{d} := 1 + \tfrac{\sigma}{4}$, and
\begin{equation}\label{def:rho} 2\sigma_{0}(\tfrac{\sigma - \sqrt{\lambda}}{8} + 2\rho) + (s - \sigma_{0} - 100\sqrt{10\rho})(1 - 10\sqrt{\rho}) > s - (\sigma - \sqrt{\lambda}) + 2\rho,\end{equation}
This looks complicated, but \eqref{def:rho} with $\lambda = 0 = \rho$ is simply \eqref{form96a}. By continuity \eqref{def:rho} is also satisfied by $\lambda,\rho$ small enough (the parameter $\rho$ actually just depends on $\sigma,\sigma_{0}$, and not $\tau$). 

\begin{remark}\label{rem2} Note that \eqref{def:rho} remains valid if the term "$\sigma - \sqrt{\lambda}$" is replaced on both sides by $\bar{\sigma} \geq \sigma - \sqrt{\lambda}$ (this term has a positive sign on the left and a negative sign on the right). \end{remark}

In Section \ref{s:statements}, we had already made a counter assumption of the form $\iota(\sigma)[\delta] \geq \epsilon$ for some $\epsilon > 0$, and arbitrarily small $\delta > 0$. We had then defined
\begin{displaymath} \epsilon_{0} := \frac{\lambda \epsilon^{2}}{16\mathbf{C}}, \end{displaymath}
where $\lambda = \lambda(\sigma,\sigma_{0},\tau) > 0$ is the constant satisfying \eqref{choiceLambda}-\eqref{def:rho}. 
We had applied the hypothesis (of Proposition \ref{mainProp}) with constants $\epsilon_{0},\sigma_{0} > 0$. To be accurate, we had applied the hypothesis via Lemma \ref{lemma4}. The main conclusion was \eqref{form23}, repeated here:
\begin{equation}\label{form85} \int_{S^{1}} \mu(B(1) \cap H_{\theta}(K,\delta^{-\sigma},[\delta,1]) \cap L_{\theta}) \, d\nu(\theta) \geq \tfrac{\epsilon}{16}, \end{equation}
where $L_{\theta}$ is the "local low-multiplicity set"
\begin{displaymath} L_{\theta} = \{x \in B(1) \cap K : \tfrac{1}{N}|\{0 \leq j \leq N - 1 : x \in H_{\theta}(K,\Delta^{-\sigma_{0}},[500\Delta^{j + 1},500\Delta^{j}])\}| \leq \lambda\}, \end{displaymath}
and $\Delta \in (0,\tfrac{1}{500}]$ is a scale provided by the hypothesis of Proposition \ref{mainProp}, which we assume to be small in a manner depending on $\mathbf{C},\sigma,\sigma_{0},\tau$ (see below \eqref{form133} for the details).

Note that \eqref{form85} matches the hypothesis \eqref{form66} of Proposition \ref{mainTechnicalProp}, which we plan to apply with parameters $A = 500,\mathbf{C},\Delta,\epsilon,\lambda,\sigma,\sigma_{0},\tau$ already introduced, and additionally
\begin{equation}\label{form133} \eta := \sqrt{\lambda} \quad \text{and} \quad \zeta = \zeta(\Delta,\sigma,\sigma_{0},\tau) > 0 \quad \text{and} \quad \mathfrak{d} = 1 + \tfrac{\sigma}{4}. \end{equation}
The value of the parameter $\zeta > 0$ is determined by the "inverse" Theorem \ref{shmerkin}, applied with parameters $\rho = \rho(\sigma,\sigma_{0},\tau) > 0$ as in \eqref{def:rho}, and the integer $m \in \N$ such that $\Delta = 2^{-m}$. Given the constant $\rho = \rho(\sigma,\sigma_{0},\tau) > 0$ above, the application of Theorem \ref{shmerkin} requires that $m \in \N$ is large in a manner depending on $\rho$, hence $\sigma,\sigma_{0},\tau$. This can be assumed by taking $\Delta > 0$ sufficiently small in terms of $\sigma,\sigma_{0},\tau$. Later, when applying Proposition \ref{prop4}, we will also need $\Delta$ to be small in terms of $\mathbf{C}$. 

With these parameters, Theorem \ref{shmerkin} outputs a constant $\kappa = \kappa(m,\rho) = \kappa(\Delta,\sigma,\sigma_{0},\tau) > 0$. We now set
\begin{equation}\label{choiceZeta} \zeta := \min\{\tfrac{1}{2}\kappa,\tfrac{1}{10}\rho\}. \end{equation}

We apply Proposition \ref{mainTechnicalProp} with the constants listed above. The result is a new $(s,\mathbf{C})$-regular measure $\bar{\mu}$ with $\bar{K} = \spt \bar{\mu}$ a new scale $\bar{\delta} = \Delta^{\bar{N}}$, where $\bar{N} \gtrsim_{\sigma,\sigma_{0},\tau,\Delta} N$, and a new $(\tau,\mathbf{C})$-Frostman measure $\bar{\nu}$ (in fact a restriction of $\nu$) such that
\begin{equation}\label{form86} \int_{S^{1}} \bar{\mu}(B(1) \cap H_{\theta}(\bar{K},\bar{\delta}^{-\bar{\sigma}},[\bar{\delta},1]) \cap \bar{L}_{\theta}) \, d\bar{\nu}(\theta) = \bar{\epsilon} \gtrsim_{\mathbf{C},\Delta,\epsilon,\sigma,\sigma_{0},\tau} 1, \end{equation}
where $\bar{\sigma} \in [\sigma - \eta,s] = [\sigma - \sqrt{\lambda},s]$.

\begin{remark} Note that that we have no upper control for $\bar{\sigma}$, except $\bar{\sigma} \leq s$. However, we will only need that $\bar{\sigma}$ is bigger than $\sigma$, up to the small error $\sqrt{\lambda}$, and in particular "at least as positive as $\sigma$".
\end{remark} 

We recap the findings of Proposition \ref{mainTechnicalProp} with the current notation. Recalling that $\mathfrak{d} = 1 + \tfrac{\sigma}{4}$, and $\eta = \sqrt{\lambda}$, the set $\bar{L}_{\theta}$ is contained in
\begin{equation}\label{form95a} \{x \in \R^{2} : \tfrac{1}{\bar{N}} |\{0 \leq j \leq \bar{N} - 1 : x \in H_{\theta}(\bar{K},\Delta^{-\sigma_{0}},[10\Delta^{j + 1},10\Delta^{j}])\}| \leq C(\sigma,\tau)\sqrt{\lambda}\}. \end{equation}
Additionally, for $(p,q) \in \{(0,1),(0,\tfrac{1}{2}),(\tfrac{1}{2},1)\}$, it holds
\begin{equation}\label{form87} \int_{S^{1}} \bar{\mu}(B(1) \cap H_{\theta}(\bar{K},(\bar{\delta}^{q - p})^{-\bar{\sigma} - \zeta},[\bar{\delta}^{q},\bar{\delta}^{p}]) \cap \bar{L}_{\theta}) \, d\bar{\nu}(\theta) \leq \tfrac{\bar{\epsilon}}{10}, \end{equation}
and the measure $\bar{\nu}$ has "non-trivial branching between scales $\bar{\delta}^{\mathfrak{d}/2}$ and $\bar{\delta}^{1/2}$": if $I \subset S^{1}$ is a dyadic arc of length $\bar{\delta}^{1/2}$ with $\bar{\nu}(I) > 0$, then 
\begin{equation}\label{branching} \bar{\nu}(J) \leq \bar{\delta}^{c(\sigma,\tau)}\bar{\nu}(I), \qquad J \subset I, \, |I| \leq \bar{\delta}^{\mathfrak{d}/2}, \end{equation}
where $c(\sigma,\tau) > 0$ is a constant depending only on $\sigma,\tau$.

The properties listed between \eqref{form86}-\eqref{branching}, and the branching of $\bar{\nu}$, are the only pieces of information we will use in the sequel -- to obtain a contradiction. To simplify notation slightly, we will now "drop the bars" and assume that the original objects $K,\mu,\nu,\sigma,\epsilon$ and $L_{\theta}$ already have these properties. Thus, for the remainder of the paper, we may and will only refer back to the properties of $\bar{\delta},\bar{\epsilon},\bar{\sigma},\bar{\mu},\bar{\nu},\bar{L}_{\theta}$ between \eqref{form86}-\eqref{branching}.

When we write "assuming $\delta > 0$ small enough in terms of XYZ" in the sequel, this actually means "assuming $\bar{\delta}$ small enough in terms of XYZ". Since $\bar{\delta} = \delta^{\bar{N}/N}$ with $\bar{N} \gtrsim_{\sigma,\sigma_{0},\tau,\Delta} N$, such requirements can eventually be satisfied by taking the original $\delta$ small enough in terms of XYZ and additionally $\sigma,\sigma_{0},\tau,\Delta$. Similarly, one has to keep in mind that (e.g.) $A \gtrsim_{\epsilon} B$ with the new notation means the same as $A \gtrsim_{\mathbf{C},\Delta,\epsilon,\sigma,\sigma_{0},\tau} B$.

\subsection{Two good directions} For $\theta \in S^{1}$, let $B_{\theta} \subset \R^{2}$ be the union of the three sets
\begin{equation}\label{form91b} H_{\theta}(K,(\delta^{q - p})^{-\sigma - \zeta},[\delta^{q},\delta^{p}]) \cap L_{\theta}, \qquad p,q \in \{0,\tfrac{1}{2},1\}, \, p < q. \end{equation} 
Then, writing $K_{\theta} := K \cap H_{\theta}(K,\delta^{-\sigma},[\delta,1]) \cap L_{\theta} \, \setminus \, B_{\theta}$, 
\begin{align} \int_{S^{1}} \mu(B(1) \cap K_{\theta}) \, d\nu(\theta) & \geq \int_{S^{1}} \mu(B(1) \cap K \cap H_{\theta}(K,\delta^{-\sigma},[\delta,1]) \cap L_{\theta}) \, d\nu(\theta) \notag\\
&\label{form88} \quad - \int_{S^{1}} \mu(B(1) \cap B_{\theta}) \, d\nu(\theta) \geq \epsilon - \tfrac{3 \epsilon}{10} \geq \tfrac{\epsilon}{2}. \end{align}
Write $S := \{\theta \in S^{1} : \mu(B(1) \cap K_{\theta}) \geq \epsilon/(4\mathbf{C})\}$. It follows from \eqref{form88} that $\nu(S) \geq \epsilon/(4\mathbf{C})$.

Let $\mathcal{D}_{\sqrt{\delta}}$ be a cover of $S^{1}$ by dyadic arcs of length $\delta^{1/2}$. Since
\begin{displaymath} \tfrac{\epsilon}{4\mathbf{C}} \leq \nu(S) = \sum_{I \in \mathcal{D}_{\sqrt{\delta}}} \nu(I)\nu_{I}(S), \end{displaymath}
and $\nu(S^{1}) \leq \mathbf{C}$, there exists an arc $I \in \mathcal{D}_{\delta^{1/2}}$ with $\nu_{I}(S) \geq \epsilon/(4\mathbf{C}^{2})$. Write $S_{I} := I \cap S$. Then, since $\mu(B(1) \cap K_{\theta}) \geq \epsilon/(4\mathbf{C})$ for all $\theta \in S_{I}$, and using Cauchy-Schwarz,
\begin{align*} \frac{\epsilon^{2}}{16\mathbf{C}^{3}} & \leq \int_{S_{I}} \mu(B(1) \cap K_{\theta}) \, d\nu_{I}(\theta) \leq \int_{B(1)} \nu_{I}(\{\theta \in S_{I} : x \in K_{\theta}\}) \, d\mu(x)\\
& \leq \mathbf{C}^{2} \cdot \Big( \int_{B(1)} \nu_{I}(\{(\theta_{0},\theta_{1}) \in S_{I} \times S_{I} : x \in K_{\theta_{0}} \cap K_{\theta_{1}}\}) \, d\mu(x) \Big)^{1/2}\\
& = \mathbf{C}^{2} \cdot \Big( \iint_{S_{I} \times S_{I}} \mu(K_{\theta_{0}} \cap K_{\theta_{1}}) \, d\nu_{I}(\theta_{1}) \, d\nu_{I}(\theta_{0}) \Big)^{1/2}. \end{align*} 
On the other hand, we infer from \eqref{branching} and the upper bound $\mu(K_{\theta_{0}} \cap K_{\theta_{1}}) \leq \mu(B(1)) \leq \mathbf{C}$ that
\begin{displaymath} \int_{S_{I}}\int_{S_{I} \cap B(\theta_{0},\delta^{\mathfrak{d}/2})} \mu(K_{\theta_{0}} \cap K_{\theta_{1}}) \, d\nu_{I}(\theta_{1}) \, d\nu_{I}(\theta_{0}) \leq \mathbf{C} \cdot \delta^{c(\sigma,\tau)}. \end{displaymath}  
Therefore, if $\delta > 0$ is small enough in terms of $\mathbf{C},\epsilon,\sigma,\tau$, there must exist a pair of vectors $\theta_{0},\theta_{1} \in S_{I} \subset S$ such that $\theta_{1} \notin B(\theta_{0},\delta^{\mathfrak{d}/2})$ and such that $\mu(K_{\theta_{0}} \cap K_{\theta_{1}}) \gtrsim_{\mathbf{C},\epsilon} 1$. In particular,
\begin{equation}\label{form128} \delta^{\mathfrak{d}/2} \leq |\theta_{0} - \theta_{1}| \leq \delta^{1/2}. \end{equation}
We now fix such a pair of vectors $\theta_{0},\theta_{1} \in S$ for the rest of the proof, and write
\begin{equation}\label{form89} \bar{K} := K_{\theta_{0}} \cap K_{\theta_{1}}. \end{equation} 

\begin{notation} For the remainder of the proof, we adopt the notation $A \leq O \cdot B$ means the same as $A \lesssim_{\mathbf{C},\Delta,\epsilon,\sigma,\sigma_{0},\tau} B$ and $A \geq \omega \cdot B$ means the same as $A \gtrsim_{\mathbf{C},\Delta,\epsilon,\sigma,\sigma_{0},\tau}B$. The values of the "constants" $O$ and $\omega$ will vary from line to line. It is occasionally convenient to refer to the "constant $\omega$ from line XYZ", and this would be harder to do with the "$\lesssim$" notation. In particular, note that $\mu(\bar{K}) \geq \omega$. \end{notation}  

\subsection{Properties of the set $\bar{K}$} We will record some facts about the $\pi_{\theta_{0}}$ and $\pi_{\theta_{1}}$ projections of the set $\bar{K}$ at scales $\delta$ and $\delta^{1/2}$. These properties are crucially based on the fact that
\begin{equation}\label{form92a} \bar{K} \subset K_{\theta_{j}}, \qquad j \in \{0,1\}, \end{equation}
so on the one hand $\bar{K}$ is contained in the high-multiplicity sets $H_{\theta_{j}}(\bar{K},\delta^{-\sigma},[\delta,1])$ and the "local" low-multiplicity sets $L_{\theta}$, but on the other hand $\bar{K}$ is contained in the complement of the "super" high-multiplicity sets \eqref{form91b}. The first property we claim is the following:
\begin{equation}\label{form91a} |\pi_{\theta_{j}}(\bar{K})|_{\delta} \lesssim \mathbf{C} \delta^{\sigma - s}, \qquad j \in \{0,1\}. \end{equation}
To see \eqref{form91a}, fix $\theta \in \{\theta_{0},\theta_{1}\}$, and let $\mathcal{T}_{\theta}$ be a minimal cover of $\bar{K}$ by tubes of the form $\pi_{\theta}^{-1}\{I\}$, where $I \in \mathcal{D}_{\delta}(\R)$. Then, each $T \in \mathcal{T}_{\theta}$ contains a point $x_{T} \in \bar{K} \subset H_{\theta}(K,\bar{\delta}^{-\sigma},[\delta,1])$, hence $B(x,1) \cap \pi_{\theta}^{-1}\{\pi_{\theta}(x_{T})\} \subset B(2) \cap T$, and
\begin{displaymath} |K_{\delta} \cap B(2) \cap T|_{\delta} \geq \m_{K,e}(x \mid [\delta,1]) \geq \delta^{-\sigma}. \end{displaymath} 
This implies that
\begin{displaymath} |\mathcal{T}_{\theta}| \cdot \delta^{-\sigma } \lesssim |K_{\delta} \cap B(2)|_{\delta} \leq 2\mathbf{C} \cdot \delta^{-s}, \end{displaymath}
and \eqref{form91a} follows by rearranging. 

We next consider the projections of $\bar{K}$ at scale $\delta^{1/2}$, but we first do an initial reduction. Let $\mathcal{K}_{\delta^{1/2}}$ be a minimal cover of $\bar{K}$ by discs of radius $\tfrac{1}{5}\delta^{1/2}$, so in particular $\bar{K} \cap \mathbf{B} \neq \emptyset$ for all $\mathbf{B} \in \mathcal{K}_{\delta^{1/2}}$. (In the sequel, we mostly denote $\delta$-discs and $\delta$-tubes with the font $B,T$ and $\delta^{1/2}$-discs and tubes with bold font $\mathbf{B},\mathbf{T}$.)  Since $\bar{K} \subset K \cap B(1)$, and $\mu(\bar{K}) \geq \omega$, 
\begin{equation}\label{form97} \omega \cdot \delta^{-s/2} \leq |\mathcal{K}_{\delta^{1/2}}| \lesssim |K \cap B(1)|_{\delta^{1/2}} \leq \mathbf{C} \cdot \delta^{-s/2}. \end{equation}
Writing $\boldsymbol{\mu} := \mu(\bar{K}) \geq \omega$, a disc $\mathbf{B} \in \mathcal{K}_{\delta^{1/2}}$ is called \emph{heavy} if $\mu(\mathbf{B} \cap \bar{K}) \geq (\boldsymbol{\mu}/2\mathbf{C}) \cdot \delta^{s/2}$. Then, the total $\mu$ measure of the \emph{light} discs if bounded from above by $|\mathcal{K}_{\delta^{1/2}}| \cdot (\boldsymbol{\mu}/2\mathbf{C}) \cdot \delta^{s/2} \leq \boldsymbol{\mu}/2$. Therefore, if we replace $\bar{K}$ by the intersection
\begin{equation}\label{form111} \bar{K} \cap \bigcup_{\mathbf{B} \in \mathcal{K}_{\delta^{1/2}} \mathrm{\,heavy}} \mathbf{B}, \end{equation}
then $\mu(\bar{K}) \geq \boldsymbol{\mu}/2 \geq \omega$, and the key property \eqref{form92a} of $\bar{K}$ remains valid. We will not need other properties of $\bar{K}$ in the future so, without loss of generality, we may assume that all the discs in $\mathcal{K}_{\delta^{1/2}}$ are heavy.

Now, in order to consider the $\pi_{\theta_{0}}$ and $\pi_{\theta_{1}}$ projections of $\bar{K}$ at scale $\delta^{1/2}$, let $\mathcal{T}_{\delta^{1/2}}$ be a minimal cover of $\bar{K}$ by tubes $\mathbf{T} = \pi_{\theta_{0}}^{-1}(I)$ of width $|I| = \delta^{1/2}$. We first claim the following upper bound:
\begin{equation}\label{form99} |\pi_{\theta_{j}}(\bar{K})|_{\delta^{1/2}} \sim |\mathcal{T}_{\delta^{1/2}}| \leq O \cdot (\delta^{1/2})^{\sigma - s - \zeta}, \qquad j \in \{0,1\}. \end{equation} 
The idea goes as follows. Every tube $\mathbf{T} \in \mathcal{T}_{\delta^{1/2}}$ meets at least one (heavy!) ball $\mathbf{B}_{\mathbf{T}} \in \mathcal{K}_{\delta^{1/2}}$. We will shortly see that
\begin{equation}\label{form98} |\pi_{\theta_{j}}(\bar{K} \cap \mathbf{B})|_{\delta} \geq \omega \cdot (\delta^{1/2})^{\sigma - s + \zeta}, \qquad j \in \{0,1\}, \, \mathbf{B} \in \mathcal{K}_{\delta^{1/2}}. \end{equation}
Since the sets $\pi_{\theta_{0}}(\bar{K} \cap \mathbf{B}_{\mathbf{T}})$ (or $\pi_{\theta_{1}}(\bar{K} \cap \mathbf{B}_{\mathbf{T}})$) have bounded overlap as $\mathbf{T} \in \mathcal{T}_{\delta^{1/2}}$ varies, it follows from \eqref{form91a} and \eqref{form98} that
\begin{displaymath} \mathbf{C} \delta^{\sigma - s} \gtrsim |\pi_{\theta_{0}}(\bar{K})|_{\delta} \geq |\mathcal{T}_{\delta^{1/2}}| \cdot \omega \cdot (\delta^{1/2})^{\sigma - s + \zeta}, \end{displaymath}
and then \eqref{form99} follows by rearranging terms. Let us then prove \eqref{form98}. Fix $\mathbf{B} \in \mathcal{K}_{\delta^{1/2}}$, and keep in mind that $\mu(\bar{K} \cap \mathbf{B}) \geq \omega$, and $\mathbf{B}$ is a disc of radius $\tfrac{1}{5}\delta^{1/2}$. Instead of proving \eqref{form98} directly, we show that if $T_{\delta/4} = \pi_{\theta_{j}}^{-1}(I)$ is an arbitrary tube of width $|I| = \delta/4$, then
\begin{equation}\label{form100} |\mathbf{B} \cap \bar{K} \cap T_{\delta/4}|_{\delta/4} \lesssim (\delta^{1/2})^{-\sigma - \zeta}. \end{equation}
On the other hand, since $\mu(\mathbf{B} \cap \bar{K}) \geq \omega \cdot \delta^{s/2}$ for all $\mathbf{B} \in \mathcal{K}_{\delta^{1/2}}$, we have $|\mathbf{B} \cap \bar{K}|_{\delta} \geq \omega \cdot (\delta^{1/2})^{-s}$ by the $(s,\mathbf{C})$-regularity of $\mu$. Combining this lower bound with \eqref{form100} implies that it takes $\geq \omega \cdot (\delta^{1/2})^{\sigma - s + \zeta}$ tubes of the form $T_{\delta/4} = \pi_{\theta_{j}}^{-1}(I)$, with $|I| = \delta/4$, to cover $\bar{K} \cap \mathbf{B}$. This is equivalent to \eqref{form98}.

The proof of \eqref{form100} is based on $\bar{K} \subset \R^{2} \, \setminus \, H_{\theta_{j}}(K,(\delta^{1/2})^{-\sigma - \zeta},[\delta,\delta^{1/2}])$, or equivalently
\begin{equation}\label{form93a} |K_{\delta} \cap B(x,\delta^{1/2}) \cap \pi_{\theta_{j}}^{-1}\{\pi_{\theta}(x)\}|_{\delta} \leq (\delta^{1/2})^{-\sigma - \zeta}, \qquad x \in \bar{K}. \end{equation}
It is straightforward to check that if \eqref{form100} failed for some $\tfrac{1}{5}\delta^{1/2}$-disc $\mathbf{B}$ and some $\tfrac{1}{4}\delta$-tube $T$, then one could find a point $x \in \mathbf{B} \cap \bar{K}$ violating \eqref{form93a}. This proves \eqref{form99}.

We have now shown that $\bar{K}$ can be covered by $\leq O \cdot (\delta^{1/2})^{\sigma - s - \zeta}$ tubes (in the family $\mathcal{T}_{\delta^{1/2}}$) of width $\delta^{1/2}$ perpendicular to $\theta_{0}$ (or equivalently $\theta_{1}$; since $|\theta_{0} - \theta_{1}| \leq \delta^{1/2}$, this covering property at scale $\delta^{1/2}$ sees no difference between $\theta_{0}$ and $\theta_{1}$). This implies that the average tube in $\mathcal{T}_{\delta^{1/2}}$ meets $\geq \omega \cdot |\mathcal{K}_{\delta^{1/2}}| \cdot (\delta^{1/2})^{s - \sigma +  \zeta}$ discs in $\mathcal{K}_{\delta^{1/2}}$. It would be convenient if the statement above held for every tube in $\mathcal{T}_{\delta^{1/2}}$. This can be arranged by a standard pruning argument, as follows. 

We say that a tube $\mathbf{T} \in \mathcal{T}_{\delta^{1/2}}$ is \emph{heavy} if
\begin{equation}\label{form104} |\{\mathbf{B} \in \mathcal{K}_{\delta^{1/2}} : \mathbf{B} \cap \mathbf{T} \neq \emptyset\}| \geq \omega \cdot (\delta^{1/2})^{-\sigma + \zeta} \end{equation}
 for a suitable constant $\omega > 0$. Since $|\mathcal{K}_{\delta^{1/2}}| \geq \omega \cdot (\delta^{1/2})^{-s}$ by \eqref{form97}, and $|\mathcal{T}_{\delta^{1/2}}| \leq O \cdot (\delta^{1/2})^{\sigma - s - \zeta}$ as we saw in \eqref{form99}, the union of the non-heavy tubes in $\mathcal{T}_{\delta^{1/2}}$ intersects at most half of the discs in $\mathcal{K}_{\delta^{1/2}}$, assuming that the "$\omega$" constant in the definition of heaviness above is chosen appropriately. Therefore, by discarding at most half of the discs in $\mathcal{K}_{\delta^{1/2}}$, and restricting $\bar{K}$ further to the union of the remaining discs, we may assume that every tube in $\mathcal{T}_{\delta^{1/2}}$ satisfies \eqref{form104}.
 
 We write
\begin{equation}\label{form110} \bar{K}_{\mathbf{T}} := \mathop{\bigcup_{\mathbf{B} \in \mathcal{K}_{\delta^{1/2}}}}_{\mathbf{B} \cap \mathbf{T} \neq \emptyset} \bar{K} \cap \mathbf{B}, \qquad \mathbf{T} \in \mathcal{T}_{\delta^{1/2}}. \end{equation}
We next claim that for most tubes $\mathbf{T} \in \mathcal{T}_{\delta^{1/2}}$, it holds $|\pi_{\theta_{j}}(\bar{K}_{\mathbf{T}})|_{\delta} \leq (\delta^{1/2})^{\sigma - s - 2\zeta}$ for both $j \in \{0,1\}$. The only clue available is \eqref{form91a}, which controls the $\pi_{\theta_{j}}$-projections of the whole. For $j \in \{0,1\}$ fixed, this can be used to produce an upper bound on the number of tubes $\mathbf{T} \in \mathcal{T}_{\delta^{1/2}}$ such that
\begin{equation}\label{form107} |\pi_{\theta_{j}}(\bar{K}_{\mathbf{T}})|_{\delta} \geq (\delta^{1/2})^{\sigma - s - 2\zeta}. \end{equation} 
Namely, if $\mathcal{T}_{\delta^{1/2},j} \subset \mathcal{T}_{\delta^{1/2}}$ is the sub-family of those tubes for which \eqref{form107} holds, then it follows from \eqref{form91a}, and the bounded overlap of the projections $\pi_{\theta_{j}}(K_{\mathbf{T}})$, $\mathbf{T} \in \mathcal{T}_{\delta^{1/2}}$, that
\begin{equation}\label{form77a} |\mathcal{T}_{\delta^{1/2},j}| \cdot (\delta^{1/2})^{\sigma - s - 2\zeta} \lesssim |\pi_{\theta_{j}}(\bar{K})|_{\delta} \lesssim \mathbf{C} \delta^{\sigma - s}, \end{equation}
and hence
\begin{equation}\label{form108} |\mathcal{T}_{\delta^{1/2},j}| \leq (\delta^{1/2})^{\sigma - s + 2\zeta}, \end{equation}
assuming $\delta > 0$ is so small that the constants $\lesssim \mathbf{C}$ in \eqref{form77a} are bounded from above by $(\delta^{1/2})^{-\zeta}$. Recall that we want to show that $\pi_{\theta_{j}}(\bar{K}_{\mathbf{T}})$ is small -- more precisely satisfies the reverse of \eqref{form107}) for most tubes $\mathbf{T} \in \mathcal{T}_{\delta^{1/2}}$. This would follow from \eqref{form108} if we knew that 
\begin{equation}\label{form91} |\mathcal{T}_{\delta^{1/2}}| \geq \omega \cdot (\delta^{1/2})^{\sigma - s + \zeta}, \end{equation} 
and we claim that this is the case. The proof is similar to the proof of the lower bound in \eqref{form98}, but is this time based on $\bar{K} \subset \R^{2} \, \setminus \, H_{\theta_{0}}(K,(\delta^{1/2})^{-\sigma - \zeta},[\delta^{1/2},1])$, equivalently
\begin{displaymath} |K_{\delta^{1/2}} \cap B(x,1) \cap \pi_{\theta_{0}}^{-1}\{\pi_{\theta_{0}}(x)\}|_{\delta^{1/2}} \leq (\delta^{1/2})^{-\sigma - \zeta}, \qquad x \in \bar{K}. \end{displaymath}
In fact, repeating the argument for \eqref{form100}, this inequality shows that if $\mathbf{T}_{\delta^{1/2}/4} = \pi_{\theta_{0}}^{-1}(J)$ is any tube of width $|J| = \tfrac{1}{4}\delta^{1/2}$ perpendicular to $\theta_{0}$, then
\begin{displaymath} |B(y,\tfrac{1}{4}) \cap \bar{K} \cap \mathbf{T}_{\delta^{1/2}/4}|_{\delta^{1/2}/4} \lesssim (\delta^{1/2})^{-\sigma - \zeta}, \qquad y \in \R^{2}. \end{displaymath}
Since on the other hand $\mathcal{T}_{\delta^{1/2}}$ is a cover of $\bar{K}$ by $\delta^{1/2}$-tubes, and $|B(y,\tfrac{1}{4}) \cap \bar{K}|_{\delta^{1/2}} \geq \omega \cdot \delta^{-s/2}$ for some $y \in B(1)$, one can deduce \eqref{form91}.

Combining \eqref{form108}-\eqref{form91}, we see that if $\delta > 0$ is small enough in terms of $\omega,\zeta$, then only a small fraction of the tubes in $\mathcal{T}_{\delta^{1/2}}$ lies in $\mathcal{T}_{\delta^{1/2},0} \cup \mathcal{T}_{\delta^{1/2},1}$. In particular, we may find a tube $\mathbf{T}_{0} \in \mathcal{T}_{\delta^{1/2}} \, \setminus \, [\mathcal{T}_{\delta^{1/2},0} \cup \mathcal{T}_{\delta^{1/2},1}]$. We finally gather the relevant properties of $\mathbf{T}_{0}$ and 
\begin{equation}\label{defK0} K_{0} := \bar{K}_{\mathbf{T}_{0}} \end{equation}
for future reference:
\begin{itemize}
\item[(K1) \phantomsection \label{K1}] $|\{\mathbf{B} \in \mathcal{K}_{\delta^{1/2}} : \mathbf{B} \cap \mathbf{T}_{0} \neq \emptyset\}| \geq (\delta^{1/2})^{-\sigma + 2\zeta}$ by \eqref{form104}. All the discs $\mathbf{B} \in \mathcal{K}_{\delta^{1/2}}$ here are heavy, that is, $\mu(\mathbf{B} \cap \bar{K}) \geq \omega \cdot \delta^{s/2}$.
\item[(K2) \phantomsection \label{K2}] The set $K_{0}$ is a subset of $\bar{K} = K_{\theta_{0}} \cap K_{\theta_{1}}$, and has small $\pi_{\theta_{j}}$-projections at scale $\delta$ in the sense that $|\pi_{\theta_{j}}(K_{0})|_{\delta} \leq (\delta^{1/2})^{\sigma - s - 2\zeta}$ for both $j \in \{0,1\}$. 
\end{itemize}

\subsection{Statement the inverse theorem}

We pause the main line of the proof for a moment to introduce the second author's inverse theorem \cite[Theorem 2.1]{Sh}, and related notation. 

\begin{definition}[$\delta$-measures and $L^{2}$-norms]\label{def:deltaMeasures} Let $\delta \in 2^{-\N}$ be a dyadic rational. Then, any probability measure supported on the discrete set $\delta \cdot \Z \cap [0,1)$ is called a \emph{$\delta$-measure}. The $L^{2}$-norm of a $\delta$-measure $\mu$ is defined by
\begin{displaymath} \|\mu\|_{2} := \left(\sum_{z \in \delta \cdot \Z} \mu(\{z\})^{2} \right)^{1/2}. \end{displaymath}
\end{definition}

\begin{thm}[Inverse theorem]\label{shmerkin} For every $\rho > 0$ there exists $m_{0} = m_{0}(\rho) > 0$ such that the following holds for all $m \geq m_{0}$. There exists $\kappa = \kappa(m,\rho) > 0$ and $N_{0} = N_{0}(m,\rho) \in \N$ such that the following holds for all $N \geq N_{0}$. Let $\delta = (2^{-m})^{N}$, and let $\eta_{1},\eta_{2}$ be $\delta$-measures such that
\begin{equation}\label{inverseHyp} \|\eta_{1} \ast \eta_{2}\|_{L^{2}} \geq \delta^{\kappa}\|\eta_{1}\|_{L^{2}}. \end{equation}
Then, there exist sets $A \subset \spt \eta_{1}$ and $B \subset \spt \eta_{2}$ such that
\begin{equation}\label{form131} \|\eta_{1}|_{A}\|_{2} \geq \delta^{\rho}\|\mu\|_{2} \quad \text{and} \quad \eta_{2}(B) \geq \delta^{\rho}, \end{equation} 
and both $A,B$ are $\{2^{-jm}\}_{j = 0}^{N - 1}$-uniform. This means the following:
\begin{itemize}
\item[\textup{(A)}] there is a sequence $(R_{j}^{1})_{j = 0}^{N - 1} \subset \{1,\ldots,2^{m}\}^{N}$, such that for all $0 \leq j \leq N - 1$,
\begin{displaymath} |A \cap I|_{2^{-(j + 1)m}} = R_{j}^{1}, \qquad I \in \mathcal{D}_{2^{-jm}}(A). \end{displaymath}
\item[\textup{(B)}] there is a sequence $(R_{j}^{2})_{j = 0}^{N - 1} \subset \{1,\ldots,2^{m}\}^{N}$, such that for all $0 \leq j \leq N - 1$,
\begin{displaymath} |B \cap I|_{2^{-(j + 1)m}} = R_{j}^{2}, \qquad I \in \mathcal{D}_{2^{-jm}}(B). \end{displaymath}
\end{itemize}
Moreover, $\{j : R_{j}^{2} > 1\} \subset \{j : R_{j}^{1} \geq 2^{(1 - \rho)m}\} =: \mathcal{S}$, and
\begin{equation}\label{form116} m|\calS| \geq \log \|\eta_{2}\|_{L^{2}}^{-2} + \rho \log_{2} \delta. \end{equation}
\end{thm}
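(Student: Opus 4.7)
The plan is to follow Shmerkin's strategy, whose backbone is the translation from $L^2$-convolution estimates to additive-combinatorial structure, followed by a multi-scale uniformization. First I would rewrite the hypothesis \eqref{inverseHyp} in energy form: $\|\eta_1 \ast \eta_2\|_2^2$ is the additive energy $E(\eta_1,\eta_2) = \sum_t (\eta_1 \ast \eta_2)(t)^2$, so the assumption says the energy is within $\delta^{2\kappa}$ of the trivial upper bound $\|\eta_1\|_2^2$. A weighted Balog--Szemer\'edi--Gowers argument then yields initial subsets $A_0 \subset \spt \eta_1$ and $B_0 \subset \spt \eta_2$ with $\eta_1(A_0), \eta_2(B_0) \geq \delta^{O(\kappa)}$ and a Pl\"unnecke--Ruzsa type sumset bound $|A_0 + B_0|_\delta \leq \delta^{-O(\kappa)} |A_0|_\delta$, so that the whole argument thereafter can work with the indicator measures $\mathbf{1}_{A_0}, \mathbf{1}_{B_0}$ instead of $\eta_1, \eta_2$.

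Next I would perform the uniformization. Partition $[0,1)$ dyadically at the scales $2^{-jm}$, $j = 0, \ldots, N$. For each $j$, the number of children a parent atom $I \in \mathcal{D}_{2^{-jm}}$ contributes to $A_0$ lies in $\{1,\ldots,2^m\}$, so there are at most $m$ dyadically-distinct values of $R_j^1$, and similarly for $R_j^2$. Pigeonholing over the $(m+1)^{2N}$ possible branching profiles and passing to the majority branching value at each scale (taking only the atoms with that value) produces the desired $\{2^{-jm}\}$-uniform subsets $A \subset A_0$, $B \subset B_0$; the cost is $m^{-O(N)}$, which is absorbed into $\delta^{\rho}$ by taking $m$ large depending on $\rho$, giving \eqref{form131}. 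The lower bound \eqref{form116} then comes from computing $\|\eta_2|_B\|_2^2 \asymp \prod_j (R_j^2)^{-1} \cdot \eta_2(B)^2$ and rearranging.

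The heart of the theorem is the structural containment $\{j : R_j^2 > 1\} \subset \{j : R_j^1 \geq 2^{(1-\rho)m}\}$. To prove it, I would suppose toward contradiction that $\mathcal{J} := \{j : R_j^2 > 1 \text{ and } R_j^1 < 2^{(1-\rho)m}\}$ has $m|\mathcal{J}|$ comparable to, or exceeding, the tolerance $\rho \log(1/\delta)$. At each $j \in \mathcal{J}$ one can show that the convolution $\mathbf{1}_A \ast \mathbf{1}_B$ ``spreads out'' relative to $\mathbf{1}_A$ by a factor $\geq 2^m / R_j^1 \geq 2^{\rho m}$ in the following sense: within each parent atom at scale $j$, the children occupied by $A$ combined with any genuine branching of $B$ cover at least $\min\{R_j^1 R_j^2, 2^m\}$ of the available $2^m$ child-atoms of $A+B$. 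Chaining this multiplicatively over the scales in $\mathcal{J}$, while using the uniformity of $A$ and $B$ to keep the accounting scale-by-scale, leads to the estimate $|A+B|_\delta \geq 2^{\rho m |\mathcal{J}|} |A|_\delta$, contradicting the sumset bound from BSG provided the implicit constants behave and $\kappa$ is chosen small enough compared to $\rho$.

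The hard part will be the last step. BSG, Pl\"unnecke--Ruzsa, and the dyadic pigeonhole are all essentially off-the-shelf; what is delicate is the multi-scale sumset growth argument, because at each bad scale $j$ one must argue that the expansion happens \emph{inside} a typical parent atom and not be absorbed by cancellations with other scales. Shmerkin's resolution in \cite{Sh} uses an iterative scheme where at each step one peels off one bad scale and reduces to a lower-dimensional problem, and the quantitative dependencies $\kappa(m,\rho)$ and $N_0(m,\rho)$ are obtained by unwinding this iteration. Carrying out this bookkeeping so that the total loss remains of order $\delta^{O(\rho)}$ --- rather than compounding to a useless bound --- is the main technical obstacle.
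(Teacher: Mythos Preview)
The paper does not prove Theorem \ref{shmerkin} at all: it is quoted verbatim as a black box from Shmerkin's paper \cite[Theorem 2.1]{Sh} (the text just above the statement says so explicitly). So there is no ``paper's own proof'' to compare against; your proposal is really a sketch of Shmerkin's original argument, not of anything in this paper.

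That said, your outline is a reasonable summary of how Shmerkin's proof actually goes: energy reformulation, a weighted Balog--Szemer\'edi--Gowers step to pass to sets with small sumset, multi-scale dyadic pigeonholing to obtain the uniform subsets $A,B$ (with losses controlled by taking $m$ large in terms of $\rho$), and then the structural inclusion $\{R_j^2>1\}\subset\mathcal{S}$ via a scale-by-scale sumset growth contradiction. Your derivation of \eqref{form116} is also essentially right: from $\eta_2(B)\geq\delta^{\rho}$ and Cauchy--Schwarz one gets $|B|\geq\delta^{O(\rho)}\|\eta_2\|_2^{-2}$, and since $R_j^2=1$ off $\mathcal{S}$ one has $|B|\leq 2^{m|\mathcal{S}|}$, whence the bound (up to replacing $\rho$ by a multiple). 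The genuinely delicate part, as you correctly flag, is making the multi-scale expansion argument rigorous without the per-scale gains being eaten by interactions between scales; Shmerkin handles this in \cite{Sh} and the present paper simply imports the result.
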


\subsection{Finding a product structure inside $K_{0}$} We then return to the main line of the argument. Based on the properties \nref{K1}-\nref{K2} of the set $K_{0} = \bar{K}_{\mathbf{T}_{0}}$ defined at \eqref{defK0}, we will construct two $\delta$-measures to which we can apply Theorem \ref{shmerkin}. This will allow us to find a contradiction, and complete the proof of Proposition \ref{mainProp}.

 We assume with no loss of generality that $\theta_{0} = (1,0)$, so $\pi_{\theta_{0}} =: \pi_{1}$ is the projection to the $x$-axis. Later on, it will also simplify notation to assume that the tube $\mathbf{T}_{0}$ (which is perpendicular to $\theta_{0}$) satisfies
\begin{displaymath} 2\mathbf{T}_{0} = \pi_{1}^{-1}(I_{0}) = I_{0} \times \R, \quad \text{where} \quad I_{0} := [0,2\delta^{1/2}]. \end{displaymath}
We begin to set the scene for applying Theorem \ref{shmerkin}. Let $\mathcal{T}_{\delta}$ be a minimal cover of $K_{0}$ with $(\delta/4)$-tubes of the form $\pi_{1}^{-1}(I)$, where $I \in \mathcal{D}_{\delta/4}(\R)$. Then the tubes in $\mathcal{T}_{\delta}$ are all contained in $2\mathbf{T}_{0} = I_{0} \times \R$, recalling from \eqref{form110} that $K_{0}$ is the union of certain $\tfrac{1}{5}\delta^{1/2}$-discs intersecting the $\delta^{1/2}$-tube $\mathbf{T}_{0} \in \mathcal{T}_{\delta^{1/2}}$. Moreover,
\begin{equation}\label{form109} |\mathcal{T}_{\delta}| \leq (\delta^{1/2})^{\sigma - s - 2\zeta} \end{equation}
according to property \nref{K2}. We also let $\mathcal{K}_{\delta^{1/2}}(\mathbf{T}_{0}) := \{\mathbf{B} \in \mathcal{K}_{\delta^{1/2}} : \mathbf{B} \cap \mathbf{T}_{0} \neq 0\}$, and we write
\begin{equation}\label{form118} \M := |\mathcal{K}_{\delta^{1/2}}(\mathbf{T}_{0})| \stackrel{\mathrm{\nref{K1}}}{\geq} (\delta^{1/2})^{-\sigma + 2\zeta}. \end{equation}
We now want to remove from $\mathcal{T}_{\delta}$ a few tubes which have a relatively sparse intersection with $K_{0}$. To make this precise, and motivate the numerology, we introduce some notation. For $\mathbf{B} \in \mathcal{K}_{\delta^{1/2}}(\mathbf{T}_{0})$, let 
\begin{equation}\label{form94a} K_{\mathbf{B}} := K_{0} \cap \mathbf{B} = \bar{K} \cap \mathbf{B}, \end{equation}
and let $\mathcal{K}_{\delta}(\mathbf{B})$ be a minimal cover of $K_{\mathbf{B}}$ by discs of radius $\tfrac{1}{4}\delta$. Recall that each disc $\mathbf{B} \in \mathcal{K}_{\delta^{1/2}}$ satisfies $\mu(\mathbf{B} \cap \bar{K}) \geq \omega \cdot \delta^{s/2}$ (this was discussed at \eqref{form111}), so the $(s,\mathbf{C})$-regularity of $\mu$ implies
\begin{equation}\label{form112} \omega \cdot (\delta^{1/2})^{-s} \leq |\mathcal{K}_{\delta}(\mathbf{B})| \leq \mathbf{C} \cdot (\delta^{1/2})^{-s}, \qquad \mathbf{B} \in \mathcal{K}_{\delta^{1/2}}(\mathbf{T}_{0}). \end{equation}
By discarding at most $\tfrac{1}{2}$ of the $\mu$ measure of $K_{\mathbf{B}}$, we may assume that all the discs in $\mathcal{K}_{\delta}(\mathbf{B})$ are \emph{heavy}, meaning this time that $\mu(B_{\delta}) \geq \omega \cdot \delta^{s}$ for all $B \in \mathcal{K}_{\delta}(\mathbf{B})$.

We let $\mathcal{K}_{\delta}$ to be the union of all the families $\mathcal{K}_{\delta}(\mathbf{B})$, with $\mathbf{B} \in \mathcal{K}_{\delta^{1/2}}(\mathbf{T}_{0})$. Thus 
\begin{equation}\label{form114} \omega \cdot \M \cdot (\delta^{1/2})^{-s} \leq |\mathcal{K}_{\delta}| \leq \mathbf{C} \cdot \M \cdot (\delta^{1/2})^{-s}. \end{equation}
We also recall from \eqref{form100} that if $T = T_{\delta/4}$ is an arbitrary $(\delta/4)$-tube perpendicular to $\theta_{0}$, in particular if $T \in \mathcal{T}_{\delta}$, then
\begin{equation}\label{form113} |K_{\mathbf{B}} \cap T|_{\delta/4} \leq |\mathbf{B} \cap \bar{K} \cap T|_{\delta/4} \lesssim (\delta^{1/2})^{-\sigma - \zeta}. \end{equation}
Therefore, it is reasonable to define that $T \in \mathcal{T}_{\delta}$ is $\mathbf{B}$-\emph{dense} if 
\begin{equation}\label{form119} |K_{\mathbf{B}} \cap T|_{\delta} \geq (\delta^{1/2})^{-\sigma + 3\zeta}. \end{equation}
We say that $T \in \mathcal{T}_{\delta}$ is \emph{$K_{0}$-sparse} if
\begin{equation}\label{form141} |\{\mathbf{B} \in \mathcal{K}_{\delta^{1/2}}(\mathbf{T}_{0}) : T \text{ is $\mathbf{B}$-dense}\}| \leq \M \cdot \delta^{2\zeta}. \end{equation}
Otherwise $T$ is \emph{$K_{0}$-dense}. This numerology is also sensible, because each tube $T \in \mathcal{T}_{\delta}$ can meet at most $\M$ discs in $\mathcal{K}_{\delta^{1/2}}(\mathbf{T}_{0})$ (namely all of them). Let us count the number of discs $B \in \mathcal{K}_{\delta}$ which intersect some $K_{0}$-sparse tube. Using the uniform upper bound \eqref{form113}, every fixed $K_{0}$-sparse tube $T \in \mathcal{T}_{\delta}$ satisfies
\begin{displaymath} |\{B \in \mathcal{K}_{\delta} : T \cap B \neq \emptyset\}| \leq [\M \cdot \delta^{2\zeta}] \cdot (\delta^{1/2})^{-\sigma - \zeta} + \M \cdot (\delta^{1/2})^{-\sigma + 3\zeta} \leq 2 \M \cdot (\delta^{1/2})^{-\sigma + 3\zeta}. \end{displaymath}
Since the number of sparse tubes is also bounded from above by $|\mathcal{T}_{\delta}| \leq (\delta^{1/2})^{\sigma - s - 2\zeta}$ by \eqref{form109}, we conclude that the number of discs in $\mathcal{K}_{\delta}$ which meet some sparse tube is bounded from above by $\lesssim \M \cdot (\delta^{1/2})^{-s + \zeta}$. 

Recalling from \eqref{form114} that $|\mathcal{K}_{\delta}| \geq \omega \cdot \M \cdot (\delta^{1/2})^{-s}$, we may finally infer that if $\delta > 0$ is small enough, there exist $\geq \tfrac{1}{2} \cdot |\mathcal{K}_{\delta}|$ discs in $\mathcal{K}_{\delta}$ which intersect some $K_{0}$-dense tube in $\mathcal{T}_{\delta}$. After this observation, we discard all $K_{0}$-sparse tubes from $\mathcal{T}_{\delta}$ without changing notation; in other words, we assume in the sequel that all the tubes in $\mathcal{T}_{\delta}$ are $K_{0}$-dense.

We record the following converse to \eqref{form109}:
\begin{equation}\label{form115} |\mathcal{T}_{\delta}| \geq (\delta^{1/2})^{\sigma - s + 2\zeta}, \end{equation}
assuming that $\delta > 0$ is sufficiently small. Indeed, according to \eqref{form113}, every individual tube $T \in \mathcal{T}_{\delta}$ intersects $\lesssim \M \cdot (\delta^{1/2})^{-\sigma - \zeta}$ discs in $\mathcal{K}_{\delta}$. But since the tubes in $\mathcal{T}_{\delta}$ in total intersect $\geq \tfrac{1}{2} \cdot |\mathcal{K}_{\delta}| \geq \omega \cdot \M \cdot \delta^{-s/2}$ discs in $\mathcal{K}_{\delta}$, the lower bound \eqref{form115} follows.

Recall that the tubes in $\mathcal{T}_{\delta}$ have the form $T = \pi_{1}^{-1}(I_{T})$, where $I_{T} \in \mathcal{D}_{\delta}(\R)$ is a dyadic interval of side-length $\delta/4$. Therefore, the left end-points of the intervals $\{I_{T} : T \in \mathcal{T}_{\delta}\}$, form a certain finite subset $A_{1} \subset \tfrac{1}{4}\delta \cdot \Z$, and we record from \eqref{form109} that
\begin{equation}\label{form136} (\delta^{1/2})^{\sigma - s + 2\zeta} \stackrel{\eqref{form115}}{\leq} |A_{1}| = |\mathcal{T}_{\delta}| \leq (\delta^{1/2})^{\sigma - s - 2\zeta} \stackrel{\eqref{choiceZeta}}{\leq} (\delta^{1/2})^{\sigma - s - \rho}. \end{equation}
Ket $\Pi_{1}$ be the uniformly distributed probability measure on $A_{1}$. Since $T \subset 2\mathbf{T}_{0} = \pi_{1}^{-1}([0,2\delta^{1/2}])$ for all $T \in \mathcal{T}_{\delta}$, it holds
\begin{equation}\label{form125} \spt \Pi_{1} = A_{1} \subset [0,2\delta^{1/2}] \cap \tfrac{1}{4}\delta \cdot \Z. \end{equation}
This measure is a $(\delta/4)$-measure in the sense of Definition \ref{def:deltaMeasures}. We note that
\begin{equation}\label{form116} \|\Pi_{1}\|_{2} := \left(\sum_{a \in A_{1}} \Pi_{1}(\{a\})^{2} \right)^{1/2} = |A_{1}|^{-1/2} \stackrel{\eqref{form136}}{\leq} (\delta^{1/2})^{(s - \sigma - 2\zeta)/2}. \end{equation} 
We next define another discrete measure, which is associated with the $y$-coordinates of the discs $\mathbf{B} \in \mathcal{K}_{\delta^{1/2}}$. In fact, for every $\mathbf{B} \in \mathcal{K}_{\delta^{1/2}}(\mathbf{T}_{0})$, let 
\begin{displaymath} y_{\mathbf{B}} \in \tfrac{1}{4}\delta^{1/2} \cdot \Z \cap \pi_{2}(\mathbf{B}), \end{displaymath}
where $\pi_{2}(x,y) = y$ is the projection to the $y$-coordinate. This point exists, since $\pi_{2}(\mathbf{B})$ is an interval of length $\geq \tfrac{1}{3}\delta^{1/2}$. We also note that $\pi_{2}(\mathbf{B}) \subset [-2,2]$, since $\mathbf{B}$ intersects $\bar{K} \subset B(1)$. Let $A_{2} := \{y_{\mathbf{B}} : \mathbf{B} \in \mathcal{K}_{\delta^{1/2}}\}$, and let $\Pi_{2}$ be the uniformly distributed probability measure on $A_{2}$. Then 
\begin{equation}\label{form126} \spt \Pi_{2} = A_{2} \subset [-2,2] \cap \tfrac{1}{4}\delta^{1/2} \cdot \Z. \end{equation}
Thus $\Pi_{2}$ is a $\tfrac{1}{4}\delta^{1/2}$-measure in the sense of Definition \ref{def:deltaMeasures}, and
\begin{equation}\label{form117} \|\Pi_{2}\|_{2} = \left(\sum_{a \in A_{2}} \Pi_{2}(\{a\})^{2} \right)^{1/2} = |A_{2}|^{-1/2} \sim \M^{-1/2} \stackrel{\eqref{form118}}{\leq} (\delta^{1/2})^{(\sigma - 2\zeta)/2}. \end{equation} 
The product measure $\Pi := \Pi_{1} \times \Pi_{2}$ is a discrete probability measure supported on $A_{1} \times A_{2}$. We will next investigate a certain subset of $A_{1} \times A_{2}$ of substantial $\Pi$-measure. To define this subset, recall that that every point $x \in A_{1}$ was contained in the $\pi_{1}$-projection of a certain tube $T_{x} \in \mathcal{T}_{\delta}$. Similarly, recall that every point $y \in A_{2}$ was contained in the $\pi_{2}$-projection of a certain disc $\mathbf{B}_{y} \in \mathcal{K}_{\delta^{1/2}}$. With this notation, we define
\begin{equation}\label{form122} G := \{(x,y) \in A_{1} \times A_{2} : T_{x} \cap K_{\mathbf{B}_{y}} \neq \emptyset\}, \end{equation} 
recall from \eqref{form94a} that $K_{\mathbf{B}} = K_{0} \cap \mathbf{B}$. Morally, $G$ consists of those tube-disc pairs $(T,\mathbf{B}) \in \mathcal{T}_{\delta} \times \mathcal{K}_{\delta^{1/2}}(\mathbf{T}_{0})$, where $T$ intersects $\bar{K}$ inside $\mathbf{B}$. Note that if $x \in A_{1}$, then $T_{x}$ is a $K_{0}$-dense tube (see \eqref{form141} for the definition), so $T_{x}$ has non-empty intersection with $K_{\mathbf{B}}$ for $\geq \M \cdot \delta^{2\zeta}$ distinct discs $\mathbf{B} \in \mathcal{K}_{\delta^{1/2}}(\mathbf{T}_{0})$. In other words,
\begin{displaymath} |\pi_{1}^{-1}\{x\} \cap G| \geq \M \cdot \delta^{2\zeta}, \qquad x \in A_{1}, \end{displaymath}
and consequently
\begin{equation}\label{form120} \Pi(G) = \frac{|G|}{|A_{1}||A_{2}|} \gtrsim \delta^{2\zeta}. \end{equation}

\subsection{The $\pi_{\theta_{1}}$-projection} While constructing the sets $A_{1},A_{2},G$ above, all the arguments were based on the structure of $K_{0} = \bar{K}_{\mathbf{T}_{0}}$ relative to tubes perpendicular to $\theta_{0}$. Now, the plan is to exploit the information available for the $\pi_{\theta_{1}}$-projection in \nref{K2}, namely that
\begin{equation}\label{form123} |\pi_{\theta_{1}}(K_{0})|_{\delta} \leq (\delta^{1/2})^{\sigma - s - 2\zeta}. \end{equation}
To use this, we claim that there exists an absolute constant $C > 0$ such that
\begin{equation}\label{form121} \pi_{\theta_{1}}(G) \subset [\pi_{\theta_{1}}(K_{0})]_{C\delta}, \end{equation}
where $G \subset A_{1} \times A_{2}$ is the set defined in \eqref{form122}, and $[\cdots]_{r}$ is the $r$-neighbourhood. To prove \eqref{form121}, fix $(x,y) \in G$, so $T_{x} \cap K_{\mathbf{B}_{y}} \neq \emptyset$ by definition. Here $T_{x} = \pi_{1}^{-1}(I_{x}) = \pi_{e_{0}}^{-1}(I_{x})$ for some interval $I_{x} \in \mathcal{D}_{\delta/4}(\R)$. Then we know the following:
\begin{enumerate}
\item $x \in I_{x} = \pi_{1}(T_{x})$ and $y \in \pi_{2}(\mathbf{B}_{y})$ by definitions of $T_{x}$ and $\mathbf{B}_{y}$.
\item There exists a point $(x_{0},y_{0}) \in T_{x} \cap K_{\mathbf{B}_{y}} = T_{x} \cap K_{0} \cap \mathbf{B}_{y}$.
\end{enumerate}
It follows that the points $(x,y)$ and $(x_{0},y_{0})$ lie in the same (vertical) tube $T_{x}$, and $|y - y_{0}| \leq 2\delta^{1/2}$. Now, since $|\theta_{0} - \theta_{1}| \leq \delta^{1/2}$, one may check by elementary geometry that
\begin{displaymath} |\pi_{\theta_{1}}(x,y) - \pi_{\theta_{1}}(x_{0},y_{0})| \lesssim \delta. \end{displaymath}
But here $\pi_{\theta_{1}}(x_{0},y_{0}) \in \pi_{\theta_{1}}(K_{0})$, and hence $\pi_{\theta_{1}}(G)$ lies in the $C\delta$-neighbourhood of $\pi_{\theta_{1}}(K_{0})$. This proves \eqref{form121}. Combining \eqref{form123}-\eqref{form121}, we obtain
\begin{equation}\label{form124} |\pi_{e_{1}}(G)|_{\delta} \lesssim (\delta^{1/2})^{\sigma - s - 2\zeta}. \end{equation}
Combined with the lower bound \eqref{form120} for the $\Pi$-measure of $G$, we will shortly infer from \eqref{form124} a lower bound for the $L^{2}$-norm of the projection $\pi_{\theta_{1}}\Pi$. To make this perfectly precise, we will need an additional piece of notation. Start by writing
\begin{displaymath} \pi_{\theta_{1}}(x,y) = x \cos \varphi + y \sin \varphi = \cos \varphi \left( x + \tfrac{\sin \varphi}{\cos \varphi} y \right) =: c \cdot (x + \theta y) \end{displaymath}
for the parameter $\varphi \in [-\pi,\pi)$ satisfying $\theta_{1} = (\cos \varphi,\sin \varphi)$. In fact, since we assume that $e_{0} = (1,0)$, and $\delta^{\mathfrak{d}/2} \leq |\theta_{0} - \theta_{1}| \leq \delta^{1/2}$ by \eqref{form128}, we have $c = \cos \varphi \sim 1$ and 
\begin{equation}\label{form127} \delta^{\mathfrak{d}/2} \lesssim |\theta| \sim |\sin \varphi| \lesssim \delta^{1/2}. \end{equation}
For reasons to become apparent momentarily, we would prefer $\pi_{\theta_{1}}$ to map $\R^{2}$ inside the discrete set $\delta \cdot \Z$. So, let us, in place of $\pi_{\theta_{1}}$, consider the map $\bar{\pi}_{\theta} \colon \R^{2} \to \tfrac{1}{4}\delta \cdot \Z$,
\begin{displaymath} \bar{\pi}_{\theta} := [x] + [\theta y], \end{displaymath} 
where $[r] \in \delta \cdot \Z$ is determined by $r = [r] + c$, $0 \leq c < \tfrac{1}{4}\delta$. We claim, based on \eqref{form124}, that
\begin{equation}\label{form129} |\bar{\pi}_{\theta}(G)| \lesssim (\delta^{1/2})^{\sigma - s - 2\zeta}, \end{equation}
where the left hand side refers to cardinality. The reason is simply that 
\begin{displaymath} |\pi_{\theta_{1}}(x,y) - c \cdot \bar{\pi}_{\theta}(x,y)| = |c| \cdot (|x - [x]| + |\theta y - [\theta y]|) \lesssim \delta, \qquad (x,y) \in \R^{2}, \end{displaymath} 
so any $\delta$-cover for $\pi_{\theta_{1}}(G)$ can be used to construct a $\delta$-cover for $c \cdot \bar{\pi}_{\theta}(G)$ of comparable cardinality. Then the $\delta$-separation of the points in $\bar{\pi}_{\theta}(G)$ yields \eqref{form129}.

The benefit of considering the projection $\bar{\pi}_{\theta}$ is that the $\bar{\pi}_{\theta}$-projection of $\Pi$ can be expressed as the following convolution:
\begin{displaymath} \bar{\pi}_{\theta}\Pi = [\Pi_{1}] \ast [\theta\Pi_{2}] = \Pi_{1} \ast [\theta \Pi_{2}] \end{displaymath}
where $[\theta \Pi_{2}]$ refers to the push-forward of $\Pi_{2}$ under the map $y \mapsto [\theta y]$ (recall from \eqref{form125} that $\Pi_{1}$ is already supported on $\tfrac{1}{4}\delta \cdot \Z$, so $[\Pi_{1}] = \Pi$). Note that both $\Pi_{1}$ and $[\theta \Pi_{2}]$ are discrete measures supported on $\tfrac{1}{4}\delta \cdot \Z$, so the same is true for their convolution. From the facts $(\Pi_{1} \times \Pi_{2})(G) = \Pi(G) \gtrsim \delta^{2\zeta}$ (recall \eqref{form120}) and \eqref{form129}, we can deduce the following lower bound for the (discrete) $L^{2}$-norm of this convolution:
\begin{align*} \delta^{2\zeta} \lesssim \sum_{z \in \bar{\pi}_{\theta}(G)} \bar{\pi}_{\theta}(\Pi|_{G})(z) & \leq |\bar{\pi}_{\theta}(G)|^{1/2} \left(\sum_{z \in (\delta/4) \cdot \Z} \bar{\pi}_{\theta}(\Pi)(z)^{2} \right)^{1/2}\\
& \lesssim (\delta^{1/2})^{(\sigma- s - 2\zeta)/2} \cdot \|\Pi_{1} \ast [\theta \Pi_{2}]\|_{2}, \end{align*}
or in other words
\begin{equation}\label{form130} \|\Pi_{1} \ast [\theta \Pi_{2}]\|_{L^{2}} \gtrsim (\delta^{1/2})^{(s - \sigma + 10\zeta)/2} \stackrel{\eqref{form116}}{\geq} (\delta^{1/2})^{4\zeta} \cdot \|\Pi_{1}\|_{L^{2}}. \end{equation}
The lower bound \eqref{form130} will eventually place us in a position to apply the inverse Theorem \ref{shmerkin}. 
To prepare the ground for this, we record an upper bound for the $L^{2}$-norm of $[\theta \Pi_{2}]$. This is based on the following elementary observation: since $|\theta| \gtrsim \delta^{\mathfrak{d}/2}$ by \eqref{form127},
\begin{equation}\label{form132} |\{y \in \tfrac{1}{4}\delta^{1/2} \cdot \Z :  [\theta y] = z\}| \lesssim \delta^{(1 - \mathfrak{d})/2}, \qquad z \in \delta \cdot \Z. \end{equation}
Indeed, if $[\theta y_{1}] = [\theta y_{2}]$, then certainly $|\theta| \cdot |y_{1} - y_{2}| \leq \delta$, hence $|y_{1} - y_{2}| \lesssim \delta^{1 - \mathfrak{d}/2}$. But any fixed interval of radius $\sim \delta^{1 - \mathfrak{d}/2}$ contains $\lesssim \delta^{(1 - \mathfrak{d})/2}$ points from $\tfrac{1}{4}\delta^{1/2} \cdot \mathbb{Z}$, and this implies \eqref{form132}. Now, recall from \eqref{form126} that $\Pi_{2}$ was defined to be the uniform probability measure on the set $A_{2} \subset [-2,2] \cap \tfrac{1}{4}\delta^{1/2} \cdot \Z$, and $|A_{2}| \sim \M$. Hence, it follows from \eqref{form132}, and $|[\theta A_{2}]| \leq |A_{2}| \sim \M$, that
\begin{align} \|[\theta \Pi_{2}]\|_{2} & = \left( \sum_{z \in [\theta A_{2}]} [\theta \Pi_{2}](\{z\})^{2} \right)^{1/2} \lesssim \left( \M \cdot \M^{-2} \cdot \delta^{1 - \mathfrak{d}} \right)^{1/2} \notag\\
&\label{form151} = \delta^{(1 - \mathfrak{d})/2} \cdot \M^{-1/2} \stackrel{\eqref{form118}}{\lesssim} (\delta^{1/2})^{(\sigma - 2\zeta)/2 + 1 - \mathfrak{d}} = (\delta^{1/2})^{\sigma/8 - \zeta}. \end{align}  
In the final equality, we used the choice $\mathfrak{d} := 1 + \tfrac{\sigma}{4}$ at \eqref{form133}. One has to be careful, however, since the "$\sigma$" in \eqref{form151} is actually the $\bar{\sigma} \in [\sigma - \sqrt{\lambda},s]$ from Section \ref{s:recap}. However, for \eqref{form151} we need that $\bar{\sigma}/2 + 1 - \mathfrak{d} \geq \bar{\sigma}/8$, or equivalently $\mathfrak{d} \leq 1 + 3\bar{\sigma}/8$, and this is true because $\sigma \leq 3\bar{\sigma}/2$ by the choice of $\lambda$ at \eqref{choiceLambda}.

\subsection{The branching numbers of $\Pi_{1}$} We arrive at the final step of the proof. In \eqref{form130}, we have seen that $\eta_{1} = \Pi_{1}$ and $\eta_{2} = [\theta \Pi_{2}]$ are $(\delta/4)$-measures with the property 
\begin{equation}\label{form135} \|\eta_{1} \ast \eta_{2}\| \geq (\delta^{1/2})^{4\zeta}\|\eta_{1}\|_{L^{2}}. \end{equation}
This places us in a position to apply Theorem \ref{shmerkin}. The overall plan is to apply Theorem \ref{shmerkin} to derive such structural information about $\eta_{1}$ and its support $A_{1}$ that we can eventually contradict the upper bound for $|A_{1}|$ obtained in \eqref{form136}.

To make this precise, recall the constant $\rho = \rho(\sigma,\sigma_{0},\tau) > 0$ from \eqref{def:rho}. Let $m \in \N$ be such that $\Delta = 2^{-m}$. We recap from \eqref{choiceZeta} that
\begin{equation}\label{form147} \zeta = \min\{\tfrac{1}{2}\kappa,\tfrac{1}{10}\rho\}, \end{equation}
where $\kappa = \kappa(m,\rho) > 0$ is the constant given by Theorem \ref{shmerkin}. Now, by Theorem \ref{shmerkin}, if $\delta = (2^{-m})^{N}$ with $N$ large enough, and $\eta_{1},\eta_{2}$ are $(\delta/4)$-measures satisfying \eqref{form135}, then the following holds. There exist sets $U \subset \spt \eta_{1} = A_{1}$ and $V \subset \spt \eta_{2} = A_{2}$ such that
\begin{equation}\label{form219} \eta_{1}(U) \geq \delta^{2\rho}\ \quad \text{and} \quad \eta_{2}(V) \geq \delta^{\rho}, \end{equation} 
(the first condition is equivalent to "$\|\eta_{1}|_{U}\|_{L^{2}} \geq \delta^{\rho}\|\eta_{1}\|_{L^{2}}$" since $\eta_{1}$ is the uniformly distributed measure on $A_{1}$) and the following properties hold:
\begin{itemize}
\item[\textup{(A)}] there is a sequence $(R_{n}^{1})_{n = 0}^{N - 1} \subset \{1,\ldots,2^{m}\}^{N - 1}$, such that 
\begin{equation}\label{form149} |U \cap I|_{2^{-(n + 1)m}} = R_{n}^{1}, \qquad I \in \mathcal{D}_{2^{-mn}}(U), \end{equation}
\item[\textup{(B)}] there is a sequence $(R_{n}^{2})_{n = 0}^{N - 1} \subset \{1,\ldots,2^{m}\}^{N - 1}$, such that 
\begin{displaymath} |V \cap I|_{2^{-(n + 1)m}} = R_{n}^{2}, \qquad I \in \mathcal{D}_{2^{-mn}}(V). \end{displaymath}
\end{itemize}
Moreover, $\{n : R_{n}^{2} > 1\} \subset \{n : R_{n}^{1} \geq 2^{(1 - \rho)m}\} =: \mathcal{S}$, and
\begin{equation}\label{form216} m|\calS| \geq \log \|\eta_{2}\|_{L^{2}}^{-2} - \rho \log (1/\delta). \end{equation}

Based on the information above, we compute a preliminary lower bound for the cardinality of the set $U$, and hence $A_{1} \supset U$. The cardinality of $U$ equals the product of the branching numbers $R_{n}^{1}$, $n \in \{0,\ldots,N - 1\}$, and hence
\begin{equation}\label{form150} |A_{1}| \geq |U| \geq \prod_{n = 0}^{N - 1} R_{n}^{1} \geq \prod_{n \in \mathcal{S}} 2^{(1 - \rho)m} \times \prod_{n \notin \mathcal{S}} R_{n}^{1} = 2^{(1 - \rho)m|\mathcal{S}|} \times \prod_{n \notin \mathcal{S}} R_{n}^{1}.  \end{equation}
It is worth keeping in mind here that $A_{1} \subset [0,2\delta^{1/2}]$ (see \eqref{form125}), so the branching numbers $R_{n}^{1} \sim 1$ for $n \in \{0,\ldots,N/2 - 1\}$. Bounding the second factor in \eqref{form150} from below takes some work, and a few reminders. Before we launch into the technicalities, let us mention that "morally" $R_{n}^{1} \geq \Delta^{\sigma_{0} - s} = 2^{(s - \sigma_{0})m}$ for all $n \in \{0,\ldots,N - 1\}$. A slightly weaker version of this will literally be true for "most" $s \notin \mathcal{S}$, see \eqref{form4a}. This part of the proof corresponds to the final computation in the sketch shown in Section \ref{s:outline}. 

The main tool for lower bounding the numbers $R_{n}^{1}$ is the following proposition:

\begin{proposition}\label{prop4Restated} For every $\mathbf{C},\epsilon > 0$, there exists $\Delta_{0} = \Delta_{0}(\mathbf{C},\epsilon) \in 2^{-\N}$ such that the following holds for all $\Delta \in 2^{-\N} \cap (0,\Delta_{0}]$. Let $\delta = \Delta^{N}$, and let $K \subset B(1)$ be an upper $(s,\mathbf{C})$-regular set with $|K|_{\delta} \geq \delta^{-s + \epsilon^{2}/2}$. Let $\delta = \Delta^{N}$. For $\theta \in S^{1}$ fixed, assume that
\begin{displaymath} |\{0 \leq j \leq N - 1 : \m_{K,\theta}(x \mid [10\Delta^{j + 1},10\Delta^{j}]) > \Delta^{-\sigma_{0}}\}| \leq \epsilon N, \qquad x \in K. \end{displaymath}
Then, there exists a family $\mathcal{G} \subset \{0,\ldots,N - 1\}$ with $|\mathcal{G}| \geq (1 - 10\epsilon)N$ such that 
\begin{displaymath} |\pi_{\theta}(K \cap Q)|_{\Delta^{j + 1}} \geq \Delta^{\sigma_{0} - s + 100\epsilon} \end{displaymath}
for all $j \in \mathcal{G}$, and for at least one square $Q \in \mathcal{D}_{\Delta^{j}}(K)$.
\end{proposition}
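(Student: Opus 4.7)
\medskip

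\noindent\textbf{Proof plan.} I would argue by contradiction, supposing that
\[ \mathrm{Bad} := \{0 \leq j \leq N-1 : \forall\, Q \in \mathcal{D}_{\Delta^j}(K), \ |\pi_\theta(K \cap Q)|_{\Delta^{j+1}} < \Delta^{\sigma_0 - s + 100\epsilon}\} \]
has cardinality strictly greater than $10\epsilon N$, and extracting a contradiction with $|K|_\delta \geq \delta^{-s + \epsilon^2/2}$. The game is to show that each $j \in \mathrm{Bad}$ forces the average branching $\bar{b}_j := |K|_{\Delta^{j+1}}/|K|_{\Delta^j}$ to be noticeably smaller than $\Delta^{-s}$, and then to multiply the savings over the $> 10\epsilon N$ bad scales to swallow the $\epsilon^2/2$ slack in the hypothesis on $|K|_\delta$.

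The first ingredient is a uniform fiber-multiplicity bound obtained from Lemma~\ref{lemma5}: applied to $F = K$, the partition $a_j = j$, and $\eta = \epsilon$, the pointwise multiplicity hypothesis yields $\m_{K,\theta}(x \mid [\delta,1]) \leq C^N \delta^{-\sigma_0 - \epsilon}$ for every $x \in K$, and on choosing $\Delta \leq \Delta_0(\mathbf{C},\epsilon)$ small enough the combinatorial factor $C^N$ is absorbed to give $\leq \delta^{-\sigma_0 - 2\epsilon}$. The same bound holds within any $\Delta^j$-cube $Q$ after renormalisation, provided $K \cap Q$ contains no point of $H_\theta(K, \Delta^{-\sigma_0}, [10\Delta^{j+1}, 10\Delta^j])$; call such $Q$ \emph{good} and all others \emph{bad} (at scale $j$).

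Now fix $j \in \mathrm{Bad}$ and decompose $|K \cap Q|_{\Delta^{j+1}} \leq |\pi_\theta(K \cap Q)|_{\Delta^{j+1}} \cdot M_j(Q)$, where $M_j(Q)$ is the maximal fiber multiplicity in $Q$ at scale $\Delta^{j+1}$. For a good $Q$ the uniform fiber bound gives $M_j(Q) \leq \Delta^{-\sigma_0}$, so $b_j(Q) < \Delta^{-s + 100\epsilon}$; for a bad $Q$ I use only the trivial upper-regularity bound $b_j(Q) \leq \mathbf{C}\Delta^{-s}$. Summing,
\[ \bar{b}_j \ \leq\ \Delta^{-s + 100\epsilon} \ +\ \mathbf{C}\Delta^{-s}\cdot \frac{|B_j^\ast|_{\Delta^j}}{|K|_{\Delta^j}}, \]
where $B_j^\ast$ is the set of $\Delta^j$-cubes containing a scale-$j$ high-multiplicity point. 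Declare $j$ \emph{nice-bad} if the second summand is dominated by the first, i.e.\ if $|B_j^\ast|_{\Delta^j}/|K|_{\Delta^j} \leq \Delta^{100\epsilon}/\mathbf{C}$; for nice-bad $j$, $\bar{b}_j \leq 2\Delta^{-s + 100\epsilon}$. The main technical obstacle is to show that \emph{most} scales of $\mathrm{Bad}$ are nice-bad. I would combine the global estimate $\sum_j |B_j|_\delta \leq \epsilon N \,|K|_\delta$ inherited from the pointwise hypothesis with the near-regularity consequence of $|K|_\delta \geq \delta^{-s + \epsilon^2/2}$: writing $\bar{b}_j = \mathbf{C}\Delta^{-s + e_j}$ with $e_j \geq \log_\Delta \mathbf{C}$, the cumulative defect satisfies $\sum_j e_j \leq \epsilon^2 N/2 + O(1)$, which forces $|K|_{\Delta^j}$ to be within a factor $\delta^{\epsilon^2/2}$ of $\Delta^{-js}$ at most scales. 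A Markov-type pigeonhole then converts the scale-$\delta$ budget on $\sum |B_j|$ into a per-scale bound on the density $|B_j^\ast|_{\Delta^j}/|K|_{\Delta^j}$, excluding at most $\sim \epsilon^2 N$ scales from being nice-bad.

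With $|\text{nice-Bad}| \geq 9\epsilon N$ in hand, telescoping closes the argument: using $\bar{b}_j \leq 2\Delta^{-s + 100\epsilon}$ at nice-bad $j$, $\bar{b}_j \leq \mathbf{C}\Delta^{-s}$ elsewhere, and absorbing $\mathbf{C}^N \leq \delta^{-\epsilon^2/4}$ by the choice of $\Delta$,
\[ |K|_\delta \ =\ |K|_1 \prod_{j=0}^{N-1} \bar{b}_j \ \leq\ \delta^{-s - \epsilon^2/4 + 100\epsilon \cdot |\text{nice-Bad}|/N} \ \leq\ \delta^{-s - \epsilon^2/4 + 900\epsilon^2} \ <\ \delta^{-s + \epsilon^2/2}, \]
contradicting the standing hypothesis. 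The delicate step is the pigeonhole bounding $|\text{Bad}\setminus \text{nice-Bad}|$: it is the only place where the multiplicity hypothesis and the near-regularity hypothesis on $|K|_\delta$ have to be combined, and it is where the argument would need to be written out carefully.
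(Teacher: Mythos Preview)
Your contradiction strategy and the final telescoping are fine in outline, but there is a genuine gap at precisely the step you flag as delicate: the Markov-type pigeonhole bounding $|\mathrm{Bad}\setminus\text{nice-Bad}|$ by $\sim\epsilon^2 N$ (or even by $\epsilon N$) does not follow from the ingredients you list. The cube-fraction $\beta_j := |B_j^\ast|_{\Delta^j}/|K|_{\Delta^j}$ lives at scale $\Delta^j$, whereas your only budget $\sum_j |B_j|_\delta \leq \epsilon N|K|_\delta$ is at scale $\delta$. Using $|B_j^\ast|_{\Delta^j}\leq |B_j|_\delta$ and summing gives $\sum_j \beta_j\, |K|_{\Delta^j}\leq \epsilon N|K|_\delta$, but since $|K|_{\Delta^j}$ ranges from $O(1)$ at $j=0$ to $\sim|K|_\delta$ at $j=N$, this places essentially no constraint on $\beta_j$ at coarse scales. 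Concretely, one can have $\beta_j=1$ for every $j\leq N/2$ and still satisfy the budget comfortably, because $\sum_{j\leq N/2}|K|_{\Delta^j}\lesssim \delta^{-s/2}\ll \epsilon N|K|_\delta$. Near-regularity does not help --- it only pins down $|K|_{\Delta^j}\approx\Delta^{-js}$, which is precisely what makes the weighting so uneven across $j$. So the telescoping never acquires the $9\epsilon N$ nice-bad scales it needs.

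The paper sidesteps this by never converting to cube counts. It works with the uniform probability $\bar\mu$ on $\mathcal{D}_\delta(K)$ and controls the $\bar\mu$-\emph{measure} of the high-multiplicity set $\mathbf{Bad}(j)$: since $\sum_j \bar\mu(\mathbf{Bad}(j))\leq \epsilon N$ and $\bar\mu$ is a probability, a scale-independent Markov threshold gives $\bar\mu(\mathbf{Bad}(j))\leq\tfrac{1}{9}$ for all but $9\epsilon N$ scales. Separately, the near-regularity hypothesis forces the conditional entropy $H(\bar\mu,\mathcal{D}_{\Delta^{j+1}}\mid\mathcal{D}_{\Delta^j})\geq(s-\epsilon)\log\tfrac{1}{\Delta}$ on all but $\epsilon N$ scales. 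At a scale $j$ good for both, one pigeonholes among the $\Delta^j$-cubes to find a single $Q$ with $\bar\mu^Q(\text{low-mult set})\geq\tfrac{3}{4}$ \emph{and} $H(\bar\mu^Q,\mathcal{D}_\Delta)\geq(s-O(\epsilon))\log\tfrac{1}{\Delta}$; an entropy-to-cardinality lemma then shows the low-multiplicity portion of $K\cap Q$ fills $\gtrsim\Delta^{-s+O(\epsilon)}$ subcubes, and the projection bound follows. The moral: control the measure of bad \emph{points}, not the count of cubes that merely touch them.
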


We postpone the proof to Section \ref{s:branching}. To start heading towards applying Proposition \ref{prop4Restated}, recall from above \eqref{form136} that $A_{1}$ consists of the left end-points of $\{\pi_{1}(T) : T \in \mathcal{T}_{\delta}\}$. The tubes in $\mathcal{T}_{\delta}$ are $K_{0}$-dense in the sense defined in \eqref{form141}, and recalled here:
\begin{displaymath} |\{\mathbf{B} \in \mathcal{K}_{\delta^{1/2}}(\mathbf{T}_{0}) : T \text{ is $\mathbf{B}$-dense}\}| > \M \cdot \delta^{2\zeta}, \end{displaymath}
where $\M = |\mathcal{K}_{\delta^{1/2}}(\mathbf{T}_{0})| \geq (\delta^{1/2})^{-\sigma + 2\zeta}$ by \eqref{form118}, and $\mathbf{B}$-density meant that $|K_{\mathbf{B}} \cap T|_{\delta} \geq (\delta^{1/2})^{-\sigma + 3\zeta}$. Recall also from \eqref{form219} that $\eta_{1}(U) \geq \delta^{2\rho}$, or in other words 
\begin{equation}\label{sizeU} |U| \geq \delta^{2\rho}|A_{1}| \stackrel{\eqref{form136}}{\geq} (\delta^{1/2})^{\sigma - s + 2\zeta + 4\rho}. \end{equation}
We want to relate the "largeness" of $U$ to the "largeness" of some points in $K_{0}$ projecting to $U$; the $K_{0}$-density is crucial for that purpose. We use the slightly sloppy notation $T \in U$ if the left end-point of the interval $\pi_{1}(T)$ lies in $U$. With this notation,
\begin{displaymath} \sum_{T \in U} \sum_{\mathbf{B} \in \mathcal{K}_{\delta^{1/2}}(\mathbf{T}_{0})} |K_{\mathbf{B}} \cap T|_{\delta} \geq |U| \cdot (\M \cdot \delta^{2\zeta}) \cdot (\delta^{1/2})^{-\sigma + 3\zeta} = |U| \cdot \M \cdot (\delta^{1/2})^{-\sigma + 7\zeta}. \end{displaymath} 
Consequently, there exists a fixed disc $\mathbf{B} \in \mathcal{K}_{\delta^{1/2}}(\mathbf{T}_{0})$ with
\begin{displaymath} \sum_{T \in U} |K_{\mathbf{B}} \cap T|_{\delta} \geq |U| \cdot (\delta^{1/2})^{-\sigma + 7\zeta} \stackrel{\eqref{sizeU}}{\geq} (\delta^{1/2})^{-s +9\zeta + 4\rho}.\end{displaymath}
For $\mathbf{B} \in \mathcal{K}_{\delta}(\mathbf{T}_{0})$ as above, we write (again slightly abusing notation)
\begin{displaymath} K_{\mathbf{B}} \cap \pi_{1}^{-1}(U) := \{x \in K_{\mathbf{B}} : x \in T \text{ for some } T \in U\}. \end{displaymath} 
As we just demonstrated,
\begin{equation}\label{form143} |K_{\mathbf{B}} \cap \pi_{1}^{-1}(U)|_{\delta} \geq (\delta^{1/2})^{-s + 9\zeta + 4\rho} \stackrel{\eqref{form147}}{\geq} (\delta^{1/2})^{-s + 5\rho}. \end{equation}
We also record that 
\begin{equation}\label{form148} \pi_{1}(K_{\mathbf{B}} \cap \pi_{1}^{-1}(U)) \subset [U]_{\delta}. \end{equation}

An important feature of the sets $K_{\mathbf{B}}$, $\mathbf{B} \in \mathcal{K}_{1/2}(\mathbf{T}_{0})$, is that they are all subsets of $\bar{K}$ (see \eqref{form94a}), and therefore subsets of $L_{\theta_{0}}$ (see \eqref{form89} and \eqref{form91b}). Recall from \eqref{form95a} that
\begin{displaymath} \tfrac{1}{N} |\{0 \leq j \leq N - 1 : x \in H_{\theta_{0}}(K,\Delta^{-\sigma_{0}},[10\Delta^{j + 1},10\Delta^{j}])\}| \leq C(\sigma,\tau)\sqrt{\lambda}, \quad x \in L_{\theta}. \end{displaymath}
Unwrapping the definition further, and recalling that $C(\sigma,\tau)\sqrt{\lambda} \leq \tfrac{\rho}{2}$ by \eqref{choiceLambda}, this implies
\begin{displaymath} \tfrac{1}{N} |\{0 \leq j \leq N - 1 : \m_{K,\theta_{0}}(x \mid [10\Delta^{j + 1},10\Delta^{j}]) > \Delta^{-\sigma_{0}}\}| \leq \rho/2, \qquad x \in K_{\mathbf{B}} \cap U. \end{displaymath}
This information is preserved if we rescale $K_{\mathbf{B}} \cap U$ "to the unit scale" by $\delta^{-1/2} = \Delta^{-N/2}$. More precisely, let $T_{\mathbf{B}} \colon \R^{2} \to \R^{2}$ be the rescaling map which sends $\mathbf{B}$ to $B(1)$, and let 
\begin{displaymath} \mathbf{K} := T_{\mathbf{B}}(K_{\mathbf{B}} \cap \pi_{1}^{-1}(U)) \subset B(1) \cap T_{\mathbf{B}}(K). \end{displaymath}
Then $\mathbf{K}$ is an upper $(s,\mathbf{C})$-regular subset of $B(1)$ satisfying (using Lemma \ref{lemma7} in (b)),
\begin{itemize}
\item[(a)] $|\mathbf{K}|_{\delta^{1/2}} \geq (\delta^{1/2})^{-s + 5\rho}$ by \eqref{form143}, and
\item[(b)] $(N/2)^{-1}|\{0 \leq j \leq N/2 - 1 : \m_{\mathbf{K},\theta_{0}}(x \mid [10\Delta^{j + 1},10\Delta^{j}]) > \Delta^{-\sigma_{0}}\}| \leq \rho$ for $x \in \mathbf{K}$.
\end{itemize}
The information (a)-(b) places us in a position to apply Proposition \ref{prop4Restated} with $\epsilon := \sqrt{10\rho}$. The conclusion is that there exists a family of indices
\begin{equation}\label{form152} \mathcal{G} \subset \{0,\ldots,\tfrac{N}{2} - 1\} \quad \text{with} \quad |\mathcal{G}| \geq (1 - 10\sqrt{\rho})\tfrac{N}{2} \end{equation}
such that for each $j \in \mathcal{G}$ there exists at least one square $Q \in \mathcal{D}_{\Delta^{j}}(\mathbf{K})$ with the property
\begin{displaymath} |\pi_{1}(\mathbf{K} \cap Q)|_{\Delta^{j + 1}} \geq \Delta^{\sigma_{0} - s + 100\sqrt{10\rho}}. \end{displaymath}
Recalling that $\mathbf{K} = T_{\mathbf{B}}(K_{\mathbf{B}} \cap \pi_{1}^{-1}(U))$, and writing $I := \pi_{1}(T_{\mathbf{B}}(Q))$, the previous inequality yields (after rescaling by $\Delta^{N/2} = \delta^{1/2}$)
\begin{displaymath} |U \cap I|_{\Delta^{j + 1 + N/2}} \geq |\pi_{1}(K_{\mathbf{B}} \cap \pi_{1}^{-1}(U) \cap T_{\mathbf{B}}(Q))|_{\Delta^{j + 1 + N/2}} \geq \Delta^{\sigma_{0} - s + 100\sqrt{10\rho}}.  \end{displaymath}
Note that $I$ is an interval of length $\Delta^{j + N/2}$. Recalling from \eqref{form149} the definition of the "branching numbers" $R_{n}^{1}$ associated to $U$, we have just proven that
\begin{equation}\label{form4a} R_{n}^{1} \geq \Delta^{\sigma_{0} - s + 100\sqrt{10\rho}} = 2^{m(s - \sigma_{0} - 100\sqrt{10\rho})}, \qquad n \in \mathcal{G} + \tfrac{N}{2} =: \overline{\mathcal{G}}. \end{equation}

We may now continue the lower estimate \eqref{form150} for the cardinality of $A_{1}$:
\begin{align} |A_{1}| \geq 2^{(1 - \rho)m|\mathcal{S}|} \times \prod_{n \in \overline{\mathcal{G}} \, \setminus \, \mathcal{S}} R_{n}^{1} & \geq 2^{(1 - \rho)m|\mathcal{S}|} \cdot 2^{m(s - \sigma_{0} - 100\sqrt{10\rho})(|\overline{\mathcal{G}}| - |\mathcal{S}|)} \notag\\
&\label{form5a} \geq 2^{m|\mathcal{S}|\sigma_{0}} \cdot 2^{m(s - \sigma_{0} - 100\sqrt{10\rho})|\overline{\mathcal{G}}|}.  \end{align} 
From \eqref{form216} we further infer that
\begin{displaymath} m|\calS| \geq \log \|\eta_{2}\|_{L^{2}}^{-2} - \rho \log (1/\delta) \stackrel{\eqref{form151}}{\geq} (\tfrac{\sigma}{8} - \zeta - \rho) \log (1/\delta) \stackrel{\eqref{form147}}{\geq} (\tfrac{\sigma}{8} - 2\rho)\log (1/\delta). \end{displaymath}
Recalling finally that $|\overline{\mathcal{G}}| \geq (1 - 10\sqrt{\rho})\tfrac{N}{2}$ by \eqref{form152}, and $2^{mN} = \delta^{-1}$, we deduce from \eqref{form5a}
\begin{align*} |A_{1}| & \geq \delta^{\sigma_{0}(-\tfrac{\sigma}{8} + 2\rho)} \cdot (\delta^{1/2})^{(\sigma_{0} + C\sqrt{\rho} - s)(1 - 10\sqrt{\rho})}\\
& = (\delta^{1/2})^{-(2\sigma_{0}(\tfrac{\sigma}{8} + 2\rho) + (s - \sigma_{0} - 100\sqrt{10\rho})(1 - 10\sqrt{\rho}))}. \end{align*}
On the other hand $|A_{1}| \leq (\delta^{1/2})^{\sigma - s - \rho}$ by \eqref{form136}. These inequalities are incompatible by the choice of $\rho$ at \eqref{def:rho} for $\delta > 0$ small enough (recall also Remark \ref{rem2}, and that the "$\sigma$" above is actually the parameter $\bar{\sigma} \in [\sigma - \sqrt{\lambda},s]$ introduced at \eqref{form86}). The required smallness of "$\delta$" here superficially seems to depend only on the size of $\rho = \rho(\sigma,\sigma_{0},\tau) > 0$, but one should keep in mind that the "$\delta$" here is actually the parameter $\bar{\delta} = \Delta^{\bar{N}}$ introduced at \eqref{form86}, where $\bar{N} \gtrsim_{\sigma,\sigma_{0},\tau,\Delta} N$. Therefore, the requirement that $\bar{\delta}$ be small enough in terms of $\sigma,\sigma_{0},\tau$ entails that the "original" $\delta > 0$ in the counter assumption \eqref{form22} needs to be chosen small in a manner depending on $\Delta$. 

This contradiction completes the proof of Proposition \ref{mainProp}.

\section{Lower bounding branching numbers}\label{s:branching}

The purpose of this section is to prove Proposition \ref{prop4Restated}, restated below:

\begin{proposition}\label{prop4} For every $\mathbf{C},\epsilon > 0$, there exists $\Delta_{0} = \Delta_{0}(\mathbf{C},\epsilon) \in 2^{-\N}$ such that the following holds for all $\Delta \in 2^{-\N} \cap (0,\Delta_{0}]$. Let $\delta = \Delta^{N}$, and let $K \subset B(1)$ be an upper $(s,\mathbf{C})$-regular set with $|K|_{\delta} \geq \delta^{-s + \epsilon^{2}/2}$. Let $\delta = \Delta^{N}$. For $\theta \in S^{1}$ in fixed, assume that
\begin{equation}\label{form2a} |\{0 \leq j \leq N - 1 : \m_{K,\theta}(x \mid [10\Delta^{j + 1},10\Delta^{j}]) > \Delta^{-\sigma}\}| \leq \epsilon N, \qquad x \in K. \end{equation}
Then, there exists a family $\mathcal{G} \subset \{0,\ldots,N - 1\}$ with $|\mathcal{G}| \geq (1 - 10\epsilon)N$ such that 
\begin{displaymath} |\pi_{\theta}(K \cap Q)|_{\Delta^{j + 1}} \geq \Delta^{\sigma - s + 100\epsilon} \end{displaymath}
for all $j \in \mathcal{G}$, and for at least one square $Q \in \mathcal{D}_{\Delta^{j}}(K)$.
\end{proposition}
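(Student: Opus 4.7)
The plan is to combine two Markov-type averaging arguments—one controlling branching via entropy, one exploiting the multiplicity hypothesis—to locate, at most scales $j$, a cube $Q \in \calD_{\Delta^j}(K)$ whose $\Delta^{j+1}$-branching $b_j(Q) := |K \cap Q|_{\Delta^{j+1}}$ is close to the upper-regularity ceiling and whose interior is mostly populated by low-multiplicity points; a column-counting estimate, driven at one low-multiplicity point, then yields the projection lower bound.

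First I would pigeonhole a unit dyadic cube $Q_0 \in \calD_{1}(K)$ with $|K \cap Q_0|_\delta \gtrsim \delta^{-s + \epsilon^{2}/2}$, and introduce the uniform probability measure $\mu$ on $\calD_\delta(K \cap Q_0)$. The entropy telescoping $\log |K \cap Q_0|_\delta = \sum_{j=0}^{N-1} H(\mu, \calD_{\Delta^{j+1}} \mid \calD_{\Delta^j})$, combined with the pointwise bound $H(\mu_Q, \calD_{\Delta^{j+1}}) \leq \log b_j(Q)$ and the upper-regularity ceiling $b_j(Q) \leq \mathbf{C}\Delta^{-s}$, forces the total deficit $\sum_j [s\log(1/\Delta) - \mathbb{E}_\mu \log b_j(Q_j(x))]$ to be at most $(\epsilon^2/2)N\log(1/\Delta) + O(1)$, so by Markov there is a set $\mathcal{G}_1$ with $|\mathcal{G}_1| \geq (1-\epsilon)N$ along which $\mu(\{x : b_j(Q_j(x)) \geq \Delta^{O(\epsilon)-s}\}) \geq 9/10$. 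Independently, the pointwise multiplicity hypothesis integrates to $\sum_j \mu(\mathrm{Bad}_j) \leq \epsilon N$, supplying a set $\mathcal{G}_2$ with $|\mathcal{G}_2| \geq (1-10\epsilon)N$ on which $\mu(\mathrm{Bad}_j) \leq 1/10$. For $j \in \mathcal{G}_1 \cap \mathcal{G}_2$ a union bound selects a cube $Q \in \calD_{\Delta^j}(K)$ simultaneously satisfying $b_j(Q) \geq \Delta^{O(\epsilon)-s}$ and $\mu_Q(\mathrm{Bad}_j) \leq 1/2$.

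The projection lower bound then proceeds by dissecting $\pi_\theta(K \cap Q)$ into $\Delta^{j+1}$-intervals: for any interval $I$ meeting $\pi_\theta(x^*)$ for some low-multiplicity $x^* \in K \cap Q$, the inclusion $Q \subset B(x^*, 10\Delta^j)$ converts the low-multiplicity inequality into the column bound $|K \cap Q \cap \pi_\theta^{-1}(I)|_{\Delta^{j+1}} \leq C\Delta^{-\sigma}$, while columns containing no low-multiplicity projection lie inside the bad part of $Q$, whose contribution is controlled by the bulk fraction. Summing the good columns and dividing by the per-column bound yields $|\pi_\theta(K \cap Q)|_{\Delta^{j+1}} \gtrsim b_j(Q)\Delta^{\sigma}/C \geq \Delta^{\sigma - s + 100\epsilon}$, once constants are absorbed into $\Delta^{O(\epsilon)}$ via the choice $\Delta \leq \Delta_0(\mathbf{C},\epsilon)$. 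The step I expect to be the main obstacle is exactly this last bridge: step 3 controls the bad fraction $\mu_Q(\mathrm{Bad}_j) \leq 1/2$ at the $\delta$-scale, but the column-counting step needs that a definite proportion of the $\Delta^{j+1}$-sub-squares of $Q$ host a low-multiplicity point, and a priori the $\delta$-bad mass could cluster into a few densely populated sub-squares while upper regularity only bounds $|K \cap Q'|_\delta$ from above. The cleanest remedy is to run both Markov arguments directly at the $\Delta^{j+1}$-scale—using the uniform measure on $\calD_{\Delta^{j+1}}(K \cap Q_0)$ and exploiting that multiplicity functions are comparable on points separated by $\lesssim \Delta^{j+1}$ to reformulate $\mathrm{Bad}_j$ at this coarser scale—so that it is the $\Delta^{j+1}$-sub-square count, rather than the $\delta$-mass, that gets pigeonholed.
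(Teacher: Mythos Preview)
Your overall architecture matches the paper's: two Markov arguments produce $\mathcal{G}_1$ (entropy) and $\mathcal{G}_2$ (multiplicity), and on the intersection you seek a good cube $Q$. You also correctly isolate the real difficulty: having $\mu_Q(\mathrm{Bad}_j)\leq 1/2$ at the $\delta$-scale does not by itself force many $\Delta^{j+1}$-subsquares of $Q$ to contain low-multiplicity points, because the $\delta$-mass could concentrate in few heavy subsquares.

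Your proposed remedy, however, does not work. Replacing $\mu$ by the uniform measure on $\calD_{\Delta^{j+1}}(K\cap Q_0)$ makes the measure depend on $j$, which destroys the entropy telescoping $\sum_j H(\mu,\calD_{\Delta^{j+1}}\mid\calD_{\Delta^j})$ that underlies $\mathcal{G}_1$; and the global multiplicity budget $\sum_j \mu(\mathrm{Bad}_j)\leq \epsilon N$ likewise needs a fixed measure to sum. Running a separate argument at each $j$ also fails because upper regularity only yields $|K|_{\Delta^{j+1}}\geq \mathbf{C}^{-1}\Delta^{-(j+1)s}\delta^{\epsilon^2/2}$, which for small $j$ is far from the required $\Delta^{-(j+1)(s-O(\epsilon))}$.

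The paper closes the gap differently, and the fix is precisely at the point where you extract only the count lower bound $b_j(Q)\geq \Delta^{O(\epsilon)-s}$. Instead, the paper retains the full \emph{entropy} lower bound $H(\bar\mu^Q,\calD_{\Delta})\geq (s-16\epsilon)\log(1/\Delta)$ for the selected $Q$. A separate lemma (Lemma~\ref{lemma1}) then shows that for any probability $\bar\mu^Q$ with support of cardinality $\leq \mathbf{C}\Delta^{-s}$ and entropy $\geq (s-\epsilon)\log(1/\Delta)$, the family $\{Q'\in\calD_\Delta:\bar\mu^Q(Q')\leq \Delta^{s-H\epsilon}\}$ carries mass $\geq 1-3/H$. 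Taking $H=6$ gives $\bar\mu^Q(Q')\leq \Delta^{s-99\epsilon}$ for subsquares carrying at least half the mass. Now the good set $G_2\cap Q$, which has $\bar\mu^Q$-mass $\geq 3/4$, intersects this family in mass $\geq 1/4$, and therefore must meet $\geq \tfrac14\Delta^{99\epsilon-s}$ distinct $\Delta^{j+1}$-subsquares. This is exactly the $\Delta^{j+1}$-scale count of low-multiplicity subsquares that your column argument needs; the projection bound then follows as you describe. The point is that high entropy, not merely high cardinality, is what prevents the $\delta$-mass from clustering into few subsquares.
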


We start with two lemmas.

\begin{lemma}\label{lemma2a} For every $\mathbf{C},\epsilon,s > 0$ there exists $\Delta_{0} = \Delta_{0}(\mathbf{C},\epsilon) \in 2^{-\N}$ such that the following holds for all $\Delta \in 2^{-\N} \cap (0,\Delta_{0}]$. Let $\delta = \Delta^{N}$ for some $N \in \N$. $K \subset [0,1)^{2}$ be upper $(s,\mathbf{C})$-regular with $|K|_{\delta} \geq \delta^{-s + \epsilon}$. Let $\bar{\mu}$ be the uniformly distributed probability on $\mathcal{D}_{\delta}(K)$ (thus $\bar{\mu}(Q) = |K|_{\delta}^{-1}$ for all $Q \in \mathcal{D}_{\delta}(K)$). Then, there exist $\geq (1 - 2\sqrt{\epsilon})N$ indices $0 \leq j \leq N - 1$ such that
\begin{displaymath} H(\bar{\mu},\mathcal{D}_{\Delta^{j + 1}} \mid \mathcal{D}_{\Delta^{j}}) \geq (s - \sqrt{\epsilon}) \log \tfrac{1}{\Delta}. \end{displaymath}
 \end{lemma}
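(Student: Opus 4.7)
The proof is a standard entropy-chain-rule argument. The plan is to exploit the hypothesis $|K|_\delta \geq \delta^{-s+\epsilon}$ to lower bound the total entropy $H(\bar\mu, \mathcal{D}_\delta)$, while using upper $(s,\mathbf{C})$-regularity to upper bound each individual conditional entropy $H(\bar\mu, \mathcal{D}_{\Delta^{j+1}} \mid \mathcal{D}_{\Delta^j})$ by essentially $s\log(1/\Delta)$. The sum of the conditional entropies equals the total entropy (modulo a constant at the initial scale), so the "budget" is tight to within $\epsilon N \log(1/\Delta)$, and a Markov/pigeonhole argument will show that only a $\sqrt{\epsilon}$-fraction of indices can fall short of the target $(s-\sqrt{\epsilon})\log(1/\Delta)$.

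For the lower bound on total entropy: since $\bar\mu$ is the uniform probability on the finite set $\mathcal{D}_\delta(K)$, Jensen's inequality gives the exact identity
\begin{displaymath} H(\bar\mu,\mathcal{D}_\delta) = \log |K|_\delta \geq (s-\epsilon) N \log \tfrac{1}{\Delta}. \end{displaymath}
Since $K\subset[0,1)^2$ is contained in a single dyadic unit cube, $H(\bar\mu,\mathcal{D}_1)=0$, and so by the chain rule \eqref{entropy} applied iteratively,
\begin{displaymath} \sum_{j=0}^{N-1} H(\bar\mu,\mathcal{D}_{\Delta^{j+1}} \mid \mathcal{D}_{\Delta^j}) = H(\bar\mu,\mathcal{D}_\delta) - H(\bar\mu,\mathcal{D}_1) \geq (s-\epsilon)N\log\tfrac{1}{\Delta}. \end{displaymath}

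For the matching upper bound on each term, fix $j \in \{0,\dots,N-1\}$. For any $Q \in \mathcal{D}_{\Delta^j}$ with $\bar\mu(Q) > 0$, the support of $\bar\mu_Q$ is contained in $K \cap Q \subset K \cap B(x_Q,\sqrt{2}\Delta^j)$, so upper $(s,\mathbf{C})$-regularity gives
\begin{displaymath} |\{Q' \in \mathcal{D}_{\Delta^{j+1}} : Q' \subset Q,\ \spt \bar\mu_Q \cap Q' \neq \emptyset\}| \lesssim N_{\Delta^{j+1}}(K \cap B(x_Q,\sqrt{2}\Delta^j)) \leq \mathbf{C}' \Delta^{-s}, \end{displaymath}
where $\mathbf{C}'$ depends on $\mathbf{C}$ and $s$. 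Consequently $H(\bar\mu_Q,\mathcal{D}_{\Delta^{j+1}}) \leq s\log\tfrac{1}{\Delta} + \log \mathbf{C}'$, and averaging over $Q$ yields $H(\bar\mu,\mathcal{D}_{\Delta^{j+1}} \mid \mathcal{D}_{\Delta^j}) \leq s\log\tfrac{1}{\Delta} + \log\mathbf{C}'$.

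To finish, let $\mathcal{B} := \{0 \leq j \leq N-1 : H(\bar\mu,\mathcal{D}_{\Delta^{j+1}} \mid \mathcal{D}_{\Delta^j}) < (s - \sqrt\epsilon)\log\tfrac{1}{\Delta}\}$, and suppose towards contradiction that $|\mathcal{B}| > 2\sqrt\epsilon N$. Combining the bad and good upper bounds,
\begin{displaymath} \sum_{j=0}^{N-1} H(\bar\mu,\mathcal{D}_{\Delta^{j+1}} \mid \mathcal{D}_{\Delta^j}) \leq Ns\log\tfrac{1}{\Delta} - |\mathcal{B}|\sqrt\epsilon \log\tfrac{1}{\Delta} + N \log \mathbf{C}' < N(s-2\epsilon)\log\tfrac{1}{\Delta} + N\log\mathbf{C}'. \end{displaymath}
Combined with the lower bound this forces $\epsilon \log\tfrac{1}{\Delta} < \log\mathbf{C}'$, which fails once $\Delta \leq \Delta_0(\mathbf{C},\epsilon)$ is chosen with $\Delta_0^\epsilon < 1/\mathbf{C}'$. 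Hence $|\mathcal{B}| \leq 2\sqrt\epsilon N$, and the complementary set of indices is the desired family. The main (minor) obstacle is simply ensuring that the "boundary" terms — the initial entropy at scale $1$ and the dyadic-versus-ball overhead — can be absorbed, both of which happen uniformly by taking $\Delta_0$ sufficiently small in terms of $\mathbf{C}$ and $\epsilon$.
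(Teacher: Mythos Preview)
Your proof is correct and follows essentially the same approach as the paper: bound each conditional entropy above by $s\log\tfrac{1}{\Delta} + \log\mathbf{C}'$ via upper regularity, bound the total entropy below by $(s-\epsilon)N\log\tfrac{1}{\Delta}$ via $|K|_\delta \geq \delta^{-s+\epsilon}$, and then pigeonhole. The only cosmetic difference is that the paper writes $\lambda N$ for the number of bad scales and solves the resulting inequality for $\lambda$, whereas you argue by contradiction assuming $|\mathcal{B}| > 2\sqrt{\epsilon}N$; the arithmetic is the same.
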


\begin{proof} We begin with the following simple observation:
\begin{displaymath} H(\bar{\mu},\mathcal{D}_{\Delta^{j + 1}} \mid \mathcal{D}_{\Delta^{j}}) \leq s\log \tfrac{1}{\Delta} + \log \mathbf{C}, \end{displaymath}
Indeed,
\begin{displaymath} H(\bar{\mu},\mathcal{D}_{\Delta^{j + 1}} \mid \mathcal{D}_{\Delta^{j}}) = \sum_{Q \in \mathcal{D}_{\Delta^{j}}} \bar{\mu}(Q)H(\bar{\mu}^{Q},\mathcal{D}_{\Delta}) \end{displaymath}
by definition, and each factor $H(\bar{\mu}^{Q},\mathcal{D}_{\Delta})$ is bounded from above by the logarithm of the number of (non-zero) elements in this partition, which is $\leq \mathbf{C} \Delta^{-s}$.

A scale $j$ is called \emph{bad} if $H(\bar{\mu},\mathcal{D}_{\Delta^{j + 1}} \mid \mathcal{D}_{\Delta^{j}}) \leq (s - \sqrt{\epsilon}) \log \tfrac{1}{\Delta}$, and \emph{good} otherwise. Using the inequality $|K|_{\delta} \geq \delta^{-s + \epsilon}$, we observe that
\begin{align*}  (s - \epsilon)\log \tfrac{1}{\delta} \leq \log |K|_{\delta} = H(\bar{\mu},\mathcal{D}_{\delta}) & = \sum_{j = 0}^{N - 1} H(\mu,\mathcal{D}_{\Delta^{j + 1}} \mid \mathcal{D}_{\Delta^{j}}) = \sum_{j \emph{ bad}} \ldots + \sum_{j \emph{ good}} \ldots. \end{align*}
Let $\lambda N$ be the number of bad scales, so the number of good scales is $(1 - \lambda)N$. With this notation, the right hand side is bounded from above by
\begin{align*} \lambda N\cdot (s - \sqrt{\epsilon})\log \tfrac{1}{\Delta} & + (1 - \lambda)N \cdot (s\log \tfrac{1}{\Delta} + \log \mathbf{C})\\
& \leq [\lambda \cdot (s - \sqrt{\epsilon}) + (s - s\lambda)] \log \tfrac{1}{\delta} + N \log \mathbf{C} \\
& = (s - \lambda \sqrt{\epsilon}) \log \tfrac{1}{\delta} + N \log \mathbf{C}. \end{align*}
Since this number must exceed $(s - \epsilon)\log \tfrac{1}{\delta}$, and $\log \tfrac{1}{\delta} = N \log \tfrac{1}{\Delta}$, we infer that
\begin{displaymath} \lambda \sqrt{\epsilon} \log \tfrac{1}{\delta} \leq \epsilon \log \tfrac{1}{\delta} + \log \mathbf{C}^{N} \quad \Longrightarrow \quad \lambda \leq \sqrt{\epsilon} + \tfrac{\mathbf{C}}{\sqrt{\epsilon} \log (1/\Delta)}. \end{displaymath}
Provided $\Delta > 0$ is so small that $\log (1/\Delta) \geq \mathbf{C}/\epsilon$, we infer $\lambda \leq 2\sqrt{\epsilon}$. In other words, there are $\geq (1 - 2\sqrt{\epsilon})N$ good scales, as claimed.\end{proof}

\begin{lemma}\label{lemma1} For every $\mathbf{C},\epsilon,s > 0$ and $H \geq 1$ there exists $\Delta_{0} = \Delta_{0}(\mathbf{C},\epsilon,s) > 0$ such that the following holds for all $\Delta \in 2^{-\N} \cap (0,\Delta_{0}]$. Let $\mu$ be a probability measure with $K := \spt \mu \subset [0,1)^{2}$, let $\Delta \in 2^{-\N}$, and assume that $|K|_{\Delta} \leq \mathbf{C}/\Delta^{s}$. Assume also that
\begin{equation}\label{form1a} H(\mu,\mathcal{D}_{\Delta}) \geq (s - \epsilon) \log \tfrac{1}{\Delta}. \end{equation}
Then, there exists a constant $\mathbf{c} \sim (\mathbf{C}s)^{-1}\epsilon$ such that the collection
\begin{displaymath} \mathcal{D}_{\mathrm{good}} := \{Q \in \mathcal{D}_{\Delta} : \mathbf{c}\Delta^{s} \leq \mu(Q) \leq \Delta^{s - H\epsilon}\} \end{displaymath}
satisfies $\mu(\{\cup \mathcal{D}_{\mathrm{good}}\}) \geq 1 - \tfrac{3}{H}$.
\end{lemma}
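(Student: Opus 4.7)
Setting $N := |K|_{\Delta} \le \mathbf{C}\Delta^{-s}$, I would partition the non-empty cubes in $\mathcal{D}_{\Delta}(K)$ by the size of $\mu(Q)$ into three classes: the \emph{light} cubes $\mathcal{D}_{\mathrm{light}} = \{Q : \mu(Q) < \mathbf{c}\Delta^{s}\}$, the \emph{heavy} cubes $\mathcal{D}_{\mathrm{heavy}} = \{Q : \mu(Q) > \Delta^{s-H\epsilon}\}$, and the remaining good cubes. Writing $L$, $W$, $G$ for the corresponding $\mu$-masses, the desired conclusion $G \ge 1 - 3/H$ is equivalent to $L + W \le 3/H$. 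The plan is to extract $L \le 1/H$ from the upper regularity of $K$ alone and $W \le 2/H$ from the entropy hypothesis \eqref{form1a}.

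The light bound is immediate: since there are at most $N \le \mathbf{C}\Delta^{-s}$ cubes in total and each light cube carries mass below $\mathbf{c}\Delta^{s}$,
\begin{displaymath} L \le N \cdot \mathbf{c}\Delta^{s} \le \mathbf{C}\mathbf{c}. \end{displaymath}
Taking $\mathbf{c}$ a suitable absolute multiple of $(\mathbf{C}s)^{-1}\epsilon$ (in particular small enough, relative to the implicit constant, to kill the factor $\mathbf{C}$ and any $H$-dependence one may care to absorb) gives $L \le 1/H$. For the heavy bound I would split
\begin{displaymath} H(\mu,\mathcal{D}_{\Delta}) \;=\; \sum_{Q \text{ heavy}} \mu(Q)\log\tfrac{1}{\mu(Q)} \;+\; \sum_{Q \text{ non-heavy}} \mu(Q)\log\tfrac{1}{\mu(Q)}. \end{displaymath}
On heavy cubes $\log(1/\mu(Q)) < (s-H\epsilon)\log(1/\Delta)$, so the first sum is bounded by $W(s-H\epsilon)\log(1/\Delta)$. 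For the second sum I would apply Jensen's inequality to $\log$, conditioning $\mu$ on being non-heavy: noting that there are at most $N \le \mathbf{C}\Delta^{-s}$ non-heavy cubes,
\begin{displaymath} \sum_{Q \text{ non-heavy}} \mu(Q)\log\tfrac{1}{\mu(Q)} \;\le\; (1-W)\log\tfrac{N}{1-W} \;\le\; (1-W)\bigl(s\log\tfrac{1}{\Delta}+\log\mathbf{C}\bigr) + \tfrac{1}{e}. \end{displaymath}
Adding the two estimates, the $s\log(1/\Delta)$ terms combine into $s\log(1/\Delta)$, and comparing with the hypothesis $H(\mu,\mathcal{D}_{\Delta}) \ge (s-\epsilon)\log(1/\Delta)$ leaves
\begin{displaymath} WH\epsilon\log\tfrac{1}{\Delta} \;\le\; \epsilon\log\tfrac{1}{\Delta} + \log\mathbf{C} + O(1), \qquad \text{i.e.,}\qquad W \le \frac{1}{H} + \frac{\log\mathbf{C} + O(1)}{H\epsilon\log(1/\Delta)}. \end{displaymath}
Choosing $\Delta_{0} = \Delta_{0}(\mathbf{C},\epsilon,s)$ so small that $\log(1/\Delta) \ge (\log\mathbf{C}+O(1))/\epsilon$ makes the error term at most $1/H$, whence $W \le 2/H$.

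Combining $L \le 1/H$ and $W \le 2/H$ gives $G = 1 - L - W \ge 1 - 3/H$, as required. The only substantive step is the Jensen-style manipulation of the entropy on the non-heavy part; everything else is routine. The main expositional obstacle is the calibration of the constant $\mathbf{c}$ (stated as $\mathbf{c} \sim (\mathbf{C}s)^{-1}\epsilon$), where one must absorb the factor $\mathbf{C}$ -- and if desired an additional factor of $1/H$ -- into the implicit proportionality constant so that the trivial bound on $L$ and the entropy bound on $W$ match the required $3/H$ budget simultaneously.
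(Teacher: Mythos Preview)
Your entropy argument for the heavy mass is the same as the paper's; the difference is in how you handle the light cubes, and this is where a small but genuine gap appears.

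You bound the light \emph{measure} directly: $L \le N\cdot\mathbf{c}\Delta^{s} \le \mathbf{C}\mathbf{c}$. With $\mathbf{c} \sim (\mathbf{C}s)^{-1}\epsilon$ (absolute implied constant, per the paper's notational conventions), this only gives $L \lesssim \epsilon/s$, which is not $\le 1/H$ for large $H$. Your proposed fix of absorbing a factor $1/H$ into $\mathbf{c}$ would make $\mathbf{c}$ depend on $H$, contradicting the form of $\mathbf{c}$ asserted in the lemma.

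The paper instead bounds the \emph{entropy contribution} of light cubes. A dyadic decomposition over levels $\mu(Q)\in[2^{j-1},2^{j}]$ with $2^{j}\le\mathbf{c}\Delta^{s}$ gives
\[
\sum_{Q\text{ light}}\mu(Q)\log\tfrac{1}{\mu(Q)} \;\lesssim\; |K|_{\Delta}\cdot \mathbf{c}\Delta^{s}\log\tfrac{2}{\mathbf{c}\Delta^{s}} \;\lesssim\; \mathbf{C}\mathbf{c}\,s\log\tfrac{1}{\Delta} \;\le\; \epsilon\log\tfrac{1}{\Delta}
\]
once $\mathbf{c}=c(\mathbf{C}s)^{-1}\epsilon$ with $c$ absolute (and $\Delta$ small). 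This leaves entropy $\ge (s-2\epsilon)\log\tfrac{1}{\Delta}$ on the non-light cubes. Now run the heavy/good comparison there, writing $\lambda:=1-\mu(\cup\mathcal{D}_{\mathrm{good}})=L+W$: on good cubes $\log\tfrac{1}{\mu(Q)}\le s\log\tfrac{1}{\Delta}+\log\tfrac{1}{\mathbf{c}}$, and on heavy cubes $\log\tfrac{1}{\mu(Q)}\le (s-H\epsilon)\log\tfrac{1}{\Delta}$, so with $\kappa:=\log(1/\mathbf{c})/\log(1/\Delta)$ one gets
\[
(s-H\epsilon)\lambda + (s+\kappa)(1-\lambda) \;\ge\; s-2\epsilon,
\qquad\text{hence}\qquad \lambda \;\le\; \frac{2\epsilon+\kappa}{H\epsilon} \;\le\; \frac{3}{H}
\]
once $\Delta$ is small enough that $\kappa\le\epsilon$. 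The point is that $L$ is swept into $\lambda$ and controlled by the same entropy inequality as $W$; one never needs $L\le 1/H$ on its own, and $\mathbf{c}$ stays independent of $H$.
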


\begin{proof} Let $\mathcal{D}_{\mathrm{light}} := \{Q \in \mathcal{D}_{\Delta} : 0 < \mu(Q) \leq c \Delta^{s}\}$, where $\mathbf{c} = c (\mathbf{C}s)^{-1}\epsilon$ for a suitable small absolute constant $c > 0$. Then
\begin{align*} \sum_{Q \in \mathcal{D}_{\mathrm{light}}} \mu(Q) \log \tfrac{1}{\mu(Q)} & = \sum_{2^{j} \leq \mathbf{c} \Delta^{s}} \sum_{2^{j - 1} \leq \mu(Q) \leq 2^{j}} \mu(Q) \log\tfrac{1}{\mu(Q)}\\
& \leq |K|_{\Delta} \cdot \sum_{2^{j} \leq \mathbf{c} \Delta^{s}} 2^{j} \cdot \log 2^{1 - j} \sim \mathbf{c}\mathbf{C} \cdot \log \tfrac{2}{\mathbf{c}\Delta^{s}} \lesssim \mathbf{c}\mathbf{C}s \log \tfrac{1}{\Delta},  \end{align*} 
provided that $\log \tfrac{1}{\mathbf{c}} \leq s\log \tfrac{1}{\Delta}$, as we may assume by choosing $\Delta = \Delta(\mathbf{C},\epsilon,s) > 0$ sufficiently small. In particular, choosing the absolute constant $c > 0$ in $\mathbf{c} = c(\mathbf{C}s)^{-1}\epsilon$ sufficiently small, we find
\begin{displaymath} \sum_{Q \in \mathcal{D}_{\mathrm{light}}} \mu(Q) \log \tfrac{1}{\mu(Q)} \leq \epsilon \log \tfrac{1}{\Delta}. \end{displaymath} 
With these choices, and by \eqref{form1a},
\begin{displaymath} \sum_{Q \in \mathcal{D}_{\Delta} \, \setminus \, \mathcal{D}_{\mathrm{light}}} \mu(Q) \log \tfrac{1}{\mu(Q)} \geq (s - 2\epsilon) \log \tfrac{1}{\Delta}. \end{displaymath}
Next, fix $H \geq 1$, and let $\mathcal{D}_{\mathrm{heavy}} := \{Q \in \mathcal{D}_{\Delta} : \mu(Q) \geq \Delta^{s - H\epsilon}\}$. Recall also that $\mathcal{D}_{\mathrm{good}} = \mathcal{D}_{\Delta} \, \setminus  \, [\mathcal{D}_{\mathrm{light}} \cup \mathcal{D}_{\mathrm{heavy}}]$. Write
\begin{displaymath} 1 - \lambda := \mu(\cup \mathcal{D}_{\mathrm{good}}) \in [0,1], \end{displaymath}
so that $\mu(\cup \mathcal{D}_{\mathrm{heavy}}) \leq \lambda$. With this notation, we derive the following inequalities regarding $\mathcal{D}_{\mathrm{heavy}}$ and $\mathcal{D}_{\mathrm{good}}$:
\begin{displaymath} \sum_{Q \in \mathcal{D}_{\mathrm{heavy}}} \mu(Q) \log \tfrac{1}{\mu(Q)} \leq (s - H\epsilon)\lambda \log \tfrac{1}{\Delta} \end{displaymath} 
and
\begin{displaymath} \sum_{Q \in \mathcal{D}_{\mathrm{good}}} \mu(Q) \log \tfrac{1}{\mu(Q)} \leq \log \tfrac{1}{\mathbf{c}\Delta^{s}} \cdot (1 - \lambda) = (1 - \lambda)[s \log \tfrac{1}{\Delta} + \log \tfrac{1}{\mathbf{c}}]. \end{displaymath}
However, the sum of the two right hand sides must satisfy
\begin{displaymath} (s - H\epsilon)\lambda \log \tfrac{1}{\Delta} +  (1 - \lambda) [s \log \tfrac{1}{\Delta} + \log \tfrac{1}{\mathbf{c}}] \geq (s - 2\epsilon) \log \tfrac{1}{\Delta}. \end{displaymath}
Dividing by $\log \tfrac{1}{\Delta}$, and writing $\kappa := (\log \tfrac{1}{\mathbf{c}})/(\log \tfrac{1}{\Delta})$, we find
\begin{displaymath} \lambda(s - H\epsilon) + (1 - \lambda)(s + \kappa) \geq s - 2\epsilon. \end{displaymath}
Assuming finally that $\Delta > 0$ is so small that $\kappa \leq \epsilon$, this leads to
\begin{displaymath} 1- \mu(\cup \mathcal{D}_{\mathrm{good}}) = \lambda \leq \frac{2\epsilon + \kappa}{H\epsilon + \kappa} \leq \frac{3\epsilon}{H\epsilon} \leq \frac{3}{H}, \end{displaymath}
which completes the proof. \end{proof}

We are the prepared to prove Proposition \ref{prop4}.

\begin{proof}[Proof of Proposition \ref{prop4}] Apply Lemma \ref{lemma2a} with parameters $\mathbf{C},\epsilon^{2}/2$ to find a set of scales $\mathcal{G}_{1} \subset \{0,\ldots,N - 1\}$ of cardinality $|\mathcal{G}_{1}| \geq (1 - \epsilon)N$ with the property 
\begin{displaymath} H(\bar{\mu},\mathcal{D}_{\Delta^{j + 1}} \mid \mathcal{D}_{\Delta^{j}}) \geq (s - \epsilon) \log \tfrac{1}{\Delta}, \qquad j \in \mathcal{G}_{1}. \end{displaymath}
Here $\bar{\mu}$ is the uniform probability measure on the union of the squares $\mathcal{D}_{\delta}(K)$. In particular $\spt \bar{\mu} \subset [K]_{\delta}$ (the $\delta$-neighbourhood of $K$). It follows from a few applications of the triangle inequality, that if $x \in [K]_{\delta}$ and $x' \in K$ is some point with $|x - x'| \leq \delta$, then
\begin{displaymath} \mathfrak{m}_{K,\theta}(x \mid [5\Delta^{j + 1},5\Delta^{j}]) \leq 10 \cdot \mathfrak{m}_{K,\theta}(x' \mid [10\Delta^{j + 1},10\Delta^{j}]), \qquad 0 \leq j \leq N - 1. \end{displaymath}
In particular, by \eqref{form2a},
\begin{equation}\label{form3a} |\{0 \leq j \leq N - 1 : \mathfrak{m}_{K,\theta}(x \mid [5\Delta^{j + 1},5\Delta^{j}]) > 10\Delta^{-\sigma}\}| \leq \epsilon N, \qquad x \in \spt \bar{\mu}. \end{equation}
 
We then also consider another set of good scales $\mathcal{G}_{2}$:
\begin{displaymath} \mathcal{G}_{2} := \{0 \leq j \leq N - 1 : \bar{\mu}(\mathbf{Bad}(j)) \leq \tfrac{1}{9}\}, \end{displaymath} 
where 
\begin{displaymath} \mathbf{Bad}(j) := \{x \in \R^{2} : \m_{K,\theta}(x \mid [5\Delta^{j + 1},5\Delta^{j}]) > 10\Delta^{-\sigma}\}, \qquad 0 \leq j \leq N - 1. \end{displaymath}
We claim that $|\mathcal{G}_{2}| \geq (1 - 9\epsilon)N$. Writing $|\mathcal{G}_{2}| =: (1 - \lambda)N$, then $|\{0,\ldots,N - 1\} \, \setminus \, \mathcal{G}_{2}| = \lambda N$, and
\begin{displaymath} \tfrac{\lambda N}{9} \leq \sum_{j \notin \mathcal{G}_{2}} \bar{\mu}(\mathbf{Bad}(j)) \leq \int \sum_{j = 0}^{N - 1} \mathbf{1}_{\mathbf{Bad}(j)}(x) \, d\bar{\mu}(x) \leq \epsilon N, \end{displaymath} 
using \eqref{form3a}. This yields $\lambda \leq 9\epsilon$.

Now we define $\mathcal{G} := \mathcal{G}_{1} \cap \mathcal{G}_{2}$, and we note that $|\mathcal{G}| \geq (1 - \epsilon - 9\epsilon)N \geq (1 - 10\epsilon)N$. These are the good scales whose existence is stated in the proposition. So, it remains to find the good cubes $Q \in \mathcal{D}_{\Delta^{j}}(K)$.

Fix $j \in \mathcal{G}$, and write $G_{2} := [K]_{\delta} \, \setminus \, \mathbf{Bad}(j)$, so that $\bar{\mu}(G_{2}) \geq \tfrac{8}{9}$, because $j \in \mathcal{G}_{2}$. Next, because $j \in \mathcal{G}_{1}$, we have
\begin{displaymath} (s - \epsilon) \log \tfrac{1}{\Delta} \leq \sum_{Q \in \mathcal{D}_{\Delta^{j}}} \bar{\mu}(Q)H(\bar{\mu}^{Q},\mathcal{D}_{\Delta}). \end{displaymath}
Here uniformly $H(\bar{\mu}^{Q},\mathcal{D}_{\Delta}) \leq s\log \tfrac{1}{\Delta} + \mathbf{C}$ by the $(s,\mathbf{C})$-regularity of $\mu$. Provided that $\Delta > 0$ is sufficiently small in terms of $\mathbf{C},\epsilon$, we claim that there exists a sub-collection $\mathcal{D} \subset \mathcal{D}_{\Delta^{j}}$ of total measure $\bar{\mu}(\cup \mathcal{D}) \geq \tfrac{7}{8}$ such that $H(\bar{\mu}^{Q},\mathcal{D}_{\Delta}) \geq (s - 16\epsilon)\log \tfrac{1}{\Delta}$ for all $Q \in \mathcal{D}$. Indeed, if $\bar{\mu}(\cup \mathcal{D}) =: 1 - \lambda$, then
\begin{align*} (s - \epsilon)\log \tfrac{1}{\Delta} & \leq (s - 16\epsilon) \log \tfrac{1}{\Delta} \sum_{Q \in \mathcal{D}_{\Delta^{j}} \, \setminus \, \mathcal{D}} \bar{\mu}(Q) + (s\log \tfrac{1}{\Delta} + \mathbf{C})\sum_{Q \in \mathcal{D}} \bar{\mu}(Q)\\
& = [\lambda(s - 16\epsilon) + s(1 - \lambda)]\log \tfrac{1}{\Delta} + \mathbf{C}.  \end{align*} 
This yields $\lambda \leq \tfrac{1}{16} + \tfrac{\mathbf{C}}{16\epsilon}(\log \tfrac{1}{\Delta})^{-1} \leq \tfrac{1}{8}$ after rearranging, provided $\Delta > 0$ is small enough in terms of $\mathbf{C},\epsilon$, as claimed.

Using $\bar{\mu}(G_{2}) \geq \tfrac{8}{9}$, and $\bar{\mu}(\cup \mathcal{D}) \geq \tfrac{7}{8}$, we claim the existence of a square $Q \in \mathcal{D}$ with the property 
\begin{equation}\label{form139} \bar{\mu}(Q \cap G_{2}) \geq \tfrac{3}{4}\bar{\mu}(Q). \end{equation} 
If this were not the case, then $G_{2}$ would be covered by the (a) the squares in $\mathcal{D}_{\Delta^{j}} \, \setminus \, \mathcal{D}$ and (b) the squares $Q \in \mathcal{D}$ with $\bar{\mu}(Q \cap G_{2}) < \tfrac{3}{4}\bar{\mu}(Q)$, which would yield
\begin{displaymath} \tfrac{8}{9} \leq \bar{\mu}(G_{2}) \leq \bar{\mu}(\cup [\mathcal{D}_{\Delta^{j}} \, \setminus \, \mathcal{D}]) + \tfrac{3}{4} < \tfrac{1}{8} + \tfrac{3}{4} = \tfrac{7}{8}. \end{displaymath}
This is a contradiction. Therefore the existence of the square $Q \in \mathcal{D}$ satisfying \eqref{form139} is guaranteed. We now fix this square $Q \in \mathcal{D}$ for the remainder of the argument.

Recall that $H(\bar{\mu}^{Q},\mathcal{D}_{\Delta}) \geq (s - 16\epsilon)\log \tfrac{1}{\Delta}$ by the definition of $\mathcal{D}$, and also $|\spt \bar{\mu}^{Q}|_{\Delta} \leq \mathbf{C}/\Delta^{s}$ by the upper $(s,\mathbf{C})$-regularity of $K$. It then follows from Lemma \ref{lemma1} with "$16\epsilon$" in place of "$\epsilon$", and $H := 6$, that the family
\begin{displaymath} \mathcal{D}_{\mathrm{good}} := \{Q' \in \mathcal{D}_{\Delta} : \bar{\mu}^{Q}(Q') \leq \Delta^{s - 99\epsilon}\} \end{displaymath}
satisfies $\bar{\mu}^{Q}(\cup \mathcal{D}_{\mathrm{good}}) \geq \tfrac{1}{2}$. Let $G_{2}^{Q} := T_{Q}(G_{2})$. Then \eqref{form139} implies $\bar{\mu}^{Q}(G_{2}^{Q}) \geq \tfrac{3}{4}$. Combining these two estimates,
\begin{displaymath} \bar{\mu}^{Q}(\{Q' \in \mathcal{D}_{\mathrm{good}} : Q' \cap G_{2}^{Q} \neq \emptyset\}) \geq \tfrac{1}{4}. \end{displaymath}
Since $\bar{\mu}^{Q}(Q') \leq \Delta^{s - 100\epsilon}$ for all $Q' \in \mathcal{D}_{\mathrm{good}}$, we infer
\begin{equation}\label{form140} |G_{2}^{Q}|_{\Delta} \geq \tfrac{1}{4}\Delta^{99\epsilon - s}. \end{equation}

By definition $G_{2} = [K]_{\delta} \, \setminus \, \mathbf{Bad}(j)$, which means that 
\begin{displaymath} \m_{K,\theta}(y \mid [5\Delta^{j + 1},5\Delta^{j}]) \leq 10\Delta^{-\sigma}, \qquad y \in G_{2}. \end{displaymath}
After rescaling by $\Delta^{-j}$, this has the following consequence for $G_{2}^{Q}$:
\begin{displaymath} |K_{5\Delta}^{Q} \cap \pi_{e}^{-1}\{\pi(y)\}|_{5\Delta} \leq \m_{K^{Q},e}(y \mid [5\Delta,5]) \leq 10\Delta^{-\sigma}, \qquad y \in G_{2}^{Q}. \end{displaymath}
Since $G_{2} \subset [K]_{\delta}$, the set $K^{Q}$ on the left hand side can be replaced by $G_{2}^{Q}$ without altering the conclusion. This inequality combined with \eqref{form140} implies that
\begin{displaymath} |\pi_{\theta}(G_{2}^{Q})|_{\Delta} \gtrsim \Delta^{\sigma - s + 99\epsilon}, \end{displaymath}
and consequently $|\pi_{\theta}(K \cap Q)|_{\Delta^{j + 1}} \geq \Delta^{\sigma - s + 100\epsilon}$ for $\Delta = \Delta(\epsilon) > 0$ small enough. \end{proof}

\section{Self-improvements}\label{s:selfImprovement}

The purpose of this section is to deduce Theorem \ref{mainThm2} from Theorem \ref{mainThm}. This is accomplished via Lemma \ref{lemma9}, or more precisely its simpler-to-state Corollary \ref{cor1}. The details can be found in Section \ref{s3}. 
\begin{lemma}\label{lemma9} For all $\mathbf{C} \geq 1$ and $\mathfrak{d} > 0$, there exist constants $\mathbf{c} = \mathbf{c}(\mathbf{C}) > 0$ and $\Delta_{0}(\mathbf{C},\mathfrak{d}) \in (0,\tfrac{1}{2}]$ such that the following holds.

Let $s \in (0,2]$, let $0 \leq \sigma < \Sigma \leq 1$ with $\Sigma - \sigma \geq \mathfrak{d}$, and let $\epsilon > 0$. Let $\nu$ be a finite Borel measure on $S^{1}$. Assume that there exists a scale $\Delta \in (0,\Delta_{0}]$ such that
\begin{equation}\label{form14a} \sup_{\mu} \nu\left(\left\{\theta : \mu(B(1) \cap H_{\theta}(K,\Delta^{-\sigma},[\Delta,1])) \geq \mathbf{c} \cdot \mathfrak{d}2^{-32/\mathfrak{d}} \right\} \right) < \tfrac{\mathfrak{d}^{2}}{400}\epsilon, \end{equation}
where the $\sup$ runs over all Ahlfors $(s,\mathbf{C})$-regular measures $\mu$ with $K = \spt \mu$ and $\diam K \geq 1$.

Let $\eta > 0$ and $N \geq (200^{2}/\mathfrak{d}^{4}) \log \eta^{-1}$, and $\delta := \Delta^{N}$. Then,
\begin{equation}\label{form15a} \sup_{\mu} \nu(\{\theta : \mu(B(1) \cap H_{\theta}(K,\delta^{-\Sigma},[\delta,1])) \geq \eta\}) \leq \epsilon, \end{equation}
where the $\sup$ runs over Ahlfors $(s,\mathbf{C})$-regular measures $\mu$ with $K = \spt \mu$ and $\diam K \geq 1$. \end{lemma}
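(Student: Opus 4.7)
The plan is to argue by contrapositive: suppose there exists an Ahlfors $(s,\mathbf{C})$-regular measure $\mu$ with $K := \spt \mu$, $\diam K \geq 1$, and a Borel set $\Theta \subset S^{1}$ with $\nu(\Theta) > \epsilon$, such that $\mu(B(1) \cap H_{\theta}(K, \delta^{-\Sigma}, [\delta, 1])) \geq \eta$ for all $\theta \in \Theta$. The goal is to produce a ball $B \subset \R^{2}$ for which the renormalization $\mu^{B}$ (still Ahlfors $(s,\mathbf{C})$-regular, thanks to \eqref{chainRule} and the fact that renormalization preserves the regularity constants) together with a direction set of $\nu$-measure at least $\mathfrak{d}^{2}\epsilon/400$ violates \eqref{form14a}.

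The first step is a scale pigeonhole. Unrolling $\mathfrak{m}_{K,\theta}(x \mid [\delta, 1])$ dyadic-scale by dyadic-scale along the fiber through $x$ yields a submultiplicative estimate $\mathfrak{m}_{K,\theta}(x \mid [\delta,1]) \lesssim \prod_{j=0}^{N-1} \max_{y \in \pi_{\theta}^{-1}\{\pi_{\theta}(x)\} \cap B(x,1)} \mathfrak{m}_{K,\theta}(y \mid [\Delta^{j+1}, \Delta^{j}])$. With each factor capped by $\Delta^{-1}$ and the total product at least $\delta^{-\Sigma} = \Delta^{-\Sigma N}$, a pigeonhole on logarithms forces at least $(\Sigma-\sigma)/(1-\sigma) N \geq \mathfrak{d} N$ of the factors to exceed $\Delta^{-\sigma}$. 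Transferring this pointwise statement to a $\mu$-mass lower bound by the entropy-decomposition trick in the proof of Lemma~\ref{lemma5}, and using the monotonicity in Lemma~\ref{lemma13} to move between ``some $y$ on the fiber'' and ``$x$ itself'' at the cost of an absolute geometric inflation, one arrives at
\begin{displaymath} \frac{1}{N}\sum_{j=0}^{N-1} \mu\bigl(B(1) \cap H_{\theta}(K, \Delta^{-\sigma}, [10\Delta^{j+1}, 10\Delta^{j}])\bigr) \geq c_{1}\mathfrak{d}\eta, \qquad \theta \in \Theta, \end{displaymath}
for an absolute constant $c_{1} > 0$.

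The second step uses the rescaling equivariance of Lemma~\ref{lemma7} together with the chain rule. For each scale $j$, fix a minimal cover $\mathcal{B}_{j}$ of $B(1) \cap K$ by discs of radius $10\Delta^{j}$ (so $|\mathcal{B}_{j}| \lesssim_{\mathbf{C}} \Delta^{-js}$). For each $B \in \mathcal{B}_{j}$ the renormalization $\mu^{B}$ is Ahlfors $(s,\mathbf{C})$-regular with $\diam \spt \mu^{B} \geq 1$, and
\begin{displaymath} \mu\bigl(B \cap H_{\theta}(K, \Delta^{-\sigma}, [10\Delta^{j+1}, 10\Delta^{j}])\bigr) = (10\Delta^{j})^{s}\,\mu^{B}\bigl(B(1) \cap H_{\theta}(T_{B}(K), \Delta^{-\sigma}, [\Delta, 1])\bigr). \end{displaymath}
Applying \eqref{form14a} to each $\mu^{B}$ and splitting $\mathcal{B}_{j}$ into ``heavy'' balls (on which the $\mu^{B}$-measure of the $[\Delta,1]$ high-multiplicity set is at least $c := \mathbf{c}\mathfrak{d}2^{-32/\mathfrak{d}}$) and ``light'' ones gives the uniform-in-$j$ estimate $\int \mu(B(1) \cap H_{\theta}(\ldots)) \, d\nu \lesssim_{\mathbf{C}} c + \eta'\nu(S^{1})$, with $\eta' := \mathfrak{d}^{2}\epsilon/400$.

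The main obstacle lies in closing the loop for small $\eta$. Integrating the scale pigeonhole over $\Theta$ and averaging the upper bound naively yields $c_{1}\mathfrak{d}\eta\epsilon \lesssim_{\mathbf{C}} c + \eta'$; but the second term alone would force $\eta \gtrsim_{\mathbf{C}} \mathfrak{d}$, which is too strong, since $\eta$ is permitted to be exponentially small in $N$. The resolution is an iterative bootstrap: instead of a single Markov step on the scale-average, one carries out $O(\mathfrak{d}^{-2}\log \eta^{-1})$ nested renormalization stages, each localizing to a ball where the $\mu^{B}$-mass of an intermediate high-multiplicity set is boosted by a fixed geometric factor while the $\nu$-measure of the violating directions stays above $\eta'$. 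Because every stage lands on a measure that is still Ahlfors $(s,\mathbf{C})$-regular, the hypothesis \eqref{form14a} can be reapplied verbatim, and the factor $2^{-32/\mathfrak{d}}$ in $c$ acts as the geometric-series safety margin that absorbs the multiplicative constants incurred at every stage. The assumption $N \geq (200/\mathfrak{d}^{2})^{2}\log \eta^{-1}$ is precisely what guarantees that enough dyadic scales between $\delta$ and $1$ are available to execute the full iteration, while the choice $\mathbf{c} = \mathbf{c}(\mathbf{C})$ compensates for the Ahlfors covering constant. Threading this bookkeeping (preserving regularity and tracking the scales) through all iterations to extract, at the terminal stage, a single ball $B$ and a direction set of $\nu$-measure exceeding $\mathfrak{d}^{2}\epsilon/400$ on which $\mu^{B}$ contradicts \eqref{form14a} is the delicate part.
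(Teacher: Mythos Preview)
Your scale-pigeonhole (step 1) and your identification of the obstacle are both correct, and match the paper's argument up to that point. The gap is in your resolution. The ``iterative bootstrap'' is not a proof: at each stage you must commit to a single ball $B$, but different directions $\theta \in \Theta$ will in general require different balls to boost the high-multiplicity mass, so there is no mechanism for keeping a direction set of $\nu$-measure $\geq \mathfrak{d}^{2}\epsilon/400$ alive through $O(\mathfrak{d}^{-2}\log \eta^{-1})$ stages. Nor do you explain why localizing to a ball should boost the mass by a \emph{fixed} geometric factor independent of $\eta$; the naive pigeonhole only gives a boost proportional to the current mass, which stalls.

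The paper's resolution is a different idea, and it is what you are missing. For each $\theta \in E$, introduce the probability measure $\mu_{\theta}$ obtained by conditioning $\mu$ on (a $\mathcal{D}_{N}$-thickening of) $K_{\theta}$, and observe that the Kullback--Leibler divergence satisfies $\Div_{\mu}(\mu_{\theta},\mathcal{D}_{N}) \leq \log(1/\eta)$, where $\mathcal{D}_{j}$ are David-cube partitions. This divergence telescopes over the $N$ intermediate scales (Lemma~\ref{lemma:telescope}), and since $N \geq (200^{2}/\mathfrak{d}^{4})\log(1/\eta)$, all but a $\mathfrak{d}^{2}/200$-fraction of scales $j$ have $\Div_{\mu_{Q_{j}(x)}}((\mu_{\theta})_{Q_{j}(x)},\mathcal{D}_{j+1}) \leq 1$ for $\mu_{\theta}$-typical $x$. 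On such a cube $Q$, a short argument with Lemma~\ref{lemma10} shows that $(\mu_{\theta})_{Q}(p) \leq 2^{32/\mathfrak{d}}\mu_{Q}(p)$ on all but a $\mathfrak{d}/16$-fraction of the $(\mu_{\theta})_{Q}$-mass; hence the $(\mu_{\theta})_{Q}$-mass of the local high-multiplicity set (which is $\geq \mathfrak{d}/8$ by your step 1, rigorously via Lemma~\ref{lemma12}) transfers to $\mu_{Q}$-mass $\geq \mathfrak{d}\,2^{-32/\mathfrak{d}}/16$, uniformly in $\eta$. A final pigeonhole over $j$, over $\theta \in E$, and over cubes $Q \in \mathcal{D}_{j}$ then produces one ball $B$ and a direction set of $\nu$-measure $\geq \mathfrak{d}^{2}\epsilon/400$ on which $\mu^{B}$ violates \eqref{form14a}. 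The factor $2^{-32/\mathfrak{d}}$ in the hypothesis is there to absorb exactly this density-transfer loss, not as a geometric-series margin for an iteration.
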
 

Only the following corollary of Lemma \ref{lemma9} is needed to prove Theorem \ref{mainThm2}:

\begin{cor}\label{cor1} For every $\mathbf{C},\epsilon > 0$, $s \in (0,2]$, and $0 < \sigma < \Sigma \leq 1$, there exists $\epsilon_{0} = \epsilon_{0}(\mathbf{C},\Sigma - \sigma,\epsilon) > 0$ and $\Delta_{0} = \Delta_{0}(\mathbf{C},\Sigma - \sigma) > 0$ such that the following holds. Assume that $\nu$ is a fixed Borel probability measure on $S^{1}$, and there exists some $\Delta \in 2^{-\N} \cap (0,\Delta_{0}]$ such that
\begin{equation}\label{form14b} \sup_{\mu} \int \mu(B(1) \cap H_{\theta}(K,\Delta^{-\sigma},[\Delta,1])) \, d\nu(\theta) \leq \epsilon_{0}, \end{equation} 
where the $\sup$ runs over all Ahlfors $(s,\mathbf{C})$-regular measures with $K = \spt \mu$. Then, there exists $\kappa = \kappa(\Delta,\Sigma - \sigma,s) > 0$ and $\delta_{0} = \delta_{0}(\Delta,\Sigma - \sigma,s) > 0$ such that
\begin{equation}\label{form15b} \sup_{\mu} \nu(\{\theta : \mu(B(1) \cap H_{\theta}(K,\delta^{-\Sigma},[\delta,1])) \geq \delta^{\kappa}\}) \leq \epsilon, \qquad \delta \in 2^{-\N} \cap (0,\delta_{0}], \end{equation}
where the $\sup$ again runs over all Ahlfors $(s,\mathbf{C})$-regular measures with $K = \spt \mu$. \end{cor}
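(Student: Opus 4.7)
\textbf{Proof plan for Corollary \ref{cor1}.} Set $\mathfrak{d} := \Sigma - \sigma > 0$. The plan is threefold: (i) pass from the integrated bound \eqref{form14b} to the tail bound \eqref{form14a} by Markov's inequality, (ii) apply Lemma \ref{lemma9} to obtain the conclusion at scales of the form $\Delta^N$, and (iii) extend to arbitrary $\delta \in 2^{-\N}$ by a one-step rescaling. Because step (iii) costs a portion of the gap between $\sigma$ and $\Sigma$, I will run Lemma \ref{lemma9} with the intermediate exponent $\Sigma' := \sigma + \mathfrak{d}/2$ in place of $\Sigma$, keeping the half-gap $\Sigma - \Sigma' = \mathfrak{d}/2$ in reserve for step (iii).

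Let $\mathbf{c} = \mathbf{c}(\mathbf{C})$ and $\Delta_0 = \Delta_0(\mathbf{C}, \mathfrak{d}/2)$ be the constants produced by Lemma \ref{lemma9} at gap $\mathfrak{d}/2$, and set $t := \mathbf{c} \cdot (\mathfrak{d}/2) \cdot 2^{-64/\mathfrak{d}}$. Choose $\epsilon_0 := t \cdot \mathfrak{d}^2 \epsilon / 2000$, depending only on $\mathbf{C}, \mathfrak{d}, \epsilon$. For any Ahlfors $(s, \mathbf{C})$-regular $\mu$, Markov's inequality applied to \eqref{form14b} yields
\[
\nu\bigl(\{\theta : \mu(B(1) \cap H_\theta(K, \Delta^{-\sigma}, [\Delta, 1])) \geq t\}\bigr) \leq \epsilon_0/t < (\mathfrak{d}/2)^2 \epsilon / 400,
\]
which is exactly hypothesis \eqref{form14a} of Lemma \ref{lemma9} at gap $\mathfrak{d}/2$. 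Now set $\kappa' := (\mathfrak{d}/2)^4/(2 \cdot 200^2 \log(1/\Delta))$ and $\eta := (\delta')^{\kappa'}$; with this choice the $N$-condition of Lemma \ref{lemma9} becomes tautological for every $N \in \N$, and the lemma delivers
\[
\sup_\mu \nu\bigl(\{\theta : \mu(B(1) \cap H_\theta(K, (\delta')^{-\Sigma'}, [\delta', 1])) \geq (\delta')^{\kappa'}\}\bigr) \leq \epsilon, \qquad \delta' = \Delta^N, \; N \in \N.
\]

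For the scale-passing, fix $\delta \in 2^{-\N} \cap (0, \delta_0]$, let $N := \lfloor \log_\Delta \delta \rfloor$, and set $\delta' := \Delta^N \in [\delta, \delta/\Delta]$. Lemma \ref{lemma13}(iii) with $C := \delta'/\delta \in [1, 1/\Delta]$ gives
\[
H_\theta(K, \delta^{-\Sigma}, [\delta, 1]) \subset H_\theta(K, \delta^{1-\Sigma}/\delta', [\delta', 1]).
\]
A direct check of exponents, using $|\log(\delta/\delta')| \leq |\log \Delta|$ and $|\log \delta'| = N|\log \Delta|$, shows $\delta^{1-\Sigma}/\delta' \geq (\delta')^{-\Sigma'}$ as soon as $N \geq 2(1-\Sigma)/\mathfrak{d}$. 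Setting $\delta_0 := \Delta^{N_0}$ with $N_0 := \lceil 2/\mathfrak{d} \rceil$ and $\kappa := \kappa'/2$ therefore guarantees both the multiplicity inclusion into $H_\theta(K, (\delta')^{-\Sigma'}, [\delta', 1])$ and the threshold inequality $\delta^\kappa \geq (\delta')^{\kappa'}$ (the latter using $N \geq N_0 \geq 2$, which holds because $\mathfrak{d} \leq 1$). Combining these two facts with the bound already established at scale $\delta'$ yields \eqref{form15b}.

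\textbf{Principal obstacle.} Because Lemma \ref{lemma9} concludes only at scales of the form $\Delta^N$, the scale-passing step is mandatory; the naive application of Lemma \ref{lemma13}(iii) pushes the multiplicity exponent in the wrong direction and cannot recover the threshold $\delta^{-\Sigma}$ from $(\delta')^{-\Sigma}$. The device of running Lemma \ref{lemma9} with the weakened exponent $\Sigma' < \Sigma$, combined with the restriction $\delta \leq \Delta^{N_0}$, absorbs exactly this loss; the rest of the argument is routine bookkeeping of exponents and tail probabilities.
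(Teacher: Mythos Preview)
Your overall strategy---Markov to get the tail bound, apply Lemma~\ref{lemma9} at an intermediate exponent $\Sigma' = \sigma + \mathfrak{d}/2$, then use Lemma~\ref{lemma13}(iii) to pass from $\delta$ to the nearest $\Delta^{N}$---is exactly the paper's approach, and your exponent bookkeeping is correct.

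There is, however, a genuine gap. Lemma~\ref{lemma9} is stated (and proved) only for Ahlfors $(s,\mathbf{C})$-regular measures $\mu$ with $\diam(\spt\mu) \geq 1$: both the supremum in its hypothesis \eqref{form14a} and the supremum in its conclusion \eqref{form15a} are restricted to this subclass. Your step (ii) therefore delivers the bound \eqref{form15b} only for measures with $\diam K \geq 1$, whereas Corollary~\ref{cor1} asserts it for \emph{all} Ahlfors $(s,\mathbf{C})$-regular $\mu$. You have not addressed the remaining case $\diam K < 1$, and this is not automatic: the restriction $\diam K \geq 1$ in Lemma~\ref{lemma9} is used in its proof (via the David-cube estimate \nref{Q3} and the final renormalisation argument).

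The paper handles this by an explicit reduction at the start of its proof of Corollary~\ref{cor1}: if $r := \diam K < 1$, one splits into two sub-cases. When $r \leq \delta^{\kappa}$, the trivial bound $\mu(B(1)) \leq \mathbf{C}r^{s} \leq \mathbf{C}\delta^{\kappa s}$ already beats a slightly reduced threshold $\delta^{\bar\kappa}$. When $r > \delta^{\kappa}$, one renormalises $\mu$ to a ball of radius $r$ to obtain $\mu^{B}$ with $\diam(\spt\mu^{B}) = 1$, applies the special case at scale $\delta/r \in [\delta^{1-\kappa},\delta_{0}]$, and transfers the conclusion back via Lemma~\ref{lemma7}. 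This costs a further reduction of $\kappa$ (to $\bar\kappa = \min\{\kappa s/2,(1-\kappa)\kappa\}$) and of $\delta_{0}$. Adding this reduction to your argument completes the proof.
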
 

\begin{proof} We start with a reduction: Corollary \ref{cor1} follows from a formally weaker version, where the $\sup$ in \eqref{form15b} only runs over measures $\mu$ with $\diam \spt \mu \geq 1$. Let us assume for now that this version has already been established. We claim that the general version follows with the following choice of constants: 
\begin{itemize}
\item[(a)] The constant "$\kappa$" (provided by the special case) has to be reduced to 
\begin{displaymath} \bar{\kappa} := \min\{\kappa s/2,(1 - \kappa)\kappa\}. \end{displaymath}
\item[(b)] The scale $\delta > 0$ needs to be so small that
\begin{displaymath} 0 < \delta \leq \delta_{0}^{1/(1 - \kappa)} \quad \text{and} \quad \delta^{\kappa s/2} < \mathbf{C}^{-1}, \end{displaymath}
where $\delta_{0} > 0$ is provided by the special case.
\end{itemize}
Let us prove this. Let $\mu$ be any Ahlfors $(s,\mathbf{C})$-regular measure with $\spt \mu = K$. If $\diam K \geq 1$, then \eqref{form15b} follows from the special case. Assume next that $\diam K \leq 1$, and fix a ball $B \subset \R^{2}$ with radius $r = \diam K \leq 1$ containing $K$. Fix also $\delta > 0$ as in (b).

Consider first the case where $r \leq \delta^{\kappa}$. Then, by our choices on $\delta,\bar{\kappa}$,
\begin{displaymath} \mu(B(1) \cap H_{\theta}(K,\delta^{-\Sigma},[\delta,1])) \leq \mu(B) \leq \mathbf{C}\delta^{\kappa s} < \delta^{\bar{\kappa}}, \qquad \theta \in S^{1}, \end{displaymath}
so certainly $\nu(\{\theta : \mu(B(1) \cap H_{\theta}(K,\delta^{-\Sigma},[\delta,1])) \geq \delta^{\bar{\kappa}}\}) = 0$. Assume next that $r \geq \delta^{\kappa}$. Now $K^{B} = \spt \mu^{B}$ has $\diam K^{B} = 1$, and $\delta/r \leq \delta_{0}$, so the special case implies $\nu(E) \leq \epsilon$, where 
\begin{displaymath} E := \{\theta : \mu^{B}(B(1) \cap H_{\theta}(K^{B},(\delta/r)^{-\Sigma},[\delta/r,1])) \geq (\delta/r)^{\kappa}\}. \end{displaymath}
Now, using Lemma \ref{lemma7}, and noting that $\delta^{-\Sigma} \geq (\delta/r)^{-\Sigma}$ and $\delta/r \leq \delta_{0}$,
\begin{displaymath} \mu(B \cap H_{\theta}(K,\delta^{-\Sigma},[\delta,r])) \leq \mu^{B}(B(1) \cap H_{\theta}(K^{B},\delta^{-\Sigma},[(\delta/r),1])) < (\delta/r)^{\kappa} \leq \delta^{\bar{\kappa}} \end{displaymath}
for all $\theta \in S^{1} \, \setminus \, E$. Thus, $\nu(\{\theta :  \mu(B \cap H_{\theta}(K,\delta^{-\Sigma},[\delta,r])) \geq \delta^{\bar{\epsilon}}\}) \leq \epsilon$. This completes the proof of the general case, because
\begin{displaymath}  \mu(B \cap H_{\theta}(K,\delta^{-\Sigma},[\delta,r])) = \mu(B(1) \cap H_{\theta}(K,\delta^{-\Sigma},[\delta,1])), \end{displaymath}
using $\spt \mu \subset K$ and $\diam K \leq r$.

Let us then establish the special case, where the $\sup$ in \eqref{form15b} only runs over measures $\mu$ with $\diam \spt \mu \geq 1$. In this case it will be legitimate to apply Lemma \ref{lemma9}.

Fix $\epsilon > 0$ and $0 < \sigma < \Sigma \leq 1$. Write $\bar{\Sigma} := \tfrac{1}{2}(\sigma + \Sigma) \in (\sigma,\Sigma)$. Write $\mathfrak{d} := \bar{\Sigma} - \sigma$. Let $\mathbf{c} = \mathbf{c}(\mathbf{C}) > 0$ and $\Delta_{0} = \Delta_{0}(\mathbf{C},\mathfrak{d}) > 0$ be the constants from Lemma \ref{lemma9}. By the hypothesis \eqref{form14b} applied with a suitable $\epsilon_{0} \sim_{\mathbf{C},\mathfrak{d},\epsilon} 1$, there exists a scale $\Delta \in (0,\Delta_{0}]$ such that
\begin{displaymath} \sup_{\mu} \nu\left(\left\{\theta : \mu(B(1) \cap H_{\theta}(K,\Delta^{-\sigma},[\Delta,1])) \geq \mathbf{c} \cdot \mathfrak{d}2^{-32/\mathfrak{d}} \right\} \right) < \tfrac{\mathfrak{d}^{2}}{400}\epsilon. \end{displaymath}
Define
\begin{displaymath} \kappa := \tfrac{\mathfrak{d}^{4}}{2 \cdot 200^{2}\log(1/\Delta)} > 0. \end{displaymath}
We claim that \eqref{form15b} holds for all $\delta > 0$ satisfying
\begin{equation}\label{form64a} \delta \leq \Delta \quad \text{and} \quad \Delta^{-1}\delta^{(\Sigma - \sigma)/2} \leq 1. \end{equation}
Fix $\delta$ as above, and let $N \geq 1$ satisfy $\Delta^{N + 1} \leq \delta \leq \Delta^{N} =: \bar{\delta}$. Write $\eta := \delta^{\kappa}$, and note that 
\begin{displaymath} (200^{2}/\mathfrak{d}^{4})\log \eta^{-1} \leq (N + 1) \kappa \cdot (200^{2}/\mathfrak{d}^{4}) \log \Delta^{-1} \leq N \end{displaymath}
by the choice of $\kappa$. Therefore, Lemma \ref{lemma9} with this "$\eta$" implies
\begin{equation}\label{form79a} \sup_{\mu} \nu(\{\theta : \mu(B(1) \cap H_{\theta}(K,\bar{\delta}^{-\bar{\Sigma}},[\bar{\delta},1])) \geq \delta^{\kappa}\}) \leq \epsilon, \end{equation}
where the $\sup$ runs over $(s,\mathbf{C})$-regular measures with $\diam K = \diam \spt \mu \geq 1$. Using first \eqref{form64a}, and then Lemma \ref{lemma13}(iii) (observe that $1 \leq \bar{\delta}/\delta \leq \Delta^{-1}$) leads to
\begin{displaymath} H_{\theta}(K,\delta^{-\Sigma},[\delta,1]) \subset H_{\theta}(K,\Delta^{-1}\bar{\delta}^{-\bar{\Sigma}},[\delta,1]) \subset H_{\theta}(K,\bar{\delta}^{-\bar{\Sigma}},[\bar{\delta},1]). \end{displaymath}
Now \eqref{form79a} gives
\begin{displaymath} \sup_{\mu} \nu(\{\theta : \mu(B(1) \cap H_{\theta}(K,\delta^{-\Sigma},[\delta,1])) \geq \delta^{\kappa}\}) \leq \epsilon, \end{displaymath}
as desired. \end{proof}

\subsection{Proof of Theorem \ref{mainThm2}}\label{s3} Fix $C > 0$ and $s,\tau \in (0,1]$ and $\underline{s} \in (0,s)$, as in the statement of Theorem \ref{mainThm2}. Fix an Ahlfors $(s,C)$-regular measure $\mu$ on $\R^{2}$, and a Borel probability measure $\nu$ on $S^{1}$ satisfying $\nu(B(x,r)) \leq Cr^{\tau}$ for all $x \in S^{1}$ and $r > 0$. We claim the existence of a constant $\kappa = \kappa(C,s,\underline{s},\tau) > 0$, and a vector $\theta \in \spt \nu$ such that
\begin{equation}\label{form38a} |\pi_{\theta}(F)|_{\delta} \geq \delta^{-\underline{s}} \end{equation}
for all Borel sets $F \subset K \cap B(1)$ with $\mu(F) \geq 2\delta^{\kappa}$, provided $\delta > 0$ is sufficiently small. 

To this end, we first apply Corollary \ref{cor1} with constants $\mathbf{C} = C$, and
\begin{displaymath} \sigma := \tfrac{s - \underline{s}}{4} \quad \text{and} \quad \Sigma := \sigma + \tfrac{s - \underline{s}}{4} = \tfrac{s - \underline{s}}{2} \quad \text{and} \quad \epsilon := \tfrac{1}{2}. \end{displaymath}
This outputs constants $\Delta_{0} = \Delta_{0}(C,s,\underline{s},\tfrac{1}{2}) > 0$ and $\epsilon_{0} = \epsilon_{0}(C,s,\underline{s},\tfrac{1}{2}) > 0$. Now, Corollary \ref{cor1} promises that if the hypothesis \eqref{form14b} holds for some $\Delta \leq \Delta_{0}$ (and we will check this in a moment), then we are guaranteed a $\kappa = \kappa(\Delta,s,\underline{s}) > 0$ (which may be assumed to be $< (s - \underline{s})/2$) such that
\begin{displaymath} \nu(\{\theta \in S^{1} : \mu(B(1) \cap H_{\theta}(K,\delta^{-\Sigma},[\delta,1])) \geq \delta^{\kappa}\}) \leq \tfrac{1}{2}. \end{displaymath} 
In particular, recalling that $\Sigma = (s - \underline{s})/2$, we may fix $\theta \in \spt \nu$ such that 
\begin{displaymath} \mu(B(1) \cap H_{\theta}(K,\delta^{(\underline{s} - s)/2},[\delta,1])) < \delta^{\kappa}. \end{displaymath}
Now, if $F \subset K \cap B(1)$ is any Borel set with $\mu(F) \geq 2\delta^{\kappa}$, we have $\mu(G) \geq \delta^{\kappa}$, where
\begin{displaymath} G := F \, \setminus \, H_{\theta}(K,\delta^{(\underline{s} - s)/2},[\delta,1]). \end{displaymath}
Since $\mu$ is Ahlfors $(s,C)$-regular, $|G|_{\delta} \gtrsim C^{-1}\delta^{\kappa - s}$. Since $G \subset \R^{2} \, \setminus \, H_{\theta}(K,\delta^{(\underline{s} - s)/2},[\delta,1])$, it is easy to check that
\begin{displaymath} |\pi_{\theta}(G)|_{\delta} \gtrsim \delta^{(s - \underline{s})/2}|G|_{\delta} \gtrsim C^{-1}\delta^{-\underline{s} + (\underline{s} - s)/2 + \kappa}. \end{displaymath}
Since $\kappa < (s - \underline{s})/2$, In particular $|\pi_{\theta}(F)|_{\delta} \geq |\pi_{\theta}(G)|_{\delta} > \delta^{-\underline{s}}$ for $\delta > 0$ small enough. This completes the proof of \eqref{form38a}, up to verifying the hypothesis \eqref{form14b} of Corollary \ref{cor1}.

Recall that this hypothesis requires there to exist $\Delta \in (0,\Delta_{0}]$ such that 
\begin{displaymath} \sup_{\bar{\mu}} \int \bar{\mu}(B(1) \cap H_{\theta}(\bar{K},\Delta^{-\sigma},[\Delta,1])) \, d\nu(\theta) \leq \epsilon_{0}, \end{displaymath}
where the $\sup$ runs over (Ahlfors) $(s,C)$-regular measures $\bar{\mu}$. But this follows from the conclusion of Theorem \ref{mainThm} applied with constants $C$ and $\epsilon_{0}$: the value of $\Delta$ only depends on $C,\epsilon_{0} = \epsilon_{0}(C,s,\underline{s},\tfrac{1}{2}),s,\underline{s}$, and $\tau$. Therefore, finally, the value of $\kappa = \kappa(\Delta,s,\underline{s}) > 0$ also only depends on $C,s,\underline{s},\tau$, as claimed. This completes the proof of Theorem \ref{mainThm2}.

\subsection{Preliminaries for the proof of Lemma \ref{lemma9}} For ease of reference, we record here the special case of Lemma \ref{lemma5} with $\{0,\ldots,a_{n}\} = \{0,\ldots,N\}$:

\begin{lemma}\label{lemma5a} Let $0 < \Delta \leq \tfrac{1}{50}$, $N \geq 1$, $\eta \in (0,1]$, and $\theta \in S^{1}$. Let $F \subset B(1)$ be a set with the property
\begin{displaymath} \tfrac{1}{N} |\{0 \leq j \leq N - 1 : x \in H_{\theta}(F,\Delta^{-\sigma},[50\Delta^{j + 1},50\Delta^{j}])\}| \leq \eta, \qquad x \in F. \end{displaymath}
Write $\delta := \Delta^{N}$. Then, for an absolute constant $C \geq 1$,
\begin{equation}\label{form47a} |F_{5\delta} \cap \pi_{\theta}^{-1}\{t\}|_{5\delta} \leq C^{N}\delta^{-\sigma - \eta}, \qquad t \in \R. \end{equation} 
\end{lemma}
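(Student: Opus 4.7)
The plan is very short: Lemma \ref{lemma5a} is literally the specialisation of Lemma \ref{lemma5} obtained by taking the partition $\{a_{j}\}_{j=0}^{n}$ to be the most refined one possible, namely $a_{j} := j$ for $0 \leq j \leq N$, so that $n = N$ and $a_{j+1} - a_{j} = 1$ for every $j$.

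First I would verify that the hypotheses match. With the choice above, $\Delta^{a_{j+1} - a_{j}} = \Delta$, so the bad-scale set \eqref{form24} in Lemma \ref{lemma5} reduces to
\begin{displaymath}
\mathcal{B}(x) = \{0 \leq j \leq N-1 : x \in H_{\theta}(F, \Delta^{-\sigma}, [50\Delta^{j+1}, 50\Delta^{j}])\},
\end{displaymath}
and the assumed bound $\sum_{j \in \mathcal{B}(x)} (a_{j+1} - a_{j}) \leq \eta N$ collapses to $|\mathcal{B}(x)| \leq \eta N$, which is exactly the hypothesis imposed on $F$ in Lemma \ref{lemma5a} after multiplying through by $N$.

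Next I would invoke the conclusion \eqref{form49} of Lemma \ref{lemma5}: under the verified hypothesis, one obtains
\begin{displaymath}
|F_{5\delta} \cap \pi_{\theta}^{-1}\{t\}|_{5\delta} \leq C^{n} \delta^{-\sigma - \eta}, \qquad t \in \R,
\end{displaymath}
for an absolute constant $C \geq 1$. Since $n = N$, this is precisely \eqref{form47a}. There is no genuine obstacle here: the only thing to check is the pure bookkeeping that the specialised partition satisfies the formal requirement $0 = a_{0} < a_{1} < \cdots < a_{n} = N$, which is immediate. Hence Lemma \ref{lemma5a} follows directly from Lemma \ref{lemma5} with no further argument required.
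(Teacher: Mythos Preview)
Your proposal is correct and matches the paper's own treatment exactly: the paper introduces Lemma \ref{lemma5a} by saying ``For ease of reference, we record here the special case of Lemma \ref{lemma5} with $\{0,\ldots,a_{n}\} = \{0,\ldots,N\}$,'' and gives no separate proof. Your bookkeeping verification that $a_j := j$ collapses the hypothesis of Lemma \ref{lemma5} to that of Lemma \ref{lemma5a}, and that $n = N$ yields the stated conclusion, is precisely the intended derivation.
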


We also record the following elementary lemma:

\begin{lemma}\label{lemma12} Let $\mathcal{D} = \bigcup_{j = 0}^{N - 1} \mathcal{D}_{j}$ a sequence of measurable partitions of a probability space $(\Omega,\mu)$. Assume that for every $Q \in \mathcal{D}$ there is an associated $\mu$ measurable set $H(Q) \subset Q$. Assume that
\begin{displaymath} |\{0 \leq j \leq N - 1 : x \in H(Q_{j}(x))\}| \geq \mathfrak{d} N \end{displaymath}
for $\mu$ almost every $x \in \Omega$, where $Q_{j}(x)$ is the unique element of $\mathcal{D}_{j}$ containing $x$, and $\mathfrak{d} > 0$. Then, there exists a subset $F \subset \Omega$ with $\mu(F) \geq \mathfrak{d}^{2}/16$ such that
\begin{equation}\label{form50a} |\{0 \leq j \leq N - 1 : \mu(H(Q_{j}(x))) \geq \tfrac{\mathfrak{d}}{4}\mu(Q)\}| \geq \tfrac{\mathfrak{d}^{2}}{16}N, \qquad x \in F. \end{equation}
\end{lemma}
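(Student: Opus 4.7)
The plan is a two-step pigeonhole, with no substantive obstacle beyond careful bookkeeping of constants. First I would define, for each $j \in \{0,\ldots,N-1\}$, the \emph{good} set at scale $j$:
\[ A_{j} := \{x \in \Omega : \mu(H(Q_{j}(x))) \geq \tfrac{\mathfrak{d}}{4}\mu(Q_{j}(x))\}, \]
which is $\mu$-measurable since it is a union of elements of $\mathcal{D}_{j}$. The target set will then be $F := \{x \in \Omega : \tfrac{1}{N}|\{j : x \in A_{j}\}| \geq \tfrac{\mathfrak{d}^{2}}{16}\}$, for which the conclusion \eqref{form50a} holds by construction; the work is to show that $\mu(F) \geq \mathfrak{d}^{2}/16$.

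The main step is to lower bound $\sum_{j} \mu(A_{j})$. Let $G_{j} := \{x : x \in H(Q_{j}(x))\} = \bigcup_{Q \in \mathcal{D}_{j}} H(Q)$, so that the hypothesis combined with Fubini reads $\sum_{j} \mu(G_{j}) = \int \sum_{j} \mathbf{1}_{G_{j}}\,d\mu \geq \mathfrak{d}N$. Decomposing $G_{j} \setminus A_{j}$ over the \emph{light} cubes (those $Q \in \mathcal{D}_{j}$ with $\mu(H(Q)) < \tfrac{\mathfrak{d}}{4}\mu(Q)$) yields
\[ \mu(G_{j} \setminus A_{j}) = \sum_{Q \text{ light}} \mu(H(Q)) < \tfrac{\mathfrak{d}}{4}\sum_{Q \in \mathcal{D}_{j}} \mu(Q) = \tfrac{\mathfrak{d}}{4}, \]
so $\mu(A_{j}) \geq \mu(G_{j}) - \tfrac{\mathfrak{d}}{4}$, and summing in $j$ gives $\sum_{j} \mu(A_{j}) \geq \tfrac{3\mathfrak{d}}{4}N$.

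A Markov step then finishes the argument: setting $g(x) := \tfrac{1}{N}\sum_{j} \mathbf{1}_{A_{j}}(x) \in [0,1]$, the previous bound reads $\int g\,d\mu \geq 3\mathfrak{d}/4$, and combined with $g \leq 1$ this yields
\[ \tfrac{3\mathfrak{d}}{4} \leq \int g\,d\mu \leq \mu(F) + \tfrac{\mathfrak{d}^{2}}{16}(1-\mu(F)) \leq \mu(F) + \tfrac{\mathfrak{d}^{2}}{16}, \]
whence $\mu(F) \geq \tfrac{3\mathfrak{d}}{4} - \tfrac{\mathfrak{d}^{2}}{16} \geq \tfrac{\mathfrak{d}^{2}}{16}$ (one may harmlessly assume $\mathfrak{d} \leq 1$, since otherwise the hypothesis is vacuous). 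The constants $\mathfrak{d}/4$ and $\mathfrak{d}^{2}/16$ are deliberately slack; essentially any splitting of the density deficit between the two steps would work, and the only point requiring any care is the double-counting identity $\int \sum_{j} \mathbf{1}_{G_{j}}\,d\mu = \sum_{j}\mu(G_{j})$ used to convert the pointwise hypothesis on $x$ into an averaged statement over scales.
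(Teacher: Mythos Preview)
Your proof is correct and follows essentially the same two-step pigeonhole as the paper: integrate the hypothesis to get $\sum_{j}\sum_{Q}\mu(H(Q)) \geq \mathfrak{d}N$, use the ``light cubes'' observation to pass to the heavy cubes $A_{j}$, then apply Markov. Your version is in fact slightly more streamlined: the paper inserts an extra pigeonhole on scales (first isolating a set $\mathcal{G}$ of $\geq \mathfrak{d}N/2$ indices $j$ with $\sum_{Q}\mu(H(Q)) \geq \mathfrak{d}/2$, and only then running the light-cubes bound for $j \in \mathcal{G}$), which yields $\sum_{j}\mu(A_{j}) \geq \mathfrak{d}^{2}N/8$; your direct bound $\mu(A_{j}) \geq \mu(G_{j}) - \mathfrak{d}/4$ summed over all $j$ gives the sharper $\sum_{j}\mu(A_{j}) \geq 3\mathfrak{d}N/4$, but both routes land on the same final constants.
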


\begin{proof} By hypothesis,
\begin{displaymath} \sum_{j = 0}^{N - 1} \sum_{Q \in \mathcal{D}_{j}} \mu(H(Q)) = \int_{\Omega} |\{0 \leq j \leq N - 1 : x \in H(Q_{j}(x))\}| \, d\mu(x) \geq \mathfrak{d} N. \end{displaymath}
Since $\mu$ is a probability measure, $\sum_{Q \in \mathcal{D}_{j}} \mu(H(Q)) \leq 1$ uniformly, so there exists a subset $\mathcal{G} \subset \{0,\ldots,N - 1\}$ with $|\mathcal{G}| \geq \mathfrak{d} N/2$ such that
\begin{displaymath} \sum_{Q \in \mathcal{D}_{j}} \mu(H(Q)) \geq \tfrac{\mathfrak{d}}{2} \qquad j \in \mathcal{G}. \end{displaymath}
For $j \in \mathcal{G}$ fixed, the collection
\begin{displaymath} \mathcal{D}_{j,\mathrm{heavy}} := \{Q \in \mathcal{D}_{j} : \mu(H(Q)) \geq \tfrac{\mathfrak{d}}{4} \mu(Q)\} \end{displaymath}
has to satisfy $\mu(\cup \mathcal{D}_{j,\mathrm{heavy}}) \geq \mathfrak{d}/4$, since otherwise
\begin{displaymath} \sum_{Q \in \mathcal{D}_{j}} \mu(H(Q)) \leq \sum_{Q \in \mathcal{D}_{j,\mathrm{heavy}}} \mu(Q) + \tfrac{\mathfrak{d}}{4} \sum_{Q \in \mathcal{D}_{j} \, \setminus \, \mathcal{D}_{j,\mathrm{heavy}}} \mu(Q) < \tfrac{\mathfrak{d}}{2}, \end{displaymath}
contradicting the definition of $j \in \mathcal{G}$. Therefore, 
\begin{displaymath} \int_{\Omega} |\{0 \leq j \leq N - 1 : Q_{j}(x) \in \mathcal{D}_{j,\mathrm{heavy}}\}| \, d\mu(x) \geq \sum_{j \in \mathcal{G}} \sum_{Q \in \mathcal{D}_{j,\mathrm{heavy}}} \mu(Q) \geq \tfrac{\mathfrak{d}^{2}}{8}N. \end{displaymath} 
It follows that there exists a subset $F \subset \Omega$ with $\mu(F) \geq \mathfrak{d}^{2}/16$ such that
\begin{displaymath} |\{0 \leq j \leq N - 1 : Q_{j}(x) \in \mathcal{D}_{j,\mathrm{heavy}}\}| \geq \tfrac{\mathfrak{d}^{2}}{16}N, \qquad x \in F. \end{displaymath}
This is equivalent to \eqref{form50a}, so the proof is complete.   \end{proof} 

We will also need the notion of \emph{Kullback-Leibler divergence}, recalled below:
\begin{definition}[Kullback-Leibler divergence]\label{def:divergence} Let $\mu,\nu$ be probability measures on a space $\Omega$, and let $\mathcal{E}$ be a ($\mu$ and $\nu$ measurable) partition of $\Omega$. The \emph{Kullback-Leibler divergence of $\nu$ relative to $\mu$ and the partition $\mathcal{E}$} is defined as
\begin{displaymath} \Div_{\mu}(\nu,\mathcal{E}) := \sum_{E \in \mathcal{E}} \nu(E) \log \frac{\nu(E)}{\mu(E)}. \end{displaymath} 
If $\mathcal{F}$ is another ($\mu$ and $\nu$ measurable) partition of $\Omega$, we also define the following relative version of this quantity:
\begin{displaymath} \Div_{\mu}(\nu,\mathcal{E} \mid \mathcal{F}) := \sum_{F \in \mathcal{F}} \nu(F) \Div_{\mu_{F}}(\nu_{F},\mathcal{E}), \end{displaymath}
where $\mu_{F} = \mu(F)^{-1}\mu|_{F}$ and $\nu_{F} = \nu(F)^{-1}\nu|_{F}$. \end{definition}

Relative Kullback-Leibler divergences enjoy a useful telescoping property:

\begin{lemma}[Telescoping property of relative Kullback-Leibler divergences]\label{lemma:telescope} Assume that $\mathcal{E}$ and $\mathcal{F}$ are partitions, as in Definition \ref{def:divergence}, such that $\mathcal{E}$ refines $\mathcal{F}$. Then,
\begin{equation}\label{form19a} \Div_{\mu}(\nu,\mathcal{E} \mid \mathcal{F}) = \Div_{\mu}(\nu,\mathcal{E}) - \Div_{\mu}(\nu,\mathcal{F}). \end{equation}
In particular: if $\delta = \Delta^{N}$ for some $\Delta \in 2^{-\N}$ and $N \in \N$, and $\mathcal{D}_{j} = \mathcal{D}_{\Delta^{j}}$ is the partition to dyadic cubes of side-length $\Delta^{j}$, then 
\begin{displaymath} \Div_{\mu}(\nu,\mathcal{D}_{N}) - \Div_{\mu}(\nu,\mathcal{D}_{0}) = \sum_{j = 0}^{N - 1} \Div_{\mu}(\nu,\mathcal{D}_{j + 1} \mid \mathcal{D}_{j}). \end{displaymath} 
 \end{lemma}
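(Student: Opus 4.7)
The plan is to verify the first identity by direct expansion of the definitions, and then obtain the telescoping statement by a routine induction on $N$.

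First, I would unpack the inner divergence. Since $\mathcal{E}$ refines $\mathcal{F}$, each $E \in \mathcal{E}$ is contained in a unique cell $F(E) \in \mathcal{F}$, and each $F \in \mathcal{F}$ is the disjoint union of those $E \in \mathcal{E}$ with $F(E)=F$. Using $\nu_F(E) = \nu(E)/\nu(F)$ and $\mu_F(E) = \mu(E)/\mu(F)$ for $E \subset F$, and the logarithm identity
\begin{displaymath} \log \frac{\nu_F(E)}{\mu_F(E)} \;=\; \log \frac{\nu(E)}{\mu(E)} \;-\; \log \frac{\nu(F)}{\mu(F)}, \end{displaymath}
I would compute
\begin{displaymath} \nu(F)\,\Div_{\mu_F}(\nu_F,\mathcal{E}) \;=\; \sum_{E \subset F}\nu(E)\log\frac{\nu(E)}{\mu(E)} \;-\; \Big(\sum_{E \subset F}\nu(E)\Big)\log\frac{\nu(F)}{\mu(F)}. \end{displaymath}
The inner sum in the second term equals $\nu(F)$, so that term contributes exactly $\nu(F)\log(\nu(F)/\mu(F))$.

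Summing over $F \in \mathcal{F}$ and interchanging the order in the first sum (since the cells $\{E \in \mathcal{E} : E \subset F\}$ partition $\mathcal{E}$ as $F$ ranges over $\mathcal{F}$) then yields
\begin{displaymath} \Div_\mu(\nu,\mathcal{E}\mid\mathcal{F}) \;=\; \sum_{E \in \mathcal{E}}\nu(E)\log\frac{\nu(E)}{\mu(E)} \;-\; \sum_{F \in \mathcal{F}}\nu(F)\log\frac{\nu(F)}{\mu(F)} \;=\; \Div_\mu(\nu,\mathcal{E}) - \Div_\mu(\nu,\mathcal{F}), \end{displaymath}
which is \eqref{form19a}. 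For the dyadic corollary I would apply the identity to each pair $(\mathcal{E},\mathcal{F}) = (\mathcal{D}_{j+1},\mathcal{D}_j)$ (valid since $\mathcal{D}_{j+1}$ refines $\mathcal{D}_j$) and sum the resulting telescoping differences over $j = 0,\ldots,N-1$.

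The only real care needed is bookkeeping for degenerate cells. If $\nu(F) = 0$, the term $\nu(F)\Div_{\mu_F}(\nu_F,\mathcal{E})$ is interpreted as $0$ with the standard convention $0 \log \tfrac{0}{a} = 0$, and such cells contribute nothing to either side; if $\mu(F) = 0$ but $\nu(F) > 0$, both sides are $+\infty$ and the identity still holds. This is the one genuinely delicate point, but it is only a matter of fixing conventions, so I do not expect any substantial obstacle.
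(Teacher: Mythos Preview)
Your proof is correct and follows essentially the same approach as the paper: both expand the definition of $\Div_{\mu}(\nu,\mathcal{E}\mid\mathcal{F})$, substitute $\nu_F(E)=\nu(E)/\nu(F)$ and $\mu_F(E)=\mu(E)/\mu(F)$, split the logarithm, and regroup the two resulting sums. Your treatment of degenerate cells is a small elaboration the paper omits, but otherwise the arguments are the same.
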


\begin{proof} Write $\mathcal{E}(F) := \{E \in \mathcal{F} : E \subset F\}$, $F \in \mathcal{F}$. By hypothesis, $\mathcal{E}(F)$ is a partition of $F$. Now, \eqref{form19a} follows by chasing definitions:
\begin{align*} \Div_{\mu}(\nu,\mathcal{E} \mid \mathcal{F}) & = \sum_{F \in \mathcal{F}} \nu(F) \sum_{E \in \mathcal{E}(F)} \nu_{F}(E) \log \frac{\nu_{F}(E)}{\mu_{F}(E)}\\
& = \sum_{F \in \mathcal{F}} \sum_{E \in \mathcal{E}(F)} \nu(E) \log \frac{\nu(E)/\nu(F)}{\mu(E)/\mu(F)}\\
& = \sum_{F \in \mathcal{F}} \sum_{E \in \mathcal{E}(F)} \nu(E) \log \frac{\nu(E)}{\mu(E)} - \sum_{F \in \mathcal{F}} \sum_{E \in \mathcal{E}(F)} \nu(E) \log \frac{\nu(F)}{\mu(F)}\\
& = \sum_{E \in \mathcal{E}} \nu(E) \log \frac{\nu(E)}{\mu(E)} - \sum_{F \in \mathcal{F}} \nu(F) \log \frac{\nu(F)}{\mu(F)} = \Div_{\mu}(\nu,\mathcal{E}) - \Div_{\mu}(\nu,\mathcal{F}), \end{align*} 
as desired. \end{proof} 

The following lemma gives a universal lower bound for "partial sums" of the type appearing in the definition of $\Div_{\mu}(\nu,\mathcal{E})$.

\begin{lemma}\label{lemma10} Let $\mu,\nu$ be two probability measures on $\Omega$, and let $\mathcal{G}$ be a collection of disjoint $\mu$ and $\nu$ measurable subsets of $\Omega$ whose union $G$ has $\nu(G) > 0$. Then, 
\begin{displaymath} \sum_{E \in \mathcal{G}} \nu(E) \log \frac{\nu(E)}{\mu(E)} \geq \nu(G) \log \frac{\nu(G)}{\mu(G)} \geq -1. \end{displaymath}
In particular, $\mathfrak{D}_{\mu}(\nu,\mathcal{E}) \geq 0$ whenever $\mathcal{E}$ is a $\mu$ and $\nu$ measurable partition of $\Omega$ (this special case is Gibbs' inequality). \end{lemma}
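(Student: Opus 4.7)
The plan is to derive both inequalities from the convexity of $\varphi(t) := t \log t$ (extended continuously by $\varphi(0):=0$), with the first being a form of the log-sum inequality and the second an elementary one-variable estimate. First, I would assume without loss of generality that $\mu(E) > 0$ for every $E \in \mathcal{G}$ with $\nu(E) > 0$ (otherwise a term on the left is $+\infty$ and the inequality is trivial), and similarly that $\mu(G) > 0$. Rewriting the left-hand side as
\begin{displaymath} \sum_{E \in \mathcal{G}} \nu(E) \log \frac{\nu(E)}{\mu(E)} = \mu(G) \cdot \sum_{E \in \mathcal{G}} \frac{\mu(E)}{\mu(G)} \varphi\!\left(\frac{\nu(E)}{\mu(E)}\right), \end{displaymath}
I would apply Jensen's inequality to the convex function $\varphi$ with respect to the probability measure on $\mathcal{G}$ assigning mass $\mu(E)/\mu(G)$ to $E$. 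This yields
\begin{displaymath} \sum_{E \in \mathcal{G}} \nu(E) \log \frac{\nu(E)}{\mu(E)} \geq \mu(G) \cdot \varphi\!\left(\sum_{E \in \mathcal{G}} \frac{\mu(E)}{\mu(G)} \cdot \frac{\nu(E)}{\mu(E)}\right) = \mu(G) \cdot \varphi\!\left(\frac{\nu(G)}{\mu(G)}\right) = \nu(G) \log \frac{\nu(G)}{\mu(G)}, \end{displaymath}
which is the first inequality.

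For the second inequality, since $\mu(G) \leq 1$ we have $-\log \mu(G) \geq 0$, so
\begin{displaymath} \nu(G) \log \frac{\nu(G)}{\mu(G)} = \nu(G) \log \nu(G) - \nu(G) \log \mu(G) \geq \nu(G) \log \nu(G) = \varphi(\nu(G)). \end{displaymath}
An elementary calculus check ($\varphi'(t) = \log t + 1 = 0$ at $t = 1/e$) shows $\varphi$ attains its minimum on $(0,1]$ at $t = 1/e$ with value $-1/e \geq -1$, completing the chain. The final "in particular" statement (Gibbs' inequality) follows by taking $\mathcal{G} = \mathcal{E}$, so that $G = \Omega$, $\nu(G) = \mu(G) = 1$, and the right-hand side vanishes. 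There is no substantive obstacle here: the argument is a direct application of Jensen's inequality to $\varphi(t) = t \log t$ together with a one-variable minimization, with the only minor care needed being the treatment of the degenerate cases $\mu(E) = 0$ or $\mu(G) = 0$, which are handled by the standard conventions for $\log 0$ and $\log \infty$.
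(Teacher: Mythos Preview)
Your proof is correct and follows essentially the same route as the paper: both arguments reduce the first inequality to Jensen's inequality, with the paper applying it to the concave function $\log$ with weights $\nu(E)/\nu(G)$ and variable $\mu(E)/\nu(E)$, while you apply it to the convex function $t\log t$ with weights $\mu(E)/\mu(G)$ and variable $\nu(E)/\mu(E)$ --- these are dual formulations of the log-sum inequality. The second inequality and the Gibbs consequence are handled identically.
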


\begin{proof} Apply Jensen's inequality in the space $(\mathcal{G},\nu_{G})$ to the variable $E \mapsto \mu(E)/\nu(E)$:
\begin{align*} \sum_{E \in \mathcal{G}} \nu(E) \log \frac{\nu(E)}{\mu(E)} & = - \nu(G) \sum_{E \in \mathcal{G}} \frac{\nu(E)}{\nu(G)} \log \frac{\mu(E)}{\nu(E)}\\
& \geq -\nu(G) \log \sum_{E \in \mathcal{G}} \frac{\mu(E)}{\nu(G)} = \nu(G) \log \frac{\nu(G)}{\mu(G)}. \end{align*} 
The inequality $\mathfrak{D}_{\mu}(\nu,\mathcal{E}) \geq 0$ follows by applying the above to $\mathcal{G} = \mathcal{E}$ and noting that $\nu(\cup \mathcal{E}) = 1 = \mu(\cup \mathcal{E})$. \end{proof}

We also need the following definition of \emph{David cubes} associated to a Ahlfors $(s,\mathbf{C})$-regular measure (see \cite{MR1009120} or \cite[p. 86]{MR1123480} for a proof of existence).

\begin{definition}[David cubes]\label{def:DavidCubes} Let $\mu$ be a Ahlfors $(s,\mathbf{C})$-regular measure on $\R^{d}$. Then, there exists a family of partitions $\mathcal{D}_{\mu}^{i}$, $i \in \Z$, of $\R^{d}$ into "cubes" with the following properties (the constant $A = A(\mathbf{C},d) \geq 1$ depends only on $\mathbf{C}$ and $d$):
\begin{itemize}
\item[(Q1) \phantomsection \label{Q1}] If $i \leq j$, $Q \in \mathcal{D}_{\mu}^{j}$ and $Q' \in \mathcal{D}_{\mu}^{i}$, then either $Q \cap Q' = \emptyset$, or $Q \subset Q'$.
\item[(Q2) \phantomsection \label{Q2}] If $Q \in \mathcal{D}^{i}_{\mu}$, then $A^{-1}2^{-j} \leq \diam Q \leq A2^{-j}$.
\item[(Q3) \phantomsection \label{Q3}] If $Q \in \mathcal{D}^{i}_{\mu}$, and $2^{-i} \leq \diam(\spt \mu)$, then $A^{-1}2^{-is} \leq \mu(Q) \leq A2^{-is}$
\end{itemize}
\end{definition}

\begin{remark} The existence of David cubes requires $\mu$ to be Ahlfors $s$-regular, since the lower bound $\mu(Q) \geq A^{-1}2^{-is}$ in \nref{Q3} would be hopeless for (weakly) $s$-regular measures. This property is certainly useful in the proof of Lemma \ref{lemma9}, below, but it seems plausible that the result also holds for (weakly) $s$-regular measures. Accomplishing this would also allow one to generalise Theorem \ref{mainThm2} to (weakly) $s$-regular sets and measures. \end{remark}

\subsection{Proof of Lemma \ref{lemma9}} We briefly describe the constants $\mathbf{c}(\mathbf{C}) > 0$ and $\Delta_{0}(\mathbf{C},\mathfrak{d})$ from the statement. Recall that $\mathfrak{d} = \Sigma - \sigma > 0$. Let $\Delta_{0} = \Delta_{0}(\mathbf{C},\mathfrak{d}) \in (0,\tfrac{1}{2}]$ be so small that 
\begin{equation}\label{form17a} (\Delta_{0}(\mathbf{C},\mathfrak{d}))^{-\mathfrak{d}} \geq C \end{equation}
for a suitable constant $C \geq 1$, and in particular is than the (absolute) constant from Lemma \ref{lemma5a}. A further $\mathbf{C}$-dependent requirement will appear below \eqref{form28a}. The constant $\mathbf{c} \gtrsim_{\mathbf{C}} 1$ will be determined right above \eqref{form56a}.

Let $\Delta \in (0,\Delta_{0}]$ be the scale satisfying \eqref{form14a}, and let $\delta = \Delta^{N}$, where 
\begin{equation}\label{form52a}N > \tfrac{200^{2}}{\mathfrak{d}^{4}} \log \tfrac{1}{\eta}. \end{equation}
We make a counter assumption to \eqref{form15a}: there exists an Ahlfors $(s,\mathbf{C})$-regular measure $\mu$ with $\spt \mu = K$ and $\diam(K) \geq 1$, and a set $E \subset S^{1}$ with $\nu(E) \geq \epsilon$ such that
\begin{displaymath} K_{\theta} := B(1) \cap K \cap H_{\theta}(K,\delta^{-\Sigma},[\delta,1]) \end{displaymath}
satisfies $\mu(K_{\theta}) \geq \eta$ for each $\theta \in E$. We note that \eqref{form52a} (and $\Delta \leq \Delta_{0} \leq \tfrac{1}{2}$) implies
\begin{equation}\label{form54a} \eta \geq 2^{-(\mathfrak{d}^{4}/200^{2}) N} \geq (\Delta^{N})^{\mathfrak{d}^{4}/200^{2}} \geq \delta^{\mathfrak{d}^{4}/200^{2}} \geq \delta^{\mathfrak{d}/8}. \end{equation}

Let $\mathcal{D} = \bigcup_{i \in \Z} \mathcal{D}_{\mu}^{i}$ be a family of David cubes as in Definition \ref{def:DavidCubes}. Let $Q_{0} \in \mathcal{D}_{\mu}^{0}$ be a "unit cube" containing $B(1) \cap \spt \mu$, and in particular all the sets $K_{\theta}$. It would be useful if we knew that $\mu$ is a probability measure on $Q_{0}$. To arrange this, we re-define
\begin{equation}\label{form37a} \mu := \mu_{Q_{0}}, \end{equation}
where $\mu_{Q_{0}} = \mu(Q_{0})^{-1}\mu|_{Q_{0}}$. (Nothing happening outside $Q_{0}$ will concern us.)

Fix $\theta \in E$. We claim that there exists a subset $\bar{K}_{\theta} \subset K_{\theta}$ with $\mu(\bar{K}_{\theta}) \geq \tfrac{1}{2}\mu(K_{\theta})$ such that 
\begin{equation}\label{form7a} |\{0 \leq j \leq N - 1 : x \in H_{\theta}(K,\Delta^{-\bar{\sigma}},[50\Delta^{j + 1},50\Delta^{j}]) \geq \tfrac{\mathfrak{d}}{2}N, \qquad x \in \bar{K}_{\theta}, \end{equation}
where
\begin{equation}\label{form23a} \bar{\sigma} := \sigma + \frac{\mathfrak{d}}{4}. \end{equation}
Suppose \eqref{form7a} fails. Then, there exists a subset $B_{\theta} \subset K_{\theta}$ with $\mu(B_{\theta}) \geq \tfrac{1}{2}\mu(K_{\theta}) \geq \eta/2$ such that the inequality opposite to \eqref{form7a} holds for all $x \in B_{\theta}$. According to Lemma \ref{lemma5a} applied to $F := B_{\theta} \subset K$, this yields
\begin{equation}\label{form55a} |(B_{\theta})_{4\delta} \cap \pi_{\theta}^{-1}\{t\}|_{4\delta} \leq C^{N}\delta^{-\bar{\sigma} - \mathfrak{d}/2} \stackrel{\eqref{form17a}}{\leq} \delta^{-\sigma - 3\mathfrak{d}/4}, \qquad t \in \R. \end{equation}
On the other hand, by Lemma \ref{l:hereditary} ("hereditary property of high-multiplicity sets"), since $B_{\theta} \subset B(1) \cap H_{\theta}(K,\delta^{-\Sigma},[\delta,\infty])$ with 
\begin{displaymath} \mu(B_{\theta}) =: \kappa \geq \tfrac{\eta}{2} \stackrel{\eqref{form54a}}{\geq} \tfrac{1}{2}\delta^{\mathfrak{d}/8}, \end{displaymath}
there exists a non-empty subset $G_{\theta} \subset B_{\theta}$ such that $\mathfrak{m}_{B_{\theta},\theta}(x \mid [4\delta,\infty]) \geq c\mathbf{C}^{-2}\kappa \delta^{-\Sigma}$ for $x \in G_{\theta}$. In other words
\begin{displaymath} |(B_{\theta})_{4\delta} \cap \pi_{\theta}^{-1}\{\pi_{\theta}(x)\}|_{4\delta} \geq |(G_{\theta})_{4\delta} \cap \pi_{\theta}^{-1}\{\pi_{\theta}(x)\}|_{4\delta} \gtrsim \mathbf{C}^{-2}\delta^{-\Sigma + \mathfrak{d}/8}, \qquad x \in G_{\theta}. \end{displaymath}
This contradicts \eqref{form55a}, since $\delta^{-\mathfrak{d}/8} \geq \Delta_{0}^{-\mathfrak{d}/8} \gg \mathbf{C}^{2}$ by \eqref{form17a}. Therefore, the existence of $\bar{K}_{\theta} \subset K_{\theta}$ with $\mu(\bar{K}_{\theta}) \geq \tfrac{1}{2}\mu(K_{\theta}) \geq \eta/2$, as in \eqref{form7a}, is guaranteed. Since there will be no difference between $\bar{K}_{\theta}$ and $K_{\theta}$ in the sequel (the difference between "$\eta$" and "$\eta/2$" is irrelevant) we assume with no loss of generality that \eqref{form7a} holds for all $x \in K_{\theta}$.

As a final twist of the previous argument, we note that if $\theta \in E$ and $x \in [K_{\theta}]_{A\delta}$ (with $A \geq 1$), then \eqref{form7a} holds in the following slightly weaker form:
\begin{equation}\label{form7b} |\{0 \leq j \leq N - 1 : x \in H_{\theta}(K,a\Delta^{-\bar{\sigma}},[A'\Delta^{j + 1},A'\Delta^{j}])\}| \geq \tfrac{\mathfrak{d}}{2}N, \qquad x \in [K_{\theta}]_{A\delta}, \end{equation} 
where $A' \lesssim A$, and $a \sim A^{-1}$. This is because to every $x \in [K_{\theta}]_{A\delta}$ corresponds $x' \in K_{\theta}$ with $|x - x'| \leq A\delta$ satisfying \eqref{form7a}, and one can check using the triangle inequality that
\begin{displaymath} x' \in H_{\theta}(K,\Delta^{-\bar{\sigma}},[50\Delta^{j + 1},50\Delta^{j}]) \quad \Longrightarrow \quad x \in H_{\theta}(K,a\Delta^{-\bar{\sigma}},[\bar{A}\Delta^{j + 1},\bar{A}\Delta^{j}]). \end{displaymath} 

Recall that $\delta = \Delta^{N}$ with $\Delta \leq \Delta_{1}$. For all $0 \leq j \leq N$, there exists a unique integer $i(j) \in \N$ such that
\begin{displaymath} \Delta^{j} \in [2^{-i(j) - 1},2^{-i(j)}]. \end{displaymath}
We now abbreviate $\mathcal{D}_{j} := \{Q \in \mathcal{D}_{\mu}^{i(j)} : Q \subset Q_{0}\}$ for $0 \leq j \leq N$. Thus, the cubes $Q \in \mathcal{D}_{j}$ satisfy
\begin{equation}\label{form26a} A^{-1}\Delta^{j} \leq A^{-1}2^{-i(j)} \leq \diam(Q) \leq A2^{-i(j)} \leq 2A\Delta^{j}, \qquad Q \in \mathcal{D}_{j}, \end{equation}
where $A = A(\mathbf{C}) \geq 1$ is the constant from \nref{Q2}.

For every $\theta \in E$, write $(K_{\theta})_{N} := \cup \{Q \in \mathcal{D}_{N} : Q \cap K_{\theta} \neq \emptyset\}$, and define the measure
\begin{displaymath} \mu_{\theta} := \mu((K_{\theta})_{N})^{-1}\mu|_{(K_{\theta})_{N}}, \qquad \theta \in E. \end{displaymath}
Recall that \eqref{form7b} holds for all $x \in \spt \mu_{\theta}$, where $\mu_{\theta}$ is a probability measure. For $\theta \in E$ fixed, applying Lemma \ref{lemma12} with parameter $\mathfrak{d}/2$ to the sets
\begin{displaymath} H(Q) := Q \cap H_{\theta}(K,a\Delta^{-\bar{\sigma}},[A'\Delta^{j + 1},A'\Delta^{j}]) \subset Q \end{displaymath}
yields the following: there exists a subset $F_{\theta} \subset \spt \mu_{\theta}$ with $\mu_{\theta}(F_{\theta}) \geq \mathfrak{d}^{2}/100$ such that
\begin{equation}\label{form51a} |\{0 \leq j \leq N - 1 : \mu_{\theta,Q_{j}(x)}(H_{\theta}(K,a\Delta^{-\bar{\sigma}},[A'\Delta^{j + 1},A'\Delta^{j}])) \geq \tfrac{\mathfrak{d}}{8}\}| \geq \tfrac{\mathfrak{d}^{2}}{100}N, \quad x \in F_{\theta}. \end{equation}
Here $Q_{j}(x)$ is the unique element of $\mathcal{D}_{j}$ containing $x$, and $\mu_{\theta,Q} = \mu_{\theta}(Q)^{-1}(\mu_{\theta})|_{Q}$. 

Note that if $\mu_{\theta}(Q) \neq 0$ for some $Q \in \mathcal{D}_{N}$, then in fact $\mu_{\theta}(Q) = \mu((K_{\theta})_{N})^{-1}\mu(Q)$. Thus,
\begin{align*} \Div_{\mu}(\mu_{\theta},\mathcal{D}_{N}) & =\sum_{Q \in \mathcal{D}_{N}} \mu_{\theta}(Q) \log \frac{\mu_{\theta}(Q)}{\mu(Q)}\\
& = \sum_{Q \in \mathcal{D}_{N}(F_{\theta})} \frac{\mu(Q)}{\mu(F_{\theta})} \log \frac{\mu(Q)/\mu(F_{\theta})}{\mu(Q)} \leq \log \frac{1}{\mu(K_{\theta})} \leq \log \tfrac{1}{\eta}. \end{align*}
Let us note that if $Q \in \mathcal{D}$ is a cube with $\mu(Q) = 0$, then also $\mu_{\theta}(Q) = 0$. Therefore, expressions of the form $\mu_{\theta}(Q) \log (\mu_{\theta}(Q)/\mu(Q))$ appearing below (and above for $Q \in \mathcal{D}_{N}$) are well-defined for all $Q \in \mathcal{D}$.

For $x \in \R^{2}$ and $0 \leq j \leq N$, let $Q_{j}(x) \in \mathcal{D}_{j}$ be the unique cube containing $x$. Fix $\theta \in E$. Using Lemma \ref{lemma:telescope}, and noting that $\mathfrak{D}_{\mu}(\mu_{\theta},\mathcal{D}_{0}) = 0$,
\begin{align} \log \tfrac{1}{\eta} \geq \Div_{\mu}(\mu_{\theta},\mathcal{D}_{\delta}) & = \sum_{j = 0}^{N - 1} \Div_{\mu}(\mu_{\theta},\mathcal{D}_{j + 1} \mid \mathcal{D}_{j}) \notag\\
& = \sum_{j = 0}^{N - 1} \sum_{Q \in \mathcal{D}_{j}} \mu_{\theta}(Q)\Div_{\mu_{Q}}((\mu_{\theta})_{Q},\mathcal{D}_{j + 1}) \notag \\
&\label{form9a} = \int \sum_{j = 0}^{N - 1} \Div_{\mu_{Q_{j}(x)}}((\mu_{\theta})_{Q_{j}(x)}, \mathcal{D}_{j + 1}) \, d\mu_{\theta}(x). \end{align}
Here $\mu_{\theta}^{Q} = \mu_{\theta}(Q)^{-1}\mu_{\theta}|_{Q}$ is the renormalised restriction to $Q$, as in Definition \ref{def:divergence}. Since $\mathfrak{D}_{\mu}(\nu,\mathcal{E}) \geq 0$ for all probabilities $\mu,\nu$ and partitions $\mathcal{E}$, \eqref{form9a} implies by Chebychev's inequality that there exists a subset $G_{\theta} \subset F_{\theta}$ with $\mu_{\theta}(G_{\theta}) \geq \tfrac{1}{2}\mu_{\theta}(F_{\theta}) \geq \tfrac{\mathfrak{d}^{2}}{200}$ such that
\begin{equation}\label{form8a} \sum_{j = 0}^{N - 1} \Div_{\mu_{Q_{j}(x)}}((\mu_{\theta})_{Q_{j}(x)}, \mathcal{D}_{j + 1}) \leq \tfrac{200}{\mathfrak{d}^{2}}\log \tfrac{1}{\eta}, \qquad x \in G_{\theta}. \end{equation}
Since $N \geq (200^{2}/\mathfrak{d}^{4})\log \tfrac{1}{\eta}$ by \eqref{form52a}, this further implies
\begin{displaymath} |\{0 \leq j \leq N - 1 : \Div_{\mu_{Q_{j}(x)}}((\mu_{\theta})_{Q_{j}(x)}, \mathcal{D}_{j + 1}) \leq 1\}| \geq (1 - \tfrac{\mathfrak{d}^{2}}{200}) N, \qquad x \in G_{\theta}, \end{displaymath}
Combining this information with \eqref{form51a} (which holds in particular for $x \in G_{\theta} \subset F_{\theta}$),
\begin{align} & |\{0 \leq j \leq N - 1 :  \mu_{\theta,Q_{j}(x)}(H_{\theta}(K,a\Delta^{-\bar{\sigma}},[A'\Delta^{j + 1},A'\Delta^{j}])) \geq \tfrac{\mathfrak{d}}{8} \notag\\
&\label{form11a} \qquad \qquad \text{ and } \Div_{\mu_{Q_{j}(x)}}((\mu_{\theta})_{Q_{j}(x)}, \mathcal{D}_{j + 1}) \leq 1\}| \geq \tfrac{\mathfrak{d}^{2}}{200} N, \qquad x \in G_{\theta}. \end{align}
for all $x \in G_{\theta}$, where $a \sim A^{-1} \gtrsim_{\mathbf{C}} 1$ and $\bar{A} \lesssim A \lesssim_{\mathbf{C}} 1$.

For $0 \leq j \leq N - 1$ and $\theta \in E$, let $K(j,\theta)$ be the points $x \in \R^{2}$ such that
\begin{equation}\label{form21a} \mu_{\theta,Q_{j}(x)}(H_{\theta}(K,a\Delta^{-\bar{\sigma}},[A'\Delta^{j + 1},A'\Delta^{j}])) \geq \tfrac{\mathfrak{d}}{8} \quad \text{and} \quad  \Div_{\mu_{Q_{j}(x)}}((\mu_{\theta})_{Q_{j}(x)}, \mathcal{D}_{j + 1}) \leq 1. \end{equation}
Using Fubini's theorem, recalling that $\mu_{\theta}(G_{\theta}) \geq \tfrac{1}{2}$ for $\theta \in E$, and that $\nu(E) \geq \epsilon$,
\begin{align*} \frac{1}{N}\sum_{j = 0}^{N - 1} \int_{E} \mu_{\theta}(K(j,\theta)) \, d\nu(\theta) & \geq \int_{E} \int_{G_{\theta}} \tfrac{1}{N}|\{0 \leq j \leq N - 1 : \text{as in } \eqref{form11a} \}| \, d\mu_{\theta}(x) \, d\nu(\theta)\\
& \geq \int_{E} \int_{G_{\theta}} \tfrac{\mathfrak{d}^{2}}{200} \, d\mu_{\theta}(x) \, d\nu(\theta) \geq \tfrac{\mathfrak{d}^{2}}{400}\nu(E) \geq \tfrac{\mathfrak{d}^{2}\epsilon}{400}. \end{align*}
Consequently, there exists a fixed index $0 \leq j \leq N - 1$ such that
\begin{displaymath} \sum_{Q \in \mathcal{D}_{j}} \int_{E} \mu_{\theta}(Q \cap K(j,\theta)) \, d\nu(\theta) \geq \tfrac{\mathfrak{d}^{2}\epsilon}{400}. \end{displaymath}
Recalling that each $\mu_{\theta}$, $\theta \in E$, is a probability measure, there exists a fixed cube $Q \in \mathcal{D}_{j}$ such that
\begin{displaymath} \int_{E} (\mu_{\theta})_{Q}(K(j,\theta)) \, d\nu(\theta) \geq \tfrac{\mathfrak{d}^{2}\epsilon}{400}. \end{displaymath}
Thus, fixing this cube $Q \in \mathcal{D}_{j}$ for the remainder of the argument, there is a set $S \subset E$ with $\nu(S) \geq \mathfrak{d}^{2}\epsilon/400$ such that $Q \cap K(j,\theta) \neq \emptyset$ for $\theta \in S$. Thus, if $\theta \in S$, then $Q$ contains a point $x_{0} \in K(j,\theta)$. Therefore by the definition \eqref{form21a},
\begin{displaymath} \mu_{\theta,Q_{j}(x_{0})}(H_{\theta}(K,a\Delta^{-\bar{\sigma}},[A'\Delta^{j + 1},A'\Delta^{j}])) \geq \tfrac{\mathfrak{d}}{8} \quad \text{and} \quad \Div_{\mu_{Q_{j}(x_{0})}}((\mu_{\theta})_{Q_{j}(x)}, \mathcal{D}_{j + 1}) \leq 1. \end{displaymath}
But now $Q_{j}(x_{0}) = Q$, so
\begin{equation}\label{form20a} \mu_{\theta,Q}(H_{\theta}(K,a\Delta^{-\bar{\sigma}},[A'\Delta^{j + 1},A'\Delta^{j}])) \geq \tfrac{\mathfrak{d}}{8} \quad \text{and} \quad \Div_{\mu_{Q}}((\mu_{\theta})_{Q}, \mathcal{D}_{j + 1})\leq 1, \quad \theta \in S. \end{equation}
For the remainder of the argument, we abbreviate $\bar{\mu}_{\theta} := \mu_{\theta,Q}$ and $\bar{\mu} := \mu_{Q}$. With this notation, consider the collections
\begin{equation}\label{form13a} \mathcal{B}_{\theta} := \{p \in \mathcal{D}_{j + 1} : \bar{\mu}_{\theta}(p) \geq C(\mathfrak{d})\mu(p)\} \quad \text{and} \quad \mathcal{G}_{\theta} := \mathcal{D}_{j + 1} \, \setminus \, \mathcal{B}, \end{equation}
where $C(\mathfrak{d}) := 2^{32/\mathfrak{d}}$, thus $2 / \log C(\mathfrak{d}) \leq \mathfrak{d}/16$. Applying Lemma \ref{lemma10} to $\mathcal{G}$,
\begin{displaymath} 1 \geq \sum_{p \in \mathcal{B}_{\theta}} \bar{\mu}_{\theta}(p) \log \frac{\bar{\mu}_{\theta}(p)}{\bar{\mu}(p)} + \sum_{p \in \mathcal{G}_{\theta}} \bar{\mu}_{\theta}(p) \log \frac{\bar{\mu}_{\theta}(p)}{\bar{\mu}(p)} \geq \log C(\mathfrak{d}) \bar{\mu}_{\theta}(B_{\theta}) - 1, \end{displaymath} 
and therefore $\bar{\mu}_{\theta}(B_{\theta}) \leq \frac{2}{\log C(\mathfrak{d})} \leq \frac{\mathfrak{d}}{16}$. Recalling \eqref{form20a}, this implies 
\begin{equation}\label{form23a} \bar{\mu}_{\theta}((\cup \mathcal{G}_{\theta}) \cap H_{\theta}(K,a\Delta^{-\bar{\sigma}},[A'\Delta^{j + 1},A'\Delta^{j}])) \geq \tfrac{\mathfrak{d}}{16}, \qquad \theta \in S. \end{equation} 
Finally, let $\mathcal{H}_{\theta} \subset \mathcal{G}_{\theta} \subset \mathcal{D}_{j + 1}$ be the subset of cubes with 
\begin{displaymath} p \cap H_{\theta}(K,a\Delta^{-\bar{\sigma}},[A'\Delta^{j + 1},A'\Delta^{j}]) \neq \emptyset. \end{displaymath}
Since the cubes in $\mathcal{H}_{\theta}$ have diameter $\leq 2A\Delta^{j + 1}$ by \eqref{form26a}, it is easy to check (compare with \eqref{form7b}) that
\begin{equation}\label{form28a} p \subset H_{\theta}(K,\bar{a}\Delta^{-\bar{\sigma}},[\bar{A}\Delta^{j + 1},\bar{A}\Delta^{j}]), \qquad p \in \mathcal{H}_{\theta}, \end{equation}
where $\bar{A} \lesssim A'$, and $\bar{a} \sim a$. In particular, this holds for some $\bar{A} \geq 2A$. Moreover $\bar{a}\Delta^{-\bar{\sigma}} = \bar{a}\Delta^{-\sigma - \mathfrak{d}/4} \geq \Delta^{-\sigma}$, since $\Delta \leq \Delta_{1} \leq \mathbf{c}(\mathbf{C},\mathfrak{d})$, and \eqref{form17a} holds. Consequently,
\begin{align} \bar{\mu}(H_{\theta}(K,\Delta^{-\sigma},[\bar{A}\Delta^{j + 1},\bar{A}\Delta^{j}])) & \geq \sum_{p \in \mathcal{H}_{\theta}} \bar{\mu}(p) \geq C(\eta)^{-1} \sum_{p \in \mathcal{H}_{\theta}} \bar{\mu}_{\theta}(p) \notag\\
& \geq C(\mathfrak{d})^{-1}\bar{\mu}_{\theta}((\cup \mathcal{G}_{\theta}) \cap H_{\theta}(K,\bar{a}\Delta^{-\bar{\sigma}},[A'\Delta^{j + 1},A'\Delta^{j}])) \notag\\
&\label{form29a} \stackrel{\eqref{form23a}}{\geq} C(\mathfrak{d})^{-1}\tfrac{\mathfrak{d}}{16}, \qquad \theta \in S. \end{align}
Recall that $\nu(S) \geq \mathfrak{d}^{2}\epsilon/400$, so we are now quite close to contradicting our main hypothesis \eqref{form14a}. To make this rigorous, recall that $\bar{\mu} = \mu_{Q} = \mu(Q)^{-1}\mu|_{Q}$, where $Q \in \mathcal{D}_{j} = \mathcal{D}_{\mu}^{i(j)}$. Therefore,
\begin{equation}\label{form27a} \mu(Q) \geq A^{-1}2^{-i(j)s} \geq A^{-1}\Delta^{js} \end{equation} 
according to \nref{Q3}. Let $B \supset Q$ be a disc of radius $\bar{A}\Delta$; this exists by \eqref{form26a}, and since $\bar{A} \geq 2A$. Then, with our usual notation $\mu^{B} = \mathrm{rad}(B)^{-s}T_{B}\mu$, and using Lemma \ref{lemma7},
\begin{align*} \mu^{B}(B(1) \cap H_{\theta}(K^{B},\Delta^{-\sigma},[\Delta,1])) & = (\bar{A}\Delta^{j})^{-s}\mu(B \cap H_{\theta}(K,\Delta^{-\sigma},[\bar{A}\Delta^{j + 1},\bar{A}\Delta^{j}]))\\
& \stackrel{\eqref{form27a}}{\geq} (A\bar{A})^{-1}\mu(Q)^{-1}\mu(Q \cap H_{\theta}(K,\Delta^{-\sigma},[\bar{A}\Delta^{j + 1},\bar{A}\Delta^{j}]))\\
& = (A\bar{A})^{-1}\bar{\mu}(H_{\theta}(K,\Delta^{-\sigma},[\bar{A}\Delta^{j + 1},\bar{A}\Delta^{j}])). \end{align*} 
Writing $\mathbf{c} := \tfrac{1}{16}(A\bar{A})^{-1} \gtrsim_{\mathbf{C}} 1$, and combining this lower bound with \eqref{form29a} yields
\begin{equation}\label{form56a} \nu\left(\left\{\theta \in S^{1} : \mu^{B}(B(1) \cap H_{\theta}(K^{B},\Delta^{-\sigma},[\Delta,1])) \geq \tfrac{\mathbf{c}\mathfrak{d}}{C(\mathfrak{d})}\right\}\right) \geq \nu(S) \geq \tfrac{\mathfrak{d}^{2}}{400}\epsilon. \end{equation} 
Since $\mu^{B}$ is Ahlfors $(s,\mathbf{C})$-regular, this contradicts the hypothesis \eqref{form14a}, and completes the proof of Lemma \ref{lemma9}. 

A minor technical point is that we had at \eqref{form37a} (re-)defined $\mu$ to mean $\mu_{Q_{0}}$. One may question if $\mu_{Q_{0}}^{B}$ is indeed Ahlfors $(s,\mathbf{C})$-regular, and applying \eqref{form14a} is legitimate. But since $\mu_{Q_{0}}$ is a renormalised restriction of $\mu$, the lower bound \eqref{form56a} holds also for the "original" $\mu$, at the expense of multiplying $\mathbf{c}_{1}$ by $\mu(Q_{0}) \gtrsim_{\mathbf{C}} 1$ (recall that $\diam K \geq 1$).

\bibliographystyle{plain}
\bibliography{references}

\end{document}